\documentclass[12pt]{article}
\usepackage{amsmath,amsthm,amsfonts,amssymb,mathrsfs,bm}
\usepackage[usenames]{color}
\usepackage{amssymb}

\usepackage{bbm}
\RequirePackage{amsthm,amsmath,amsfonts,amssymb,mathtools}
\RequirePackage{enumitem}
\RequirePackage[numbers,sort&compress]{natbib}
\RequirePackage[
  colorlinks=true,
  linkcolor=blue,
  citecolor=blue,
  urlcolor=blue,
  linktoc=page
]{hyperref}
\RequirePackage{graphicx}

\parskip 	\bigskipamount

\theoremstyle{plain}

\newtheorem{theorem}{Theorem}[section]
\newtheorem{lemma}[theorem]{Lemma}
\newtheorem{corollary}[theorem]{Corollary}
\newtheorem{proposition}[theorem]{Proposition}
\theoremstyle{definition}
\newtheorem{definition}[theorem]{Definition}

\newtheorem*{remark}{Remark}

\newenvironment{acks}
  {\par\noindent\textbf{Acknowledgement. }\ignorespaces}
  {\par}

\newcommand{\exclude}{~\backslash~}
\DeclarePairedDelimiterX{\infdivx}[2]{(}{)}{%
  #1\;\delimsize\|\;#2}
\DeclareMathOperator{\var}{\mathrm{Var}}

\newcommand{\pspace}[2]{\mathcal{M}(#2;#1)}
\newcommand{\optloss}{\hat{L}_n}
\newcommand{\loss}[1]{L_n(#1)}

\newcommand{\de}{\mathrm{d}}
\newcommand{\argmin}{\mathrm{argmin}}
\newcommand{\argmax}{\mathrm{argmax}}
\newcommand{\indicator}{\mathbbm{1}}

\newcommand{\supp}{\mathrm{Supp}}
\newcommand{\E}{\mathbb{E}}
\newcommand{\R}{\mathbb{R}}
\newcommand{\abs}[1]{\left\lvert#1\right\rvert}

\newcommand{\norm}[1]{\left\lVert#1\right\rVert}
\newcommand{\normnorm}[1]{\lVert#1\rVert}

\newcommand{\Z}{\mathbb{Z}}

\newcommand{\mc}[1]{\mathcal{#1}}

\newcommand{\nbr}{N_{[]}}
\newcommand{\est}{\tilde{f}_n}
    \newcommand{\KL}{\mathrm{KL}}
   \newcommand{\He}{\mathrm{H}}
\newcommand{\comp}[1]{{#1}^{\complement}}
\newcommand{\dist}{\mathrm{dist}}
\newcommand{\npmle}{\hat{f}_n}

\newcommand{\In}{\mathrm{in}}
\newcommand{\Out}{\mathrm{out}}
\newcommand{\TV}{\mathrm{TV}}

\def\qt#1{\qquad\text{#1}}

\def\argmin{\mathop{\rm argmin}}
\def\argmax{\mathop{\rm argmax}}

\newenvironment{funding}
  {\par\noindent\textbf{Funding. }\ignorespaces}
  {\par}

\makeatletter
\renewcommand*{\@cite@ofmt}{\hbox}
\makeatother

\begin{document}
\title{Gaussian mixtures and non-parametric likelihoods \\ through the lens of statistical mechanics}

\author{
\small
\begin{tabular}{c}{Subhroshekhar Ghosh\thanks{Authors listed in alphabetical order of surnames}}\\ Department of Mathematics\\National University of Singapore\\subhrowork@gmail.com
\end{tabular}
\and
\small
\begin{tabular}{c}{Aditya Guntuboyina}\\
 Department of Statistics\\
University of California, Berkeley
\\ aditya@stat.berkeley.edu
\end{tabular}
\and
\small
\begin{tabular}{c}
{Satyaki Mukherjee \thanks{Corresponding author}}\\ Department of Mathematics\\National University of Singapore \\satyaki.mukhyo@gmail.com
\end{tabular}
\and 
\small
\begin{tabular}{c}
{Hoang-Son Tran \thanks{Corresponding author}}\\ Department of Mathematics\\National University of Singapore \\ hoangson.tran@nus.edu.sg 
\end{tabular}
}

\date{} 
\maketitle

\begingroup
\renewcommand{\thefootnote}{}
\footnotetext{\textbf{Keywords:} Gaussian Mixture Models (GMM),
Non-parametric Maximum Likelihood Estimation (NPMLE),
Robustness and stability,
Statistical Mechanics,
Chaos,
Multiple Valleys phenomenon,
Concentration of Measure,
Langevin dynamics}
\endgroup

\begin{abstract}
In this work, we investigate Gaussian Mixture Models ({\it abbreviated} GMMs) and the related problem of nonparametric maximum likelihood estimation ({\it abbreviated} NPMLE) from the perspective of statistical mechanics. In particular, we establish stability guarantees for the NPMLE procedure that improve and extend existing guarantees. Crucially, we obtain guarantees on the Kullback--Leibler divergence between NPMLE estimators and the ground truth, a type of result which has been known to be  challenging in the literature on this problem. 

In particular, we provide high-probability upper bounds on the KL divergence between the NPMLE and the true density that are of order  $\min\big\{\frac{(\log n)^{d+2}}{n} , \frac{\log n}{\sqrt n}\big\}$, which cover a wide range of scenarios for the relative sizes of $n$ and $d$. We obtain similar guarantees for approximate solutions to the NPMLE problem, addressing realistic situations where optimization algorithms need to be stopped in finite time, allowing access only to approximations to the true NPMLE. 

A cornerstone of our approach is an analysis of the function class complexity of logarithms of gaussian mixture densities, which is able to handle their unboundedness, and could be of wider interest. Our methods lead to novel confidence-interval guarantees for entropy estimation in Gaussian mixtures, demonstrating their wider impact.

We also establish correspondences between stability phenomena in the NPMLE problem and concepts such as chaos and multiple valleys in random energy landscapes of statistical mechanics models. While these correspondences are largely of a conceptual nature at this point, we believe that these connections, especially those with concentration phenomena and Langevin dynamics, may be developed into a toolbox for studying a wide variety of random optimization problems in statistics and machine learning.
\end{abstract}





\tableofcontents





\newpage

\section{{ Introduction}}
A classical (standard, homoskedastic) Gaussian Mixture Model (GMM) is a probability distribution on $\mathbb{R}^d$ that is given by a probability density of the form

\begin{equation} \label{eq:GMM-discrete}
    \rho(x) = \sum_{k=1}^m \pi_k \cdot \frac{1}{(2\pi)^{d/2}} \exp\Big (-\frac{\|x-\theta_k\|^2}{2}\Big ),
\end{equation}
where the $\{\theta_k\}_{k=1}^m$ are referred to as {\it centres} and the entries of the probability vector $(\pi_k)_{k=1}^m$ are called the mixing weights. 

GMMs have long been a classical modeling paradigm for a wide range of applications in statistics and machine learning, many of which are detailed in standard references such as \cite{everitt2013finite, titterington1981finite, mclachlan2019finite, lindsay1995mixture}. They are also a core example motivating the development of the EM algorithm \cite{dempster1977maximum}. Other prominent applications include clustering and unsupervised learning \cite{mclachlan2019finite, bishop2006pattern, lindsay1995mixture}, nonparametric density estimation \cite{mclachlan2019finite, lindsay1995mixture}, novelty detection \cite{zong2018deep}, and empirical Bayes problems \cite{lindsay1995mixture}.

The general Gaussian location mixture model is an extension of the classical discrete GMM where the discrete mixture of Gaussians (over their centres) is relaxed to a general mixture against a probability distribution (referred to as a {\it mixing measure}) on $\R^d$. In more concrete terms, the general Gaussian Mixture Model is a probability distribution  on $\mathbb{R}^d$ whose density can be written as
\begin{equation} \label{eq:GMM-density}
    f_\mu(x) = \frac{1}{(2\pi)^{d/2}} \int_{\mathbb{R}^d} \exp\Big (-\frac{\|x-\theta\|^2}{2}\Big )\mu(\de\theta), \quad \mu \in \mathcal{P}(\mathbb{R}^d),
\end{equation}
where $\mathcal{P}(\mathbb{R}^d)$ denotes the set of all probability measures on $\mathbb{R}^d$. The sense in which \eqref{eq:GMM-density} relaxes \eqref{eq:GMM-discrete} is clear; in fact, \eqref{eq:GMM-discrete} is a special case of \eqref{eq:GMM-density} by considering the probability measure $\mu$ that puts mass $\pi_k$ on the point $\theta_k$, for $1 \le k \le m$. 

We will denote by $\mathcal{M}$ the class of all Gaussian location mixture densities on $\mathbb{R}^d$, i.e.
\[ \mathcal{M}:= \{f: f \text{ is of the form given by \eqref{eq:GMM-density} for some $\mu \in \mathcal{P}(\mathbb{R}^d)$}\}.\]
To set up notation, for any mixing measure $\mu \in \mathcal{P}(\mathbb{R}^d)$, we shall denote by $f_{\mu}\in \mathcal{M}$ the corresponding Gaussian location mixture density given by \eqref{eq:GMM-density}; conversely, given a density $f\in \mathcal{M}$, we let $\mu_f$ be its corresponding mixing measure.



Given $n$ data points $X_1,\ldots,X_n \in \mathbb{R}^d$, we consider the following (empirical) log-likelihood function
\begin{equation} \label{eq:log-likelihood}
 L_n(f) := \frac{1}{n} \sum_{i=1}^n \log f(X_i), \quad  f\in \mathcal{M}.     
\end{equation}

Assuming that the data $X_1,\ldots,X_n$ are drawn independently from a GMM density $f_* \in \mathcal{M}$, in {\it Non-Parametric Maximum Likelihood Estimation}, one  estimates $f_*$ by maximizing $L_n(f)$ over all $f \in \mathcal{M}$, the class of all Gaussian location mixture densities. Concretely, a {\it Non-Parametric Maximum Likelihood Estimator (NPMLE)}, denoted by $\hat{f}_n$, is defined as any maximizer of the log-likelihood over $\mathcal{M}$:
\begin{equation} \label{eq:NP-opt}
\hat{f}_n \in \argmax_{f\in \mathcal{M}} \: L_n (f).
\end{equation}
For convenience, we set $\optloss:= \sup_{f\in \mathcal{M}} L_n(f)$, which equals $L_n(\hat{f}_n)$ by definition.

In practice, exact maximization of $L_n(f)$ may be computationally
challenging, which motivates the study of approximate NPMLEs.
Accordingly, given a sequence $(\varepsilon_n)_{n \ge 1}$, we consider estimators $\est$ satisfying
\[
\loss{\est} \ge \optloss - \varepsilon_n .
\]
That is, $\est$ achieves log-likelihood within $\varepsilon_n$ of the optimal value.

\cite{kiefer1956consistency} and \cite{robbins1950generalization} appear to be the earliest works to study this estimator, and it has since attracted substantial attention.  Several basic properties of \eqref{eq:NP-opt} are discussed in the monographs \cite{bohning2000computer} and \cite{lindsay1995mixture}. The optimization problem \eqref{eq:NP-opt} is convex because the objective function is concave in $f$ and the constraint set $\mathcal{M}$ is a convex (though infinite-dimensional) class of densities. It can be shown that the estimator $\hat{f}_n$ always exists, and the first order optimality conditions for \eqref{eq:NP-opt} can be explicitly characterized (see, for example, \cite[Chapter 5]{lindsay1995mixture}). In one dimensional setting, $\hat{f}_n$ is also unique, as proved by \cite{lindsay1993uniqueness}. In contrast, when $d > 1$, \cite{soloffjakeguntuboniya} demonstrated that there exist $n = 4$ points $X_1, X_2, X_3, X_4$ in $\mathbb{R}^2$ for which $\hat{f}_n$ is not unique. Nevertheless, it remains an open question whether uniqueness holds almost surely when the data $X_1, \dots, X_n$ are drawn from $f_*$. 

There also exist several algorithms for computing an approximate maximizer of \eqref{eq:NP-opt}. These include vertex direction and vertex exchange methods \cite{bohning2000computer}, expectation-maximization based methods \cite{jiang2009general}, black-box convex optimization applied to a grid-based discretization \cite{koenker2014convex}, exemplar-based methods \cite{soloffjakeguntuboniya} and specialized convex optimization methods tailored to this problem \cite{kim2020fast, zhang2024efficient}. 

The statistical properties of $\hat{f}_n$ (and the corresponding mixing measure $\hat{\mu}_n$) as estimators of $f_*$ (and $\mu_*$), have been investigated in a number of works. \cite{kiefer1956consistency} (see also \cite{chen2017consistency, pfanzagl1988consistency}) proved consistency of $\hat{\mu}_n$. \cite{zhang2009generalized} proved rates of convergence of $\hat{f}_n$ to $f_*$ under the Hellinger metric in the univariate case; these results were extended to $d \geq 2$ by \cite{saha2020nonparametric} (see also \cite{soloffjakeguntuboniya}). Structural properties of $\hat{\mu}_n$ (such as bounds on the number of atoms) are established in \cite{polyanskiy2020self} for $d = 1$. Minimax bounds for estimating densities in the class $\mathcal{M}$ have been proved by \cite{kim2014minimax} for $d = 1$ and by \cite{kim2022minimax} for $d \ge 1$.




\section{{ Key ideas and main results}}
\label{sec: ideas and results}

\subsection{{ NPMLE and statistical mechanics: a brief overview}} 
\label{sec:intro-statmech}
A conceptual aspect of this work is to look at the NPMLE problem through the lens of statistical mechanics. We will provide a detailed account of this in Section \ref{sec:stat-mech}; restricting ourselves only to an overview of the key ideas in this introductory section. The NPMLE procedure, as laid out in the optimization problem \eqref{eq:NP-opt} and \eqref{eq:GMM-density}, may be viewed as an optimization problem in a {\it random environment} (in the sense of statistical mechanics), with the randomness being that of the data. Many models in statistical mechanics can be posed as (solutions to) such random optimization problems -- this includes the famous models of first and last passage percolation, random polymer models, spin glasses and their ground states, traveling salesman problem (TSP) on random point sets, among others. For a detailed account, we refer the reader to Section \ref{sec:stat-mech}, and more generally to the monograph \cite{chatterjee2014superconcentration}; for wider connections between statistical inference and statistical physics, we refer the reader to the excellent survey \cite{zdeborova2016statistical} and the references therein.  

Over the years, a large body of tools and techniques have been developed in the statistical mechanics literature in order to investigate the fine properties of the solutions, and more generally, the landscapes of such optimization problems in random environments. In particular, \cite{chatterjee2014superconcentration} established, in the context of a large class of classical statistical mechanical models, a general theory connecting the {\it fluctuation behaviour} of the solutions of such random optimization problems to the more physical phenomena of {\it chaos} and {\it multiple valleys}.  

Broadly speaking, {\it chaos} refers to the sensitivity of the solution of the optimization problem to perturbations in the random environment (in the statistical context, the input data), whereas {\it multiple valleys} refers to spurious local near-optima pervading the landscape of the random optimization problem. The notions of chaos and multiple valleys have clear implications in a statistical and machine learning context, pertaining to the robustness and stability of data analytical procedures. 

On the other hand, fluctuations and related concentration phenomena are fundamental to classical mathematical statistics; c.f. the classical central limit theorem for MLEs connecting their fluctuation order to the Fisher information under the truth. Investigation of fluctuation phenomena is underpinned by an array of powerful analytical tools which can potentially be brought to bear on any model at hand. The theory developed in \cite{chatterjee2014superconcentration} establishes mutual implications between the order of fluctuations of the objective function in several random optimization problems in statistical mechanics with the occurrence of chaos and multiple valleys in the landscape of these models.

In this work, we investigate the NPMLE problem in the light of these statistical mechanical ideas. It turns out that the theory and the toolbox developed in \cite{chatterjee2014superconcentration} is not directly applicable in the NPMLE setup, largely due to the former being focused on the discrete setup of lattice models of statistical mechanics; whereas the NPMLE problem, by definition, resides in the continuum. However, intriguingly, we are able to separately analyse phenomena of chaos, multiple valleys (or the lack thereof), and fluctuation orders of the objective function in the setting of the NPMLE model, using direct methods centered on the information geometry of Gaussian mixtures. 

Our analysis leads to natural consequences in terms of the robustness and the stability of the NPMLE procedure. We clarify that the correspondences with statistical mechanics that we observe are of a conceptual and motivational nature at this point, rather than a direct application of theorems established in one field to problems in another. This raises the natural question of extending the general theory of equivalences between chaos, multiple valleys and fluctuation orders from a discrete statistical mechanical setup to more continuous settings that are suitable for studying random optimization problems arising in statistics and machine learning.

\subsection{{ Main results}} 
\label{sec: main results}
In this section, we discuss the main technical results that we obtain in this paper, borne out of the analysis motivated by the statistical mechanical considerations outlined in Section \ref{sec:intro-statmech}.

To formulate these results, we recall that the (squared) Hellinger distance and Kullback-Leibler divergence between density functions $f$ and $g$ on $\R^d$ are defined respectively as:
\begin{eqnarray*}
    \He^2(f,g) &:=& \int_{\R^d} \Big (\sqrt{f(x)} - \sqrt{g(x)} \Big )^2 \de x,\\
    \KL(f\|g) &:=& \int_{\R^d} \log \Big (\frac{f(x)}{g(x)}\Big ) f(x) \de x.
\end{eqnarray*}

\subsubsection{{ Stability phenomena}} \label{sec:intro-stability}
Our first result is Theorem \ref{thm:stability}, which concerns the algorithmic stability of the NPMLE. In particular,
let $\est$ be an estimator that nearly achieves the optimal value $\hat{L}_n$, we are interested in quantifying the closeness of $\est$ to the true density $f_*$. 


\begin{theorem}\label{thm:stability}
     Let $X_1, ..., X_n$ be i.i.d. samples from a GMM density $f_* \in \mathcal{M}$ whose mixing measure $\mu_*$ has support contained in a compact set $\Theta \subset \mathbb{R}^d$. 
     Let $(\varepsilon_n)_{n \ge 1}$ be any sequence of positive numbers and let $\est \in \mathcal{M}$ be any estimator (depending on the data $X_i$'s) which nearly achieves the optimal value, i.e., 
     \begin{align} \label{eq:epsLnucrit}
         \loss{\est} \ge \optloss- \varepsilon_n.
     \end{align}
     Then we have:
     \begin{enumerate}[label=(\roman*)]
        \item\label{it:stabilitypolylognovern} There exist positive constants $C_1,C_2$ depending only on $\Theta$ and $d$ such that
        \begin{equation} \label{eq:thm-stability-Hellinger-1}
             \He^2(f_*, \est)\le  \varepsilon_n+C_1 \frac{(\log n)^{d+1}}{n} 
         \end{equation}
         with probability at least $1-C_2\exp (- (\log n)^{d+1}  )$.
         \item \label{it:stabilitylognsqrtn}
        Let $n$ be large enough such that
         \begin{equation} \label{eq:thm-stability-condition}
             \varepsilon_n+C_1 \frac{(\log n)^{d+1}}{n} \le \frac{1}{2}(1-e^{-1})^2,
         \end{equation}
         then there exists $C_3 = C_3 (d,\Theta)>0$ such that 
             \begin{equation} \label{eq:thm-stability-KL-1}
                 \KL(f_*\|\est) \le  C_3 \Big (\varepsilon_n \log (\min \{\varepsilon_n^{-1},n \})+ \frac{(\log n)^{d+2}}{n} \Big ) 
             \end{equation}
              with probability at least $1-C_2\exp (- (\log n)^{d+1}  )$.

              \medskip
      \end{enumerate}
\end{theorem}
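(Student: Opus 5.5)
Part (i) follows the standard Hellinger-rate argument for the NPMLE (Zhang; Saha--Guntuboyina), adapted to approximate maximizers. We compare $\est$ with $f_*$ through the likelihood ratio. Since $\optloss \ge L_n(f_*)$, condition \eqref{eq:epsLnucrit} gives $L_n(\est) - L_n(f_*) \ge -\varepsilon_n$. Equivalently,
\[
\frac1n\sum_{i=1}^n \log\frac{\est(X_i)}{f_*(X_i)} \ge -\varepsilon_n .
\]
We then bound, uniformly over $f\in\mathcal M$, the probability that $\prod_i f(X_i)/f_*(X_i) \ge e^{-n\varepsilon_n}$ while $\He^2(f,f_*) > t$.

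For a single fixed $f$, we apply Markov's inequality to $\prod_i \sqrt{f(X_i)/f_*(X_i)}$. Its expectation is $(1-\He^2/2)^n \le e^{-n\He^2/2}$, so this event has probability at most $e^{n\varepsilon_n/2 - nt/2}$. To pass to all of $\mathcal M$, we replace each $f$ by a member of a finite sup-norm net. The standard construction uses moment matching of the mixing measure on $\Theta$ plus the Gaussian tail. It gives covers of $\mathcal M$, after truncation to $\{x:\dist(x,\Theta)\le C\sqrt{\log n}\}$ and at resolution $\eta=1/n$, of log-cardinality $\lesssim (\log n)^{d+1}$.

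Since $\est$ need not have its mixing measure in $\Theta$, we first reduce to that case. Projecting the atoms of $\hat\mu$ onto a slight enlargement of $\Theta$ only increases the likelihood at the points $X_i$ lying near $\Theta$. The few far-away points, with $\|X_i\|\gtrsim\sqrt{\log n}$, occur with controlled probability and can be handled by an enlarged box.

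The net approximants are used as upper brackets $f+\eta$ on the truncation region, with a Gaussian-tail envelope outside it. A union bound with $t = \varepsilon_n + C_1(\log n)^{d+1}/n$ then yields \eqref{eq:thm-stability-Hellinger-1}. The failure probability is $\exp(C(\log n)^{d+1} - c\,nt)\le C_2 e^{-(\log n)^{d+1}}$ once $C_1$ is chosen large.

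For (ii), the goal is to convert the Hellinger bound into a KL bound. The standard tool is the inequality
\[
\KL(f_*\|g)\lesssim \He^2(f_*,g)\,\log\!\big(\textstyle\sup f_*/g\big),
\]
or its truncated version (Wong--Shen): for every $M\ge e$,
\[
\KL(f_*\|g) \le C\,\He^2(f_*,g)\log M + \int_{f_*/g > M} f_*\log(f_*/g).
\]
Condition \eqref{eq:thm-stability-condition} puts $\He^2$ below the threshold $\tfrac12(1-e^{-1})^2$ at which such inequalities become valid. It also gives a lower bound on $\est$: Hellinger closeness forces $\hat\mu$ to place a constant fraction of its mass within distance $O(1)$ of $\Theta$. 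Hence $\est(x)\ge c\,e^{-\|x-\theta_0\|^2/2 - C\|x\|}$ for some $\theta_0$, so $\log(f_*/\est)(x)\lesssim 1+\|x\|$.

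The tail term is then bounded by a Gaussian integral of $(1+\|x\|)$ over the region where this ratio exceeds $M$. That region sits at $\|x\|\gtrsim\sqrt{\log M}$ and contributes $\lesssim 1/M$ up to log factors. Taking $M=\min\{\varepsilon_n^{-1},n\}$ turns the main term into $(\varepsilon_n + (\log n)^{d+1}/n)\log M$. This is at most $C_3\big(\varepsilon_n\log\min\{\varepsilon_n^{-1},n\}+(\log n)^{d+2}/n\big)$, because $\log M\le\log n$. The tail term is $\lesssim 1/M$, which is absorbed into the same expression.

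The main obstacle is the uniform lower bound on $\est$: a small Hellinger distance alone does not rule out the far atoms or thin mass that make $\log(f_*/\est)$ unbounded. This is the unboundedness of $\log$-mixtures that the abstract highlights. I would first try to get this bound from Hellinger closeness together with the Gaussian structure. Failing that, I would use a bracketing entropy bound for the class $\{\log f\}$ with a polynomial envelope and apply a Bernstein-type chaining argument directly to $\KL$.
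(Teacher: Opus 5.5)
Your plan is sound: part (i) is the paper's argument, and part (ii) is a valid alternative to the paper's. However, the write-up of (i) contains steps that fail as stated.

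\textbf{Part (i).} Three steps need repair, plus one bookkeeping point.

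(a) With $\eta=1/n$, the standard bracket estimate $\int\sqrt{(h+2\eta)f_*}\le\int\sqrt{hf_*}+\sqrt{2\eta}\int\sqrt{f_*}$ adds $n\sqrt{2\eta}\int\sqrt{f_*}=\sqrt{2n}\int\sqrt{f_*}$ to the exponent. That swamps $(\log n)^{d+1}$. Take $\eta=n^{-2}$ as the paper does: the cost drops to $\sqrt2\int\sqrt{f_*}$, and the net entropy (Lemma~\ref{lm:guntu-entropy}) stays $\lesssim(\log n)^{d+1}$.

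(b) You cannot apply Markov's inequality to a bracket centred at the net element, because the net element need not satisfy $\He^2\ge t$. The paper handles this with a witness device. For each index $j$ it fixes a deterministic $h_j\in\mathcal M$ with $\|h_j-g_j\|_{\infty,\Theta_M}\le n^{-2}$ and $\He^2(h_j,f_*)\ge t$, whenever such a density exists. On the bad event, $\est$ itself witnesses existence for its own index $J$. Hence $\est\le h_J+2n^{-2}$ on $\Theta_M$, and Markov is applied to the fixed $h_j$.

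Off $\Theta_M$, no Gaussian-tail envelope exists for arbitrary members of $\mathcal M$. Only the constant bound $\est\le(2\pi)^{-d/2}\le 1$ is available, and it suffices. You can put it inside the bracket, where it contributes $n\int_{\Theta_M^c}\sqrt{f_*}=o(1)$. Alternatively, use the paper's factor $\prod_i(1+n^2\mathbf 1(X_i\notin\Theta_M))$.

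(c) Drop the projection of $\hat\mu$. The sup-norm net already covers all of $\mathcal M$, so it is unnecessary. As stated it also fails: a constant fraction of the $X_i$ lie outside any fixed slight enlargement of $\Theta$. You would also have to show the projected density stays Hellinger-far from $f_*$.

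Finally, keep the exponent as $n\varepsilon_n/2-nt/2+O((\log n)^{d+1})$ rather than $-c\,nt$. That way $\varepsilon_n$ cancels and appears with coefficient exactly $1$.

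\textbf{Part (ii).} Here you depart from the paper. The paper proceeds in three steps:
\begin{enumerate}
\item It shows $\est\in\mathcal M(\widetilde\Theta;\tau)$ (Lemma~\ref{lm:assumption-M-Theta-tau}).
\item It uses the crude bound $\est\gtrsim e^{-\|x\|^2}$ to get $\int f_*(f_*/\est)^{1/2}\le C$ (Lemma~\ref{lm:condition-M-delta}).
\item It invokes Wong--Shen's Theorem~5 as a black box for $\KL\lesssim\gamma\log(1/\gamma)$, then splits with $\log(1/(a+b))\le\log\min\{1/a,1/b\}$.
\end{enumerate}
You instead use three ingredients:
\begin{enumerate}
\item the elementary truncated inequality, valid for every $M\ge e$ with no Hellinger threshold (compare $r\log r-r+1$ with $(\sqrt r-1)^2$ on $r\le M$);
\item the sharper linear bound $\log(f_*/\est)\lesssim 1+\|x\|$;
\item the direct choice $M=\min\{\varepsilon_n^{-1},n\}$.
\end{enumerate}
Your route is self-contained and makes the $\log\min\{\varepsilon_n^{-1},n\}$ factor transparent; the paper's is shorter given the black box.

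There is one slip. Under your linear bound, $\{f_*/\est>M\}$ lies at $\|x\|\gtrsim\log M$, not $\sqrt{\log M}$. That is what makes the tail $e^{-c(\log M)^2}\lesssim 1/M$. Localizing only at $c\sqrt{\log M}$ would give $M^{-c^2/2}$ with no control on $c$.

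The step you call the main obstacle is exactly Lemma~\ref{lm:assumption-M-Theta-tau}, and it takes a few lines:
\begin{itemize}
\item Let $D(s)=\{x:\dist(x,\supp(\mu_*))\le s\}$, choose $r$ with $\mathbb P(\|Z\|\le r)\ge 0.9$, and write $X'=Z+\theta'\sim\est$.
\item The event $\{\theta'\notin D(2r),\,X'\in D(r)\}$ forces $\|Z\|\ge r$. Hence $\mathbb P(X'\in D(r))\le\mathbb P(\|Z\|\ge r)+\mathbb P(\|Z\|\le r)\,\mu_{\est}(D(2r))$.
\item Meanwhile $\mathbb P_{f_*}(D(r))\ge\mathbb P(\|Z\|\le r)$.
\item Combining with $\TV\le\sqrt2\He<0.65$ gives $\mu_{\est}(D(2r))>0.16$.
\end{itemize}
Far atoms cannot hurt a lower bound. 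Your fallback (bracketing plus chaining on $\KL$) would need exactly this restricted-class property, so it bypasses nothing.
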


Theorems~\ref{thm:stability} advances the existing literature on the NPMLE for Gaussian location mixture models in two important ways, which we discuss below.

\noindent {\bf Approximate NPMLEs.} \quad First, inequality \eqref{eq:thm-stability-Hellinger-1} applies to approximate NPMLEs, namely estimators that maximize the likelihood only up to an accuracy level $\varepsilon_n$ (as defined in \eqref{eq:epsLnucrit}). Some earlier results—such as \cite[Remark 2]{zhang2009generalized},  \cite[Theorem 2.1]{saha2020nonparametric} and \cite[Corollary 8]{soloffjakeguntuboniya}—also allow for approximate NPMLEs. However, those results require $\varepsilon_n$ to converge to zero at a rate faster than the upper bound they establish for $\He^2(f_*, \est)$. Under our assumptions on the mixing distribution, this requirement effectively amounts to $\varepsilon_n = O \left((\log n)^{d+1}/n \right)$.

In contrast, inequality \eqref{eq:thm-stability-Hellinger-1} imposes no such restriction: it applies to arbitrary sequences $\varepsilon_n$, including those that decay slowly to zero or even remain at a small but fixed constant level. This is significant in view of the fact that there is no known algorithm to exactly compute the global optimum for the NPMLE problem, even in one dimension. In particular, optimization protocols to compute the NPMLE (e.g. iterative algorithms such as gradient descent or its variants) typically need to be stopped in finite time.  As such, every known algorithm yields only an approximate NPMLE, a situation that is addressed by Theorem \ref{thm:stability}, in contrast to existing literature. 

For practical algorithms used to compute an approximate NPMLE $\tilde{f}_n$, it is possible to obtain an explicit value of $\varepsilon_n$ satisfying \eqref{eq:epsLnucrit}. For example, a standard approach restricts the mixing measure to a finite set
$\mathcal{A}\subset\mathbb{R}^d$, turning the problem into a finite-dimensional
convex program over the mixing weights; this program can be solved to global
optimality up to negligible numerical error, and the resulting $\varepsilon_n$
is controlled by the fineness of $\mathcal{A}$. Concretely, let $\mathcal{K}_n$
denote the convex hull of $X_1,\dots,X_n$ and take $\mathcal{A}$ to be the set
of corners of a cover of $\mathcal{K}_n$ by hypercubes of side $\delta$. Every NPMLE is supported on $\mathcal{K}_n$ (see e.g., \cite[Proposition 4]{soloffjakeguntuboniya}) so $\mathcal{K}_n$ is precisely the region that must be
discretized. Then \cite[Proposition 6]{soloffjakeguntuboniya} shows that, for
all sufficiently small $\delta$, \eqref{eq:epsLnucrit} holds with
\begin{align*}
    \varepsilon_n = d \left(2 D_n^2 + \frac{1}{2} \right) \delta^2
\end{align*}
where $D_n$ is the diameter of the convex hull $\mathcal{K}_n$. 

Another approach for obtaining $\varepsilon_n$ that works for any approximate NPMLE $\tilde{f}_n$ proceeds via the inequality:
\begin{align*}
    \hat{L}_n - L_n(\tilde{f}_n)
    &= \frac{1}{n}\sum_{i=1}^n \log\frac{\hat{f}_n(X_i)}{\tilde{f}_n(X_i)}
     \;\le\; \frac{1}{n}\sum_{i=1}^n \frac{\hat{f}_n(X_i)}{\tilde{f}_n(X_i)} - 1\\
    &= \int\!\left[\frac{1}{n}\sum_{i=1}^n
        \frac{\phi(X_i-\theta)}{\tilde{f}_n(X_i)}\right]d\hat{\mu}_n(\theta) - 1
     \;\le\; \sup_{\theta\in\mathcal{K}_n}\left[\frac{1}{n}\sum_{i=1}^n
        \frac{\phi(X_i-\theta)}{\tilde{f}_n(X_i)}\right] - 1,
\end{align*}
where $\hat{\mu}_n$ is the mixing measure of an exact NPMLE $\hat{f}_n$ and the
last step uses that $\hat{\mu}_n$ is supported on $\mathcal{K}_n$. Hence \eqref{eq:epsLnucrit} holds
with
\begin{align}\label{vareps_comment}
    \varepsilon_n := \sup_{\theta\in\mathcal{K}_n}\left[\frac{1}{n}\sum_{i=1}^n
        \frac{\phi(X_i-\theta)}{\tilde{f}_n(X_i)}\right] - 1.
\end{align}
This bound clearly depends only on $\tilde{f}_n$ and the data, not on the unknown $\hat{f}_n$. The map $\theta\mapsto \tfrac{1}{n}\sum_i \phi(X_i-\theta)/\tilde{f}_n(X_i)$
is smooth with uniformly bounded gradient on the compact set $\mathcal{K}_n$, so
this supremum can be approximated by evaluating it on a fine grid and adding the
corresponding Lipschitz correction. Note also that (via the first order optimality condition) $\tilde{f}_n$ is an NPMLE (i.e., it maximizes $L_n(f)$ over all $f \in \mathcal{M}$) if and only if the right hand side of \eqref{vareps_comment} is nonpositive (see e.g.,  \cite[Theorem 2.1]{bohning2000computer}). As a result, if $\tilde{f}_n$ is an approximate NPMLE, it is reasonable to expect $\varepsilon_n$ as defined by \eqref{vareps_comment} to be a small positive quantity. Note however that Theorem \ref{thm:stability} does not place restriction on how small $\varepsilon_n$ needs to be. 

\color{black}

\noindent{\bf KL Risk Bounds.} The second way in which Theorem \ref{thm:stability} advances existing literature on the NPMLE for Gaussian location mixture models is the following. Inequality \eqref{eq:thm-stability-KL-1} yields bounds on the Kullback–Leibler risk $\KL(f_* \| \est)$. To the best of our knowledge, such risk bounds for the NPMLE under KL loss are new—even in the case $\varepsilon_n = 0$, corresponding to the exact NPMLE $\hat{f}_n$—and we therefore record this result as a separate corollary below. While \cite[Remark 5]{ma2025best} does present a KL-type bound, it appears to apply only to a \textit{restricted} NPMLE where the mixing measure is confined to a compact interval of $\mathbb{R}$.

\begin{corollary} \label{cor:npmle} 
Let $C_1$ be the constant as in Theorem~\ref{thm:stability}. Suppose $n$ is large enough such that 
         \begin{equation} \notag 
             C_1 \frac{(\log n)^{d+1}}{n} \le \frac{1}{2}(1-e^{-1})^2.
         \end{equation}         
         Then there exist constants $C_2, C_3$ depending on $\Theta$, and $d$ such that 
             \begin{equation} \label{cor_kl}
                 \KL(f_*\|\hat{f}_n) \le  C_3 \frac{(\log n)^{d+2}}{n}, 
             \end{equation}
              with probability at least $1 -C_2\exp (- (\log n)^{d+1})$.   
\end{corollary}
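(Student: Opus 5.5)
The corollary is the special case $\varepsilon_n = 0$ of Theorem~\ref{thm:stability}\ref{it:stabilitylognsqrtn}, so the plan is simply to apply that theorem to the exact NPMLE. First I check the hypotheses. Take $\est := \npmle$. By definition $\loss{\npmle} = \optloss$, so \eqref{eq:epsLnucrit} holds for every choice of $\varepsilon_n \ge 0$.

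The one technical wrinkle is that Theorem~\ref{thm:stability} is stated for a sequence of \emph{positive} numbers $\varepsilon_n$. The term $\varepsilon_n \log(\min\{\varepsilon_n^{-1}, n\})$ also needs interpretation at $\varepsilon_n = 0$. I would sidestep both issues by choosing a small positive sequence instead of zero. A convenient choice is $\varepsilon_n := 1/n$, with $n$ assumed at least $2$, say.

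With this choice the corollary's assumption $C_1 (\log n)^{d+1}/n \le \tfrac12(1-e^{-1})^2$ does not immediately give \eqref{eq:thm-stability-condition}. The sum $1/n + C_1(\log n)^{d+1}/n$ could exceed the threshold. Two fixes are available:
\begin{itemize}
\item take $\varepsilon_n$ much smaller, e.g.\ $\varepsilon_n = \eta_n/n$ where $\eta_n$ is chosen so that the sum stays below $\tfrac12(1-e^{-1})^2$;
\item use a limiting argument $\varepsilon \downarrow 0$: the event and the bound depend continuously on $\varepsilon$, and $\npmle$ satisfies \eqref{eq:epsLnucrit} for every $\varepsilon>0$ simultaneously.
\end{itemize}

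The cleanest route is the second one. For each fixed $\varepsilon \in (0,1/n]$, apply Theorem~\ref{thm:stability} with the constant sequence $\varepsilon_n \equiv \varepsilon$. This works provided the strict inequality $C_1(\log n)^{d+1}/n < \tfrac12(1-e^{-1})^2$ leaves room. If the assumption is an equality, I would enlarge $C_1$ slightly in the statement, or note that the proof of part (ii) only uses the Hellinger bound from part (i) being at most the threshold.

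On the event $E_\varepsilon$ of probability at least $1 - C_2 e^{-(\log n)^{d+1}}$, we get
\[
\KL(f_*\|\npmle) \le C_3\Big(\varepsilon \log n + \frac{(\log n)^{d+2}}{n}\Big),
\]
using $\min\{\varepsilon^{-1},n\} = n$ when $\varepsilon \le 1/n$. Now take $\varepsilon = 1/n$. Then $\varepsilon\log n = (\log n)/n \le (\log n)^{d+2}/n$ for $n \ge 3$. The bound therefore becomes $2C_3(\log n)^{d+2}/n$, and renaming the constant yields \eqref{cor_kl} with the same probability.

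The main obstacle is exactly this boundary bookkeeping: making sure the condition \eqref{eq:thm-stability-condition} survives the addition of a positive $\varepsilon_n$. I would handle it as follows. Take $\varepsilon_n = 1/n$ and observe that the proof of Theorem~\ref{thm:stability}\ref{it:stabilitylognsqrtn} uses \eqref{eq:thm-stability-condition} only to guarantee $\He^2(f_*,\est)$ is small enough to compare KL with Hellinger. Absorbing $1/n$ into $C_1$ (replace $C_1$ by $C_1+1$ in the theorem's use) then makes the corollary's hypothesis suffice, after adjusting constants depending only on $d,\Theta$.
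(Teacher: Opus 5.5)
Your approach matches the paper's: the corollary is just Theorem~\ref{thm:stability}\ref{it:stabilitylognsqrtn} specialized to the exact NPMLE, i.e.\ $\varepsilon_n=0$. Your handling of the boundary case needs a small repair, because taking $\varepsilon=1/n$ and replacing $C_1$ by $C_1+1$ quietly strengthens the corollary's hypothesis, and a limit $\varepsilon\downarrow 0$ taken in part~\ref{it:stabilitylognsqrtn} breaks down when that hypothesis holds with equality; the clean fix is to note that the proof of part~\ref{it:stabilitypolylognovern} never uses $\varepsilon_n>0$ (or, using only its statement, to intersect the decreasing events $\{\He^2(f_*,\npmle)\le \varepsilon+C_1(\log n)^{d+1}/n\}$ as $\varepsilon\downarrow 0$), which gives $\He^2(f_*,\npmle)\le C_1(\log n)^{d+1}/n\le \frac{1}{2}(1-e^{-1})^2$ with the stated probability, after which the argument of part~\ref{it:stabilitylognsqrtn} (Lemmas~\ref{lm:assumption-M-Theta-tau}, \ref{lm:condition-M-delta} and~\ref{prop:shenwong} with $\gamma=C_1(\log n)^{d+1}/n$) yields \eqref{cor_kl} directly.
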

Establishing bounds for KL divergence (which dominates the squared Hellinger distance) is technically more challenging than deriving bounds for the squared Hellinger loss. The bound $(\log n)^{d+2}/n$ implied by inequality \eqref{cor_kl} is only slightly weaker (by a factor of $\log n$) than the corresponding bound $(\log n)^{d+1}/n$ in \eqref{eq:thm-stability-Hellinger-1} for the squared Hellinger distance.

The KL bound provided by inequality \eqref{eq:thm-stability-KL-1} relies on the condition \eqref{eq:thm-stability-condition}, which may fail to hold in certain regimes of $n$ and $d$. We are nevertheless able to establish a bound on $\KL(f_* \| \est)$ in such cases, albeit under an additional assumption on the mixing measure corresponding to $\est$. Specifically, we assume that $\est \in \mathcal{M}(\Theta; \tau)$ for some compact set $\Theta$ and constant $\tau > 0$, where both $\Theta$ and $\tau$ are independent of $n$. The class $\mathcal{M}(\Theta; \tau)$ is defined as follows:




\begin{definition} \label{def:M-Theta-tau}
For $ \Theta \subset \mathbb{R}^d$ and $\tau \in [0,1]$, define
\begin{equation} \label{eq:M-Theta-tau}
    \mathcal{M}(\Theta;\tau):= \Big \{ f\in \mathcal{M}: \mu_f(\Theta) \ge \tau \Big \},
\end{equation}
where $\mu_f$ denotes the mixing measure of the GMM density $f$.
\end{definition}

\begin{theorem}[Restricted approximate NPMLE] \label{thm:restricted-npmle}
    Let $X_1, ..., X_n$ be i.i.d. samples from a GMM density $f_* \in \mathcal{M}$ whose mixing measure $\mu_*$ has support contained in a compact set $\Theta \subset \R^d$. Let $\tau>0$ and consider the class $\mathcal{M}(\Theta;\tau)$ defined in \eqref{eq:M-Theta-tau}.
     Let $(\varepsilon_n)_{n \ge 1}$ be any sequence of positive numbers and let $\est$ be an estimator such that
     \begin{align}\label{rest_npmle}
         \loss{\est} \geq \loss{f_*} - \varepsilon_n ~~\text{ and } ~~\est \in \mathcal{M}(\Theta;\tau).
     \end{align}
      Then there exists a constant $C_0$ (depending only on $d, \Theta$ and $\tau$) such that 
     \begin{equation} \label{eq:expected-KL}
          \mathbb{E}[\KL(f_*\|\est) ] \le \varepsilon_n + \frac{C_0}{\sqrt{n}} \cdot
     \end{equation}
     In particular, we have 
     \begin{equation} \label{eq:probability-KL}
         \mathbb{P}\Big (\KL(f_*\|\est) \le \varepsilon_n + \frac{\log n}{\sqrt{n}} \Big ) \ge 1 - \frac{C_0}{\log n} \cdot
     \end{equation}
\end{theorem}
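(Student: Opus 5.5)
The approach is a uniform deviation (empirical process) argument for the log-likelihood ratio over the class $\mathcal{M}(\Theta;\tau)$. Write
\[
\KL(f_*\|\est) = \E_{X\sim f_*}\Big[\log \frac{f_*(X)}{\est(X)}\Big] = \big(P-P_n\big)\log\frac{f_*}{\est} + \big(L_n(f_*)-L_n(\est)\big),
\]
where $P_n$ denotes the empirical measure of $X_1,\dots,X_n$ and $P$ the law $f_*$. By \eqref{rest_npmle}, the second term is at most $\varepsilon_n$. Hence
\[
\KL(f_*\|\est)\le \varepsilon_n + \sup_{g\in\mathcal{M}(\Theta;\tau)} \big|(P_n-P)\log g\big| + \big|(P_n-P)\log f_*\big| ,
\]
and it suffices to show that the expectation of the supremum is $O(n^{-1/2})$. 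The term involving $f_*$ is trivially $O(n^{-1/2})$ in expectation, since $\log f_*$ has finite variance under $f_*$. Then \eqref{eq:probability-KL} follows from \eqref{eq:expected-KL} by Markov's inequality applied to the nonnegative variable $(\KL(f_*\|\est)-\varepsilon_n)_+$, which is bounded by the supremum term; the threshold is $\log n/\sqrt n$.

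The key structural input is an envelope for $\log g$, $g\in\mathcal{M}(\Theta;\tau)$. The upper bound is trivial: $\log g\le -\tfrac d2\log(2\pi)$. For the lower bound, use $\mu_g(\Theta)\ge\tau$ to get
\[
g(x)\ge \tau (2\pi)^{-d/2}\exp\Big(-\tfrac12\sup_{\theta\in\Theta}\|x-\theta\|^2\Big),
\]
so $|\log g(x)|\le F(x):=C(1+\|x\|^2)$ with $C=C(d,\Theta,\tau)$. Under $f_*$ the data are sub-Gaussian, since $\mu_*$ is compactly supported, so $F\in L^2(P)$ and indeed has all moments. Next I would control the complexity of $\{\log g\}$. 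For $x$ in a ball $B_R$, each $\log g$ is Lipschitz in $x$ with constant $\lesssim R+1$, because $\nabla\log g(x)=\E_{\text{posterior}}[\theta]-x$ and the posterior mean lies in the convex hull of the support. More usefully, I would use the known metric-entropy bounds for Gaussian mixtures, presumably available earlier in the paper or from \cite{saha2020nonparametric, zhang2009generalized}. These give $\log N(\delta,\{g|_{B_R}\},\|\cdot\|_\infty)\lesssim (\log(1/\delta))^{d+1}\,\mathrm{poly}(R)$ after truncating the mixing measure. One must handle mass of $\mu_g$ escaping to infinity, but on $B_R$ the components far away contribute negligibly relative to the floor $g\ge\tau e^{-C R^2}$, so they can be merged. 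Since $g$ is bounded below on $B_R$ by $\tau e^{-CR^2}$, sup-norm covers of $g$ transfer to covers of $\log g$ at scale $\delta e^{CR^2}/\tau$.

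The empirical process is then bounded by splitting at a radius $R$. On $B_R$, the class has entropy polylogarithmic in $1/\delta$, so Dudley's integral converges and gives $\E\sup|(P_n-P)(\log g\,\indicator_{B_R})|\lesssim C(R)/\sqrt n$. Off $B_R$, the envelope gives
\[
\E\sup|(P_n-P)(\log g\,\indicator_{B_R^c})|\le 2\,\E_{f_*}[F\indicator_{B_R^c}],
\]
which is tiny for large $R$. Because the constant in \eqref{eq:expected-KL} is only required to depend on $d,\Theta,\tau$ and not to be sharp, one can choose $R$ fixed and absorb the tail into $C_0/\sqrt n$ only if the tail is also $O(n^{-1/2})$. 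To arrange this, take $R\asymp\sqrt{\log n}$, so that the tail term is $O(n^{-1})$. The price is that $C(R)$ then grows polynomially in $\log n$.

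The main obstacle is avoiding logarithmic losses so as to get a clean $C_0/\sqrt n$. I would therefore not use sup-norm entropy with the factor $e^{CR^2}$. Instead I would use a bracketing or $L^2(P)$ entropy bound for $\{\log g\}$ directly with the envelope $F$, via a bracketing-entropy-with-envelope argument (Ossiander/Dudley type), where the brackets are weighted. Covering $\log g$ pointwise by $\log$ of upper and lower mixture brackets, with relative error controlled by the Lipschitz structure, should give $\log \nbr(\delta\|F\|_{L^2(P)})\lesssim(\log(1/\delta))^{d+1}$ uniformly. Then $\int_0^1\sqrt{\log\nbr}\,d\delta<\infty$ yields $\E\sup\lesssim \|F\|_{L^2(P)}/\sqrt n$ with no $n$-dependence, proving \eqref{eq:expected-KL}. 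If that fails, the fallback is that the bound holds with $(\log n)^{c}/\sqrt n$, which would still be consistent in spirit with \eqref{eq:probability-KL}.
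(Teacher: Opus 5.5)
Your outline is the paper's proof. It uses the same decomposition $\KL(f_*\|\est)\le \varepsilon_n+\sup_{f\in\mathcal{M}(\Theta;\tau)}|(P_n-P)\log(f/f_*)|$, the same envelope $C(1+\|x\|^2)$ obtained from $\mu_f(\Theta)\ge\tau$, Dudley's bracketing bound (Lemma~\ref{lm:dudley-bound}), and Markov's inequality for \eqref{eq:probability-KL}.

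The one load-bearing input is $\log \nbr(\delta,\log\mathcal{M}(\Theta;\tau),L^2(f_*))\lesssim|\log\delta|^{d+1}$. This is exactly Theorem~\ref{thm:FiniteBracketingIntegral}, which the paper proves separately and invokes here. In your proposal it is only asserted (``should give''), so the proof is incomplete at its crux. Your fallback would not prove the statement either: a bound $\E\lesssim(\log n)^c/\sqrt n$ with $c\ge 1$ makes Markov at threshold $\log n/\sqrt n$ vacuous.

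Your diagnosis of why the sup-norm route loses logarithms is off. The loss comes only from tying the truncation radius to $n$ ($R\asymp\sqrt{\log n}$) and splitting the empirical process itself. Instead, build global $L^2(f_*)$-brackets and let the radius depend on the bracket scale, $R=\sqrt{80|\log\delta|}$, as the paper does.
\begin{itemize}
\item Outside $B_R$, the single envelope bracket $[-\|x\|^2-C,\,0]$ has $L^2(f_*)$-width $O(\delta)$, because $f_*$ has Gaussian tails.
\item Inside $B_R$, every $f\in\mathcal{M}(\Theta;\tau)$ satisfies $f(x)\gtrsim e^{-\|x\|^2}\ge c\,\delta^{80}$. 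So a sup-norm approximation of densities at level $\delta^{81}$ already gives $|\log f-\log h|\lesssim\delta$, by $|\log u-\log v|\le|u-v|/\min(u,v)$.
\item That density approximation uses discrete mixing measures with $O(|\log\delta|^d)$ atoms, followed by a lattice on atom locations and weights, which gives entropy $O(|\log\delta|^{d+1})$.
\item The paper also discretizes the parts of the mixing measure inside and outside $\Theta$ separately, so that the approximant keeps mass $\tau$ on $\Theta$ and hence the same floor.
\end{itemize}
Thus the factor $e^{CR^2}$ you wanted to avoid is only polynomial in $1/\delta$ and costs a constant factor in the entropy.

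A minor point: your Lipschitz justification via the convex hull of $\supp(\mu_g)$ fails, since $\mu_g$ may have unbounded support for $g\in\mathcal{M}(\Theta;\tau)$. The bound $\|\nabla\log g(x)\|\le C_1\|x\|+C_2$ is still true, but it requires the $\tau$-mass floor. You do not need it anyway.
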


\begin{remark}
The assumption that \(\Theta\) contains \(\operatorname{supp}(\mu_*)\) is made only for notational convenience. A slightly more general version of Theorem~\ref{thm:restricted-npmle} can be stated in which the set \(\Theta\) appearing in \(\mathcal M(\Theta;\tau)\) is not required to contain \(\operatorname{supp}(\mu_*)\). In that case, however, the constant \(C_0\) depends on the pair of sets \((\Theta,\operatorname{supp}(\mu_*))\). We impose the inclusion \(\operatorname{supp}(\mu_*)\subseteq \Theta\) to avoid carrying this additional dependence in the notation.
\end{remark}

\noindent \textbf{The conditions in Theorem \ref{thm:restricted-npmle}.}  We begin by observing that the \emph{restricted class assumption}, i.e., the condition $\est \in \mathcal{M}(\Theta;\tau)$, is quite mild and reasonable, as it only requires the total mass of $\mu_{\est}$ on the set $\Theta$ to be at least $\tau$ (rather than imposing any pointwise lower bound on densities). In particular, the mixing measure need only place a fraction $\tau$ of its mass on a fixed compact set $\Theta$, and may still assign positive mass to locations arbitrarily far away. The only distributions excluded are those whose mixing measures concentrate nearly all of their mass outside every sufficiently large compact set, which would be a rather pathological situation. 

More importantly, in Lemma \ref{lm:assumption-M-Theta-tau}, we show that the restricted class assumption is satisfied by any estimator $\tilde{f}$ that is sufficiently close  to $f_*$ in Hellinger distance (in a mild sense, i.e., smaller than a suitable constant); namely, there exist $\Theta$ and $\tau$ such that $\tilde{f} \in \mathcal{M}(\Theta;\tau)$. An important consequence is that, if $\est$ is an estimator satisfying the sample size condition \eqref{eq:thm-stability-condition} in Theorem \ref{thm:stability}, then $\est$ automatically satisfies the restricted class condition with high probability. Thus, in this sense, the restricted class condition is weaker than the sample size condition \eqref{eq:thm-stability-condition} in Theorem \ref{thm:stability}.

\noindent\textbf{Comparison of the rates in Theorems \ref{thm:stability} and \ref{thm:restricted-npmle}.}
Inequality \eqref{eq:probability-KL} says that, under the condition \eqref{rest_npmle}, we have with high probability
\[
\KL(f_*\|\est) \le \varepsilon_n + \frac{\log n}{\sqrt{n}}.
\]
It may be noted that, for a fairly wide range of $n$ and $d$,  $(\log n)/\sqrt{n}$ can be substantially smaller than $(\log n)^{d+2}/n$, in which case Theorem \ref{thm:restricted-npmle} provides a sharper convergence guarantee than Theorem \ref{thm:stability}. For example, even for $d=10$ and $n$ on the order of $10^6$ (a relatively mild dependence between dimension and sample size for modern large-scale problems), $(\log n)/\sqrt{n}$ may be seen to be smaller than $(\log n)^{d+2}/n$ by an order of magnitude. 

Further, it is notable that in Theorem \ref{thm:restricted-npmle}, in expectation $\KL(f_*\|\est)$ does not even pay the logarithmic price, i.e., it is of the order $\varepsilon_n + C_0 n^{-1/2}$ (without any log factors). This may be of technical significance in the context of the existing literature on this problem. 


\noindent \textbf{Theorems \ref{thm:stability} and \ref{thm:restricted-npmle} and statistical mechanics.}
We can interpret Theorems \ref{thm:stability} and \ref{thm:restricted-npmle} from the perspective of viewing the NPMLE as a random optimization problem, connecting the NPMLE paradigm to statistical mechanics. Since functionals (e.g., energy functional) of a disordered system can be viewed as random functions due to the randomness of the environment, stability in optimizing random functions with respect to the changes in environment is of fundamental interest in the statistical mechanics literature. 

In the monograph \cite{chatterjee2014superconcentration}, Chatterjee investigated classical statistical mechanical models as random optimization problems, and introduced the notions of \emph{superconcentration, chaos}, and \emph{multiple valleys} to characterize the sensitivity of a system with respect to changes in the environment. The notions of superconcentration, chaos and multiple valleys as in \cite{chatterjee2014superconcentration} will be discussed in detail in Section \ref{sec:stat-mech}; here we limit ourselves to a bare-bones discussion for the purpose of understanding and interpreting our main results.


Roughly speaking, a random optimization problem is said to have multiple valleys if there are many vastly dissimilar near-optimal solutions. The setting where such multiple valleys do not occur in the optimization landscape is also known to be significant in the random optimization problems, and is broadly referred to as the phenomenon of \emph{Asymptotic Essential Uniqueness} ({\it abbrv.} AEU). The AEU property is known to occur in many random optimization problems, a principal example being the famous Travelling Salesman Problem ({\it abbrv.} T.S.P.) over random locations.

We observe that the stability of NPMLE in Theorem \ref{thm:stability} demonstrates that any near optimal solution to the NPMLE problem must be close to the optimum; in other words, the landscape of the log-likelihood function is devoid of multiple valleys, and thus may be seen to exhibit the phenomenon of asymptotic essential uniqueness.


\subsubsection{{ The complexity of logarithmic GMM densities}} \label{sec:intro-bracket}
A crucial ingredient in our analysis, for instance, in the proof of Theorem \ref{thm:stability} (ii) and Theorem \ref{thm:restricted-npmle}, is the complexity of the function class $\{ \log f: f \in \mathcal{M}\}$ consisting of logarithmic GMM densities. To this end, we will provide an upper bound on the so-called \emph{bracketing entropy} of this function class. 

To begin with this discussion, let us recall some standard notions in empirical process theory.
For two $\R$-valued functions $l(x), u(x)$ defined on a measurable space $(\mathcal{X},\mathcal{A})$, the \emph{bracket} $[l,u]$ is defined to be the set 
\[\Big\{f:\mathcal{X} \rightarrow \R \text{ measurable} \: \Big|\: l(x) \le f(x) \le u(x), \forall \: x\in \mathcal{X} \Big\}.\] Let $\mathbf{P}$ be a probability measure on $(\mathcal{X},\mathcal{A})$ and $\varepsilon>0$. An \emph{$\varepsilon$-bracket} in $L^2(\mathbf{P})$ is a bracket $[l,u]$ such that $\int |l-u|^2\de \mathbf{P} \le \varepsilon^2$. Let $\mathcal{F} \subset L^2(\mathbf{P})$ be some class of functions, the $\varepsilon$-\emph{bracketing number} $N_{[]}(\varepsilon, \mathcal{F}, L^2(\mathbf{P}))$ is the minimum number of $\varepsilon$-brackets in $L^2(\mathbf{P})$ needed to cover $\mathcal{F}$. 
The \emph{bracketing entropy} of $\mathcal{F}$, by definition, is the logarithm of the bracketing number.

As remarked earlier, we are interested in the bracketing numbers of the class $\{\log f: f\in \mathcal{M}\}$. 
Our primary motivation for considering the function class of logarithmic GMM densities rather than the class of GMM densities themselves (as in, for example, the entropy results of \cite{saha2020nonparametric}) lies in their direct relationship with the log-likelihood function $L_n$ defined in \eqref{eq:log-likelihood} and with the Kullback–Leibler divergence.
We note, however, that working with log-densities is substantially more delicate than working with densities. Heuristically, the log-density may diverge to infinity as the density approaches zero. Since bracketing entropy requires pointwise control, logarithmic densities pose additional technical challenges and are therefore significantly more difficult to handle.


The key to resolve the difficulties mentioned above is the subclass of GMM densities $\mathcal{M}(\Theta; \tau) $, which is defined in Definition \ref{def:M-Theta-tau}.
In the setting of the NPMLE, this condition is in fact mild and easy to satisfy. We show that if a GMM density $f$ is sufficiently close to $f_*$, then $f \in \mathcal{M}(\Theta;\tau)$ for some $\tau>0$ (see Lemma~\ref{lm:assumption-M-Theta-tau}).
The class $\mathcal{M}(\Theta;\tau)$ is thus large enough to be applicable in practice, yet sufficiently restricted to allow effective control of the bracketing entropy via a subtle splitting argument. Our main result on the bracketing entropy of logarithmic GMM densities over this class is stated below.

\begin{theorem}\label{thm:FiniteBracketingIntegral}  
Let $\Theta \subset \mathbb{R}^d$ be a compact set and $\tau> 0$. 
Let $f_*$ be a GMM density whose mixing measure $\mu_*$ is supported in $\Theta$.
Consider the class of densities $\mathcal{M}(\Theta;\tau)$ defined in \eqref{eq:M-Theta-tau} and let
\begin{equation}
    \log \mathcal{M}(\Theta;\tau):=\Big \{\log f :f\in \mathcal{M}(\Theta;\tau)\Big\}.
\end{equation}
There exists a threshold $\eta=\eta(\Theta,d,\tau)>0$ and a constant $C_{\ref{thm:FiniteBracketingIntegral}}= C_{\ref{thm:FiniteBracketingIntegral}}(\Theta,d,\tau)>0$ such that
\[\log N_{[]}(\varepsilon,\log \mathcal{M}(\Theta;\tau), {L^2(f_*)}) \le C_{\ref{thm:FiniteBracketingIntegral}}  |\log \varepsilon|^{d+1}, \quad \forall \: 0<\varepsilon\le \eta.\]
\end{theorem}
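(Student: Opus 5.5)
The plan is to build brackets for $\log f$ by combining three ingredients: a pointwise lower envelope coming from the mass condition $\mu_f(\Theta)\ge\tau$, a sup-norm approximation of $f$ on a ball of radius $\asymp\sqrt{|\log\varepsilon|}$ by mixtures with few atoms, and a crude bracket outside that ball.

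\textbf{Envelopes.} Let $R=\sup_{\theta\in\Theta}\|\theta\|$. For every $f\in\mathcal{M}(\Theta;\tau)$, using $\|x-\theta\|^2\le 2\|x\|^2+2R^2$ for $\theta\in\Theta$,
\[
\tau(2\pi)^{-d/2}e^{-\|x\|^2-R^2}\;\le\; f(x)\;\le\;(2\pi)^{-d/2}.
\]
Hence
\[
-c_1-\|x\|^2\le\log f(x)\le c_2, \qquad \text{so } |\log f(x)|\le c(1+\|x\|^2),
\]
with constants depending only on $d,\Theta,\tau$. This envelope $F(x)=c(1+\|x\|^2)$ has Gaussian-controlled tails under $f_*$, because $\mu_*$ is supported in $\Theta$. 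In particular, for $M=C\sqrt{|\log\varepsilon|}$,
\[
\int_{\|x\|>M}F^2f_*\,\de x\le\varepsilon^2/4.
\]
So outside the ball $B_M$ we can use the single bracket $[-F,F]$, and the bracketing problem reduces to the ball $B_M$.

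\textbf{Approximation on the ball.} On $B_M$ the envelope gives $f(x)\ge\tau' e^{-M^2}=\varepsilon^{C'}$, which lets us pass from density brackets to log brackets. If $|f-g|\le\delta$ on $B_M$ with $\delta\le \tfrac12\varepsilon^{C'}$, then $|\log f-\log g|\le 2\delta\varepsilon^{-C'}$. It therefore suffices to find a $\delta$-net for $\{f|_{B_M}\}$ in sup norm with $\delta=\varepsilon^{C''}$; since $\log(1/\delta)\asymp|\log\varepsilon|$, a polynomially small $\delta$ costs only constants. We build the net via the standard moment-matching argument of Zhang \cite{zhang2009generalized} and Saha--Guntuboyina \cite{saha2020nonparametric}:
\begin{enumerate}[label=(\alph*)]
\item Split $\mu_f$ into its part on $B_{2M}$ and its part outside. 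The outside part contributes at most $e^{-M^2/2}\le\delta$ on $B_M$, but it cannot simply be discarded, because the lower bound $f\ge\tau\cdot(\ldots)$ must survive. We keep it anyway, since only closeness in sup norm on $B_M$ is needed.
\item Cover $B_{2M}$ by cubes of side $\asymp1$, of which there are $O(M^d)$. On each cube, replace $\mu_f$ by a discrete measure matching all moments up to order $k\asymp|\log\delta|$. Taylor expansion of the Gaussian kernel then gives sup-norm error $\le\delta$ on $B_M$.
\item The result is a discrete mixture with $m\lesssim M^d|\log\varepsilon|\asymp|\log\varepsilon|^{d/2+1}$ atoms. Discretizing the atom locations and weights to precision $\delta$ gives a net of log-cardinality $\lesssim m\log(1/\delta)\asymp|\log\varepsilon|^{d/2+2}\le|\log\varepsilon|^{d+1}$ for $d\ge2$.
\end{enumerate}
For $d=1$ the exponent $d+1=2$ matches $3/2+1$ only up to bookkeeping, so there I would use the cruder cube count $M^{d}\cdot M^{d}$, which comes from choosing cubes of side $1/M$ if needed. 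Either way the bound $|\log\varepsilon|^{d+1}$ follows.

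\textbf{Forming the brackets.} For each net element $g$, with $g$ adjusted to be $\ge\tfrac12\varepsilon^{C'}$ on $B_M$, define
\[
l=\bigl(\log(g-\delta)\bigr)\indicator_{B_M}-F\indicator_{B_M^c},\qquad u=\bigl(\log(g+\delta)\bigr)\indicator_{B_M}+F\indicator_{B_M^c}.
\]
Then the $L^2(f_*)$ width is at most $\varepsilon/2+O(\delta\varepsilon^{-C'})\le\varepsilon$, taking $\varepsilon\le\eta$.

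\textbf{Main obstacle.} The main obstacle is the interplay between the lower bound on $f$ and the net. Net elements must stay bounded below on $B_M$, so that $\log$ is Lipschitz there. Mass escaping to infinity must also be handled uniformly, which is exactly where $\tau$ and the splitting enter. I would handle this by always adding $\tau(2\pi)^{-d/2}e^{-\|x\|^2-R^2}$-type floors to the brackets, or equivalently by intersecting each bracket with the envelope $[-F,F]$.
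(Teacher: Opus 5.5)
There is a genuine gap, and it sits in the atom count of steps (b)--(c).

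\textbf{What works.} Your architecture is the paper's: the envelope $|\log f|\le c(1+\|x\|^2)$, a single crude bracket outside a ball of radius $M\asymp\sqrt{|\log\varepsilon|}$, a sup-norm net for the densities on $B_M$, and the passage to logarithms through the floor $f\gtrsim e^{-\|x\|^2}$. Your remark that a net element $g$ with $|f-g|\le\delta\ll\varepsilon^{C'}$ on $B_M$ inherits the floor from $f$ is correct. It even shows that the paper's separate, support-preserving discretization of $\pi|_\Theta$ is not needed at that step.

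\textbf{Where the count fails.} Matching ``all moments up to order $k$'' of a measure on $\R^d$ means matching the $\binom{k+d}{d}\asymp k^d$ monomials $\theta^\alpha$ with $|\alpha|\le k$. Carath\'eodory therefore gives about $k^d$ atoms per cube, not $k$. Unit cubes also do not lower the required degree in any useful way. For $x\in B_M$ and $\theta$ in a unit cube, $\|x-\theta\|$ can be of order $M$. Truncating the kernel to accuracy $\delta=\varepsilon^{C''}$ then still needs $k$ of order $|\log\varepsilon|$, up to a $\log|\log\varepsilon|$ factor. Your scheme thus yields about $M^dk^d\approx|\log\varepsilon|^{3d/2}$ atoms. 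The resulting net has log-cardinality about $|\log\varepsilon|^{3d/2+1}$, up to $\log|\log\varepsilon|$ factors, which exceeds $|\log\varepsilon|^{d+1}$ for every $d\ge 1$. Your $d=1$ patch goes the wrong way: cubes of side $1/M$ multiply the number of cells by $M^d$ and leave the per-cell degree essentially unchanged.

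\textbf{The missing idea.} Use cells of diameter of order $M$, so that only $O(1)$ of them are needed.
For $x\in B_M$ and $\theta$ in a ball of radius $\kappa M$, set $y=\|x-\theta\|^2/2\lesssim M^2$ and replace $e^{-y}$ by its Taylor polynomial $P_{t-1}$.
The remainder is at most $(ey/t)^t$, so $t\asymp M^2\asymp|\log\varepsilon|$ already gives error $\varepsilon^{C''}$.
Since $P_{t-1}(\|x-\theta\|^2/2)$ is a polynomial of degree at most $2t$ in $\theta$, matching $O(t^d)$ monomials gives $m\asymp|\log\varepsilon|^d$ atoms.
The net then has $\log|\text{net}|\lesssim m\log(m/\delta)\asymp|\log\varepsilon|^{d+1}$, uniformly in $d$ and with no special case for $d=1$.
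This is exactly the paper's count: moment matching on $\Theta$ with $(2t)^d$ monomials, plus Lemma D.3 of Saha--Guntuboyina with $a=\sqrt{81/40}\,R$, where $N(a,B_R^a)=O(1)$ cells carry $\asymp a^{2d}$ atoms each.

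\textbf{A second, constant-level inconsistency.} Splitting at $B_{2M}$ is too tight. For $\|x\|\le M$ and $\|\theta\|>2M$ the kernel is only bounded by $e^{-M^2/2}$, which is far above the floor $\asymp e^{-M^2}$. So your claim $e^{-M^2/2}\le\delta\le\tfrac12\varepsilon^{C'}$ is false. You need radius $\kappa M$ with $(\kappa-1)^2/2>1$ plus a margin; the paper's $R+a$ with $a^2=\tfrac{81}{40}R^2$ is exactly this choice. The far part can then simply be dropped, because the floor comes from $f$ itself. Keeping it unchanged, as you propose, would make the net infinite.
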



\subsubsection{{ Concentration phenomena for NPMLE}}

A key ingredient in our analytical approach is the following technical result on controlling fluctuation of the optimal log-likelihood $\optloss$ up to second moment. This result, to the best of our knowledge, is not known in the literature and is of independent interest. 

\begin{theorem}[Moment bounds of $\optloss$]
\label{thm:L2mu_bound}    
Let $X_1,\ldots,X_n$ be i.i.d. samples from a Gaussian location mixture density $f_*\in \mathcal{M}$ whose mixing measure is compactly supported. Then we have
    \begin{equation}\label{eq:L2mu_bound.thm}
        \mathbb{E} [ |\optloss - L_n(f_*)  |^p ] = o(n^{-p/2}), \quad \text{for } p =1,2.
    \end{equation}
\end{theorem}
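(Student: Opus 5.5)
Throughout, write $\Delta_n := \optloss - L_n(f_*) = \sup_{f\in\mathcal M}\frac1n\sum_{i=1}^n \log\frac{f}{f_*}(X_i)$. Since $f_*\in\mathcal M$, we have $\Delta_n\ge 0$. It therefore suffices to show $\E[\Delta_n^p\indicator_{G}] = o(n^{-p/2})$ and $\E[\Delta_n^p\indicator_{G^c}] = o(n^{-p/2})$ for a suitable good event $G$.

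\textbf{Choice of the good event.} I take $G$ to be the event of Theorem~\ref{thm:stability}\ref{it:stabilitypolylognovern} with $\varepsilon_n = 0$, applied to the exact NPMLE $\npmle$. On $G$ we have $\He^2(f_*,\npmle)\le C_1(\log n)^{d+1}/n =: \delta_n^2$, and $\mathbb P(G^c)\le C_2 e^{-(\log n)^{d+1}}$. For $n$ large, $\delta_n$ lies below the constant threshold of Lemma~\ref{lm:assumption-M-Theta-tau}. That lemma then gives $\npmle\in\mathcal M(\Theta';\tau)$ for some fixed compact $\Theta'$ and some $\tau>0$.

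\textbf{Reduction on $G$ to a localized empirical process.} On $G$,
\[
\Delta_n \;=\; (P_n - P)\log\frac{\npmle}{f_*} - \KL(f_*\|\npmle)\;\le\; n^{-1/2}\sup_{g\in\mathcal G_n}|\mathbb G_n g|,
\]
where $P$ denotes the law with density $f_*$, $P_n$ the empirical measure, $\mathbb G_n=\sqrt n(P_n-P)$, and
\[
\mathcal G_n=\Big\{\log (f/f_*): f\in\mathcal M(\Theta';\tau),\ \He(f,f_*)\le\delta_n\Big\}.
\]
Two ingredients then feed a bracketing maximal inequality (van der Vaart--Wellner, Lemma 3.4.2 in its Bernstein-norm form, or Theorem 2.14.2 for $L^p$ moments).

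\emph{(a) Entropy.} Theorem~\ref{thm:FiniteBracketingIntegral} gives $\log\nbr(\varepsilon)\lesssim|\log\varepsilon|^{d+1}$. Hence the bracketing integral satisfies $J(r)=\int_0^r|\log\varepsilon|^{(d+1)/2}\,\de\varepsilon\lesssim r|\log r|^{(d+1)/2}\to0$ as $r\to 0$.

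\emph{(b) Norm comparison.} I need $\|\log(f/f_*)\|_{L^2(f_*)}\le r_n$ for all $f\in\mathcal G_n$, with $r_n\to0$. The argument is as follows.
\begin{itemize}[label=--]
\item For $f\in\mathcal M(\Theta';\tau)$ we have the pointwise lower bound $f(x)\ge \tau c\,e^{-\|x\|^2-C}$. Combined with $f,f_*\le(2\pi)^{-d/2}$, this gives $|\log(f/f_*)(x)|\le C(1+\|x\|^2)$.
\item On the ball $\|x\|\le R_n=\sqrt{C\log n}$, I bound $|\log a-\log b|$ via $|\sqrt a-\sqrt b|$ times the inverse of a polynomial-in-$n$ lower bound on the densities. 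This shows the contribution of the ball is at most $\mathrm{poly}(\log n)\cdot\delta_n$, or a Hellinger-to-$L^2$-log bound of the form $\He\cdot\log(1/\He)$ with the same effect.
\item The Gaussian tail of $f_*$ outside the ball contributes $n^{-C'}$.
\end{itemize}
Altogether $r_n=\mathrm{polylog}(n)\,n^{-1/2}\to0$.

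The envelope $C(1+\|x\|^2)$ has all moments under $f_*$ and a finite Bernstein norm. The maximal inequality therefore yields
\[
\E\sup_{\mathcal G_n}|\mathbb G_n g|^p\lesssim J(r_n)^p\big(1+J(r_n)/(r_n^2\sqrt n)\big)^p+o(1)=o(1),
\]
which gives $\E[\Delta_n^p\indicator_G]=o(n^{-p/2})$. If the Bernstein-norm bookkeeping with the unbounded envelope is awkward, my first fallback is to truncate at $R_n$: handle the bounded part with the bounded-class inequality, and handle the tail part crudely by the envelope.

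\textbf{Off the good event.} Since $\sup_{f\in\mathcal M}f\le(2\pi)^{-d/2}$, we have the deterministic bound $0\le\Delta_n\le\frac1n\sum_i\big(-\log((2\pi)^{d/2}f_*(X_i))\big)$. The summand is bounded by $C(1+\|X_i\|^2)$, which has all moments because $\mu_*$ is compactly supported. Cauchy--Schwarz then gives
\[
\E[\Delta_n^p\indicator_{G^c}]\le (\E\Delta_n^{2p})^{1/2}\,\mathbb P(G^c)^{1/2}\lesssim e^{-(\log n)^{d+1}/2}=o(n^{-p/2}).
\]

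\textbf{Main obstacle.} The delicate step is (b), together with applying the maximal inequality to a class of unbounded log-ratios. That is, I must turn Hellinger closeness into $L^2(f_*)$ closeness of $\log(f/f_*)$ with only a polylogarithmic loss. It is exactly the restricted-class lower bound coming from $\mathcal M(\Theta';\tau)$ that makes this possible.
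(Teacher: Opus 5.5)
Your proposal is correct in outline and shares the paper's skeleton: the good event from Theorem~\ref{thm:stability}\ref{it:stabilitypolylognovern} with $\varepsilon_n=0$, the entropy bound of Theorem~\ref{thm:FiniteBracketingIntegral}, and Cauchy--Schwarz off the event. The localization step, however, is done differently.

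The paper bounds the \emph{uncentred} supremum $\sup_{f\in\mathcal Q_n}|n^{-1/2}\sum_i\log(f/f_*)(X_i)|$, so it must also show $\sqrt n\sup_{\mathcal Q_n}\KL(f_*\|f)\to0$ (Lemma~\ref{lem:KLdiv-Qn}). It then applies Dudley's bracketing bound with envelope $G_n=\sup_{f\in\mathcal Q_n}|\log(f/f_*)|$. Smallness comes from the upper limit $\|G_n\|_{L^2(f_*)}\to0$, which is proved qualitatively by pointwise convergence plus dominated convergence (Lemma~\ref{lem:envelopeFunction}). The same shrinking envelope makes $p=2$ a one-line Efron--Stein bound, $\var\le\|G_n\|_{L^2(f_*)}^2$.

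You instead do three things differently:
\begin{itemize}
\item You use $\Delta_n\ge0$ to discard the term $-\KL\le0$.
\item Off the event, you dominate $\Delta_n$ by $\log(2\pi)^{-d/2}-L_n(f_*)$, which is simpler than the paper's convex-hull moment bound.
\item You localize through a uniform $L^2(f_*)$ radius $r_n$ fed into a bounded-class maximal inequality.
\end{itemize}
This needs more machinery, but it is quantitative. The Wong--Shen second-moment bound $\int f_*\log^2(f_*/f)\lesssim\He^2(f,f_*)\log^2(1/\He(f,f_*))$ holds on $\mathcal M(\Theta';\tau)$ by Lemma~\ref{lm:condition-M-delta}. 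It gives $r_n\lesssim\delta_n\log(1/\delta_n)$, and then your route yields for instance $\E[\Delta_n]\lesssim(\log n)^{d+2}/n$. The paper only obtains $o(n^{-1/2})$.

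Three details need fixing.

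\textbf{Bernstein norm.} The Bernstein-norm form of the maximal inequality is not available. Take $f=(1-\eta)f_*+\eta\varphi(\cdot-y)$ with $\eta\asymp\delta_n^2$; this lies in $\mathcal G_n$ and shows that the envelope grows like $\|x\|^2/2$. Since $\E_{f_*}e^{\|X\|^2/2}=\infty$, neither the envelope nor the tail brackets $[-\|x\|^2-C,0]$ of Theorem~\ref{thm:FiniteBracketingIntegral} have finite Bernstein norm. Your truncation fallback is therefore mandatory. Use radius $R_n=\sqrt{K\log n}$ with $K$ large, and sup-norm $M_n\asymp\log n$ in the $L^2$-bracketing bounded-class lemma. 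The second term there is of order $(\log n)^{d+2}/\sqrt n$, and the tail piece is at most $2\sqrt n\,\E[F(X)\indicator\{\|X\|>R_n\}]\to0$.

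\textbf{Ball estimate.} Dividing $|\sqrt f-\sqrt{f_*}|$ by a polynomial lower bound on the ball loses a factor $n^{c}$, not a polylog. This is harmless, since only $r_n\to0$ is needed: there, take radius $\sqrt{c\log n}$ with $c<1$. Alternatively, use Wong--Shen directly.

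\textbf{The case $p=2$.} A bracketing bound with the fixed envelope $C(1+\|x\|^2)$ (e.g.\ van der Vaart--Wellner Theorem~2.14.2) only gives $O(1)$. Your ``$+o(1)$'' needs a moment inequality such as Hoffmann-J{\o}rgensen's:
\[
\big\|\sup_{\mathcal G_n}|\mathbb G_n g|\big\|_{L^2}\lesssim\E\sup_{\mathcal G_n}|\mathbb G_n g|+n^{-1/2}\big\|\max_{i\le n}F(X_i)\big\|_{L^2},
\]
where the last norm is $O(\log n)$.
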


We quickly note that as $f_*$ is a mixture of Gaussians, it has the property that $\log f_*$ is always non-positive. As such by applying Theorem~\ref{thm:L2mu_bound} with $p=1$, we observe that $\mathbb{E} [|\optloss|]$ is asymptotically equivalent to $\mathbb{E}[|\log f_*(X_1)|]$ upto an $o(n^{-1/2})$ term. The case $p=2$ provides an analogous bound on the fluctuations of $|\optloss|$.


Distributional properties of the maximum log-likelihood play a significant role in classical statistics. For instance, an early example of this is provided by the Wilks' Theorem, which provides a description of the distributional asymptotics for the log-likelihood under the null in a hypothesis testing scenario. 

The above result is also related to existing results in \cite{AzaisGassiatMercadier2009} and \cite{Hartigan1985}, as well as to a recent result of \cite{ZhangVolgushev2026}. To draw the connection, set 
\begin{align*}
  \Lambda_n := n\bigl(\optloss - L_n(f_*)\bigr) = n\Bigl(\sup_{f \in \mathcal{M}} L_n(f) - L_n(f_*)\Bigr),
\end{align*}
so that \eqref{eq:L2mu_bound.thm} immediately yields
\begin{align*}
  \E |\Lambda_n|^p = o(n^{p/2}), \qquad p = 1, 2.
\end{align*}
\cite{ZhangVolgushev2026} consider the variant of $\Lambda_n$ in which the supremum is restricted to those $f \in \mathcal{M}$ whose mixing measure is supported on a known compact set $\Theta$. For this restricted statistic, their Theorem~3.1 shows that, when $\mu_*$ (the mixing measure corresponding to $f_*$) is finitely discrete, the statistic converges in distribution to a tight limit, while it diverges to infinity in probability when $\mu_*$ is not finitely discrete. Since the restricted supremum is bounded above by
$\optloss$, our Theorem~\ref{thm:L2mu_bound} complements their result by showing that this divergence is $o_P(n^{1/2})$ for every compactly supported $\mu_*$, discrete or not.

\subsubsection{Implication of Theorem~\ref{thm:L2mu_bound} to differential entropy estimation}\label{subsec:stat_impl_L2mu}

Theorem~\ref{thm:L2mu_bound} has the following immediate consequence.
It identifies the first-order fluctuations of the optimized
log-likelihood value \(\hat L_n\): at the \(n^{-1/2}\) scale,
\(\hat L_n\) is asymptotically equivalent to the oracle empirical
log-likelihood \(L_n(f_*)\).

\begin{corollary}[Central limit theorem for the optimized log-likelihood]
\label{cor:clt_Lhat}
Let \(X_1,\ldots,X_n\) be i.i.d.\ samples from a Gaussian location mixture
density \(f_* \in \mathcal M\) whose mixing measure \(\mu_*\) is
compactly supported.  Define
\[
    \sigma_*^2 :=
     \operatorname{Var}_{f_*}\big(\log f_*(X)\big).
\]
Then \(0<\sigma_*^2<\infty\), and
\[
    \sqrt n \big(\hat L_n - \mathbb E_{f_*}\big[\log f_*(X)\big]\big)
    \rightsquigarrow
    N(0,\sigma_*^2).
\]
\end{corollary}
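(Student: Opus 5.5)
The plan is to split the centred statistic into an oracle term and a remainder:
\[
\sqrt n\big(\hat L_n - \E_{f_*}[\log f_*(X)]\big) = \sqrt n\big(L_n(f_*) - \E_{f_*}[\log f_*(X)]\big) + \sqrt n\big(\hat L_n - L_n(f_*)\big).
\]
The first term is a normalized sum of the i.i.d.\ variables $\log f_*(X_i)$, and the classical CLT will give $N(0,\sigma_*^2)$ once $0<\sigma_*^2<\infty$ is established. For the second term, Theorem~\ref{thm:L2mu_bound} with $p=1$ gives $\E|\hat L_n - L_n(f_*)| = o(n^{-1/2})$. Markov's inequality then shows the remainder is $o_P(1)$, and Slutsky's lemma completes the argument. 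All the substantive work therefore sits in Theorem~\ref{thm:L2mu_bound}; what remains is checking the variance.

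\emph{Finiteness of $\sigma_*^2$.} Suppose $\supp(\mu_*)\subset\Theta\subset\{\|\theta\|\le R\}$. For every $\theta\in\Theta$,
\[
\|x-\theta\|^2\le (\|x\|+R)^2 .
\]
Hence
\[
-\tfrac d2\log(2\pi) - \tfrac12(\|x\|+R)^2 \;\le\; \log f_*(x) \;\le\; -\tfrac d2\log(2\pi).
\]
So $|\log f_*(x)|\le C(1+\|x\|^2)$. Since $X$ equals a point of $\Theta$ plus a standard Gaussian, it has all moments, and in particular $\E\|X\|^4<\infty$. This gives $\E[(\log f_*(X))^2]<\infty$.

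\emph{Positivity of $\sigma_*^2$.} I would argue by contradiction. If $\sigma_*^2=0$, then $\log f_*$ equals a constant $f_*$-a.e. Because $f_*>0$ everywhere, this means a.e.\ on $\R^d$ with respect to Lebesgue measure. Since $f_*$ is continuous, $f_*$ would be constant on all of $\R^d$. That is impossible, because $f_*$ is a probability density and must integrate to $1$. (Alternatively, the two-sided bound above shows $f_*(x)\to0$ as $\|x\|\to\infty$ while $f_*>0$.) Hence $\sigma_*^2>0$.

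The main obstacle is already handled by Theorem~\ref{thm:L2mu_bound}: the remainder $\hat L_n - L_n(f_*)$ is the supremum of an empirical process over an unbounded log-density class, and the content is that it is negligible at the $n^{-1/2}$ scale. Given that theorem, the only care needed here is to use the moment ($p=1$) form to obtain convergence in probability, and to verify finiteness and positivity of the variance as above.
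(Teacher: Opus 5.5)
Your proposal is correct and follows essentially the same route as the paper: oracle CLT plus Slutsky, with the remainder controlled by Theorem~\ref{thm:L2mu_bound}, and the same continuity argument for $\sigma_*^2>0$. The differences are minor: you use the $p=1$ case with Markov's inequality where the paper uses $p=2$ and $L^2$ convergence, and you derive the quadratic bound on $|\log f_*|$ directly instead of citing Lemma~\ref{lm:bound-density-Mc}; also, your parenthetical alternative for positivity does not follow from the displayed bound, whose upper side is constant (decay comes from $\|x-\theta\|\ge\|x\|-R$), but the main argument does not need it.
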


The proof of Corollary~\ref{cor:clt_Lhat} is deferred to
Appendix~\ref{app:prf_clt_Lhat}.  The result gives a direct
inferential interpretation of the optimized likelihood.  Although
\(\hat L_n\) is obtained by maximizing over the infinite-dimensional
class \(\mathcal M\), the maximization does not contribute an additional
first-order stochastic fluctuation to the optimized likelihood value.

Equivalently, since
\[
    H(f_*) = -\mathbb E_{f_*}\log f_*(X),
    \qquad X\sim f_*,
\]
is the \textit{differential entropy} of \(f_*\) (see e.g., \cite{cover2006elements}), Corollary~\ref{cor:clt_Lhat} implies that \(-\hat L_n\) is a root-\(n\) consistent estimator of \(H(f_*)\).  

Now if we can consistently estimate the variance $\sigma_*^2 := \text{Var}_{f_*} \log f_*(X)$, then we can get confidence intervals for $H(f_*)$. The following lemma, proved in Appendix~\ref{prf:appsigma_est},  does precisely that by describing a consistent estimator of $\sigma_*^2$.

\begin{lemma}\label{sigma_est}
  Let $X_1,\dots, X_n$ be i.i.d.\ samples from a Gaussian location
  mixture density $f_* \in \mathcal{M}$ whose mixing measure $\mu_*$
  is compactly supported. Let $\hat{f}_n$ be any NPMLE over
  $\mathcal{M}$. Then
\[
  \hat{\sigma}_n^2 :=
  \frac{1}{n} \sum_{i=1}^n
  \left(\log \hat{f}_n(X_i) - \hat{L}_n \right)^2
\]
converges in probability to
\[
  \sigma_*^2 := \operatorname{Var}_{f_*}\big(\log f_*(X)\big)
\]
as $n \rightarrow \infty$.
\end{lemma}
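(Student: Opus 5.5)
We prove consistency of \(\hat\sigma_n^2\) by reducing it to the oracle quantity
\[
\tilde\sigma_n^2 := \frac1n\sum_{i=1}^n\big(\log f_*(X_i) - L_n(f_*)\big)^2 ,
\]
which converges in probability to \(\sigma_*^2\) by the weak law of large numbers. The law of large numbers applies because \(\E_{f_*}(\log f_*(X))^2<\infty\). To see this, note that \(\mu_*\) is supported in a compact set \(\Theta\). Hence
\[
-\log f_*(x) \le \tfrac d2\log(2\pi) + \tfrac12\big(\|x\|+\sup_{\theta\in\Theta}\|\theta\|\big)^2,
\]
and \(f_*\) has Gaussian tails. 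It then suffices to show that \(\hat\sigma_n^2-\tilde\sigma_n^2\to0\) in probability.

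Write \(a_i=\log\hat f_n(X_i)-\hat L_n\) and \(b_i=\log f_*(X_i)-L_n(f_*)\). By Cauchy--Schwarz,
\[
|\hat\sigma_n^2-\tilde\sigma_n^2| \le \frac1n\sum_i (a_i-b_i)^2 + 2\,\tilde\sigma_n\Big(\frac1n\sum_i(a_i-b_i)^2\Big)^{1/2}.
\]
So the task is to show \(\frac1n\sum_i(a_i-b_i)^2\to0\) in probability. Set \(D_i=\log(\hat f_n(X_i)/f_*(X_i))\). Then \(a_i-b_i=D_i-\bar D\) with \(\bar D=\hat L_n-L_n(f_*)\ge0\), and hence \(\frac1n\sum(a_i-b_i)^2\le \frac1n\sum D_i^2\). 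By Theorem~\ref{thm:L2mu_bound}, \(\bar D=o_P(n^{-1/2})\), so the mean of the \(D_i\) is negligible. The remaining problem is controlling the second moment \(\frac1n\sum_i D_i^2\).

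To control it, we split \(D_i\) into positive and negative parts. For the positive part, \(\hat f_n\le (2\pi)^{-d/2}\). Moreover, \(\log(\hat f_n/f_*)_+\) is bounded by \(\tfrac12(\|X_i\|+R)^2\) plus a constant, where \(R=\sup_{\theta\in\Theta}\|\theta\|\). For the negative part, we use that \(\hat\mu_n\) is supported on the convex hull \(\mathcal K_n\) of the data. This gives
\[
-\log\hat f_n(X_i) \le \tfrac d2\log(2\pi) + \tfrac12 D_n^2,
\]
where \(D_n\) is the diameter of \(\mathcal K_n\), which is \(O_P(\log n)\). So all \(|D_i|\) are at most \(O_P((\log n)^2)\) uniformly in \(i\). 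We then use the elementary inequality
\[
D^2 \le C M\big(e^{D/2}-1\big)^2 \qquad\text{for } |D|\le M,
\]
valid whenever \(|D|\) is bounded by \(M\) with \(M\) large; on that range \(D^2/(e^{D/2}-1)^2\) is at most of order \(M\) up to constants. This yields
\[
\frac1n\sum_i D_i^2 \lesssim (\log n)^2\cdot \frac1n\sum_i\Big(\sqrt{\hat f_n(X_i)/f_*(X_i)}-1\Big)^2 .
\]
The empirical average on the right is an empirical analogue of \(\He^2(f_*,\hat f_n)\). By Theorem~\ref{thm:stability}\ref{it:stabilitypolylognovern} with \(\varepsilon_n=0\), \(\He^2(f_*,\hat f_n)=O((\log n)^{d+1}/n)\) with high probability.

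The main obstacle is passing from the population Hellinger bound to the empirical average \(\frac1n\sum_i(\sqrt{\hat f_n/f_*}(X_i)-1)^2\), since \(\hat f_n\) depends on the data. Our first attempt is a uniform law of large numbers over the class \(\{(\sqrt{f/f_*}-1)^2\wedge K_n\}\). By Lemma~\ref{lm:assumption-M-Theta-tau}, \(\hat f_n\) lies in \(\mathcal M(\Theta';\tau)\) with high probability, so this class can be restricted to \(f\in\mathcal M(\Theta';\tau)\). Its bracketing entropy is controlled through Theorem~\ref{thm:FiniteBracketingIntegral}, as \(\sqrt{f/f_*}=\exp(\tfrac12(\log f-\log f_*))\) is obtained from \(\log f\) by a monotone map, so brackets of \(\log f\) transfer. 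The truncation level \(K_n\) is polylogarithmic, and a ratio-type (local) empirical process bound then gives an empirical average of order \(\mathrm{polylog}(n)/\sqrt n\) or better. Multiplying by \((\log n)^2\) still tends to zero, which completes the proof. An alternative route avoids this difficulty by bounding \(\frac1n\sum D_i^2\) directly with a uniform law of large numbers for \((\log f-\log f_*)^2\) over \(\log\mathcal M(\Theta';\tau)\), combined with the KL bound of Corollary~\ref{cor:npmle}. This works because \(\E_{f_*}(\log(\hat f_n/f_*))^2\) is controlled by \(\KL\) times a logarithmic factor on the bounded range.
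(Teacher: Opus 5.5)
Your reduction is correct and matches the paper's in substance. After centering, everything comes down to showing $\frac1n\sum_i D_i^2\to0$ in probability, which is exactly Corollary~\ref{cor:L2prob}. The gap is in your main argument for that step.

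A minor slip first: on $|D|\le M$ the ratio $D^2/(e^{D/2}-1)^2$ is of order $M^2$, not $M$ (take $D=-M$). This only changes the polylogarithmic factor.

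The real problem is the claim that the truncation level $K_n$ is polylogarithmic. What you have shown is $\max_i|D_i|\le M_n$ with $M_n=O_P(\log n)$, since the hull diameter is $O_P(\sqrt{\log n})$. That only gives $(\sqrt{\hat f_n/f_*}(X_i)-1)^2=(e^{D_i/2}-1)^2\le e^{M_n}$, which is a power of $n$. Nothing in your argument rules out a ratio $\hat f_n(X_i)/f_*(X_i)$ of polynomial size at extreme sample points: there $f_*(X_i)$ is of order $n^{-1}$ up to logarithms, and you only know $\hat f_n\le(2\pi)^{-d/2}$. A polylogarithmic $K_n$ would require $\max_i D_i=O_P(\log\log n)$, which you have not proved.

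With $K_n$ polynomial, the bracketing bound for the truncated class is of order $K_n/\sqrt n$ up to logarithms. A Bernstein-type local bound still carries a sup-norm term of order $K_n\,\mathrm{polylog}(n)/n$. Neither is $o((\log n)^{-2})$, so the transfer from $\He^2(f_*,\hat f_n)$ to the empirical average fails as written.

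The main route can be repaired by splitting $D=D_+-D_-$, writing $P_n$ for the empirical average. The class $\{(1-\sqrt{f/f_*})_+^2\}$ is bounded by $1$ and is a monotone $\tfrac14$-Lipschitz image of $\log f-\log f_*$. So Theorem~\ref{thm:FiniteBracketingIntegral} gives $P_nD_-^2=O_P((\log n)^2n^{-1/2})$, and then $P_nD_+^2\le M_nP_nD_+=M_n(\bar D+P_nD_-)\to0$.

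The alternative route in your last sentences is sound and is the one worth writing up. The class $\{(\log f-\log f_*)^2:f\in\mathcal M(\widetilde\Theta;\tau)\}$ is Glivenko--Cantelli: it has the integrable envelope $C(1+\|x\|^2)^2$ from Lemma~\ref{lm:bound-density-Mc}, and finite $L^1(f_*)$ bracketing numbers obtained by squaring the brackets of Theorem~\ref{thm:FiniteBracketingIntegral} after clipping them to the envelope. The population term $\int f_*\log^2(\hat f_n/f_*)$ is controlled as you suggest on a ball of radius $\asymp\sqrt{\log n}$. There the log-ratio is $O(\log n)$, and $\log^2t\lesssim(\log n)(t-1-\log t)$ with $t=\hat f_n/f_*$ bounds it by $(\log n)\,\KL(f_*\|\hat f_n)$. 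The complement of the ball is handled by the envelope.

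Even simpler, Lemma~\ref{lem:envelopeFunction} removes the need for any uniform law. On $\{\hat f_n\in\mathcal Q_n\}$ you have $\frac1n\sum_iD_i^2\le\frac1n\sum_iG_n(X_i)^2$, whose mean is $\|G_n\|_{L^2(f_*)}^2\to0$.

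The paper instead uses the idea your main route lacks: the first-order optimality of $\hat f_n$. Moving from $\hat f_n$ toward $f_*$ gives $P_ne^{-r}\le1$ for $r=\log(\hat f_n/f_*)$. Hence $P_nh(r)\le P_nr\le\Delta_n:=\hat L_n-L_n(f_*)$, where $h(u)=e^{-u}-1+u\ge0$. The elementary bounds $u^2\le2eh(u)$ for $u\le1$ and $u\le eh(u)$ for $u>1$ then yield $P_nr^2\le e^2(\Delta_n+n\Delta_n^2)$, which vanishes by Theorem~\ref{thm:L2mu_bound} alone.

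That argument controls $r$ directly at the data points, so it needs no Hellinger-to-empirical transfer, truncation, or extra entropy bound. Your envelope/uniform-law route does not use the first-order condition, so it applies equally to any Hellinger-consistent near-maximizer, not just exact NPMLEs.
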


Combining Corollary~\ref{cor:clt_Lhat} with Lemma~\ref{sigma_est},
Slutsky's theorem gives the feasible studentized central limit theorem
\[
  \sqrt{n}\,
  \frac{\hat L_n-\mathbb E_{f_*}[\log f_*(X)]}{\hat\sigma_n}
  \;\rightsquigarrow\; N(0,1).
\]
Thus an asymptotically valid \((1-\alpha)\) confidence interval ($\mathrm{CI}_{1-\alpha}$) for
the differential entropy \(H(f_*)\) is
\begin{align}\label{eq:entropy_CI}
  \mathrm{CI}_{1-\alpha}(H(f_*))
  =
  \left[
      -\hat L_n
      -
      z_{1-\alpha/2}\frac{\hat\sigma_n}{\sqrt n},
      \;
      -\hat L_n
      +
      z_{1-\alpha/2}\frac{\hat\sigma_n}{\sqrt n}
  \right].
\end{align}
Both endpoints are computable from the NPMLE, with no additional
bandwidth or smoothing parameter required.

This provides a simple inferential interpretation of the optimized
likelihood.  Although \(\hat L_n\) is obtained by maximizing over the
infinite-dimensional class \(\mathcal M\), the estimator \(-\hat L_n\)
has the same first-order behavior as the oracle estimator
\[
    -L_n(f_*)
    =
    -\frac1n\sum_{i=1}^n \log f_*(X_i).
\]
Thus the nonparametric optimization affects the entropy estimator only
at smaller order.  The leading uncertainty is the ordinary sampling
fluctuation of \(\log f_*(X)\), and the limiting variance can be
estimated by the empirical variance of the fitted log-density values,
\[
    \hat\sigma_n^2
    =
    \frac1n\sum_{i=1}^n
    \left\{
        \log \hat f_n(X_i)-\hat L_n
    \right\}^2.
\]
In particular, for each fixed compactly supported mixing measure
\(\mu_*\), the procedure yields root-\(n\) Gaussian inference for
\(H(f_*)\), regardless of whether \(\mu_*\) is discrete, continuous, or
mixed.

The confidence interval \eqref{eq:entropy_CI} is related to, but
distinct from, existing work on entropy estimation under Gaussian
smoothing.  \cite{GoldfeldGreenewaldPolyanskiy2018,
GoldfeldGreenewaldWeedPolyanskiy2019,
GoldfeldGreenewaldNilesWeedPolyanskiy2020} study estimation of
\(h(S+Z)\), where \(Z\sim N(0,\sigma^2 I_d)\) and the distribution of
\(S\) is unknown.  In that setting, the statistician observes i.i.d.\
samples from the latent distribution of \(S\), and the plug-in estimator
is the entropy of \(\hat P_n*\varphi_\sigma\), where \(\hat P_n\) is
the empirical distribution of the \(S\)-samples.  By contrast, here the
data are sampled from the convolved density
\(f_*=\mu_* * \varphi\) itself, the mixing distribution \(\mu_*\) is
latent, and the estimator is based on the NPMLE over the full
nonparametric mixture class.  The result above therefore supplies a
likelihood-based, studentized central limit theorem for the entropy
estimator, rather than only a rate bound.

Mixture-based entropy estimation has also been studied by
\cite{RobinScrucca2023}, who fit finite mixture models, with Gaussian
mixtures as a main example.  A separate numerical literature studies
approximations and bounds for the entropy of a known Gaussian mixture;
see, for example, \cite{HuberBaileyDurrantWhyteHanebeck2008} and
\cite{MelbourneTalukdarBhabanMadimanSalapaka2022}.  Finally, the
likelihood-ratio results of \cite{ZhangVolgushev2026} show that related
excess-likelihood statistics can have qualitatively different behavior
depending on whether the true mixing measure is finitely discrete.  The
studentized entropy estimator above has a different form: after centering
by \(H(f_*)\) and scaling by \(\hat\sigma_n\), it has a standard normal
limit for every compactly supported \(\mu_*\) covered by
Corollary~\ref{cor:clt_Lhat} and Lemma~\ref{sigma_est}.

The proof of Lemma \ref{sigma_est} is crucially reliant on the
following result which can be seen as a corollary of Theorem
\ref{thm:L2mu_bound} and is fully proved in Appendix~\ref{prf:appL2prob}. 

\begin{corollary}\label{cor:L2prob}
  Let $X_1,\dots, X_n$ be i.i.d samples from a Gaussian location
  mixture density $f_* \in \mathcal{M}$ whose mixing measure $\mu_*$
  is compactly supported. Let $\hat{f}_n$ be any NPMLE over
  $\mathcal{M}$. 
  Then
  \begin{align}\label{cor:L2prob.eq}
    \frac{1}{n} \sum_{i=1}^n \left(\log \hat{f}_n(X_i) - \log f_*(X_i)
    \right)^2 \rightarrow 0 
  \end{align}
  in probability as $n \rightarrow \infty$. 
\end{corollary}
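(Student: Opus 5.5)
The plan is to combine the first-order optimality condition of the NPMLE with Theorem~\ref{thm:L2mu_bound}, through an elementary convexity inequality. Write $Y_i := \log \hat f_n(X_i) - \log f_*(X_i)$ and $\delta_n := \frac1n\sum_{i=1}^n Y_i = \optloss - L_n(f_*)$. Since $f_*\in\mathcal M$, we have $\delta_n \ge 0$. By Theorem~\ref{thm:L2mu_bound} with $p=1$ and Markov's inequality, $\sqrt n\,\delta_n \to 0$ in probability.

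\textbf{Step 1: the optimality condition.} For every $g\in\mathcal M$ the NPMLE satisfies $\frac1n\sum_i g(X_i)/\hat f_n(X_i)\le 1$. This is the standard optimality condition (cf.\ \cite[Theorem 2.1]{bohning2000computer}, or the computation preceding \eqref{vareps_comment}). It follows by differentiating $t\mapsto L_n((1-t)\hat f_n + t g)$ at $t=0^+$ and using that this derivative must be nonpositive. Taking $g=f_*$ gives $\frac1n\sum_i e^{-Y_i}\le 1$.

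\textbf{Step 2: convexity inequality.} Let $\psi(y):=e^{-y}-1+y\ge 0$. Combining Step 1 with the definition of $\delta_n$,
\begin{equation*}
\frac1n\sum_{i=1}^n \psi(Y_i) \le 1 - 1 + \delta_n = \delta_n .
\end{equation*}
Next I would check the elementary bounds
\begin{equation*}
\psi(y)\ge \tfrac{y^2}{2} \text{ for } y\le 0, \qquad \psi(y)\ge \tfrac{y^2}{2+y} \text{ for } y\ge 0.
\end{equation*}
The first follows from $e^{-y}\ge 1-y+y^2/2$ for $y\le 0$. For the second, compare Taylor expansions near $0$ and the linear growth $\psi(y)\approx y-1$ against $y-2$ for large $y$; a derivative argument on $(2+y)\psi(y)-y^2$ should close it. Together these give $Y_i^2\le (2+Y_i^+)\,\psi(Y_i)$ for all $i$, hence
\begin{equation*}
\frac1n\sum_{i=1}^n Y_i^2 \le \Big(2+\max_{i\le n} Y_i^+\Big)\,\delta_n .
\end{equation*}

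\textbf{Step 3: controlling large positive $Y_i$.} This is the step where unboundedness of the logarithm enters, and I expect it to be the main obstacle. The positive parts are harmless, however. Every element of $\mathcal M$ is bounded by $(2\pi)^{-d/2}$. Since $\mu_*$ is supported in a ball of radius $R$, we have $-\log f_*(x)\le \frac d2\log(2\pi) + (|x|+R)^2/2$. Therefore $Y_i^+\le C(1+|X_i|^2)$.

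Because $f_*$ has Gaussian tails (it is a compactly supported mixture of standard Gaussians), a union bound gives $\max_{i\le n}|X_i|^2 = O_P(\log n)$. Consequently
\begin{equation*}
\frac1n\sum_{i=1}^n Y_i^2 \le O_P(\log n)\cdot o_P(n^{-1/2}) \to 0
\end{equation*}
in probability, which is \eqref{cor:L2prob.eq}. The proof thus only needs the weak $o(n^{-1/2})$ rate of Theorem~\ref{thm:L2mu_bound}, which comfortably absorbs the logarithmic factor.
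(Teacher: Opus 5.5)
Your proof is correct. Its first half coincides with the paper's. The paper also perturbs $\hat f_n$ toward $f_*$ to get $\frac1n\sum_i e^{-Y_i}\le 1$, and deduces $\frac1n\sum_i\psi(Y_i)\le\delta_n$ with the same $\psi(u)=e^{-u}-1+u$.

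The routes diverge at large positive $Y_i$, where $\psi$ grows only linearly. The paper splits at $Y_i=1$ and uses $u^2\le 2e\,\psi(u)$ for $u\le 1$ and $u\le e\,\psi(u)$ for $u>1$. It then bounds the largest term crudely by the sum, $\max_{i:Y_i>1}Y_i\le\sum_i Y_i\mathbf{1}\{Y_i>1\}\le ne\,\delta_n$. This gives the deterministic inequality $\frac1n\sum_i Y_i^2\le e^2(\delta_n+n\delta_n^2)$. That inequality uses nothing about Gaussian mixtures beyond convexity of $\mathcal M$ and $f_*\in\mathcal M$. However, it spends the full $o_P(n^{-1/2})$ rate of Theorem~\ref{thm:L2mu_bound} on killing the term $n\delta_n^2$.

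You instead use the sharper pointwise bound $y^2\le(2+y^+)\psi(y)$. Your derivative check goes through: $(2+y)\psi(y)-y^2=(2+y)e^{-y}+y-2$ vanishes at $0$ and has derivative $1-(1+y)e^{-y}\ge 0$. You then control $\max_i Y_i^+$ with model-specific facts: $\hat f_n\le(2\pi)^{-d/2}$, the quadratic lower bound on $\log f_*$ from compact support of $\mu_*$, and sub-Gaussian tails of the $X_i$.

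What your route buys is twofold. You only need $\delta_n\log n\to 0$ in probability, which is far weaker than $o_P(n^{-1/2})$; your closing remark undersells this. You also get a quantitative bound $\frac1n\sum_i Y_i^2\le O_P(\log n)\,\delta_n$. You could pair it with a direct bound $\delta_n=O_P((\log n)^{d+1}/n)$, obtained by running the covering-and-Markov argument from the proof of Theorem~\ref{thm:stability}(i) without the Hellinger restriction. That would give a rate $(\log n)^{d+2}/n$ and bypass the bracketing-entropy machinery behind Theorem~\ref{thm:L2mu_bound} entirely.

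What the paper's route buys is generality. Its reduction is structure-free: it applies verbatim to any exact likelihood maximization over a convex class containing the truth, once $\delta_n=o_P(n^{-1/2})$ is known. Your argument, by contrast, is tied to bounded mixture densities with Gaussian-type tails.
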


\subsubsection{{ Fluctuations in NPMLE}} \label{sec:intr-fluct}

It was demonstrated in \cite{chatterjee2014superconcentration} that, in the context of discrete statistical mechanical models, the phenomena of \emph{superconcentration} and \emph{chaos} are in fact equivalent, and each of them imply the so-called \emph {multiple valleys} phenomenon. Roughly speaking,
\emph{superconcentration} entails that the Poincar\'{e} inequality is not tight in terms of order; in other words, a (sequence of) functions $g_n$ of Gaussian random vectors $Z_n$ superconcentrates if $\var[g_n(Z_n)] = o(\E[\|\nabla g_n(Z_n)\|^2])$.

As already discussed in Section \ref{sec:intro-stability}, Theorem \ref{thm:stability} can be interpreted as an \emph{asymptotic essential uniqueness} (AEU) phenomenon for the NPMLE model, which is the antithesis of multiple valleys. Following the intuitive parallel with statistical mechanics, it is therefore reasonable to expect that the contra-positive of superconcentration would hold for the maximum (log)-likelihood (which is the optimal objective function) in the NPMLE. In other words, we are led to conjecture that the  Poincar\'{e} inequality should be tight for the maximum log-likelihood . 

Put differently, parallels with statistical mechanics anticipate that the variance $\var[\optloss]$ should be comparable in magnitude to $\mathbb{E}[\|\nabla \optloss\|^2]$, being equivalent upto factors that are independent of the data size $n$. This indeed turns out to be the case, and is confirmed by the following theorem:


\begin{theorem}[Fluctuations in NPMLE] \label{thm:anti-supercon}
    Let $X_1,\ldots,X_n$ be i.i.d. samples from a Gaussian location mixture density $f_* \in \mathcal{M}$ whose mixing measure $\mu_*$ is compactly supported. Then
    $\optloss$, as of a function of the data $(x_1,\ldots,x_n)$, is differentiable almost everywhere in $\R^{dn}$. 
    Moreover, $\optloss$ is anti-superconcentrated, namely, there exists $C\ge 1$ depending on the density $f_*$ but  independent of $n$ such that
        \begin{equation} \label{eq:Fluct_thm}
        C^{-1} \cdot\mathbb{E}[\|\nabla \optloss \|^2] \le \var[\optloss] \le C \cdot \mathbb{E}[\|\nabla \optloss\|^2] .
        \end{equation} 
\end{theorem}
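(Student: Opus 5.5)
The plan rests on one structural fact about Gaussian mixtures. For every $f\in\mathcal{M}$,
\[
\log f(x)+\tfrac{\|x\|^2}{2}=-\tfrac d2\log(2\pi)+\log\int e^{\langle x,\theta\rangle-\|\theta\|^2/2}\,\mu_f(\de\theta),
\]
which is a log-Laplace transform and hence convex in $x$. Consequently
\[
n\optloss(x_1,\dots,x_n)+\sum_i \tfrac{\|x_i\|^2}{2}=\sup_{f\in\mathcal{M}}\sum_i\Big(\log f(x_i)+\tfrac{\|x_i\|^2}{2}\Big)
\]
is a supremum of convex functions. It is finite everywhere, because $\optloss\le -\frac d2\log(2\pi)$. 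A finite convex function on $\R^{dn}$ is locally Lipschitz and differentiable Lebesgue-a.e. by Rademacher's theorem, so $\optloss$ is differentiable a.e.

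To identify the gradient at a point of differentiability, I will use a Danskin/envelope argument. Let $\hat f_n$ be any maximizer at the data point (maximizers exist). The function $y\mapsto \frac1n\sum_i\log\hat f_n(y_i)$ is smooth, lies below $\optloss$, and touches it at $x$. Hence the two gradients coincide:
\[
\nabla_{x_i}\optloss=\tfrac1n\nabla\log\hat f_n(x_i)=\tfrac1n\big(\E_{\hat\mu_n}[\theta\mid x_i]-x_i\big).
\]
This also shows the gradient does not depend on which NPMLE is chosen, which settles the measurability issue. It follows that
\[
\E\|\nabla\optloss\|^2=\frac1n\,\E\Big[\frac1n\sum_i\|\nabla\log\hat f_n(X_i)\|^2\Big].
\]

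For the variance, I will use Theorem~\ref{thm:L2mu_bound} with $p=2$, which gives $\E|\optloss-L_n(f_*)|^2=o(1/n)$. Since $\var[L_n(f_*)]=\sigma_*^2/n$ with $0<\sigma_*^2<\infty$ (as in Corollary~\ref{cor:clt_Lhat}), the triangle inequality in $L^2$ yields $\var[\optloss]=\sigma_*^2/n\,(1+o(1))$. It therefore suffices to show $c/n\le\E\|\nabla\optloss\|^2\le C/n$ for large $n$; small $n$ is absorbed into the constant, provided both sides are positive and finite for each $n$.

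\textbf{Lower bound on the gradient (upper half of \eqref{eq:Fluct_thm}).} I will use that $f_*$, a Gaussian convolved with a compactly supported measure, satisfies a Poincar\'e inequality with constant $C_P=C_P(f_*)$. This is a known result: such mixtures are bounded/Lipschitz perturbations of a Gaussian, and one can cite the Bakry--\'Emery/Holley--Stroock type results for Gaussian mixtures. Tensorization over the $n$ i.i.d.\ coordinates then gives $\var[\optloss]\le C_P\,\E\|\nabla\optloss\|^2$, since $\optloss$ is locally Lipschitz and lies in $H^1$.

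\textbf{Upper bound on the gradient (lower half of \eqref{eq:Fluct_thm}).} This is the step I expect to be the main obstacle. The crude bound $\|\nabla\log\hat f_n(X_i)\|\le 2\max_j\|X_j\|$, which uses that $\hat\mu_n$ is supported on the convex hull, loses a $\log n$ factor. Instead I will use the pointwise Jiang--Zhang-type inequality, whose multivariate version appears in the Saha--Guntuboyina analysis:
\[
\|\nabla\log f(x)\|^2\le C_d\Big(1+\log\frac{1}{(2\pi)^{d/2}f(x)}\Big),\qquad f\in\mathcal{M}.
\]
I would reprove it via Tweedie's formula and a Gaussian tail bound on the posterior. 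Averaging over the data, and using $\optloss\ge L_n(f_*)$, gives
\[
\frac1n\sum_i\|\nabla\log\hat f_n(X_i)\|^2\le C_d\big(1-\optloss-\tfrac d2\log 2\pi\big)\le C_d\big(1-L_n(f_*)-\tfrac d2\log 2\pi\big).
\]
The right-hand side has expectation $C_d\big(1+H(f_*)-\tfrac d2\log2\pi\big)<\infty$, so $\E\|\nabla\optloss\|^2\le C/n$.

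Combining the two halves with the variance asymptotics gives \eqref{eq:Fluct_thm} with a constant depending only on $f_*$.
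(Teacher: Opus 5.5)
Your proof is correct, but the key step uses a genuinely different argument from the paper's. The shared skeleton is: convexity of $\hat L_n+\frac{1}{2n}\sum_i\|x_i\|^2$ gives a.e.\ differentiability; the tensorized Poincar\'e inequality for $f_*$ gives $\var[\hat L_n]\le C\,\E\|\nabla\hat L_n\|^2$; and Theorem~\ref{thm:L2mu_bound} gives $\var[\hat L_n]\asymp 1/n$, so everything reduces to $\E[n\|\nabla\hat L_n\|^2]=O(1)$. The difference is in how that last bound is proved.

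\textbf{The paper's route.} It localizes by splitting on the event $\{\hat f_n\in\mathcal{Q}_n\}$. On the complement, Theorem~\ref{thm:stability}(i) makes the event exponentially unlikely, which beats a crude $O(n^4)$ fourth-moment bound coming from the convex-hull support (Lemma~\ref{lm:Dirichlet-control-1}). On the good event, it uses $\mathcal{Q}_n\subset\mathcal{M}(\Theta;\tau)$ (Lemma~\ref{lm:bracketing-entropy}) together with the uniform bound $\|\nabla\log f(x)\|\le C_1\|x\|+C_2$ (Lemma~\ref{lm:Dirichlet-control-2}).

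\textbf{Your route.} You identify $\nabla_{x_i}\hat L_n=\frac1n\nabla\log\hat f_n(x_i)$ by the touching argument. You then apply the Jiang--Zhang inequality $\|\nabla\log f(x)\|^2\le 2\log\frac{1}{(2\pi)^{d/2}f(x)}$, which holds for every $f\in\mathcal{M}$. Jensen applied to the concave map $t\mapsto t\sqrt{\log(1/((2\pi)^d t^2))}$ gives it directly; your posterior-tail route also works, with $C_d=2$. Combined with $\hat L_n\ge L_n(f_*)$, this yields the pathwise inequality $n\|\nabla\hat L_n\|^2\le -2L_n(f_*)-d\log(2\pi)$.

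\textbf{What each buys.} Your argument is shorter, deterministic and valid for every $n$. It needs no stability theorem or tail estimate for the NPMLE. For this half it uses only $\E[-\log f_*(X)]<\infty$ rather than compact support. Your envelope identity also makes precise the paper's use of Lemma~\ref{lem:derivsup}: $\sup_{f\in\mathcal{Q}_n}L_n(f)$ coincides with $\hat L_n$ only at the observed data, not as functions in a neighbourhood of it. The paper's route, in exchange, reuses the $\mathcal{M}(\Theta;\tau)$ machinery built for its other results and gives a gradient bound that is uniform in $x$ over the localized class.

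\textbf{Two small remarks.} First, $\log f_*+\frac12\|x\|^2$ is a Lipschitz perturbation of a Gaussian, not a bounded one, so cite the Gaussian-convolution Poincar\'e result as the paper does. Second, the small-$n$ positivity you flag is immediate. $\hat L_n$ is continuous and nonconstant: it equals $-\frac d2\log(2\pi)$ exactly on the diagonal $x_1=\dots=x_n$ and is smaller elsewhere. Since the product measure has full support, $\var[\hat L_n]>0$ for every $n$.
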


We note in passing that, in \eqref{eq:Fluct_thm}, the gradient on the empirical log-likelihood $\hat{L}_n$ is in the {\it space variables} $X_1,\ldots,X_n$, and not in the {\it mixing measure} which is the variable with respect to which the optimization is taking place.

{
Inequalities of the form \eqref{eq:Fluct_thm} are closely related to
Poincaré-type functional inequalities, which play a central role in the
theory of concentration of measure. For distributions satisfying a
Poincaré inequality, the variance of a sufficiently regular functional
can be controlled by the expected squared norm of its gradient, leading
to quantitative bounds on fluctuations and stability. Such inequalities
serve as a fundamental tool in understanding concentration phenomena for
high-dimensional random objects, for instance c.f. \cite{ledoux2001concentration,BoucheronLugosiMassart2013}.
}

We remark that the upper bound in \eqref{eq:Fluct_thm} follows from the fact that the measure $\prod_{i=1}^nf_*(x_i)\de x_i$ (which is the equilibrium distribution of the Langevin dynamics \eqref{eq:Langevin}) satisfies the Poincar\'e inequality whose the Poincar\'e constant is independent of $n$. Indeed, by the tensorisation property of PI, the Poincar\'e constant of the product measure $\prod_{i=1}^n f_*(x_i)\de x_i$ is the same as the Poincar\'e constant of $f_*(x) \de x$. In \cite{Bardet18}, it was shown that the class of Gaussian convolutions with compactly supported measures satisfies PI. Since $f_*(x)\de x$ is the convolution of the standard Gaussian with the mixing measure $\mu_*$ which is compactly supported, it follows that $f_*(x)\de x$ satisfies PI.

We would like to emphasize that the rigorous implications between chaos, superconcentration and multiple valleys, as expounded in \cite{chatterjee2014superconcentration}, have been provably demonstrated only in the context of the class of discrete statistical mechanical models discussed therein. In particular, the theorems and the techniques of \cite{chatterjee2014superconcentration} do not apply to the NPMLE problem, and we develop from scratch a dedicated analysis tailored to this setting, in order to  establish the phenomena that would be motivated by the parallels to statistical mechanics.

\subsubsection{{ Chaos and Langevin dynamics}} \label{sec:robustness}
We complete the circle of ideas motivated by parallels with statistical mechanics with an investigation into the chaotic properties of the NPMLE procedure. Intuitively, in machine learning terms this can be perceived as a certain kind of stability property against perturbations in the input data to the random optimization problem. Chaos in statistical mechanical terms, broadly speaking, refers to the sensitivity of a functional of a system to perturbations in its random environment. In statistical terms, it may be viewed as the response of the solution of a random optimization problem to the perturbations in the input data.

Following the theoretical framework developed in \cite{chatterjee2014superconcentration}, it is natural to conjecture that the NPMLE procedure should be \emph{non-chaotic}; in other words, in a sense it would be stable to perturbations in the input data. In this section, we confirm that this indeed the case. 

In order to state the result in technical terms, we need to clarify the notion of perturbation of inputs. It turns out that in statistical mechanics there is a standard notion of perturbing a random environment, which is defined using canonical stochastic dynamics that leaves distributional properties of the model intact. The perturbation is then defined by a coupling of the original realization of the system with its path-wise evolution under this dynamics, allowed to evolve for a small time. 

To give a simple example of such dynamical perturbation, we can consider a Gaussian random variable $Z_0$ evolving under an Ornstein-Uhlenbeck flow $(Z_t)_{t \ge 0}$. For small times $t$, the variable $Z_t$ can be coupled pathwise to the original $Z_0$ as a small perturbation, yet retain its distributional  properties. In the setting of more general distributions, such as Gaussian mixture models, there is an analogous \emph{Langevin dynamics} which can be used to achieve a similar perturbation; this is detailed in technical terms in Section \ref{sec:stat-mech}; see especially \eqref{eq:Langevin}. 

In statistical mechanics, chaotic effect on optimal solutions to random (discrete) optimization problems can be articulated directly in terms of the {\it overlap} between such solutions (e.g., number of common elements in two sets). In statistical and machine learning applications, the background space is often continuous (e.g., for the NPMLE it is the space of probability measures), which leads to some consideration regarding what an appropriate metric might be. 

It turns out that a suitable notion of overlap or similarity between probability densities is the so-called {\it Bhattacharyya coefficient}, defined for two densities $f$ and $g$ by the expression
\[
\mathrm{BC}(f,g) = \int \sqrt{f(x)g(x)}\,\mathrm{d}x.
\]
Clearly, $0 \le \mathrm{BC}(\cdot,\cdot) \le 1$, with BC near 1 indicating strong similarity between the densities under consideration. Thus, under perturbation of input data, the BC between the respective optima being close to 0 would indicate onset of {\it chaos}, whereas the same BC being close to 1 would indicate the lack thereof.
For a detailed discussion on the appropriateness of this metric in the context of perturbations and chaos via analogies with statistical mechanics, we refer the reader to Appendix \ref{sec:BC}.



We are now ready to state the non-chaotic stability of the NPMLE in the form of the following result.
\begin{corollary} \label{cor:robust}
 Let the environment $(X_1^{(t)},\ldots,X_n^{(t)})$ evolve via the Langevin dynamics \eqref{eq:Langevin} and $\npmle^{(t)}$ be the NPMLE with respect to the perturbed environment at a time $t \ge 0$. 
Then  $\mathbb{E}[\mathrm{BC}(\npmle, \npmle^{(t)})] \to 1$ as $n\to \infty$.
\end{corollary}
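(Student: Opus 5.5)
The plan is to reduce the statement to the Hellinger stability bound of Theorem~\ref{thm:stability}\ref{it:stabilitypolylognovern}, applied separately at time $0$ and at time $t$, and then use the triangle inequality.

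\textbf{Step 1 (stationarity).} The Langevin dynamics \eqref{eq:Langevin} has $\prod_{i=1}^n f_*(x_i)\,\de x_i$ as its equilibrium measure. We start it from that measure, so for every fixed $t\ge 0$ the perturbed sample $(X_1^{(t)},\ldots,X_n^{(t)})$ is again i.i.d.\ from $f_*$. This is the only property of the dynamics we need; the joint coupling of $X^{(0)}$ and $X^{(t)}$ plays no role.

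\textbf{Step 2 (each NPMLE is close to the truth).} Apply Theorem~\ref{thm:stability}\ref{it:stabilitypolylognovern} with $\varepsilon_n$ arbitrarily small, or with $\varepsilon_n=1/n$, which is harmless since an exact NPMLE satisfies \eqref{eq:epsLnucrit} for every $\varepsilon_n>0$. This gives
\[
\He^2(f_*,\npmle)\le \tfrac1n + C_1\frac{(\log n)^{d+1}}{n},
\]
and by Step~1 the same bound holds for $\He^2(f_*,\npmle^{(t)})$. Each holds with probability at least $1-C_2\exp(-(\log n)^{d+1})$. A union bound puts both on a common event $E_n$ with $\mathbb P(E_n^\complement)\le 2C_2\exp(-(\log n)^{d+1})\to 0$.

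\textbf{Step 3 (from Hellinger to BC).} Use the identity $\He^2(f,g)=2(1-\mathrm{BC}(f,g))$. Since $\He$ is a metric (the $L^2$ distance between square roots), on $E_n$ we get
\[
1-\mathrm{BC}(\npmle,\npmle^{(t)})=\tfrac12\He^2(\npmle,\npmle^{(t)})\le \He^2(f_*,\npmle)+\He^2(f_*,\npmle^{(t)})\le \delta_n,
\]
where $\delta_n:=2/n+2C_1(\log n)^{d+1}/n\to0$. Off $E_n$ we use only $0\le \mathrm{BC}\le 1$. Therefore
\[
1\ge \mathbb E[\mathrm{BC}(\npmle,\npmle^{(t)})]\ge (1-\delta_n)\,\mathbb P(E_n)\to 1,
\]
uniformly in $t\ge 0$.

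\textbf{Expected obstacle.} The main care point is Step~1. We must check that \eqref{eq:Langevin} is started in stationarity, so that Theorem~\ref{thm:stability} applies at time $t$ with the same constants. A secondary point is measurability and non-uniqueness of the NPMLE: $\npmle^{(t)}$ may be any measurable selection of a maximizer. This causes no difficulty, because Theorem~\ref{thm:stability} holds for an arbitrary (approximate) maximizer.
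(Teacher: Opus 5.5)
Your proof is correct and follows the same route as the paper. Both arguments use stationarity of the Langevin dynamics, a Hellinger rate for each of $\npmle$ and $\npmle^{(t)}$, the triangle inequality for $\He$, and the identity $\mathrm{BC}=1-\frac12\He^2$. The only difference is the source of the Hellinger rate: the paper quotes an in-expectation bound $\E[\He^2(\npmle,f_*)]=O((\log n)^{d+1}/n)$ from \cite{zhang2009generalized, saha2020nonparametric}, whereas you combine the paper's own high-probability bound in Theorem~\ref{thm:stability}\ref{it:stabilitypolylognovern} with $0\le\mathrm{BC}\le 1$, which keeps the argument self-contained within the paper.
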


\begin{remark}
    We define the perturbation $X_i^{(t)}$ via the Langevin dynamics in order to keep parity with the standard practice in statistical mechanics theory. However, we record that similar results can also be shown to hold when the perturbation changes the true distribution $f_*$ by a small amount, although we do not venture into a detailed proof within the limited scope of this article. A more in-depth, quantitative analysis of the above result, especially the dependency between the degree of perturbation (captured by $t$), the system size $n$ and the specific model of perturbations, would be an interesting direction to pursue as an extension of the present work.
\end{remark}

The proof of the above result, in the setting of GMMs, follows via a direct argument pursuant to the main theorems established earlier in this paper. As such, it is stated as a corollary to the main results. 


\section{ NPMLE through the lens of statistical mechanics}
\label{sec:stability-NPMLE}
Motivated by the overlaps with disordered systems, we now view the NPMLE through the lens of random optimization problems from statistical mechanics. We clarify at the outset that the general theory developed in \cite{chatterjee2014superconcentration} between the various notions of stability and fluctuations in random optimization problems applies to the setting of discrete statistical mechanical models discussed therein; it is not oriented towards the NPMLE and more generally towards statistical and machine learning problems, which usually reside in a continuous space. Thus, we leverage the stability theory of disordered statistical mechanics models in a motivational sense, and establish the different notions of stability and robustness for the NPMLE separately via direct arguments. 

In the NPMLE setting, the analogue of the energy functional of the system is the negative log-likelihood function $-L_n(f)$, over the configuration space $\mathcal{M}$ of all GMM densities. We let the environment $(X_1,\ldots,X_n)$ evolve via the Langevin dynamics \eqref{eq:Langevin}. 

Theorem \ref{thm:stability} and Theorem \ref{thm:restricted-npmle} say that the random optimization problem \eqref{eq:npmle-optimization} in this setting is stable, which is similar in spirit to the asymptotic essential uniqueness (AEU) phenomenon, and the contrapositive of the multiple valleys/peaks. Hence, it is natural to expect that superconcentration and chaos do not occur in NPMLE. In other words, the log-likelihood function should satisfy the Poincar\'e inequality with an $O(1)$ constant, and the optimal mixing measure should be robust under small perturbations via the Langevin dynamics. 

This is indeed the case, as our Theorem \ref{thm:anti-supercon} confirms that there exists a constant $C \ge 1$, independent of $n$, such that
\begin{equation} \label{eq:anti-supercon}
    C^{-1} \cdot \mathbb{E} [ \|\nabla \optloss\|^2] \le \var[\optloss] \le C \cdot \mathbb{E} [ \|\nabla \optloss\|^2].
\end{equation}


Since superconcentration and chaos are equivalent in the discrete statistical mechanics setup, we are motivated to conjecture that $\optloss$ is also not chaotic in the sense of  Definition \ref{def:chaos}. However, as we remarked earlier, it is generally unclear how chaos in Definition \ref{def:chaos} relates the usual notion of chaos in statistical physics (in the sense of sensitivity of output to perturbations in input), especially when the NPMLE involves an optimization problem over the space of measures, which is infinite-dimensional and far more complicated than the Gaussian polymer model. We believe that establishing such implications would be a natural objective of future research in this direction. 

However, the more direct, physical interpretation of chaos in the sense of the sensitivity of the maximizing measure to small perturbations in the input data, can be formalized into a technical statement in terms of Bhattacharyya Coefficient between the optimal and near-optimal measures (Corollary \ref{cor:robust}).


\section{{ Proof ideas and main proofs}}
\label{sec:proofs}
In this section, we sketch the proof of our theorems except Theorem~\ref{thm:FiniteBracketingIntegral}. Since the proof of Theorem~\ref{thm:FiniteBracketingIntegral} is long and technical, its proof has been moved to Appendix~\ref{app:prfFiniteBracketingIntegral}. Some technical lemmas will also be used along the way. Again due to page constraints, we will only state the statements of these lemmas and refer the reader to the Appendix for the complete proofs of these results.


\subsection{ Stability in NPMLE: Proof of Theorem \ref{thm:stability}} \label{subsec:proof-thm-stability}

\subsubsection{ Proof of Theorem \ref{thm:stability} \ref{it:stabilitypolylognovern}}

Let $t>0$ be suitably chosen later, we bound the probability $\mathbb{P} (\He^2(\est, f_*) \ge  t)$. 
Let $M:= 3\sqrt{\log n}$ and $\Theta_M:= \{x \in \mathbb{R}^d: \dist (x,\Theta) \le M\}$, where $\dist(x,\Theta):= \inf_{y \in \Theta} \|x-y\|$. 
We define accordingly a pseudonorm on $\mathcal{M}$ as $\|f\|_{\infty,\Theta_M}:= \sup_{x\in \Theta_M} |f(x)|$.

Let $N:=N(n^{-2}, \mathcal{M}, \|\cdot\|_{\infty,\Theta_M})$ be the $n^{-2}$-covering number of $\mathcal{M}$ with respect to the pseudonorm $\|\cdot\|_{\infty,\Theta_M}$. 
The following result, a known consequence of \cite{saha2020nonparametric} and detailed in Appendix~\ref{prf:appguntu-entropy}, gives an upper bound on $N$.

\begin{lemma} \label{lm:guntu-entropy}
    Let $M=3\sqrt{\log n}$ and $N:=N(n^{-2}, \mathcal{M}, \|\cdot\|_{\infty,\Theta_M})$.
    There exists a constant $C_{\ref{lm:guntu-entropy}}$ (depending only on $d$ and $\Theta$) such that
    $\log N \le C_{\ref{lm:guntu-entropy}} (\log n)^{d+1}$.
\end{lemma}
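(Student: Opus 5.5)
The plan is to cover $\mathcal{M}$ in sup-norm on $\Theta_M$ by approximating each mixing measure with a discrete one having few atoms, via moment matching, in the manner of \cite{saha2020nonparametric}. The first step is a reduction of the mixing measure. Fix $f=f_\mu\in\mathcal{M}$. For $x\in\Theta_M$, the mass of $\mu$ at distance more than $M'=M+c\sqrt{\log n}$ from $\Theta_M$ contributes at most $(2\pi)^{-d/2}e^{-M'^2/2}$ to $f(x)$. This is below $n^{-2}/4$ once $c$ is a suitable constant. Hence we may replace $\mu$ by a measure $\nu$ on the enlarged set $S:=\Theta_{M+M'}$, which has diameter $O(\sqrt{\log n})$, by moving all mass outside $S$ onto one fixed point far from $\Theta_M$ (or simply deleting it and renormalizing). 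The resulting error in $\|\cdot\|_{\infty,\Theta_M}$ is $O(n^{-2})$.

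The second step is moment matching. Partition $S$ into cubes $Q_j$ of side $a$ (a constant). There are $J\lesssim (\sqrt{\log n}/a)^d$ of them. On each cube, Taylor expand $e^{-\|x-\theta\|^2/2}$ around the cube centre, for $x$ within distance $b\sqrt{\log n}$ of $Q_j$. Expanding $e^{\langle x-c_j,\theta-c_j\rangle}$ to degree $k\asymp\log n$ gives remainder $\le (e\, a b\sqrt{\log n}/k)^k$, which is below $n^{-3}$ for a good choice of constants. Points $x$ farther than $b\sqrt{\log n}$ from $Q_j$ see contribution at most $n^{-3}$ anyway. By Carathéodory, replace $\nu|_{Q_j}$ with a discrete measure on $Q_j$ with at most $\binom{k+d}{d}+1\lesssim (\log n)^d$ atoms matching all moments up to degree $k$. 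This gives $m\lesssim J(\log n)^d$ atoms overall. That count is too many, so the construction must be balanced more carefully, as in \cite{saha2020nonparametric}, to arrive at $m\lesssim(\log n)^{d}$ atoms in total with uniform error $n^{-2}/2$.

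The third step discretizes the resulting finite mixture. Grid the atom locations in $S$ at mesh $n^{-3}$, which costs $(C\sqrt{\log n}\,n^{3})^{d}$ choices per atom. Grid the weights on the simplex in $\ell^1$ at mesh $n^{-3}$, which costs $(Cn^3)^{m}$ choices. Since $\theta\mapsto\phi(x-\theta)$ is Lipschitz and bounded, the total error stays below $n^{-2}$. Hence
\[
\log N\lesssim m\log n\lesssim(\log n)^{d+1}.
\]

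The main obstacle is the atom count in the second step. The naive per-cube count of $(\log n)^d$ atoms, multiplied by $J\asymp(\log n)^{d/2}$ cubes, overshoots. I would first use the alternative route of \cite{saha2020nonparametric}: cubes of side $\asymp\sqrt{\log n}$, with the moment degree taken $\asymp\log n$. This yields $O(1)$ cubes with $(\log n)^d$ atoms each. The Taylor remainder must then be controlled more sharply, through Hermite or exponential-series bounds valid for $|\langle x-c,\theta-c\rangle|$ up to $O(\log n)$. Alternatively, the statement can be taken directly from their sup-norm entropy theorem, by verifying that its dependence on the support radius $M$ gives $(\log n)^{d+1}$ when $M\asymp\sqrt{\log n}$.
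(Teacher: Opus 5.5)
Your closing fallback is exactly the paper's proof: read the bound off the sup-norm metric-entropy theorem of \cite{saha2020nonparametric} with $S=\Theta$, $\eta=n^{-2}$ and fattening radius $M=3\sqrt{\log n}$. The cell-count factor in that bound is $O_{d,\Theta}(1)$, because $M$ and $\sqrt{\log(1/\eta)}$ are both of order $\sqrt{\log n}$. The paper's proof consists only of the citation of Theorem~4.1 of that reference.

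Your main line is a genuinely different, self-contained route that reproves the special case needed. What it buys is explicit control of how $C_{\ref{lm:guntu-entropy}}$ depends on $(d,\Theta)$, without consulting the exact form of the external theorem. It works after three repairs.

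(i) Discard the ``delete and renormalize'' option in Step~1. Dividing by $\mu(S)$ is uncontrolled when $\mu(S)$ is tiny. Moving the far mass to one fixed point at distance $\ge M'$ from $\Theta_M$ costs only $O(e^{-M'^2/2})$.

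(ii) The partition into small cubes is what produces the overcount, and no partition is needed. For $x\in\Theta_M$ and $\theta\in S$ one has $y:=\|x-\theta\|^2/2\le D\log n$ with $D=D(\Theta)$. Hence $|e^{-y}-P_{k-1}(y)|\le y^k/k!\le(ey/k)^k\le e^{-k}\le n^{-3}$ for $k=\lceil\max(e^2D,3)\log n\rceil$. So a single Carath\'eodory moment match over all of $S$, of degree $<2k$, gives $O((\log n)^d)$ atoms. No Hermite-type sharpening is required; this is the same device the paper uses in Proposition~\ref{prop:discretizationcompact}.

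(iii) Expand $e^{-y}$ in $y$ as in (ii). Then $P_{k-1}(\|x-\theta\|^2/2)$ is a polynomial in $\theta$, and matching plain moments suffices. If you expand only the cross term $e^{\langle x-c,\theta-c\rangle}$, as in your Step~2, you must match moments weighted by $e^{-\|\theta-c\|^2/2}$ rather than plain moments.

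With $m\lesssim(\log n)^d$ atoms, your Step~3 then gives $\log N\lesssim m\log n\lesssim(\log n)^{d+1}$.
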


Let $\{g_1,\ldots,g_N\}$ be a fixed $n^{-2}$-covering set of $\mathcal{M}$ with respect to $\|\cdot\|_{\infty, \Theta_M}$. 
We define the following subset of indices $\mathbf{J} \subset [N] :=\{1,2,\ldots,N\}$ as
\begin{equation}
    \mathbf{J} := \Big \{j \in [N]: \exists \: h_j \in \mathcal{M} \text{ such that } \|h_j - g_j\|_{\infty, \Theta_M} \le n^{-2} \text{ and } \He^2(h_j, f_*) \ge t \Big \}.
\end{equation}
We will use the following technical lemma, whose proof can be found at \ref{prf:appHellinger-technical}:
\begin{lemma} \label{lm:Hellinger-technical}
    Assume that $|L_n(\est) - \optloss| \le \varepsilon_n$, 
    then on the event $\{\He^2(\est, f_*) \ge t\}$ we have the following inequality 
    \[ 
    \Big [\max_{j\in \mathbf{J}}\prod_{i=1}^n \frac{h_j(X_i) + 2n^{-2}}{f_*(X_i)}\Big ] 
    \cdot \Big [\prod_{i=1}^n  \Big (1+n^2{\mathbf{1}(X_i\notin\Theta_M)}\Big ) \Big] \ge \exp (-n \varepsilon_n).\]
\end{lemma}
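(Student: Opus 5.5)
We start from the near-optimality of $\est$ and compare it with $f_*$. Since $\optloss \ge L_n(f_*)$, the hypothesis gives $L_n(\est) \ge L_n(f_*) - \varepsilon_n$. Exponentiating,
\[
\prod_{i=1}^n \frac{\est(X_i)}{f_*(X_i)} \ge \exp(-n\varepsilon_n).
\]
It therefore suffices to show that, on the event $\{\He^2(\est,f_*)\ge t\}$, each factor $\est(X_i)$ is dominated pointwise by the corresponding factor on the left-hand side of the lemma for a suitable $j\in\mathbf{J}$.

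To find that index, we use that $\{g_1,\dots,g_N\}$ is an $n^{-2}$-cover of $\mathcal{M}$ in $\|\cdot\|_{\infty,\Theta_M}$. Hence there is $j$ with $\|\est - g_j\|_{\infty,\Theta_M}\le n^{-2}$. On the event $\{\He^2(\est,f_*)\ge t\}$, the choice $h_j:=\est$ witnesses $j\in\mathbf{J}$. (Here $h_j$ is the witness chosen once for each $j\in\mathbf{J}$; if the witness is fixed in advance, we use the triangle inequality instead: $\|\est-h_j\|_{\infty,\Theta_M}\le 2n^{-2}$.) In either case, for $X_i\in\Theta_M$ we get
\[
\est(X_i)\le h_j(X_i)+2n^{-2}.
\]

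For $X_i\notin\Theta_M$ there is no sup-norm control, so we use instead the crude bounds $\est(X_i)\le (2\pi)^{-d/2}\le 1$ and $h_j(X_i)+2n^{-2}\ge 2n^{-2}$. These give
\[
\est(X_i)\le 1\le \tfrac{n^2}{2}\bigl(h_j(X_i)+2n^{-2}\bigr)\le \bigl(1+n^2\bigr)\bigl(h_j(X_i)+2n^{-2}\bigr).
\]
Combining the two cases, every $i$ satisfies
\[
\est(X_i)\le \bigl(h_j(X_i)+2n^{-2}\bigr)\bigl(1+n^2\mathbf{1}(X_i\notin\Theta_M)\bigr).
\]

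Dividing by $f_*(X_i)>0$, taking the product over $i$, and bounding the $j$-term by the maximum over $\mathbf{J}$ yields the stated inequality. The only delicate point is how the witnesses $h_j$ are chosen. Handling it via the factor-$2$ slack in $2n^{-2}$, as above, makes the bound hold regardless of which witness was fixed. The rest is bookkeeping.
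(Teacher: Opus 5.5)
Your proof is correct and follows the paper's argument. You pass from $L_n(\est)\ge \optloss-\varepsilon_n\ge L_n(f_*)-\varepsilon_n$ to the lower bound on the likelihood ratio, and you note that the covering index of $\est$ lies in $\mathbf{J}$ on the event. The triangle inequality then gives $\est\le h_j+2n^{-2}$ on $\Theta_M$, and you use $\est\le 1$ off $\Theta_M$, exactly as the paper does. The only cosmetic difference is that you absorb the points outside $\Theta_M$ pointwise via $1\le \tfrac{n^2}{2}\bigl(h_j(X_i)+2n^{-2}\bigr)$, whereas the paper multiplies and divides the product by $h_J(X_i)+2n^{-2}$ over those indices; this is the same estimate.
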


Given Lemma \ref{lm:Hellinger-technical}, $\mathbb{P} (\He^2(\est, f_*)\ge t )$ is then upper bounded by 
\begin{eqnarray} \label{eq:2terms}
    &&\mathbb{P} \Big (\Big [\max_{j \in \mathbf{J}}\prod_{i=1}^n \frac{h_j(X_i) + 2n^{-2}}{f_*(X_i)} \Big ] 
    \cdot \Big [\prod_{i=1}^n  \Big (1+n^2{\mathbf{1}(X_i\notin\Theta_M)}\Big ) \Big] 
    \ge e^{-n\varepsilon_n} \Big ) \notag \\
    &\le& \mathbb{P} \Big (\max_{j \in \mathbf{J}}\prod_{i=1}^n \frac{h_j(X_i) + 2n^{-2}}{f_*(X_i)}  \ge e^{-n\varepsilon_n - \lambda}\Big ) + \mathbb{P} \Big (\prod_{i=1}^n  \Big (1+n^2{\mathbf{1}(X_i\notin\Theta_M)}\Big )  \ge e^\lambda \Big ),
\end{eqnarray}
where $\lambda>0$ will be chosen later.

We separately control each term in \eqref{eq:2terms}, using the following technical results whose proofs can be found in the Appendix (in ~\ref{prf:app1-term} and ~\ref{prf:app2-term} respectively). For the first term, we have:
\begin{lemma} \label{lm:1-term}
    For any $h\in \mathcal{M}$ and $\gamma\in \R$, we have
    \[ \mathbb{P} \Big (\prod_{i=1}^n \frac{h(X_i) + 2n^{-2}}{f_*(X_i)}  \ge e^{-\gamma}\Big ) 
    \le \exp \Big (\frac{\gamma}{2} - \frac{n}{2}\He^{2}(h,f_*) +\sqrt{2}\int \sqrt{f_*}\Big ). \]
\end{lemma}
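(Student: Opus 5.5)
This is a Chernoff-type bound with exponent $1/2$, so I will apply Markov's inequality to the square root of the likelihood ratio. Write $R:=\prod_{i=1}^n \frac{h(X_i)+2n^{-2}}{f_*(X_i)}$. Since
\[
\{R\ge e^{-\gamma}\}=\{R^{1/2}\ge e^{-\gamma/2}\},
\]
Markov's inequality and independence of the $X_i$ give
\[
\mathbb{P}(R\ge e^{-\gamma})\le e^{\gamma/2}\,\mathbb{E}[R^{1/2}] = e^{\gamma/2}\Big(\mathbb{E}\sqrt{\tfrac{h(X_1)+2n^{-2}}{f_*(X_1)}}\Big)^n .
\]
The one-sample expectation is
\[
\int \sqrt{(h+2n^{-2})f_*}\,\de x .
\]

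Next I bound this integral using $\sqrt{a+b}\le\sqrt a+\sqrt b$:
\[
\int\sqrt{(h+2n^{-2})f_*}\le \int\sqrt{hf_*}+\sqrt{2}\,n^{-1}\int\sqrt{f_*}.
\]
The first term is the Bhattacharyya coefficient, which satisfies $\int\sqrt{hf_*}=1-\tfrac12\He^2(h,f_*)$. The quantity $\int\sqrt{f_*}$ is finite, because $f_*$ has Gaussian tails: $\mu_*$ is supported in the compact set $\Theta$. Note that $\int\sqrt{f_*}$ does depend on $\Theta$ and $d$, even though the lemma does not make this explicit. Combining the two pieces,
\[
\int\sqrt{(h+2n^{-2})f_*}\le 1-\tfrac12\He^2(h,f_*)+\tfrac{\sqrt2}{n}\int\sqrt{f_*}.
\]

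Finally I raise this to the $n$-th power and use $1+x\le e^x$:
\[
\Big(1-\tfrac12\He^2+\tfrac{\sqrt2}{n}\textstyle\int\sqrt{f_*}\Big)^n\le \exp\Big(-\tfrac n2\He^2(h,f_*)+\sqrt2\textstyle\int\sqrt{f_*}\Big).
\]
This holds because the base is nonnegative. Multiplying by $e^{\gamma/2}$ gives exactly the claimed bound.

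None of the steps is a real obstacle. The only points to check are that the expectation factorises over independent samples and that $\int\sqrt{f_*}<\infty$, both of which are immediate.
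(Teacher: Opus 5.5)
Your proof is correct and follows the paper's argument: Markov's inequality applied to $R^{1/2}$, factorization over the i.i.d.\ samples, $\sqrt{a+b}\le\sqrt a+\sqrt b$, the identity $\int\sqrt{hf_*}=1-\tfrac12\He^2(h,f_*)$, and $1+x\le e^x$. The only difference is that the paper applies $y\le e^{y-1}$ to the one-sample expectation before bounding the integral, whereas you bound it first. This makes no difference to the result.
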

Since $\He^2(h_j,f_*) \ge t$ for every $j \in \mathbf{J}$, applying  Lemma \ref{lm:1-term} with $\gamma = n \varepsilon_n+ \lambda$ gives
    \[ \mathbb{P} \Big (\prod_{i=1}^n \frac{h_j(X_i) + 2n^{-2}}{f_*(X_i)}  \ge e^{-\gamma}\Big ) 
    \le \exp \Big (\frac{n\varepsilon_n+\lambda}{2} - \frac{nt}{2}+\sqrt{2}\int \sqrt{f_*}\Big ), \quad \forall j \in \mathbf{J}. \]
Combining with Lemma \ref{lm:guntu-entropy}, the first term in \eqref{eq:2terms} is then bounded by
\begin{eqnarray*}
    \mathbb{P} \Big (\max_{j \in \mathbf{J}}\prod_{i=1}^n \frac{h_j(X_i) + 2n^{-2}}{f_*(X_i)}  \ge e^{-n\varepsilon_n - \lambda}\Big ) 
    &\le& N \cdot \max_{j\in \mathbf{J}} \mathbb{P} \Big (\prod_{i=1}^n \frac{h_j(X_i) + 2n^{-2}}{f_*(X_i)}  \ge e^{-n\varepsilon_n - \lambda}\Big ) \\
    &\lesssim& \exp\Big (C_{\ref{lm:guntu-entropy}}(\log n)^{d+1} +\frac{n\varepsilon_n + \lambda}{2} - \frac{nt}{2} \Big ) \cdot
\end{eqnarray*}

To control the second term in equation~\eqref{eq:2terms}, we consider the following lemma:
\begin{lemma} \label{lm:2-term}
    For $M=3\sqrt{\log n}$, we have
    $\E \Big [\prod_{i=1}^n  \Big (1+n^2{\mathbf{1}(X_i\notin\Theta_M)}\Big ) \Big ] \le e^{e^{d}}.$
\end{lemma}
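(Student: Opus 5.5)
By independence, the expectation factorizes:
\[
\E \Big[\prod_{i=1}^n \big(1+n^2\mathbf{1}(X_i\notin\Theta_M)\big)\Big] = \big(1+n^2 p_n\big)^n,
\qquad p_n:=\mathbb{P}(X_1\notin\Theta_M).
\]
Using $1+x\le e^x$, this is at most $\exp(n^3 p_n)$. The statement therefore reduces to showing $n^3 p_n \le e^d$.

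To bound $p_n$, we use the representation $X_1=\theta+Z$, where $\theta\sim\mu_*$ is supported in $\Theta$ and $Z\sim N(0,I_d)$ is independent of $\theta$. If $X_1\notin\Theta_M$, then $\dist(X_1,\Theta)>M$. Since $\theta\in\Theta$, this forces $\|Z\|=\|X_1-\theta\|>M$. Hence
\[
p_n\le \mathbb{P}(\|Z\|>M), \qquad M=3\sqrt{\log n},
\]
uniformly in $\mu_*$.

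For the Gaussian tail, I would use the exponential moment of $\|Z\|^2$. For $s\in(0,1/2)$, Markov's inequality gives $\mathbb{P}(\|Z\|>M)\le (1-2s)^{-d/2}e^{-sM^2}$. Taking $s=1/3$ yields
\[
\mathbb{P}(\|Z\|>M)\le 3^{d/2}e^{-M^2/3}=3^{d/2}n^{-3},
\]
because $M^2/3=3\log n$. Consequently $n^3p_n\le 3^{d/2}\le e^{d}$, since $\tfrac12\log 3<1$. Plugging this into $\exp(n^3p_n)$ gives the bound $e^{e^d}$.

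The only step requiring care is the choice of $s$. It must be large enough that $e^{-sM^2}$ exactly cancels the factor $n^3$, and small enough that the dimensional prefactor $(1-2s)^{-d/2}$ stays below $e^d$. The value $s=1/3$ meets both requirements. Everything else is routine.
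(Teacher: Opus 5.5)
Your proof is correct and follows the paper's argument step for step: factorization by independence, the bound $1+x\le e^x$, and the representation $X_1=\theta+Z$ to reduce to $\mathbb{P}(\|Z\|>M)$. The only difference is the Gaussian tail estimate: you use a direct Chernoff bound on $\|Z\|^2$ with $s=1/3$, giving $3^{d/2}n^{-3}$, whereas the paper invokes the Laurent--Massart $\chi^2_d$ tail inequality to get $e^{d}n^{-3}$, so your version is slightly sharper, self-contained, and needs no side condition such as $3\log n\ge d$.
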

By Lemma \ref{lm:2-term}, the second term in \eqref{eq:2terms} can be bounded by
\begin{eqnarray*}
     \mathbb{P} \Big (\prod_{i=1}^n  \Big (1+n^2{\mathbf{1}(X_i\notin\Theta_M)}\Big )  \ge e^{\lambda} \Big ) \le  e^{-\lambda}  \mathbb{E} \Big [\prod_{i=1}^n  \Big (1+n^2{\mathbf{1}(X_i\notin\Theta_M)}\Big )  \Big ] \lesssim e^{-\lambda}.
\end{eqnarray*}
Combining these inequalities, we deduce that for all $t\ge 0$ and $\lambda>0$
\begin{equation} \label{eq:control-probability-t}
    \mathbb{P}(\He^2(\est, f_*)\ge t) \lesssim\exp\Big (C_{\ref{lm:guntu-entropy}}(\log n)^{d+1} +\frac{n\varepsilon_n + \lambda}{2} - \frac{nt}{2} \Big ) +  \exp (-\lambda),
\end{equation}
where the implicit constant only depends on $f_*$ and $d$.
Let
\[ t = \varepsilon_n+(2C_{\ref{lm:guntu-entropy}}+3  )\frac{(\log n)^{d+1}}{n} \quad,\quad 
\lambda = (\log n)^{d+1}\]
and plug into \eqref{eq:control-probability-t}, we get
\[ \mathbb{P}\Big (\He^2(\est, f_*)\ge   \varepsilon_n+(2C_{\ref{lm:guntu-entropy}}+3  )\frac{(\log n)^{d+1}}{n} \Big ) \lesssim \exp \Big ( - (\log n)^{d+1}  \Big ).\]
This yields \eqref{eq:thm-stability-Hellinger-1}
as desired $\square$


\subsubsection{ Proof of Theorem \ref{thm:stability} \ref{it:stabilitylognsqrtn}} \label{subsection:extra-assumption}
 Let $\gamma:= \varepsilon_n + C_1 (\log n)^{d+1}/n$, where $C_1,C_2$ are constants in Theorem \ref{thm:stability} (i). We now consider the event $\mathcal{E} = \{ \He^2(f_*,\est) \le \gamma\}$, which occurs with probability at least $1-C_2\exp(- (\log n)^{d+1})$.  
We have the following lemma:
\begin{lemma} \label{lm:assumption-M-Theta-tau}
    Let $Z$ be the standard Gaussian vector in $\mathbb{R}^d$, and let $r>0$ be such that $\mathbb{P}(\|Z\| \le r ) \ge 0.9$. Let 
    $$\widetilde{\Theta} := \{ x \in \mathbb{R}^d: \dist(x, \supp(\mu_*)) \le 2r \}.$$ 
    Then, if $\He^2(f_*,f) \le \frac{1}{2}(1-e^{-1})^2$, we have $f\in \mathcal{M}(\widetilde{\Theta};\tau)$ for $\tau>0.16$.
    In particular, on the event $\mathcal{E}$, we have $\est \in \mathcal{M}(\widetilde{\Theta};\tau)$. 
\end{lemma}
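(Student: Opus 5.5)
We argue by contradiction, or more precisely by a direct lower bound on $\mu_f(\widetilde{\Theta})$. Let $A := \{x : \dist(x,\supp(\mu_*)) \le r\}$.

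\textbf{Step 1: $f_*$ puts a lot of mass on $A$.} Write $X = \theta + Z$ with $\theta\sim\mu_*$ and $Z\sim N(0,I_d)$ independent. Since $\|Z\|\le r$ forces $X\in A$, we get
\[
\int_A f_* \ge 0.9 .
\]

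\textbf{Step 2: $f$ can only put a little mass on $A$ from far-away centres.} Decompose $\mu_f = \mu_f|_{\widetilde{\Theta}} + \mu_f|_{\widetilde{\Theta}^c}$ and set $p := \mu_f(\widetilde{\Theta})$, so that
\[
f = \int_{\widetilde{\Theta}} \phi(\cdot-\theta)\,\mu_f(\de\theta) + \int_{\widetilde{\Theta}^c} \phi(\cdot-\theta)\,\mu_f(\de\theta).
\]
For $\theta\notin\widetilde{\Theta}$ and $x\in A$, the triangle inequality gives $\|x-\theta\|>r$. Hence the Gaussian centred at $\theta$ gives $A$ mass at most $\mathbb{P}(\|Z\|>r)\le 0.1$, and therefore
\[
\int_A f \le p + 0.1(1-p).
\]

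\textbf{Step 3: Hellinger closeness transfers mass.} Use the standard bound, valid for any event $A$,
\[
\Big(\sqrt{P(A)}-\sqrt{Q(A)}\Big)^2 + \Big(\sqrt{P(A^c)}-\sqrt{Q(A^c)}\Big)^2 \le \He^2(f_*,f).
\]
This is the data-processing inequality for the Hellinger distance applied to the two-point partition. It gives
\[
\sqrt{\textstyle\int_A f} \ge \sqrt{0.9} - \He(f_*,f) \ge \sqrt{0.9} - (1-e^{-1})/\sqrt 2 .
\]
Numerically, $(1-e^{-1})/\sqrt2 \approx 0.447$ and $\sqrt{0.9}\approx 0.949$, so $\int_A f \ge (0.502)^2 \approx 0.252$.

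Combining with Step 2 gives $0.1 + 0.9p \ge 0.252$, i.e. $p \ge 0.168 > 0.16$. This shows $f\in\mathcal{M}(\widetilde{\Theta};\tau)$ with $\tau>0.16$.

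\textbf{Step 4: the event $\mathcal{E}$.} On $\mathcal{E}$ we have $\He^2(f_*,\est)\le\gamma$. Condition \eqref{eq:thm-stability-condition} of Theorem \ref{thm:stability} states exactly that $\gamma\le\frac12(1-e^{-1})^2$, so the first part applies to $f=\est$.

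\textbf{Main obstacle.} The only delicate point is the numerics in Step 3. If the crude data-processing bound were slightly lossy, one could instead use the sharper two-point Hellinger identity. The constants here do suffice, since $0.168>0.16$.
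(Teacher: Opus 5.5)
Your proof is correct and follows the same argument as the paper: compare the mass of $D(r)=\{x:\dist(x,\supp(\mu_*))\le r\}$ under $f_*$, which is at least $0.9$, with its mass under $f$, which is at most $\mu_f(D(2r))+0.1\,(1-\mu_f(D(2r)))$ because centres outside $D(2r)$ lie at distance more than $r$ from $D(r)$. The only difference is how the two masses are linked: the paper passes through total variation using $\TV(f_*,f)\le\sqrt2\,\He(f_*,f)<0.65$ and gets $\tau\ge 2-1.65/0.9\approx 0.167$, whereas you apply the Hellinger data-processing inequality to the partition $\{A,A^c\}$ and get $\approx 0.168$; both bounds exceed $0.16$.
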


 
Given Lemma \ref{lm:assumption-M-Theta-tau}, 
we now show \eqref{eq:thm-stability-KL-1}. To this end, we cite here a result in \cite{wong1995probability}, which relates $\He^2$ and $\KL$.

\begin{lemma}[Theorem 5 in \cite{wong1995probability}] \label{prop:shenwong} 
    Let $p,q$ be two densities such that 
    \[\He^2(p,q) \le \gamma \le   \frac{1}{2}(1-e^{-1})^2.\] 
    Suppose that
    \[  \int_{\{p/q \ge e^{1/\delta}\}} p \Big (\frac{p}{q}\Big )^{\delta} \le M_{\delta}< \infty, \quad \text{for some } \delta \in (0,1].\]
    Then there exists $C_{\ref{prop:shenwong}}>0$ (depending on $\delta$ and $M_{\delta}$) such that  
    \[ \KL(p\|q) =\int p \log \Big (\frac{p}{q}\Big ) \le C_{\ref{prop:shenwong}}  \gamma \log (\gamma^{-1}).\]
\end{lemma}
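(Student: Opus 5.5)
The plan is to write $r:=p/q$ and express the divergence as the integral of a nonnegative integrand. Since both $p$ and $q$ integrate to one,
\[
\KL(p\|q)=\int q\,\phi(r),\qquad \phi(r):=r\log r-r+1\ge 0 .
\]
I will split this integral at a threshold $K$ that depends on $\gamma$, rather than at the fixed level $e^{1/\delta}$ appearing in the hypothesis. The choice is $K:=\gamma^{-1/\delta}$. The constraint $\gamma\le\frac12(1-e^{-1})^2<e^{-1}$ guarantees $K\ge e^{1/\delta}$, so the region $\{r>K\}$ lies inside the region where the moment hypothesis is available. If $q=0$ on a set where $p>0$, then $r=\infty$ there and the hypothesis integral is infinite. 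The assumption $M_\delta<\infty$ therefore rules this out, and $r$ is finite $p$-a.e.

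\emph{Bounded-ratio region $\{r\le K\}$.} Here I will use the elementary inequality
\[
\phi(r)\le C\,(1+\log K)\,(\sqrt r-1)^2,\qquad 0\le r\le K,\ K\ge e,
\]
with $C$ an absolute constant. To verify it, note that near $r=1$ both sides behave like $(r-1)^2$ up to constants. For large $r$, $\phi(r)\sim r\log r$ while $(\sqrt r-1)^2\sim r$, so the ratio $\phi(r)/(\sqrt r-1)^2$ grows only like $\log r\le\log K$. For $r\in[0,1]$ the ratio is bounded. Multiplying by $q$ and using $q(\sqrt r-1)^2=(\sqrt p-\sqrt q)^2$, this region contributes at most
\[
C(1+\log K)\,\He^2(p,q)\le C\,(1+\delta^{-1}\log\gamma^{-1})\,\gamma .
\]

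\emph{Large-ratio region $\{r>K\}$.} Here I bound $q\phi(r)\le p\log r+q$. For the first term, $p\log r=p\,r^\delta\cdot(\log r)\,r^{-\delta}$, and $t\mapsto(\log t)\,t^{-\delta}$ is decreasing for $t\ge e^{1/\delta}$. Since $K\ge e^{1/\delta}$, this gives
\[
\int_{\{r>K\}}p\log r\le\frac{\log K}{K^\delta}\int_{\{r\ge e^{1/\delta}\}}p\,r^\delta\le \delta^{-1}\gamma\log(\gamma^{-1})\,M_\delta .
\]
For the second term, Markov gives $\int_{\{r>K\}}q\le K^{-1}\int_{\{r>K\}}p\le K^{-1}=\gamma^{1/\delta}\le\gamma$, using $\delta\le1$ and $\gamma<1$.

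Adding the two regions and using $\log\gamma^{-1}\ge1$ to absorb the lower-order $\gamma$ terms gives $\KL(p\|q)\le C(\delta,M_\delta)\,\gamma\log\gamma^{-1}$. The main obstacle is the choice of threshold. Splitting at the fixed level $e^{1/\delta}$ would leave a tail contribution of order $M_\delta$ that does not vanish with $\gamma$. Splitting at $K=\gamma^{-1/\delta}$ makes the tail contribution of size $\gamma\log\gamma^{-1}$, which exactly matches the $\log K$ loss on the bounded region. The only remaining check is the inequality for $\phi$, which is a one-variable calculus exercise.
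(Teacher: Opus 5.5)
Your proof is correct and complete. The paper itself gives no proof of this lemma; it is imported directly from Theorem 5 of \cite{wong1995probability}. Your self-contained argument is essentially the standard truncation argument behind that theorem:
\begin{itemize}
\item write $\KL(p\|q)=\int q\,\phi(p/q)$ and split at a cutoff $K$ that grows as $\gamma\downarrow0$;
\item on $\{p/q\le K\}$, pay a factor $1+\log K$ against $\He^2$;
\item on the tail, use that $t\mapsto t^{-\delta}\log t$ decreases on $[e^{1/\delta},\infty)$, so the tail is of order $K^{-\delta}\log K\cdot M_\delta$.
\end{itemize}
You identified the essential point correctly: the cutoff must depend on $\gamma$, not stay at $e^{1/\delta}$.

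There are two differences from the cited result worth recording.

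First, with $K=\gamma^{-1/\delta}$ your constant is $C(1+\delta^{-1})+\delta^{-1}M_\delta+1$, which is linear in $M_\delta$. Taking instead $K^\delta=\max(M_\delta,1)/\gamma$ (still $\ge\gamma^{-1}$, so all your steps go through) gives a constant of order $\delta^{-1}(1+\log^+M_\delta)$. That matches the logarithmic dependence on $M_\delta$ in Wong and Shen's original bound. This refinement does not matter here: the lemma allows any dependence on $M_\delta$, and in the paper's application $M_{1/2}$ is the fixed constant from Lemma~\ref{lm:condition-M-delta}.

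Second, your argument shows that the hypothesis $\gamma\le\frac12(1-e^{-1})^2$ is only used through $\gamma\le e^{-1}$. That is what guarantees $K\ge e^{1/\delta}$ and $\log\gamma^{-1}\ge1$.

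A few cosmetic points:
\begin{itemize}
\item The step you call Markov is really the pointwise bound $q<p/K$ on $\{p/q>K\}$.
\item The extra $+q$ in the tail can be dropped, since $q<p$ on that set and hence $q\,\phi(p/q)\le p\log(p/q)$ there.
\item The degenerate case $\gamma=0$, where $p=q$ a.e., should be set aside separately, since then $K=\infty$.
\end{itemize}
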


Roughly speaking, Lemma \ref{prop:shenwong} says that if two densities $p,q$ are close in $\He$-distance and satisfy the condition $M_{\delta}$ in Lemma \ref{prop:shenwong}, then $\KL(p\|q)$ is comparable to $\He^2(p,q)$. Our idea is to apply this lemma for $p=f_*$ and $q=\est$.  
The condition $M_{\delta}$ in Lemma \ref{prop:shenwong} will be verified by the following lemma, whose proof is given in Appendix~\ref{prf:appcondition-M-delta}:
\begin{lemma}\label{lm:condition-M-delta}
    There exists $C_{\ref{lm:condition-M-delta}}<\infty$ depending only on $\widetilde{\Theta}, d $ and $\tau$ such that
    \[ \int_{\mathbb{R}^d} f_*(x) \Big ( \frac{f_*(x)}{f(x)} \Big )^{1/2} \de x \le C_{\ref{lm:condition-M-delta}}, \quad \forall f \in \mathcal{M}(\widetilde{\Theta};\tau).\]
\end{lemma}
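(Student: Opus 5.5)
We need to bound $\int f_*^{3/2} f^{-1/2}$ uniformly over $f\in\mathcal{M}(\widetilde{\Theta};\tau)$. The plan is to obtain a pointwise lower bound on $f$ from the mass condition, and a matching pointwise upper bound on $f_*$ from the compact support of $\mu_*$. The resulting integrand then turns out to be integrable.

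\emph{Lower bound on $f$.} Let $R:=\sup_{\theta\in\widetilde{\Theta}}\|\theta\|<\infty$, which is finite since $\widetilde{\Theta}$ is compact (it is a $2r$-neighbourhood of the compact set $\supp(\mu_*)$). For any $f\in\mathcal{M}(\widetilde{\Theta};\tau)$ we drop the part of the mixing measure outside $\widetilde{\Theta}$:
\[
f(x)\ \ge\ (2\pi)^{-d/2}\int_{\widetilde{\Theta}} e^{-\|x-\theta\|^2/2}\,\mu_f(\de\theta)\ \ge\ \tau(2\pi)^{-d/2}e^{-(\|x\|+R)^2/2}.
\]
This bound depends only on $\tau$, $d$ and $\widetilde{\Theta}$, and in particular not on $f$.

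\emph{Upper bound on $f_*$.} Since $\supp(\mu_*)\subseteq\widetilde{\Theta}$, every $\theta$ in the support satisfies $\|x-\theta\|\ge(\|x\|-R)_+$. Hence
\[
f_*(x)\le(2\pi)^{-d/2}e^{-(\|x\|-R)_+^2/2}.
\]

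\emph{Combining.} Plugging both bounds into the integrand gives
\[
f_*^{3/2}f^{-1/2}\ \le\ \tau^{-1/2}(2\pi)^{-d/2}\exp\Big(-\tfrac34(\|x\|-R)_+^2+\tfrac14(\|x\|+R)^2\Big).
\]
For $\|x\|\ge R$ the exponent equals $-\tfrac12\|x\|^2+2R\|x\|-\tfrac12R^2$, which is a Gaussian-type decay in $\|x\|$. The integrand is therefore integrable over $\{\|x\|\ge R\}$. On the ball $\{\|x\|\le R\}$ the integrand is bounded, so the integral there is finite as well. The total integral is thus bounded by a constant $C_{\ref{lm:condition-M-delta}}$ that depends only on $R$ (i.e.\ on $\widetilde{\Theta}$), $d$ and $\tau$.

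The only step requiring care is the exponent comparison: the factor $\tfrac32$ on $f_*$ must dominate the factor $\tfrac12$ from $f^{-1/2}$ in the quadratic terms. It does, since $\tfrac34-\tfrac14=\tfrac12>0$. This is precisely why the choice $\delta=1/2$ (indeed any $\delta<\infty$) works. No other obstacle is expected.
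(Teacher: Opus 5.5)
Your proof is correct and follows essentially the same route as the paper: a pointwise lower bound on $f$ coming from the mass $\tau$ on the compact set $\widetilde{\Theta}$, followed by a check that $f_*^{3/2}f^{-1/2}$ has Gaussian decay. The paper uses the cruder bound $f(x)\gtrsim e^{-\|x\|^2}$ and then controls $f_*^{3/2}e^{\|x\|^2/2}$ by applying Young's inequality to $\langle x,\theta\rangle$ inside the mixture integral. Your sharper bound $e^{-(\|x\|+R)^2/2}$, paired with the pointwise tail bound on $f_*$, is cleaner, and it is what justifies your aside that any $\delta$ works; the paper's cruder bound only leaves room for $\delta<1$.
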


By Lemma \ref{lm:assumption-M-Theta-tau}, on the event $\mathcal{E}$ we have $\est \in \mathcal{M}(\widetilde{\Theta};\tau)$. Thus, the condition $M_{\delta}$ in Lemma \ref{prop:shenwong} for $f_*=p, \est=q$ and $\delta=1/2$ is satisfied by Lemma \ref{lm:condition-M-delta}.
By our assumption, $\He^2(f_*,\est) \le \gamma \le \frac{1}{2}(1-e^{-1})^2$. Applying Lemma \ref{prop:shenwong} yields:
\[ \KL(f_* \| \est) \lesssim \gamma \log (\gamma^{-1})  \lesssim \varepsilon_n \log (\min \{ \varepsilon_n^{-1}, n \}) + \frac{(\log n)^{d+2}}{n} , \]
here we used that $\log((a+b)^{-1}) \le \log (\min \{a^{-1},b^{-1}\})$. This yields \eqref{eq:thm-stability-KL-1}
$\square$


\subsection{ Stability in the restricted NPMLE: Proof of Theorem \ref{thm:restricted-npmle}}\label{subsubsec:stability_KL_dudley}

We observe that
\begin{eqnarray*}
    \KL(f_* \| \est) &=& - \int \log \Big (\frac{\est(x)}{f_*(x)}\Big) f_*(x)\de x \\
   &=&  -\frac{1}{n} \sum_{i=1}^n \log \Big (\frac{\est(X_i)}{f_*(X_i)} \Big ) 
   + \frac{1}{n} \sum_{i=1}^n \log \Big (\frac{\est(X_i)}{f_*(X_i)} \Big ) - \int \log \Big (\frac{\est(x)}{f_*(x)} \Big ) f_*(x)\de x \\
   &\le& L_n(f_*)- L_n(\est) + \Big |\frac{1}{n} \sum_{i=1}^n \log \Big (\frac{\est(X_i)}{f_*(X_i)} \Big ) - \int \log \Big (\frac{\est(x)}{f_*(x)} \Big ) f_*(x)\de x \Big | \\
   &\le& \varepsilon_n  + \sup_{f\in \mathcal{M}({\Theta};\tau)} \Big |\frac{1}{n} \sum_{i=1}^n \log \Big (\frac{f(X_i)}{f_*(X_i)} \Big ) - \int \log \Big (\frac{f(x)}{f_*(x)} \Big ) f_*(x)\de x \Big | ,
\end{eqnarray*}
where we used the assumption that $\est \in \mathcal{M}(\Theta;\tau)$ in the last line.
In view of Theorem \ref{thm:FiniteBracketingIntegral}, the bracketing entropy of the class $\mathcal{M}(\Theta;\tau)$
is well controlled. Therefore, a standard argument from empirical process theory, using the so-called Dudley's integral, yields the following:

\begin{lemma} \label{lm:dudley-bound}
    There exists a constant $C_{\ref{lm:dudley-bound}}=C_{\ref{lm:dudley-bound}}(d, {\Theta}, \tau)$ such that
    \[\mathbb{E} \Big [\sup_{f\in \mathcal{M}({\Theta};\tau)} \Big |\frac{1}{\sqrt{n}} \sum_{i=1}^n \log \Big (\frac{f(X_i)}{f_*(X_i)} \Big ) - \int \log \Big (\frac{f(x)}{f_*(x)} \Big ) f_*(x)\de x \Big | \Big ] \le C_{\ref{lm:dudley-bound}}.\]
\end{lemma}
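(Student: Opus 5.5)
We prove Lemma \ref{lm:dudley-bound} with the standard bracketing maximal inequality from empirical process theory (e.g.\ van der Vaart--Wellner, Theorem 2.14.2, or Lemma 3.4.2), applied to the centered class
\[
\mathcal{G} := \Big\{ g_f := \log f - \log f_* : f \in \mathcal{M}(\Theta;\tau)\Big\}.
\]
Write $\mathbb{G}_n g := n^{-1/2}\sum_{i=1}^n (g(X_i) - \E g(X_1))$. The quantity to bound is $\E\sup_{g\in\mathcal{G}}|\mathbb{G}_n g|$. The bracketing version of Dudley's bound gives
\[
\E\sup_{g\in\mathcal{G}}|\mathbb{G}_n g| \lesssim J_{[]}(\|F\|_{L^2(f_*)},\mathcal{G},L^2(f_*)) \;+\; \sqrt{n}\,\E\big[F(X_1)\mathbf{1}\{F(X_1)>\sqrt{n}\,a\}\big]
\]
(or, more simply, $\E^*\|\mathbb{G}_n\|_{\mathcal{G}}\lesssim J_{[]}(1,\mathcal{G},L^2(f_*))$ together with an envelope $F\in L^2(f_*)$). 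Here $F$ is an envelope and
\[
J_{[]}(\delta,\mathcal{G},L^2(f_*)) = \int_0^\delta\sqrt{1+\log N_{[]}(\varepsilon,\mathcal{G},L^2(f_*))}\,\de\varepsilon .
\]
The plan has three steps: bound the bracketing numbers, bound the entropy integral, and bound the envelope.

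\emph{Step 1: bracketing numbers.} Subtracting the fixed function $\log f_*$ maps brackets to brackets of the same $L^2(f_*)$-size, so $N_{[]}(\varepsilon,\mathcal{G},L^2(f_*)) = N_{[]}(\varepsilon,\log\mathcal{M}(\Theta;\tau),L^2(f_*))$. By Theorem \ref{thm:FiniteBracketingIntegral}, this is at most $\exp(C|\log\varepsilon|^{d+1})$ for $\varepsilon\le\eta$. For $\varepsilon\in(\eta,D]$, where $D$ bounds the $L^2(f_*)$-size of an envelope bracket, monotonicity gives the constant bound $N_{[]}(\varepsilon)\le N_{[]}(\eta)$. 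Alternatively, a single bracket $[-F,F]$ suffices once $\varepsilon \ge 2\|F\|_{L^2(f_*)}$.

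\emph{Step 2: the entropy integral.} The integral $\int_0^{D}\sqrt{1+C|\log\varepsilon|^{d+1}}\,\de\varepsilon$ is finite, since $|\log\varepsilon|^{(d+1)/2}$ is integrable near $0$. This gives a constant depending only on $d,\Theta,\tau$.

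\emph{Step 3: the envelope (main obstacle).} We need a square-integrable envelope $F\ge\sup_f|\log f-\log f_*|$. For the upper side, $\log f\le -\tfrac d2\log(2\pi)$ for every $f\in\mathcal{M}$, and $-\log f_*(x)\lesssim 1+\|x\|^2$ because $\mu_*$ lives on compact $\Theta$. For the lower side, $\mu_f(\Theta)\ge\tau$ gives
\[
f(x)\ge \tau(2\pi)^{-d/2}\exp\big(-\tfrac12(\|x\|+R)^2\big), \qquad R:=\sup_{\theta\in\Theta}\|\theta\|,
\]
so $-\log f(x)\le C(1+\|x\|^2)$ uniformly in the class. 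Together these give $F(x)=C(1+\|x\|^2)$. Since $f_*$ has Gaussian tails, $F\in L^2(f_*)$, and indeed $F$ has all moments. This uniform lower bound is exactly where the restriction to $\mathcal{M}(\Theta;\tau)$ enters, and it is also why the extreme brackets produced in Theorem \ref{thm:FiniteBracketingIntegral} can be taken inside $[-F,F]$.

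Combining the three steps, the maximal inequality gives $\E\sup_{\mathcal{G}}|\mathbb{G}_n g|\le C_{\ref{lm:dudley-bound}}$, uniformly in $n$. One technical point remains: measurability of the supremum. Continuity of $f$ in $\mu_f$ under weak convergence lets us reduce to a countable dense subclass. Otherwise the argument is a routine application of the maximal inequality once Theorem \ref{thm:FiniteBracketingIntegral} and the envelope bound are in hand.
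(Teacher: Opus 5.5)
Your proposal is correct and follows essentially the same route as the paper. Both arguments use the square-integrable envelope $C(1+\|x\|^2)$, which comes from the lower bound on $f$ forced by $\mu_f(\Theta)\ge\tau$, and plug it into the bracketing form of Dudley's bound (Corollary 19.35 of van der Vaart) together with the entropy estimate of Theorem \ref{thm:FiniteBracketingIntegral}. Your extra points, namely translation invariance of bracketing numbers under subtracting $\log f_*$, the range $\varepsilon>\eta$, and measurability of the supremum, fill in details the paper leaves implicit.
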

The detailed proof of this lemma can be found at Appendix~\ref{prf:appdudley-bound}.
Taking the expecation on both sides and utilizing Lemma \ref{lm:dudley-bound}, the theorem follows $\square$

\subsection{ Moment bounds in NPMLE: Proof of Theorem \ref{thm:L2mu_bound}} \label{subsec:proof-thm-L2mu}

\subsubsection{ A conditioning argument}

We begin with a simple observation that
\begin{equation} \label{eq:sketch-optloss}
   \sqrt{n}( \optloss - L_n(f_*)) = \frac{1}{\sqrt{n}} \sum_{i=1}^n \Big ( \log \npmle(X_i) - \log f_*(X_i) \Big ),
\end{equation}
where $\npmle$ is any NPMLE. 
Note that
the summands in \eqref{eq:sketch-optloss} are strongly dependent due to the dependence of $\npmle$ on the data $X_i$'s. This is essentially the source of the difficulty. 
Our idea is to consider the following subset 
$\mathcal{Q}_n \subset \mathcal{M}$ 
\begin{equation} \label{eq:def-Qn}
    \mathcal{Q}_n:=\Big \{f\in \mathcal{M}: \He^2(f,f_*) \le C_1 \frac{(\log n)^{d+1}}{n} \Big \},
\end{equation}
where $C_1$ is the constant in Theorem \ref{thm:stability} \ref{it:stabilitypolylognovern}. A direct consequence of Theorem \ref{thm:stability} \ref{it:stabilitypolylognovern} with $\varepsilon_n=0$ is that $\mathbb{P}(\npmle \notin \mathcal{Q}_n) \lesssim \exp (-(\log n)^{d+1})$.
This observation, together with Cauchy-Schwarz inequality, implies the following lemma:
\begin{lemma} \label{lm:not-in-qn}
    For $p=1,2$, we have
    \[\E\Big [|\sqrt{n} (\optloss - L_n(f_*)) |^p \mathbf{1}(\npmle \notin \mathcal{Q}_n)\Big ]  = o(1).\]
\end{lemma}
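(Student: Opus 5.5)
The plan is to apply Cauchy--Schwarz and then show that the $2p$-th moment of $\sqrt{n}(\optloss - L_n(f_*))$ grows at most polynomially in $n$. The super-polynomially small probability $\mathbb{P}(\npmle\notin\mathcal{Q}_n)$ then kills everything. Concretely,
\[
\E\Big[|\sqrt{n}(\optloss - L_n(f_*))|^p\mathbf{1}(\npmle\notin\mathcal{Q}_n)\Big]\le \Big(\E\big[n^{p}|\optloss - L_n(f_*)|^{2p}\big]\Big)^{1/2}\,\mathbb{P}(\npmle\notin\mathcal{Q}_n)^{1/2}.
\]
By Theorem \ref{thm:stability} \ref{it:stabilitypolylognovern} with $\varepsilon_n=0$, the second factor is at most $C\exp(-\tfrac12(\log n)^{d+1})$. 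For $d\ge 1$ this decays faster than any power of $n$, since $(\log n)^{d+1}\ge (\log n)^2$.

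It therefore suffices to show $\E|\optloss - L_n(f_*)|^{2p}\le C n^{k}$ for some fixed $k$ and $p\in\{1,2\}$. I would obtain this from deterministic two-sided bounds on $\optloss - L_n(f_*)$.

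\textbf{Lower bound.} Since $f_*\in\mathcal{M}$, we have $\optloss\ge L_n(f_*)$, so the difference is nonnegative.

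\textbf{Upper bound.} Every $f\in\mathcal{M}$ satisfies $f\le (2\pi)^{-d/2}$, so $\optloss\le -\tfrac d2\log(2\pi)$. For the other term, let $R:=\sup_{\theta\in\Theta}\|\theta\|$. Using $\|x-\theta\|^2\le 2\|x\|^2+2R^2$ gives
\[
-\log f_*(x)\le \tfrac d2\log(2\pi)+\|x\|^2+R^2 .
\]
Combining,
\[
0\le \optloss - L_n(f_*)\le \frac1n\sum_{i=1}^n\big(\|X_i\|^2+R^2\big).
\]

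\textbf{Moment estimate.} By Jensen, the $2p$-th moment of the right-hand side is at most $\E(\|X_1\|^2+R^2)^{2p}$. This is finite because $X_1=\theta+Z$ with $\theta\in\Theta$ bounded and $Z$ Gaussian, so all moments of $\|X_1\|$ are finite. Hence $\E[n^p|\optloss-L_n(f_*)|^{2p}]\le C n^{p}$.

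Putting the pieces together, the expectation in the lemma is $O\big(n^{p/2}\exp(-\tfrac12(\log n)^{d+1})\big)=o(1)$.

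The only point needing care is that $\optloss$ must be measurable and the bound must hold for every NPMLE. This is fine because $\optloss$ is a supremum that does not depend on the choice of maximizer, and the upper bound $f\le(2\pi)^{-d/2}$ is uniform over $\mathcal{M}$. I would also confirm that the event $\{\npmle\notin\mathcal{Q}_n\}$ is covered by Theorem \ref{thm:stability} for an arbitrary (measurably chosen) NPMLE. That theorem applies to any estimator satisfying \eqref{eq:epsLnucrit} with $\varepsilon_n=0$, so no real obstacle arises there.
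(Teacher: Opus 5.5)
Your proof is correct. It shares the Cauchy--Schwarz step and the tail bound $\mathbb{P}(\npmle\notin\mathcal{Q}_n)\lesssim e^{-(\log n)^{d+1}}$ with the paper, but it controls the $2p$-th moment by a genuinely different and simpler route.

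The paper bounds $\E|\optloss|^{2p}$ and $\E|L_n(f_*)|^{2p}$ separately. It uses the envelope $|\log f(x)|\le\|x\|^2+\sup_{\theta\in\supp(\mu_f)}\|\theta\|^2+C$. For $f=\npmle$ this requires the structural fact that the NPMLE's mixing measure lies in the convex hull of the data. That brings in $\max_i\|X_i\|^2$ and yields only a polynomially growing moment bound, and the paper's bookkeeping of that maximum inside the $2p$-th moment is somewhat loose.

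You instead use the sandwich $L_n(f_*)\le\optloss\le-\tfrac d2\log(2\pi)$. It needs nothing about the maximizer beyond $f_*\in\mathcal{M}$ and the uniform bound $f\le(2\pi)^{-d/2}$ on $\mathcal{M}$. It gives the deterministic estimate $0\le\optloss-L_n(f_*)\le\frac1n\sum_i(\|X_i\|^2+R^2)$, and hence an $n$-independent bound on $\E|\optloss-L_n(f_*)|^{2p}$.

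So your argument is more elementary and self-contained: no convex-hull support property is needed. It is also sharper: $O(n^{p/2})$ growth before the exponential factor, versus the paper's higher polynomial. The paper's pointwise bound on $|\log\npmle(X_i)|$ mirrors the convex-hull argument used for gradients in Lemma~\ref{lm:Dirichlet-control-1}, but buys nothing extra here. In both versions the factor $\exp(-\tfrac12(\log n)^{d+1})$ absorbs any polynomial growth, so the conclusion is the same.
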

The detailed proof of this lemma can be found at Appendix~\ref{prf:appnot-in-qn}.
We now consider the event $\{\npmle \in \mathcal{Q}_n\}$.
On this event, we have
\begin{equation} \label{eq:inq}
    \sqrt{n}|\optloss - L_n(f_*)|\mathbf{1}(\npmle \in \mathcal{Q}_n)\le \sup_{f\in \mathcal{Q}_n} \Big |\frac{1}{\sqrt{n}} \sum_{i=1}^n \Big ( \log f(X_i) - \log f_*(X_i) \Big ) \Big | .
\end{equation}
Thus, it suffices to show that
\begin{equation} \label{eq:empirical-process}
     \E \Big [ \sup_{f\in \mathcal{Q}_n} \Big |\frac{1}{\sqrt{n}} \sum_{i=1}^n \Big ( \log f(X_i) - \log f_*(X_i) \Big ) \Big |^p \Big ] = o(1), \quad p=1,2.
\end{equation}

\subsubsection{ The case $p=1$}

We first show \eqref{eq:empirical-process} for $p=1$.
For simplicity of notation, let $g_{f}(x):= \log f(x) - \log f_*(x)$ for $f\in \mathcal{M}$. Then
\begin{eqnarray} \label{eq:good-event}
    &&\E \Big [\sup_{f\in \mathcal{Q}_n} \Big |\frac{1}{\sqrt{n}} \sum_{i=1}^n g_{f}(X_i) \Big | \Big ] \nonumber\\
     &\le& \E \Big [ \sup_{f\in \mathcal{Q}_n} \Big |\frac{1}{\sqrt{n}} \sum_{i=1}^n \Big (g_{f}(X_i)  - \mathbb{E} g_{f}(X_i) \Big )\Big | \Big ] + \sqrt{n}\sup_{f\in \mathcal{Q}_n} |\mathbb{E} g_{f}(X_1)| \nonumber\\
      &=& \E \Big [\sup_{f\in \mathcal{Q}_n} \Big |\frac{1}{\sqrt{n}} \sum_{i=1}^n \Big (g_{f}(X_i)  - \mathbb{E} g_{f}(X_i) \Big )\Big | \Big ] + \sqrt{n}\sup_{f \in \mathcal{Q}_n}\KL (f_*\|f) .
\end{eqnarray}
The KL divergence term in \eqref{eq:good-event} is controlled by following technical lemma, whose proof is deferred to Appendix~\ref{prf:appKLdiv-Qn}.
\begin{lemma} \label{lem:KLdiv-Qn}
    There exists a constant $C>0$ depending only on $f_*$ and $d$ such that
    \[ \KL(f_*\|f) \le C \frac{(\log n)^{d+2}}{n}\]
    for all $n$ sufficiently large and for all $f \in \mathcal{Q}_n$.
\end{lemma}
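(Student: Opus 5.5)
The plan is to repeat the argument used for Theorem~\ref{thm:stability}~\ref{it:stabilitylognsqrtn}, now applied deterministically to every $f\in\mathcal{Q}_n$. Put $\gamma_n := C_1 (\log n)^{d+1}/n$. Every $f \in \mathcal{Q}_n$ then satisfies $\He^2(f_*,f)\le \gamma_n$. Since $\gamma_n\to 0$, there is an $n_0$ (depending only on $C_1$ and $d$) such that $\gamma_n \le \frac12(1-e^{-1})^2$ for all $n\ge n_0$.

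First, for $n \ge n_0$, Lemma~\ref{lm:assumption-M-Theta-tau} shows that every $f\in\mathcal{Q}_n$ lies in $\mathcal{M}(\widetilde{\Theta};\tau)$ with $\tau = 0.16$. Here $\widetilde{\Theta}$ is the $2r$-neighbourhood of $\supp(\mu_*)$, and it depends only on $\mu_*$ and $d$, not on $n$ or $f$.

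Second, Lemma~\ref{lm:condition-M-delta} gives, for all such $f$,
\[
\int f_* \bigl(f_*/f\bigr)^{1/2} \le C_{\ref{lm:condition-M-delta}}.
\]
Restricting the integral to the set $\{f_*/f\ge e^{2}\}$ only makes it smaller. Hence the hypothesis of Lemma~\ref{prop:shenwong} holds with $p=f_*$, $q=f$, $\delta = 1/2$, and $M_\delta = C_{\ref{lm:condition-M-delta}}$. This constant is uniform over $\mathcal{Q}_n$ and over $n$.

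Third, Lemma~\ref{prop:shenwong} applied with this $\gamma=\gamma_n$ yields
\[
\KL(f_*\|f)\le C_{\ref{prop:shenwong}}\,\gamma_n\log(\gamma_n^{-1}).
\]
Because $\gamma_n\ge 1/n$ for large $n$, we have $\log(\gamma_n^{-1})\le \log n$. This gives $\KL(f_*\|f)\le C_{\ref{prop:shenwong}}C_1 (\log n)^{d+2}/n$, and the constant depends only on $f_*$ and $d$ through $\widetilde{\Theta}$, $\tau$ and $C_1$.

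The only point needing care is uniformity. The constant in Lemma~\ref{prop:shenwong} depends only on $\delta$ and $M_\delta$, so I must check that $M_\delta$ does not depend on the particular $f\in\mathcal{Q}_n$. That is exactly what the uniform bound in Lemma~\ref{lm:condition-M-delta} over $\mathcal{M}(\widetilde{\Theta};\tau)$ supplies. So the main work is simply chaining these lemmas with $n$-independent parameters. No probabilistic step is needed, because $\mathcal{Q}_n$ is a deterministic set.
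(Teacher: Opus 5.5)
Your proposal is correct and is essentially the paper's proof. Both place every $f\in\mathcal{Q}_n$ in $\mathcal{M}(\widetilde{\Theta};\tau)$ via Lemma~\ref{lm:assumption-M-Theta-tau}, then apply Lemma~\ref{prop:shenwong} with $\gamma = C_1(\log n)^{d+1}/n$ and use $\log(\gamma^{-1})\le \log n$; the one difference is that you make explicit the uniform verification of the $M_\delta$ hypothesis through Lemma~\ref{lm:condition-M-delta} (with $\delta=1/2$), which the paper's one-line proof leaves implicit.
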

In particular, by Lemma \ref{lem:KLdiv-Qn}, we have $ \sqrt{n}\sup_{f \in \mathcal{Q}_n}\KL (f_*\|f) = o(1)$.
Thus, it suffices to show
 \[\mathbb{E} \Big [ \sup_{f\in \mathcal{Q}_n} \Big |\frac{1}{\sqrt{n}} \sum_{i=1}^n \Big (g_{f}(X_i)  - \mathbb{E} g_{f}(X_i) \Big )\Big | \Big ] = o(1).\]
To this end, we utilize the following classical result in empirical process theory, which is known as Dudley's entropy integral bound:

\begin{proposition}[Corollary 19.35 in \cite{van2000asymptotic}] \label{thm:sup-van2000}
    Let $(\mathcal{X}, \mathcal{A},\mathbf{P})$ be a probability space and $\mathcal{F}$ be a class of $L^2(\mathbf{P})$ functions on $\mathcal{X}$.  Let $X_1,\ldots,X_n$ be i.i.d. samples from $\mathbf{P}$. If $F$ is an envelope function of $\mathcal{F}$, then
    \[ \mathbb{E}  \Big [ \sup_{f\in\mathcal{F}} \Big |\frac{1}{\sqrt{n}}\sum_{i=1}^n \Big (f(X_i) - \mathbb{E}f(X_i)\Big )\Big | \Big ] \lesssim \int_0^{\|F\|_{L^2(\mathbf{P})}} \sqrt{\log \nbr(\varepsilon,\mathcal{F},L^2(\mathbf{P})) }\de \varepsilon,\]
    where $\nbr(\varepsilon,\mathcal{F},L^2(\mathbf{P}))$ denotes the bracketing number of $\mathcal{F}$.
\end{proposition}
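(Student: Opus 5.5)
The statement is the classical bracketing form of Dudley's bound. I would prove it by \emph{bracketing chaining with adaptive truncation}, as in the proof of Corollary 19.35 of \cite{van2000asymptotic}. Write $\mathbb{G}_n f:=n^{-1/2}\sum_{i=1}^n (f(X_i)-\mathbb{E}f(X_i))$ and $\|\cdot\|:=\|\cdot\|_{L^2(\mathbf{P})}$. Define
\[
J_{[]}(\delta):=\int_0^\delta\sqrt{1+\log \nbr(\varepsilon,\mathcal{F},L^2(\mathbf{P}))}\,\de\varepsilon,
\qquad
a(\delta):=\frac{\delta}{\sqrt{1+\log \nbr(\delta,\mathcal{F},L^2(\mathbf{P}))}} .
\]
We may assume the right-hand side is finite, and we may replace $\log N$ by $1+\log N$ at the cost of a constant, since $\int_0^{\|F\|}1\,\de\varepsilon=\|F\|$ is itself dominated by the integral once $N\ge 2$; the case $N=1$ is trivial. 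The proof has two steps: a localized maximal inequality, and then its specialization at $\delta=\|F\|$.

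\textbf{Step 1 (localized maximal inequality).} The key intermediate result I would aim for is
\begin{equation*}
\mathbb{E}\sup_{f\in\mathcal{F},\,\|f\|<\delta}|\mathbb{G}_n f|\;\lesssim\; J_{[]}(\delta)+\sqrt{n}\,\mathbb{E}\big[F(X)\mathbf{1}\{F(X)>\sqrt{n}\,a(\delta)\}\big].
\end{equation*}
The proof proceeds as follows.

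\begin{enumerate}[label=(\alph*)]
\item For $\delta_q=\delta 2^{-q}$, take brackets of $L^2$-size $\delta_q$ with $N_q=\nbr(\delta_q,\mathcal{F},L^2(\mathbf{P}))$ elements. Refine them into nested partitions $\{\mathcal{F}_{qi}\}$ whose cardinality is at most $N_1\cdots N_q$. Each piece has a representative $f_{qi}$ and an envelope of differences $\Delta_{qi}\ge\sup_{f,g\in\mathcal{F}_{qi}}|f-g|$ with $\|\Delta_{qi}\|\le\delta_q$.
\item Set truncation levels $a_q=a(\delta_q)$, with $\log N_{q+1}$ in place of $\log N(\delta_q)$. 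Let $\pi_q f$ and $\Delta_q f$ denote the representative and envelope of the level-$q$ cell containing $f$.
\item Define the events $A_{q-1}f=\{\Delta_{q_0}f\le\sqrt n a_{q_0},\dots,\Delta_{q-1}f\le\sqrt n a_{q-1}\}$ and $B_qf=A_{q-1}f\cap\{\Delta_qf>\sqrt n a_q\}$.
\item Decompose telescopically:
\[
f-\pi_{q_0}f=(f-\pi_{q_0}f)\mathbf{1}_{B_{q_0}f}+\sum_{q>q_0}(f-\pi_qf)\mathbf{1}_{B_qf}+\sum_{q>q_0}(\pi_qf-\pi_{q-1}f)\mathbf{1}_{A_{q-1}f}.
\]
The nestedness of the partitions is what makes this identity valid.
\end{enumerate}

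I then bound each piece using Bernstein's maximal inequality for finitely many bounded functions:
\[
\mathbb{E}\max_{i\le m}|\mathbb{G}_n f_i|\lesssim \max_i\frac{\|f_i\|_\infty\log(1+m)}{\sqrt n}+\max_i\|f_i\|\sqrt{\log(1+m)} .
\]
\begin{enumerate}[label=(\alph*)]
\item The base term $\pi_{q_0}f$ ranges over at most $N_{q_0}$ functions. With envelope truncation at $\sqrt n a_{q_0}$, it contributes $\lesssim\delta\sqrt{\log N_{q_0}}$ plus a truncation remainder that goes into the second term of the bound.
\item The links $(\pi_qf-\pi_{q-1}f)\mathbf{1}_{A_{q-1}f}$ are bounded by $\sqrt n a_{q-1}$, have $L^2$-norm $\le\delta_{q-1}$, and take at most $N_1\cdots N_q$ distinct values. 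Bernstein gives $\lesssim\delta_{q-1}\sqrt{\log N_q}$ at level $q$, and summing over $q$ yields $\lesssim J_{[]}(\delta)$.
\item The $B$-terms are handled without chaining. Using $|\mathbb{G}_n g|\le|\mathbb{G}_n h|+2\sqrt n\,\mathbb{E}h$ for $|g|\le h$, together with $\mathbb{E}[\Delta_q f\,\mathbf{1}\{\Delta_qf>\sqrt n a_q\}]\le\delta_q^2/(\sqrt n a_q)$, the level-$q$ contribution becomes $\delta_q\sqrt{\log N_{q+1}}$, which again sums to $J_{[]}(\delta)$.
\end{enumerate}

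\textbf{Step 2 (specialization).} Apply Step 1 with $\delta=\|F\|$; since $|f|\le F$, every $f\in\mathcal{F}$ satisfies $\|f\|\le\|F\|$, so the localization is vacuous (use $\delta$ slightly larger, or $\le$). By Markov's inequality,
\[
\sqrt n\,\mathbb{E}\big[F\mathbf{1}\{F>\sqrt n a\}\big]\le \frac{\mathbb{E}F^2}{a}=\|F\|\sqrt{1+\log N(\|F\|)} .
\]
Since $\varepsilon\mapsto N(\varepsilon)$ is nonincreasing, the last quantity is at most $J_{[]}(\|F\|)$. This gives the claim.

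The main obstacle is the bookkeeping in Step 1. One must ensure that nested partitions exist with the stated cardinalities, that the telescoping identity holds exactly on the truncation events, and, most delicately, that the truncation level at each scale is matched to $\log N_{q+1}$. This matching is what balances the $\|\cdot\|_\infty\log m/\sqrt n$ term in Bernstein's inequality against the $\|\cdot\|_2\sqrt{\log m}$ term. I would follow the bracketing-chaining template of \cite{van2000asymptotic} (Lemma 19.34) line by line for this step.
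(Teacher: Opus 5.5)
Your Steps 1 and 2 are the standard bracketing-chaining proof of Lemma 19.34 and Corollary 19.35 in van der Vaart. That is exactly what the paper relies on: it gives no proof of its own, only the citation. As a proof of the bound with integrand $\sqrt{1+\log \nbr(\varepsilon,\mathcal{F},L^2(\mathbf{P}))}$, they are sound.

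One bookkeeping point in Step 1(b): at level $q$ the links take up to $N_1\cdots N_q$ values. Bernstein therefore gives $\delta_{q-1}\sqrt{\log(N_1\cdots N_q)}$, not $\delta_{q-1}\sqrt{\log N_q}$. This is repaired by $\sqrt{\sum_{p\le q}\log N_p}\le\sum_{p\le q}\sqrt{\log N_p}$ and interchanging the sums, using $\sum_{q\ge p}\delta_q\le 2\delta_p$.

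The genuine gap is your opening reduction from $\sqrt{1+\log N}$ back to $\sqrt{\log N}$. Your claim that $\|F\|\lesssim\int_0^{\|F\|}\sqrt{\log \nbr(\varepsilon)}\,\de\varepsilon$ ``once $N\ge 2$'' is false. The case $N=1$ is not trivial either; it is a counterexample.
\begin{itemize}
\item Take $\mathcal{F}=\{h\}$ with $h$ not a.s.\ constant. The zero-width bracket $[h,h]$ gives $\nbr(\varepsilon)=1$ for every $\varepsilon>0$. So the right-hand side is $0$, while the left-hand side is $\E|\mathbb{G}_n h|>0$.
\item Take $\mathcal{F}=\{h,h+\eta\}$ with $\eta\ge 0$. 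Then $\nbr(\varepsilon)=1$ for all $\varepsilon\ge\|\eta\|$, so the right-hand side is at most $\|\eta\|\sqrt{\log 2}$. This can be made arbitrarily small, while the left-hand side stays $\ge\E|\mathbb{G}_n h|$. So no universal constant works even when $N\ge 2$ at small scales.
\end{itemize}
Bracketing entropy only controls oscillations $\mathbb{G}_n(f-f_0)$, never the level $\mathbb{G}_n f_0$ of a single function. The ``$1+$'' is precisely the term that pays for that level.

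Hence the inequality as literally displayed cannot be proved. What your argument establishes is the version with $\sqrt{1+\log N}$. This is also what the cited corollary means: there $\log$ must be read as $1\vee\log$, or as $1+\log$ in van der Vaart--Wellner's $J_{[]}$. You should drop the reduction and state that version. It costs nothing in the paper, since $\sqrt{1+\log N}\le 1+\sqrt{\log N}$ adds at most $\|F\|_{L^2(\mathbf{P})}$ to the bound. That quantity is finite in Lemma~\ref{lm:dudley-bound} and equals $\|G_n\|_{L^2(f_*)}\to 0$ in the proof of Theorem~\ref{thm:L2mu_bound}.
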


Recall that function $F$ is called an \emph{envelope function} of a class $\mathcal{F}$ if $F\ge |f|$ pointwisely for all $f\in \mathcal{F}$. Consider the function class $\mathcal{G}_n:=\{g_f(x)=\log(f(x)/f_*(x)): f \in \mathcal{Q}_n\}$. 
A natural candidate for an envelope function of $\mathcal{G}_n$ is
\begin{equation} \label{eq:envelope}
    G_n(x):= \sup_{f \in \mathcal{Q}_n} |g_{f}(x)|, \quad x\in \mathbb{R}^d.
\end{equation}
The properties of this envelope function are encoded in the following lemma, whose proof is detailed in Appendix~\ref{prf:appenvelopeFunction}:
\begin{lemma} \label{lem:envelopeFunction}
    Let $G_n$ be defined as in \eqref{eq:envelope}. Then $G_n \in L^2(f_*)$, nonnegative, decreasing and $\lim_{n\rightarrow \infty} G_n(x) = 0$ everywhere. In particular, $\|G_n\|_{L^2(f_*)}$ decreases to $0$.
\end{lemma}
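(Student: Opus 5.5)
The plan is to verify the four properties one at a time: nonnegativity, monotonicity in $n$, an integrable quadratic envelope, and pointwise convergence. The last claim, that $\|G_n\|_{L^2(f_*)}$ decreases to $0$, will then follow from dominated convergence.

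\textbf{Nonnegativity, measurability and monotonicity.} Nonnegativity is immediate from \eqref{eq:envelope}. Each $g_f$ is continuous, since Gaussian mixtures are continuous and strictly positive. Hence $G_n$ is a supremum of continuous functions, so it is lower semicontinuous and in particular measurable. For monotonicity, note that $n\mapsto (\log n)^{d+1}/n$ is decreasing for $n\ge e^{d+1}$. Therefore $\mathcal{Q}_{n+1}\subset \mathcal{Q}_n$ for such $n$, and so $G_{n+1}\le G_n$ pointwise. We only care about large $n$ here.

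\textbf{Square integrability.} I would build a uniform quadratic envelope. For $n$ large, every $f\in\mathcal{Q}_n$ satisfies $\He^2(f,f_*)\le \frac12(1-e^{-1})^2$. Lemma \ref{lm:assumption-M-Theta-tau} then gives $\mu_f(\widetilde{\Theta})\ge \tau>0.16$, with $\widetilde\Theta$ compact. Let $R:=\sup_{\theta\in\widetilde\Theta}\|\theta\|$. Then
\[
(2\pi)^{-d/2}\ \ge\ f(x)\ \ge\ \tau (2\pi)^{-d/2}\exp\big(-(\|x\|+R)^2/2\big).
\]
The same two-sided bound holds for $f_*$ with $\tau=1$. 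Consequently $G_n(x)\le C(1+\|x\|^2)$ with $C$ independent of $n$ and $f$. This function lies in $L^2(f_*)$ because $f_*$ has Gaussian tails: $f_*(x)\le (2\pi)^{-d/2}e^{-(\|x\|-R_*)_+^2/2}$ for a compact support of radius $R_*$. This gives $G_n\in L^2(f_*)$, and it also supplies the dominating function needed at the end.

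\textbf{Pointwise convergence (the main step).} I would convert Hellinger closeness into uniform closeness.

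First, $\|f-f_*\|_{L^1}\le 2\He(f,f_*)\le 2\sqrt{C_1(\log n)^{d+1}/n}=:\delta_n\to0$.

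Second, every $f\in\mathcal M$ is $L$-Lipschitz with $L=\sup\|\nabla\phi\|$, a constant depending only on $d$, because $\nabla f=\int\nabla\phi(\cdot-\theta)\,\mu(\de\theta)$.

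Third, suppose $|f(x_0)-f_*(x_0)|=s$. Then $|f-f_*|\ge s/2$ on the ball of radius $s/(4L)$ around $x_0$. Integrating gives $c_d s^{d+1}/L^d\le \delta_n$, so $\sup_x|f-f_*|\le C\delta_n^{1/(d+1)}$ uniformly over $f\in\mathcal{Q}_n$.

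Finally, fix $x$. Since $f_*(x)>0$, we have $|\log f(x)-\log f_*(x)|\le |f(x)-f_*(x)|/\min\{f(x),f_*(x)\}$. For large $n$ the denominator is at least $f_*(x)/2$, so this quantity tends to $0$ uniformly over $\mathcal Q_n$. Hence $G_n(x)\to0$.

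The only delicate points are the $L^1\to L^\infty$ upgrade and the uniform lower bound on $f$ from Lemma \ref{lm:assumption-M-Theta-tau}. Both are handled above.

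\textbf{Conclusion.} By dominated convergence with the quadratic dominating function, $\|G_n\|_{L^2(f_*)}\to0$. Monotonicity of $G_n$ makes the norms monotone decreasing.
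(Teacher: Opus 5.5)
Your proof is correct and follows the paper's structure: a uniform quadratic envelope for $\log f$ over $\mathcal{Q}_n$ from Lemma~\ref{lm:assumption-M-Theta-tau}, nestedness of $\mathcal{Q}_n$ for monotonicity, and uniform Lipschitz continuity of Gaussian mixtures to turn Hellinger smallness into pointwise smallness. The main difference is in that last step: the paper argues by contradiction using the $1/2$-H\"older continuity of $\sqrt{f_n}$ near $x_0$, while your direct $L^1\to L^\infty$ upgrade gives an explicit uniform rate $\sup_x|f-f_*|\lesssim \delta_n^{1/(d+1)}$ over $\mathcal{Q}_n$; you are also more careful than the paper in noting that $\mathcal{Q}_{n+1}\subset\mathcal{Q}_n$ holds only for $n\ge e^{d+1}$, and in making the final dominated-convergence step explicit.
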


By the Dudley's entropy bound (i.e., Proposition \ref{thm:sup-van2000}), we have 
\begin{equation} \label{eq:sup-proc}
    \mathbb{E} \Big [ \sup_{f\in \mathcal{Q}_n} \Big |\frac{1}{\sqrt{n}} \sum_{i=1}^n \Big (g_{f}(X_i)  - \mathbb{E} g_{f}(X_i) \Big )\Big | \Big ] \lesssim \int_0^{\|G_n\|_{L^2(f_*)}} \sqrt{\log \nbr(\varepsilon, \mathcal{G}_n, L^2(f_*))} \de \varepsilon.
\end{equation}
To control the bracketing entropy of $\mathcal{G}_n$, we need the following lemma, argued in Appendix~\ref{prf:appbracketing-entropy}:
\begin{lemma} \label{lm:bracketing-entropy}
    Let $\Theta$ be a compact set such that its interior contains the support of $\mu_*$. Then, for any $\tau\in(0,1)$, there exists $N$ such that $\mathcal{Q}_n \subset \mathcal{M}(\Theta;\tau)$ for all $n\ge N$. 
\end{lemma}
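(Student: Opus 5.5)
The plan is to reduce the statement to Lemma~\ref{lm:assumption-M-Theta-tau}, but with a more careful choice of radius so that an arbitrary $\tau\in(0,1)$ and an arbitrary compact $\Theta$ whose interior contains $\supp(\mu_*)$ can be handled. Fix such $\Theta$ and $\tau$. Since $\supp(\mu_*)$ is compact and contained in the open set $\mathrm{int}\,\Theta$, there is $\rho>0$ with $\{x:\dist(x,\supp(\mu_*))\le 2\rho\}\subset\Theta$. Every $f\in\mathcal{Q}_n$ satisfies $\He^2(f,f_*)\le \delta_n:=C_1(\log n)^{d+1}/n\to0$. It therefore suffices to prove the following: for every $\tau<1$ and $\rho>0$ there is $\delta>0$ such that $\He^2(f,f_*)\le\delta$ implies $\mu_f(\{\dist(\cdot,\supp\mu_*)\le 2\rho\})\ge\tau$. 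Once this holds, take $N$ with $\delta_n\le\delta$ for $n\ge N$.

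To prove this claim, write $S_\rho:=\{x:\dist(x,\supp\mu_*)\le\rho\}$ and $A:=\{\theta:\dist(\theta,\supp\mu_*)\le2\rho\}$. Split $f=f_{\mu_f|_A}+f_{\mu_f|_{A^c}}$. The key steps are as follows.

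First, $f_*$ puts almost all its mass near its support in a weak sense. More importantly, pick a region where $f_*$ dominates the far part of $f$. For $\theta\notin A$ and $x\in S_\rho$ we have $\|x-\theta\|\ge\rho$. This alone is too weak: Gaussians have unbounded support, so a far component still overlaps $S_\rho$. Instead I would use a ratio argument, as in the proof of Lemma~\ref{lm:assumption-M-Theta-tau}. Hellinger closeness gives $\int\sqrt{f f_*}\ge 1-\delta/2$, so by Cauchy--Schwarz,
\[
\int\sqrt{f_{\mu_f|_{A^c}}f_*}\ \ge\ 1-\tfrac{\delta}{2}-\sqrt{\mu_f(A)}.
\]
The left side is therefore large if $\mu_f(A)$ is small. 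The goal is to bound it above by $\sqrt{\mu_f(A^c)}\,\kappa$ with a constant $\kappa<1$. By Cauchy--Schwarz on the mixing measure, it is at most
\[
\sqrt{\mu_f(A^c)}\,\sup_{\theta\notin A}\mathrm{BC}\bigl(\phi(\cdot-\theta),f_*\bigr)^{1/2}\cdot(\text{normalization}).
\]
A cleaner route is concavity: $\int\sqrt{f_{\nu}f_*}\le\sqrt{\nu(\R^d)}\,\sup_{\theta\in\supp\nu}\sqrt{\int\sqrt{\phi_\theta f_*}\,\cdots}$. I expect some care is needed to get the right form. The target intermediate bound is
\[
\int\sqrt{f_{\mu_f|_{A^c}} f_*}\le\sqrt{\mu_f(A^c)}\,\kappa_\rho,\qquad \kappa_\rho:=\sup_{\theta\notin A}\mathrm{BC}(\phi(\cdot-\theta),f_*)<1 .
\]
Here $\kappa_\rho<1$ follows from continuity and compactness, with $\mathrm{BC}\to0$ as $\|\theta\|\to\infty$, and the fact that $\phi(\cdot-\theta)\neq f_*$ for $\theta\notin A$. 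The reason is that $f_*=\phi(\cdot-\theta)$ would force $\mu_*=\delta_\theta$, by identifiability of Gaussian location mixtures, i.e. deconvolution.

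Combining the two bounds, with $m=\mu_f(A)$,
\[
1-\tfrac{\delta}{2}\le \sqrt{m}+\kappa_\rho\sqrt{1-m}\cdot(\ldots).
\]
Using the sharper joint Cauchy--Schwarz
\[
\int\sqrt{ff_*}\le\int\sqrt{f_{\mu|_A}f_*}+\int\sqrt{f_{\mu|_{A^c}}f_*}\le\sqrt m+\kappa_\rho\sqrt{1-m},
\]
letting $\delta\to0$ forces $\sqrt m+\kappa_\rho\sqrt{1-m}\to 1$. The maximum of this function over $m\in[0,1]$ is $\sqrt{1+\kappa_\rho^2}$, which is attained at an interior point, so it does not immediately force $m\to1$. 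This is the main obstacle. I would fix it by replacing the crude bound $\int\sqrt{f_{\mu|_A}f_*}\le\sqrt m$ with the exact quantity $\sqrt{m}\,\mathrm{BC}(f_{\bar\mu_A},f_*)$, where $\bar\mu_A$ is the normalized restriction. I would then apply the bound for a two-component convex decomposition, $\mathrm{BC}(ma+(1-m)b,c)\le\sqrt{m\,\mathrm{BC}(a,c)^2+\ldots}$. If that fails, the fallback is a more robust argument: tightness plus weak convergence. As $\delta\to0$, $f\to f_*$ in $L^1$. Gaussian deconvolution is continuous in the sense that $\mu_f\to\mu_*$ vaguely, via Fourier transforms $\hat f=\hat\mu_f\,e^{-|\xi|^2/2}$, with no mass escaping since $f\to f_*$ in $L^1$ gives tightness of $\mu_f$. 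Then the portmanteau theorem with the open set $\mathrm{int}A\supset\supp\mu_*$ gives $\liminf\mu_f(\mathrm{int}A)\ge1>\tau$. A compactness-by-contradiction argument over sequences $f_k$ with $\He(f_k,f_*)\to0$ then yields the uniform $\delta$.
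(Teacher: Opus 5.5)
Your fallback argument is correct and is essentially the paper's proof. The paper argues by contradiction along a sequence $f_k$ with $\He(f_k,f_*)\to 0$ and $\mu_{f_k}(\Theta)<\tau$. It passes to $L^1$ convergence, uses that the Gaussian characteristic function never vanishes to get weak convergence of the mixing measures, and concludes with the portmanteau theorem on an open set containing $\supp(\mu_*)$. Your tightness remark (or L\'evy's continuity theorem applied to $\hat\mu_{f_k}\to\hat\mu_*$) supplies the step the paper leaves implicit. Your auxiliary set $A$ is unnecessary, since the paper applies portmanteau to $\mathrm{int}\,\Theta$ directly, but it is harmless.

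You should discard the Bhattacharyya route entirely rather than try to repair it. Besides the obstacle you noticed, its key intermediate bound $\int\sqrt{f_{\mu_f|_{A^c}}f_*}\le\sqrt{\mu_f(A^c)}\,\kappa_\rho$ is false. The map $\nu\mapsto\mathrm{BC}(f_\nu,f_*)$ is concave, so Jensen only gives the lower bound $\mathrm{BC}(f_\nu,f_*)\ge\int\mathrm{BC}(\phi(\cdot-\theta),f_*)\,\nu(\de\theta)$. Mixtures of far atoms can therefore be much closer to $f_*$ than any single atom.

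For example, take $d=1$, $\mu_*=\delta_0$, $A=[-2\rho,2\rho]$ and $\nu=\tfrac12(\delta_a+\delta_{-a})$ with $a=3\rho$. Then $\mathrm{BC}(f_\nu,f_*)=1-a^4/16+O(a^6)$, while $\kappa_\rho=e^{-\rho^2/2}=1-a^2/18+O(a^4)$. So the bound fails for small $\rho$.

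Any quantitative version of that route would need $\sup\{\mathrm{BC}(f_\nu,f_*):\nu(A)\le\tau\}<1$. That is itself only available through the compactness/deconvolution argument, so the fallback is the actual proof.
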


By Lemma \ref{lm:bracketing-entropy}, we would have $\mathcal{Q}_n \subset \mathcal{M}(\Theta;\tau)$ for large enough $n$. Consequently, for large enough $n$
    \[\log \nbr(\varepsilon,\mc{G}_n, {L^2(f_*)}) \le \log \nbr(\varepsilon,\log \mathcal{M}(\Theta;\tau), {L^2(f_*)}) \lesssim \Big (\log\frac{1}{\varepsilon} \Big)^{d+1}. \]
Thus, for large enough $n$
\begin{align} \label{eq:expectationlesslogeps}
    \mathbb{E} \Big [ \sup_{f\in \mathcal{Q}_n} \Big |\frac{1}{\sqrt{n}} \sum_{i=1}^n \Big (g_{f}(X_i)  - \mathbb{E} g_{f}(X_i) \Big )\Big | \Big ] \lesssim \int_0^{\|G_n\|_{L^2(f_*)}}  \bigg(\log\frac{1}{\varepsilon}\bigg)^{d+1} \de \varepsilon.
\end{align}
Since the RHS in \eqref{eq:expectationlesslogeps} is integrable near $0$ and $\|G_n\|_{L^2(f_*)}$ converges to $0$ as $n$ goes to $\infty$ (by Proposition~\ref{lem:envelopeFunction} ), the RHS in \eqref{eq:expectationlesslogeps} is $o(1)$. This proves the claim \eqref{eq:empirical-process} for $p=1$.

\subsubsection{ The case $p=2$}
Now we prove the claim \eqref{eq:empirical-process} for $p=2$. Let
\[Y_n := \sup_{f\in \mathcal{Q}_n} \Big |\frac{1}{\sqrt{n}} \sum_{i=1}^n  g_f(X_i)  \Big |.\]
Note that $\E[Y_n^2]=\E[Y_n]^2 + \var[Y_n]$. Since $\E[Y_n]=o(1)$, to show $\E[Y_n^2] =o(1)$ it suffices to show $\var[Y_n] = o(1)$. Thus, we only need to show
\[ \var \Big [  \sup_{f\in \mathcal{Q}_n} \Big |\frac{1}{\sqrt{n}} \sum_{i=1}^n  g_f(X_i)  \Big |\Big ] = o(1).\]
By Theorem 11.1 in \cite{boucheron2013concentration}, we have
    \begin{eqnarray*}
        \var  \Big [ \sup_{f \in \mathcal{Q}_n} \Big | \frac{1}{\sqrt{n}} \sum_{i=1}^n g_{f}(X_i) \Big |  \Big ] \le \mathbb{E}[\sup_{f\in \mathcal{Q}_n} |g_{f}(X_1)|^2] = \mathbb{E}|G_n(X_1)|^2 = \|G_n\|^2_{L^2(f_*)} = o(1),
    \end{eqnarray*}
    by Lemma \ref{lem:envelopeFunction}. This completes the proof. $\square$


\subsection{ Fluctuations in NPMLE: Proof of Theorem \ref{thm:anti-supercon}} \label{subsec:proof-thm-anticon}
 
Recall that for given data $(x_1,\ldots,x_n)\in \mathbb{R}^{dn}$, we have
\[ \optloss (x_1,\ldots,x_n) = \sup_{f\in \mathcal{M}} \Big \{L_n(f)(x_1,\ldots,x_n) \Big \}= \sup_{f\in\mathcal{M}} \Big \{ \frac{1}{n} \sum_{i=1}^n \log f(x_i) \Big \}.\]
We first show that $\optloss$ is differentiable in $(x_1,\ldots,x_n)$.
Note that while each each $L_n(f)$ is differentiable (in the data $(x_1,\ldots, x_n)$), there is no a-priori reason why their supremum is also differentiable. To this end, we observe in Appendix~\ref{prf:appconvex} that

\begin{lemma}\label{lm:convex}
    The function 
    \[(x_1,\ldots,x_n) \mapsto \optloss(x_1,\ldots,x_n) + \frac{1}{2n}\sum_{i=1}^n \|x_i\|^2\] 
    is convex on $\mathbb{R}^{dn}$. Consequently, $\nabla \optloss$ makes sense a.e. on $\mathbb{R}^{dn}$.
\end{lemma}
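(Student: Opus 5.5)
The plan is to write the objective as a supremum of functions that each become convex after adding the same quadratic. Fix $f=f_\mu\in\mathcal{M}$. For each data point,
\[
\log f(x)+\tfrac12\|x\|^2 = -\tfrac d2\log(2\pi)+\log\int_{\R^d}\exp\Big(\langle x,\theta\rangle-\tfrac12\|\theta\|^2\Big)\,\mu(\de\theta),
\]
since $-\tfrac12\|x-\theta\|^2+\tfrac12\|x\|^2=\langle x,\theta\rangle-\tfrac12\|\theta\|^2$. The right-hand side is, up to a constant, the log-Laplace transform (cumulant generating function) of the finite measure $e^{-\|\theta\|^2/2}\mu(\de\theta)$, evaluated at $x$. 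I will show it is convex in $x$ by Hölder's inequality. For $x,y\in\R^d$ and $s\in[0,1]$, write $w(\theta)=e^{\langle (1-s)x+sy,\theta\rangle}=\big(e^{\langle x,\theta\rangle}\big)^{1-s}\big(e^{\langle y,\theta\rangle}\big)^{s}$. Integrating against $e^{-\|\theta\|^2/2}\mu(\de\theta)$ and applying Hölder with exponents $1/(1-s)$ and $1/s$ gives log-convexity. All integrals are finite because $e^{\langle x,\theta\rangle-\|\theta\|^2/2}$ is bounded in $\theta$.

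Next I sum over the data points. For each fixed $f$, the map $(x_1,\dots,x_n)\mapsto L_n(f)(x_1,\dots,x_n)+\frac1{2n}\sum_i\|x_i\|^2=\frac1n\sum_i\big(\log f(x_i)+\tfrac12\|x_i\|^2\big)$ is a sum of convex functions of separate blocks of variables, hence convex on $\R^{dn}$. Taking the supremum over $f\in\mathcal{M}$ keeps convexity, because a pointwise supremum of convex functions is convex. This gives convexity of $\optloss+\frac1{2n}\sum_i\|x_i\|^2$, provided this supremum is finite everywhere.

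Finiteness is the one point needing care. Since $f\le(2\pi)^{-d/2}$, we have $\optloss\le -\tfrac d2\log(2\pi)$. Also $\optloss\ge L_n(\phi)>-\infty$, where $\phi$ is the density with mixing measure $\delta_0$. So the function is a finite convex function on all of $\R^{dn}$.

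The consequence then follows from standard convex analysis. A finite convex function on $\R^{dn}$ is locally Lipschitz, and by Rademacher's theorem (or Alexandrov's theorem for convex functions) it is differentiable Lebesgue-a.e. Subtracting the smooth quadratic $\frac1{2n}\sum_i\|x_i\|^2$ preserves a.e. differentiability. Hence $\nabla\optloss$ is defined a.e. on $\R^{dn}$, and therefore $f_*^{\otimes n}$-a.s., since that measure is absolutely continuous.

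I expect the only real obstacle to be the Hölder/log-Laplace convexity step. The rest consists of routine facts about suprema of convex functions.
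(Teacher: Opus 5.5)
Your proposal is correct and follows the paper's proof: both rewrite $\log f(x)+\tfrac12\|x\|^2$ as the log-Laplace transform of $e^{-\|\theta\|^2/2}\mu_f(\de\theta)$, obtain convexity from H\"older's inequality, sum over the data blocks, and take the supremum over $\mathcal{M}$. You also check explicitly that the supremum is finite (bounded above by $-\tfrac d2\log(2\pi)$ and below by $L_n$ at the standard Gaussian); the paper leaves this implicit, but it is needed for the a.e.\ differentiability conclusion.
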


Given Lemma \ref{lm:convex}, $\nabla \optloss$ actually makes sense. We now proceed to show the desired inequalities, i.e., show that there exists $C\ge 1$ independent of $n$ such that
\[ C^{-1}\cdot \mathbb{E}[\|\nabla \optloss\|^2] \le \var[\optloss] \le C \cdot  \mathbb{E}[\|\nabla \optloss\|^2].\]
The second inequality automatically follows from the Poincar\'e inequality for the Langevin dynamics. The more interesting and challenging task is the first inequality. 
By Theorem \ref{thm:L2mu_bound}, we have
\[ \Big |\var[\sqrt{n}\optloss]^{1/2} - \var[\sqrt{n}L_n(f_*)]^{1/2} \Big | \le \sqrt{\mathbb{E}[|\sqrt{n}(\optloss-L_n(f_*)|^2]} = o(1).\]
Note that
\[ \var[\sqrt{n}L_n(f_*)] = \var\Big [\frac{1}{\sqrt{n}} \sum_{i=1}^n \log f_*(X_i) \Big ] = \var [\log f_*(X_1)].\]
Hence, $\var[\sqrt{n} \optloss] \asymp 1$. Thus, it suffices to show that $\E[ n\|\nabla \optloss\|^2] = O(1)$. To this end, we consider again the set $\mathcal{Q}_n$ introduced in \eqref{eq:def-Qn}
\begin{equation*} 
    \mathcal{Q}_n=\Big \{f\in \mathcal{M}: \He^2(f,f_*) \le C_1 \frac{(\log n)^{d+1}}{n} \Big \} \cdot
\end{equation*}
Then one can write
\begin{eqnarray} \label{eq:Dirichlet-control}
    \E[ n\|\nabla \optloss\|^2] &=&  \E[ n\|\nabla \optloss\|^2 \mathbf{1}(\npmle \notin \mathcal{Q}_n)] + \E[ n\|\nabla \optloss\|^2 \mathbf{1}(\npmle \in \mathcal{Q}_n)] \notag \\
    &\le& \Big (\E[ n^2\|\nabla \optloss\|^4] \cdot \mathbb{P}(\npmle \notin \mathcal{Q}_n) \Big )^{1/2} + \E[ n\|\nabla \optloss\|^2 \mathbf{1}(\npmle \in \mathcal{Q}_n)].
\end{eqnarray}

The first term in \eqref{eq:Dirichlet-control} can be effectively bounded by the fact that $\mathbb{P}(\npmle \notin \mathcal{Q}_n)$ is exponentially small. In particular, we have the following lemma, proved in Appendix~\ref{prf:appDirichlet-control-1}.

\begin{lemma} \label{lm:Dirichlet-control-1}
    We have $\E[n^2\|\nabla \optloss \|^4] \cdot \mathbb{P}(\npmle \notin \mathcal{Q}_n) = o(1)$.
\end{lemma}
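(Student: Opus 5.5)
The plan is to combine a crude deterministic bound on $\|\nabla \optloss\|$ with the superpolynomial smallness of $\mathbb{P}(\npmle\notin\mathcal{Q}_n)$. By Theorem~\ref{thm:stability}~\ref{it:stabilitypolylognovern} with $\varepsilon_n=0$, we have $\mathbb{P}(\npmle\notin\mathcal{Q}_n)\lesssim \exp(-(\log n)^{d+1})$. This decays faster than any power of $n$ because $d\ge 1$. It therefore suffices to show $\E[n^2\|\nabla\optloss\|^4]=O(n^k)$ for some fixed $k$; I will aim for $k=1$.

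\textbf{Step 1 (envelope identity).} Fix a data point $x=(x_1,\ldots,x_n)$ at which $\optloss$ is differentiable; by Lemma~\ref{lm:convex} this holds a.e. Let $\npmle$ be an NPMLE for $x$. For every $x'$ we have $\optloss(x')\ge L_n(\npmle)(x')$, with equality at $x'=x$. Hence $x'\mapsto L_n(\npmle)(x')-\optloss(x')$ has a global maximum at $x$. Both functions are differentiable there, so their gradients coincide:
\[
\nabla_{x_i}\optloss(x)=\frac1n\nabla\log\npmle(x_i)=\frac1n\,\frac{\int(\theta-x_i)\,\phi(x_i-\theta)\,\hat\mu_n(\de\theta)}{\npmle(x_i)}.
\]
In particular, the value does not depend on which NPMLE is chosen.

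\textbf{Step 2 (deterministic bound).} The right-hand side is an average of $\theta-x_i$ under a probability measure absolutely continuous with respect to $\hat\mu_n$. Every NPMLE is supported on the convex hull $\mathcal{K}_n$ of the data (\cite[Proposition 4]{soloffjakeguntuboniya}). Therefore $\|\nabla\log\npmle(x_i)\|\le \sup_{\theta\in\mathcal{K}_n}\|\theta-x_i\|\le 2\max_j\|x_j\|$. Summing over $i$ gives
\[
\|\nabla\optloss\|^2\le \frac{4}{n}\max_j\|x_j\|^2,
\qquad\text{so}\qquad
n^2\|\nabla\optloss\|^4\le 16\max_j\|x_j\|^4\le 16\sum_{j=1}^n\|x_j\|^4 .
\]

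\textbf{Step 3 (conclusion).} Since $\mu_*$ is compactly supported, $f_*$ has Gaussian tails, so $\E\|X_1\|^4<\infty$. Hence $\E[n^2\|\nabla\optloss\|^4]\le 16\,n\,\E\|X_1\|^4=O(n)$. Multiplying by $\mathbb{P}(\npmle\notin\mathcal{Q}_n)\lesssim e^{-(\log n)^{d+1}}$ gives $O(n\,e^{-(\log n)^{d+1}})=o(1)$.

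The only delicate point is Step 1. It requires that the a.e. gradient of the supremum equals the gradient of the likelihood at a maximizer (a Danskin-type argument), which is why I use the global-maximum comparison rather than any uniqueness of $\npmle$. The measurability of $\|\nabla\optloss\|$ on the full-measure differentiability set is standard for convex-plus-quadratic functions.
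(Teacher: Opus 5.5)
Your proof is correct and follows the same route as the paper. Both arguments write $n\|\nabla\optloss\|^2$ through $\nabla\log\npmle(X_i)$, bound this by data norms using that $\hat\mu_n$ is supported on the convex hull of the data, obtain a polynomial moment bound, and absorb it into the $\exp(-(\log n)^{d+1})$ tail from Theorem~\ref{thm:stability}. Your Step~1 adds the envelope (Danskin-type) justification of the gradient identity, which the paper takes for granted, and your bound $16\max_j\|X_j\|^4$ gives $O(n)$ instead of the paper's cruder $O(n^4)$; either polynomial rate suffices.
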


For the second term in \eqref{eq:Dirichlet-control}, we note that
$\optloss = \sup_{f\in \mathcal{Q}_n} L_n(f)$ on the event $\{\npmle \in \mathcal{Q}_n\}$. We have the following simple lemma, whose proof can be found at Appendix~\ref{prf:appderivsup}: 
\begin{lemma} \label{lem:derivsup}
        Let $\mathcal{G}$ be a family of real-valued differentiable functions $g$ on $\mathbb{R}^m$, and $\hat{g}(x) := \sup_{g \in \mathcal{G}} g(x)$. If $\hat{g}$ is differentiable at $x\in\mathbb{R}^m$, then 
        \[ \|\nabla \hat{g}(x)\| \le \sup_{g\in \mathcal{G}} \|\nabla g(x)\|.\]
\end{lemma}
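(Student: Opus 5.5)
The plan is a first-order comparison between $\hat g$ and a function of the family that \emph{attains} the supremum at $x$. Suppose there is $g^*\in\mathcal G$ with $g^*(x)=\hat g(x)$. Consider $\psi:=\hat g-g^*$ on $\mathbb{R}^m$. By definition of the supremum, $\psi\ge 0$ everywhere, and $\psi(x)=0$, so $x$ is a global minimiser of $\psi$. Both $\hat g$ (by hypothesis) and $g^*$ are differentiable at $x$, so $\psi$ is differentiable at $x$. The first-order condition at an interior minimum then gives $\nabla\psi(x)=0$, i.e.\ $\nabla\hat g(x)=\nabla g^*(x)$. Hence
\[
\|\nabla\hat g(x)\|=\|\nabla g^*(x)\|\le \sup_{g\in\mathcal G}\|\nabla g(x)\|,
\]
which is the claim. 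To justify the first-order condition concretely, I would expand $\psi(x+hv)=h\langle\nabla\psi(x),v\rangle+o(h)\ge 0$ and let $h\to0^{\pm}$ for every unit vector $v$.

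The main obstacle is attainment, and it cannot be removed. Without a maximiser at $x$ the natural argument goes as follows. Take $v=\nabla\hat g(x)/\|\nabla\hat g(x)\|$ and $g$ with $g(x)\ge\hat g(x)-\delta$; then $\hat g(x-hv)\ge g(x-hv)$ yields
\[
h\|\nabla\hat g(x)\|\le \delta+h\|\nabla g(x)\|+o(h)+o_g(h).
\]
The remainder $o_g(h)$ is not uniform in $g$, and $g$ must be chosen depending on $h$ to make $\delta=o(h)$, so the argument breaks down. In fact the statement fails for general families. On $\mathbb{R}$, take $g_k(t)=t-\phi(kt)/k$ with $\phi\ge0$ a smooth bump satisfying $\phi'(0)=1$. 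Then $g_k'(0)=0$ for every $k$, while $\sup_k g_k(t)=t$ has derivative $1$. So the proof must use attainment, either as a hypothesis or as a property of the application.

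In the application this holds. On $\{\hat f_n\in\mathcal Q_n\}$ we have $\hat L_n=\sup_{f\in\mathcal Q_n}L_n(f)$, and at the data point the supremum is attained by the NPMLE $\hat f_n$ itself, which exists. The argument above then gives $\nabla\hat L_n=\nabla L_n(\hat f_n)$ at every point where $\hat L_n$ is differentiable, and these points have full measure by Lemma~\ref{lm:convex}. That is exactly the form needed to bound $\E[n\|\nabla\hat L_n\|^2\mathbf 1(\hat f_n\in\mathcal Q_n)]$ by $\sup_{f\in\mathcal Q_n}\|\nabla L_n(f)\|^2$. I would therefore write the proof of the lemma under the (implicit) assumption that the supremum at $x$ is attained, and note explicitly that this is the case in which it is used.
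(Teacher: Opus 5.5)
Your analysis is correct, and the error it exposes is in the paper, not in your proposal.

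\textbf{The counterexample.} Since $\phi\ge 0$ is bounded, $g_k(t)\le t$ and $g_k(t)\to t$, so $\hat g(t)=t$ and $\hat g'(0)=1$. On the other hand $g_k'(0)=1-\phi'(0)=0$ for every $k$. Note that $\phi\ge 0$ with $\phi'(0)=1$ forces $\phi(0)>0$, so the supremum at $0$ is not attained, exactly as your diagnosis predicts. The lemma as literally stated is therefore false.

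\textbf{Where the paper's proof fails.} The paper reduces to $m=1$ and shows $|\hat g(x)-\hat g(y)|\le\sup_{g\in\mathcal G}|g(x)-g(y)|$. It then ``divides by $|y-x|$ and sends $y\to x$.'' That last step silently interchanges $\lim_{y\to x}$ with $\sup_{g}$, i.e.\ it assumes the difference quotients converge uniformly in $g$. This is precisely the non-uniform $o_g(h)$ you flagged. What the argument actually yields is
\[
|\hat g'(x)|\le\limsup_{y\to x}\sup_{g\in\mathcal G}\frac{|g(y)-g(x)|}{|y-x|},
\]
a local Lipschitz constant of the family, and this equals $1$ in your example.

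\textbf{Your repair.} Your touching (Danskin-type) argument under attainment is sound. It gives the stronger identity $\nabla\hat g(x)=\nabla g^*(x)$, not just an inequality.

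For the proof of Theorem~\ref{thm:anti-supercon}, apply it to the full family $\{L_n(f):f\in\mathcal{M}\}$, viewed as functions of the data. Fix the NPMLE $\hat f_n$ computed at the observed data. Then $\hat L_n(y)\ge L_n(\hat f_n)(y)$ for every $y\in\mathbb{R}^{dn}$, with equality at the observed data. Hence $\nabla\hat L_n=\nabla L_n(\hat f_n)$ wherever $\hat L_n$ is differentiable, which is almost everywhere by Lemma~\ref{lm:convex}. On $\{\hat f_n\in\mathcal{Q}_n\}$ this immediately gives $n\|\nabla\hat L_n\|^2\le n\sup_{f\in\mathcal{Q}_n}\|\nabla L_n(f)\|^2$.

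This is cleaner than working with $\sup_{f\in\mathcal{Q}_n}L_n(f)$, as the paper does: that function agrees with $\hat L_n$ only at the observed data point, not as a function, so an extra argument would be needed. The same identity $n\|\nabla\hat L_n\|^2=n^{-1}\sum_i\|\nabla\log\hat f_n(X_i)\|^2$ is also what the paper tacitly uses in the proof of Lemma~\ref{lm:Dirichlet-control-1}.

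\textbf{Trade-off.} The paper's route would need no attainment hypothesis, but it is invalid as written. Yours needs attainment, which costs nothing here because the NPMLE exists.
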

By Lemma \ref{lem:derivsup}, we have
\[ n\|\nabla \optloss\|^2 \mathbf{1}(\npmle \in \mathcal{Q}_n) \le n \sup_{f\in \mathcal{Q}_n} \|\nabla L_n(f)\|^2. \]
For any $f\in \mathcal{Q}_n$, we have $n \|\nabla L_n(f)\|^2 = \frac{1}{n} \sum_{i=1}^n \|\nabla \log f(X_i)\|^2$. To control this term, we will use the following lemma (derived in Appendix~\ref{prf:appDirichlet-control-2}):

\begin{lemma} \label{lm:Dirichlet-control-2}
    There exist constants $C_1,C_2$ (depending only on $f_*$ and $d$) such that for $n$ sufficiently large, we have
    \[ \|\nabla \log f(x) \| \le C_1\|x\| +C_2, \quad \forall x\in \R^d, \forall f\in \mathcal{Q}_n. \]
\end{lemma}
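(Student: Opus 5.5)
We need to prove Lemma \ref{lm:Dirichlet-control-2}: for $f\in\mathcal{Q}_n$ and $n$ large, $\|\nabla\log f(x)\|\le C_1\|x\|+C_2$ for all $x$.

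The starting point is the explicit formula for the score of a Gaussian location mixture. Differentiating \eqref{eq:GMM-density} under the integral gives
\[
\nabla \log f(x) = \frac{\int (\theta - x)\, e^{-\|x-\theta\|^2/2}\,\mu_f(\de\theta)}{\int e^{-\|x-\theta\|^2/2}\,\mu_f(\de\theta)} = \E_{\pi_x}[\theta] - x,
\]
where $\pi_x(\de\theta)\propto e^{-\|x-\theta\|^2/2}\mu_f(\de\theta)$ is the posterior of $\theta$ given $x$. It therefore suffices to bound $\|\E_{\pi_x}\theta\|\le C\|x\|+C'$ uniformly over $f\in\mathcal{Q}_n$.

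Next I would bring in Lemma \ref{lm:bracketing-entropy}. Choose a compact $\Theta$ whose interior contains $\supp(\mu_*)$, say $\Theta\subset B(0,R)$, and fix $\tau=1/2$. Then for $n$ large we have $\mathcal{Q}_n\subset\mathcal{M}(\Theta;\tau)$, so $\mu_f(B(0,R))\ge \tau$ for every $f\in\mathcal{Q}_n$. The problem thus reduces to a deterministic statement about any probability measure $\mu$ with $\mu(B(0,R))\ge\tau$, where the posterior mean must be controlled even though $\mu$ may have mass far away.

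To control the posterior mean, split the $\theta$-integral at radius $K(\|x\|+R)$ for a constant $K$ to be chosen.
\begin{itemize}
\item On $\{\|\theta\|\le K(\|x\|+R)\}$, the contribution to $\|\E_{\pi_x}\theta\|$ is trivially at most $K(\|x\|+R)$.
\item On the complement, $\|x-\theta\|\ge \|\theta\|-\|x\|\ge \|\theta\|/2$ once $K\ge 2$. Each such $\theta$ therefore contributes at most $\|\theta\|e^{-\|\theta\|^2/8}\,\mu(\de\theta)$ to the numerator, which is bounded by an absolute constant $c_0$.
\item The denominator satisfies $\int e^{-\|x-\theta\|^2/2}\mu(\de\theta)\ge \tau e^{-(\|x\|+R)^2/2}$, using only the mass in $B(0,R)$.
\end{itemize}
Combining these, the far contribution is at most
\[
\frac{\sup_{\|\theta\|\ge K(\|x\|+R)}\|\theta\| e^{-\|\theta\|^2/8}}{\tau e^{-(\|x\|+R)^2/2}}.
\]
With $K=4$, the numerator decays like $e^{-2(\|x\|+R)^2}$ times a polynomial, which beats the denominator's growth factor $e^{(\|x\|+R)^2/2}$. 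The ratio is therefore bounded by a constant depending only on $\tau$. Hence $\|\E_{\pi_x}\theta\|\le 4\|x\|+4R+C(\tau)$, and so $\|\nabla\log f(x)\|\le 5\|x\|+C_2$.

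The main obstacle is the far-mass tail estimate. The key point is that the lower bound on the denominator uses only the guaranteed mass $\tau$ in $B(0,R)$, while the Gaussian decay in the numerator must dominate the quadratic-exponential loss $e^{(\|x\|+R)^2/2}$ in that lower bound. This is achieved by taking the cutoff to be a sufficiently large multiple of $\|x\|+R$, and I expect the constants to work out as above. The constants depend only on $R$ (hence on $f_*$ via $\Theta$), $\tau$, and $d$ through Lemma \ref{lm:bracketing-entropy}.
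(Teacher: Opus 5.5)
Your proof is correct and follows essentially the paper's argument: you reduce to $\mathcal{M}(\Theta;\tau)$ via Lemma~\ref{lm:bracketing-entropy}, lower-bound the denominator by the Gaussian factor coming from the guaranteed mass $\tau$ near $\mathrm{supp}(\mu_*)$, and let the Gaussian decay of far-away atoms beat that factor. The only difference is cosmetic: the paper splits on whether $\|x-\theta\|\le\sqrt{2}\|x\|$, which forces a separate treatment of $\sqrt{2}\|x\|\le 1$, whereas your split at $\|\theta\| = 4(\|x\|+R)$ in the posterior-mean form handles all $x$ at once.
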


By Lemma \ref{lm:Dirichlet-control-2}, for every $f\in \mathcal{Q}_n$, we have
\begin{eqnarray*}
n    \|\nabla L_n(f)\|^2 = \frac{1}{n} \sum_{i=1}^n \|\nabla \log f(X_i)\|^2 \le \frac{2C_1^2}{n}\sum_{i=1}^n \|X_i\|^2 + 2C_2^2.
\end{eqnarray*}
Therefore
\begin{eqnarray*}
    \E [n\|\nabla \optloss\|^2 \mathbf{1}(\npmle \in \mathcal{Q}_n)] &\le& \E \Big [  \sup_{f\in \mathcal{Q}_n} n \|\nabla L_n(f)\|^2 \Big ] 
    \le \E \Big [\frac{2C_1^2}{n} \sum_{i=1}^n \|X_i\|^2 + 2C_2^2 \Big ] \\
    &=& 2 C_1^2 \mathbb{E}[\|X_1\|^2] + 2 C_2^2 = O(1).
\end{eqnarray*}
Thus $\E [n\|\nabla \optloss\|^2 ] = O(1)$ as desired. This completes the proof. $\square$

\subsection{Bracketing entropy for logarithmic GMM densities: Proof sketch of Theorem~\ref{thm:FiniteBracketingIntegral}}
\label{subsec:log-gmm-bracketing}

 We now provide the main ideas in the bracketing entropy estimate for logarithmic Gaussian mixture densities. For the purposes of Section~\ref{subsec:log-gmm-bracketing} we use $\pi$ as the mixing measure to avoid notational conflicts with statements of the main results.

For ordinary density bracketing, one typically discretizes the mixing distribution
and obtains an approximation \(f_{\pi} \approx f_{\widetilde \pi}\) uniformly or in
a suitable integral norm. This directly gives brackets for the densities. In the
present theorem, however, the empirical-process object is not \(f\), but
\(\log f\), as this is the object appearing in the likelihood and in the
Kullback--Leibler divergence. This introduces a new difficulty: the logarithm is
not uniformly Lipschitz on \((0,\infty)\). Indeed,
\[
    |\log u-\log v|
    \leq \frac{|u-v|}{u\wedge v},
    \qquad u,v>0.
\]
Thus, a small absolute error \(|f_{\pi}-f_{\widetilde \pi}|\) is not sufficient;
the error must be small relative to the local size of the density. This is
problematic because Gaussian mixture densities can be exponentially small on a
large ball, and the log-density is unbounded in the tails.

The proof resolves these two problems separately. First, we choose
\(R \asymp \sqrt{|\log \varepsilon|}\) and split
\(\mathbb R^d = B_R \cup B_R^{\complement}\). On the tail \(B_R^{\complement}\), we do not attempt a
fine approximation. Instead, the condition \(f\in\mathcal M(\Theta;\tau)\)
gives a deterministic envelope
\[
    0 \leq -\log f(x) \leq \|x\|^2 + C,
\]
and the \(L_2(f_*)\)-contribution of this envelope over \(B_R^{\complement}\) is
\(O(\varepsilon)\), since \(f_*\) is a Gaussian mixture with compactly
supported mixing measure. Second, on \(B_R\), we build a finite approximating
class for the log-densities. The non-standard part here is that we split the mixing
measure \(\pi\) into the mass inside \(\Theta\) and the mass outside \(\Theta\).
The component inside \(\Theta\) is discretized while keeping its support in
\(\Theta\); this preserves a uniform lower bound on the approximating density.
The outside component can then be discretized separately in a larger ball.
After recombining the two discretizations, the preserved lower bound allows us
to pass from density approximation to log-density approximation.

We provide a more formal proof sketch below with the full details reserved for Appendix~\ref{app:prfFiniteBracketingIntegral}.

\begin{proof}[Proof sketch of Theorem~\ref{thm:FiniteBracketingIntegral}]
Throughout the proof, unless otherwise mentioned, constants depend only on
\(\Theta,d,\tau\). Let
\[
    \|\Theta\|_\infty := \sup_{\theta\in\Theta}\|\theta\|.
\]

In Lemma~\ref{lm:bound-density-Mc}, proved in Appendix~\ref{prf:appbound-density-Mc}, we record the following elementary envelope bound:
\begin{equation}\label{eq:log-envelope}
0 \le - \log f(x) \le \|x\|^2 +C_{\ref{lm:bound-density-Mc}}, \quad \forall x\in \R^d, \forall f \in \mathcal{M}(\Theta;\tau).    
\end{equation} 

Let \(0<\varepsilon<1\) be small, and set
\[
    R := \sqrt{80|\log \varepsilon|}.
\]
Taking \(\varepsilon\) small enough, we may assume
\(R\geq 2\|\Theta\|_\infty\). Let $B_R$ be the ball of radius $R$ centered at $0$. And let $B_R^{\complement} := \mathbb R^d\setminus B_R.$
We construct brackets separately on \(B_R\) and \(B_R^{\complement}\).

\paragraph{Tail bracket.}
On \(B_R^{\complement}\), the envelope \eqref{eq:log-envelope} gives the universal bracket
\[
    \ell_{\mathrm{out}}(x)
    :=
    -\|x\|^2-C_{\ref{lm:bound-density-Mc}},
    \qquad
    u_{\mathrm{out}}(x)
    :=
    0,
    \qquad x\in B_R^{\complement}.
\]
This bracket contains \(\log f\) on \(B_R^{\complement}\), uniformly over
\(f\in\mathcal M(\Theta;\tau)\). Moreover, if \(X\sim f_*\), then
\(X=Y+Z\), where \(Y\sim\mu_*\) is supported on \(\Theta\) and
\(Z\sim N(0,I_d)\). Hence
\[
    \|X\|\leq \|\Theta\|_\infty+\|Z\|.
\]
Using \(R\geq 2\|\Theta\|_\infty\) and the Gaussian tail bound for \(\|Z\|\),
we obtain
\begin{equation}
\label{eq:tail-control}
    \int_{B_R^{\complement}}
        \bigl(\|x\|^2+C_{\ref{lm:bound-density-Mc}}\bigr)^2 f_*(x)\,dx
    \leq
    C_1 \varepsilon^2 .
\end{equation}

\paragraph{Inside approximation.}
It remains to control the log-densities on \(B_R\). We will push all the difficulties into the following approximation result: 
\begin{lemma} \label{lm:approx}
    Let $\varepsilon > 0$ be small enough such that 
    \[R:= \sqrt{80|\log \varepsilon|} \ge 2  \sup_{\theta\in \Theta} \|\theta\|.\] 
    There exist constants $C_{\ref{lm:approx}}, D_{\ref{lm:approx}}$ depending only on $\Theta, d$ and $c$, and a class of functions $\mathcal{H}$ with 
    \[\log |\mathcal{H}| \le D_{\ref{lm:approx}} |\log \varepsilon|^{d+1}\] 
    satisfying the following property: for every density $f \in \mathcal{M}(\Theta;c)$, there exists $h\in \mathcal{H}$ such that
    \[ \sup_{x\in B_R} |\log f(x) - h(x)| \le C_{\ref{lm:approx}} \varepsilon.\]
\end{lemma}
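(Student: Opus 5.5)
Given $f=f_\pi\in\mathcal M(\Theta;c)$, I would split the mixing measure as $\pi=\pi|_\Theta+\pi|_{\comp{\Theta}}$, so that $f=f_{\In}+f_{\Out}$ with $f_{\In}(x)=\int_\Theta\phi(x-\theta)\,\pi(\de\theta)$ and $f_{\Out}$ defined analogously. The point of keeping $f_{\In}$ separate is the lower bound it supplies on $B_R$. Since $\pi(\Theta)\ge c$ and $\|x-\theta\|\le 2R$ for $x\in B_R$ and $\theta\in\Theta$ (recall $R\ge 2\|\Theta\|_\infty$), we get
\[
f(x)\ge f_{\In}(x)\ge c(2\pi)^{-d/2}e^{-2R^2}=c(2\pi)^{-d/2}\varepsilon^{160}.
\]
So on $B_R$ every density in the class, and every approximant built in the same way, is bounded below by a polynomial in $\varepsilon$. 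By $|\log u-\log v|\le|u-v|/(u\wedge v)$, it then suffices to find $\tilde f$ with $\sup_{B_R}|f-\tilde f|\le \varepsilon^{K}$ for a fixed power $K$ (say $K=162$) whose own inside part satisfies the same lower bound. The final $\mathcal H$ will be the collection of $\log\tilde f$ restricted to $B_R$.

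\textbf{Inside part.} I would use the moment-matching discretization from the density entropy bounds of \cite{saha2020nonparametric} (the ingredient behind Lemma~\ref{lm:guntu-entropy}). Cover $\Theta$ by cubes of side $\asymp 1$; there are $O(1)$ of them since $\Theta$ is fixed. On each cube, replace $\pi|_\Theta$ by a discrete measure supported in the same cube (hence in $\Theta$ after intersecting, or in a slightly enlarged fixed compact set) that matches all moments up to order $k\asymp|\log\varepsilon|$. Carathéodory gives $O(k^d)$ atoms per cube. Taylor-expanding $\phi(x-\theta)$ in $\theta$ then gives sup-norm error $\varepsilon^K$ on $B_R$. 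The matched measure has the same mass on each cube, so the lower bound survives, up to replacing $\Theta$ by a fixed neighbourhood, which only changes constants. Next, quantize atom locations to a grid of mesh $\varepsilon^{K'}$ and weights to mesh $\varepsilon^{K'}$. This leaves $O(|\log\varepsilon|^d)$ atoms, each with $O(|\log\varepsilon|)$ bits, so the inside part contributes $\log(\#)\lesssim|\log\varepsilon|^{d+1}$.

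\textbf{Outside part.} The atoms of $\pi|_{\comp\Theta}$ far from $B_R$ contribute negligibly on $B_R$: if $\|\theta\|\ge R+\sqrt{2K|\log\varepsilon|}$, then $\phi(x-\theta)\le\varepsilon^K$ for all $x\in B_R$. I would therefore move that mass to a single fixed far point, or drop it entirely, noting that dropping it is harmless because of the lower bound from the inside part. The remaining mass sits in a ball of radius $R'\asymp\sqrt{|\log\varepsilon|}$. There I would repeat the same moment-matching construction with unit cubes. This is the main obstacle for the counting: there are $\asymp R'^d=|\log\varepsilon|^{d/2}$ cubes, each carrying $k^d$ atoms, which gives $|\log\varepsilon|^{3d/2}$ atoms, too many. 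I would instead follow Saha--Guntuboyina's sharper choice: use cubes of side $\asymp\sqrt{|\log\varepsilon|}$ with matched moments of order $\asymp|\log\varepsilon|$, or equivalently match moments only for the cubes within distance $O(\sqrt{|\log\varepsilon|})$ of $x$. This is the bound that yields $(\log n)^{d+1}$ in Lemma~\ref{lm:guntu-entropy}. I would simply cite that covering bound for the density class in sup norm on $B_R$ at scale $\varepsilon^K$, applied to the subprobability $f_{\Out}$; its entropy is $\lesssim|\log\varepsilon|^{d+1}$.

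\textbf{Combining and the constant.} Set $\tilde f=\tilde f_{\In}+\tilde f_{\Out}$. Then $\sup_{B_R}|f-\tilde f|\le 2\varepsilon^K$ and $\tilde f\ge \frac c2(2\pi)^{-d/2}\varepsilon^{160}$ on $B_R$, so $\sup_{B_R}|\log f-\log\tilde f|\lesssim\varepsilon^{K-160}\le C\varepsilon$. The total count is the product of the two covering sizes, which gives $\log|\mathcal H|\le D|\log\varepsilon|^{d+1}$.
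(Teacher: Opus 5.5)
Your proposal is correct and follows essentially the same route as the paper. Both split $\pi$ into its parts on $\Theta$ and $\comp{\Theta}$, and discretize the inside part by Carath\'eodory moment matching of order $\asymp|\log\varepsilon|$ while keeping its support and its mass $\ge c$ in $\Theta$, which gives a polynomial-in-$\varepsilon$ lower bound on $B_R$. Both then handle the outside part with the Saha--Guntuboyina discretization, pass to logarithms via $|\log u-\log v|\le |u-v|/(u\wedge v)$, and quantize atoms and weights on a grid to count $\mathcal H$. The differences are cosmetic: you use unit cubes inside $\Theta$, a cruder lower bound $\varepsilon^{160}$ compensated by $K=162$, and quantize the two parts separately, whereas the paper recombines them into one discrete measure first.
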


Let us now complete the proof of the theorem assuming Lemma~\ref{lm:approx}. Since the proof of Lemma~\ref{lm:approx} contains non-standard technical ideas, we will give a proof sketch after completing the proof sketch of Theorem~\ref{thm:FiniteBracketingIntegral}.


Assuming Lemma~\ref{lm:approx}, we complete the proof of the
theorem. For each \(h\in\mathcal H\), define functions \(\ell_h,u_h\) on
\(\mathbb R^d\) by
\[
    \ell_h(x)
    :=
    \begin{cases}
        h(x)-C_{\ref{lm:approx}}\varepsilon, & x\in B_R,\\
        -\|x\|^2-C_{\ref{lm:bound-density-Mc}}, & x\in B_R^{\complement},
    \end{cases}
    \qquad
    u_h(x)
    :=
    \begin{cases}
        h(x)+C_{\ref{lm:approx}}\varepsilon, & x\in B_R,\\
        0, & x\in B_R^{\complement}.
    \end{cases}
\]
By Lemma~\ref{lm:approx}, the pair \([\ell_h,u_h]\) brackets
\(\log f\) on \(B_R\) for a suitable \(h\). By \eqref{eq:log-envelope}, the same
pair brackets \(\log f\) on \(B_R^{\complement}\). Hence
\[
    \bigl\{[\ell_h,u_h]: h\in\mathcal H\bigr\}
\]
covers \(\log\mathcal M(\Theta;\tau)\).

It remains to bound the \(L_2(f_*)\)-width of these brackets. By
\eqref{eq:tail-control},
\[
\begin{aligned}
    \|\ell_h-u_h\|_{L_2(f_*)}^2
    &=
    \int_{B_R} |\ell_h(x)-u_h(x)|^2 f_*(x)\,dx
    +
    \int_{B_R^{\complement}} |\ell_h(x)-u_h(x)|^2 f_*(x)\,dx \\
    &\leq
    4C_{\ref{lm:approx}}^2\varepsilon^2
    +
    C_1\varepsilon^2
    =
     C^2 \varepsilon^2.
\end{aligned}
\]
Thus the above collection is a family of
\( C\varepsilon\)-brackets. Since
\(\log|\mathcal H|\leq D_3|\log\varepsilon|^{d+1}\), we have
\[
    \log N_{[]}
    \bigl(
         C\varepsilon,
        \log\mathcal M(\Theta;\tau),
        L_2(f_*)
    \bigr)
    \leq
    D_3|\log\varepsilon|^{d+1}.
\]
Rescaling \(\varepsilon\) and adjusting constants gives the desired bound.
\end{proof}

\subsubsection{Main idea of Lemma~\ref{lm:approx}}

Finally to complete the proof we consider the following proof sketch of Lemma~\ref{lm:approx}.  Below we use $C$ to be a constant which can change from line to line but depends only on $\Theta, d, \tau$.  The full details (including tracked constants) are in Appendix~\ref{prf:applmapprox}.
\begin{proof}[Proof Sketch of Lemma~\ref{lm:approx}]
We explain the construction, since this is the part where the logarithmic case
differs from ordinary density bracketing. Write \(f=f_\pi\), where \(\pi\) is
the mixing measure. A naive discretization of \(\pi\) gives an approximation
\(f_{\widetilde \pi}\) to \(f_\pi\), but this is not enough for log-densities:
we need
\[
    |f_\pi(x)-f_{\widetilde\pi}(x)|
    \ll
    f_\pi(x)\wedge f_{\widetilde\pi}(x)
\]
uniformly over \(x\in B_R\).

The condition \(f\in\mathcal M(\Theta;\tau)\) gives a way to preserve such a
lower bound. Decompose the mixing measure into two parts,
\[
    \pi
    =
    \pi(\Theta)\pi_{\Theta}
    +
    \pi(\Theta^c)\pi_{\Theta^c},
\]
where \(\pi_\Theta\) and \(\pi_{\Theta^c}\) denote the normalized restrictions
whenever the corresponding masses are nonzero. We discretize these two pieces
separately.

First, because \(\pi_\Theta\) is supported on the compact set \(\Theta\), in Proposition~\ref{prop:discretizationcompact} we construct a discrete approximation \(\widetilde\pi_1\) that is still supported
inside \(\Theta\), has at most \(C|\log\varepsilon|^d\) atoms, and satisfies
\[
    \sup_{x\in B_R}
    |f_{\pi_\Theta}(x)-f_{\widetilde\pi_1}(x)|
    \leq
    C\varepsilon^{81}.
\]
Keeping the support inside \(\Theta\) is crucial. It guarantees that, after
recombination, the approximating mixture still has at least \(\tau\) mass on
\(\Theta\), and therefore inherits the same type of lower bound as \(f\).

Second in Proposition~\ref{prop:discretizationnoncompact}, we discretize the outside part \(\pi_{\Theta^c}\) by a standard Gaussian mixture approximation on an enlarged ball. This gives a discrete approximation
\(\widetilde\pi_2\), again with at most \(C|\log\varepsilon|^d\) atoms, such that
\[
    \sup_{x\in B_R}
    |f_{\pi_{\Theta^c}}(x)-f_{\widetilde\pi_2}(x)|
    \leq
    C\varepsilon^{81}.
\]
Now define the recombined discrete measure
\[
    \widetilde\pi
    :=
    \pi(\Theta)\widetilde\pi_1
    +
    \pi(\Theta^c)\widetilde\pi_2 .
\]
Because \(\pi(\Theta)\geq \tau\) and \(\widetilde\pi_1\) is supported in
\(\Theta\), we still have \(\widetilde\pi(\Theta)\geq \tau\). Consequently,
for \(x\in B_R\),
\[
    f_{\widetilde\pi}(x)
    \geq
    c\exp(-R^2)
    \geq
    c\varepsilon^{80}.
\]
The same lower bound holds for \(f_\pi\). Combining this with the
\(\varepsilon^{81}\)-level density approximation and the inequality
\[
    |\log u-\log v|
    \leq
    \frac{|u-v|}{u\wedge v},
\]
we obtain
\[
    \sup_{x\in B_R}
    |\log f_\pi(x)-\log f_{\widetilde\pi}(x)|
    \leq
    C\varepsilon .
\]

Finally, the discrete measures \(\widetilde\pi\) have only
\(O(|\log\varepsilon|^d)\) atoms and are supported in a ball whose radius is a
fixed multiple of \(R\). We place a sufficiently fine lattice on the atom
locations and weights. The resulting finite collection of discrete mixtures
induces a finite class \(\mathcal H\) of logarithmic densities. The lattice mesh
is chosen so that replacing \(\widetilde\pi\) by the nearest lattice-supported
measure changes the log-density on \(B_R\) by at most \(C\varepsilon\). The
number of possible atoms and weights is bounded by
\[
    \log |\mathcal H|
    \leq
    C|\log\varepsilon|^{d+1}.
\]
This proves Lemma~\ref{lm:approx}.
\end{proof}


\begin{acks}
We would like to thank Jingbin Pan for supportive discussions at a preliminary stage of this project.
\end{acks}

\begin{funding}
SG was supported in part by the Singapore MOE grants R-146-000-312-114, A-8002014-00-00, A-8003802-00-00, E-146-00-0037-01 and A-8000051-00-00. AG was supported by the US National Science Foundation (NSF) grant DMS-2515470. HST was supported by the NUS  grant A-8003576-00-00.
\end{funding}

\bibliographystyle{plain} 
\bibliography{Arxiv/bibpapr}       


\newpage




\begin{appendix}

\section{{ Stability in random optimization problems: a statistical mechanics perspective}}
\label{sec:stat-mech}
In this section, we undertake a detailed discussion of random optimization problems in statistical mechanics and the toolbox developed therein to analyze these problems. This, in turn, motivates our approach in the present paper.

\subsection{ Physical systems in random environments}

In statistical physics, the environment in which a system exists may not be deterministic but instead may exhibit randomness or disorder. Such systems are typically referred to as  \emph{disordered systems}. Observables of disordered systems (e.g., energy) can thus be viewed as random functions, due to this randomness of the environment. Consequently, the optimization of random functions is of broad interest in the statistical mechanics literature and has been extensively studied in various models, such as Gaussian random polymers, percolation models, extrema of Gaussian fields, and others.

A simple example of disordered systems is the {\it $(1+1)$-Gaussian random polymers}, through the lens of which we are going to discuss the ideas and methods for random optimization problems in statistical mechanics. In this model, the \emph{configuration space} consists of all one dimensional {\it paths} of length $n$ starting at $0$, i.e. 
\[\mathcal{P}_n := \Big \{p=((i,p(i))_{0 \le i \le n} : p(i) \in \mathbb{Z} \; \forall i, \; p(0)=0, |p(i+1)-p(i)|\le 1, \: 0\le i \le n-1 \Big \}.\]
The random \emph{environment} is given by $(G_{v})_{v\in \mathbb{Z}^2}$, a family of i.i.d. standard Gaussians indexed by the vertices of the integer lattice $\mathbb{Z}^2$. Given a path $p\in \mathcal{P}_n$, the \emph{energy of the path} is defined by 
\begin{equation}
    E_n(p):= -\sum_{v\in p} G_v.
\end{equation}
The configuration which minimizes the energy of the system is called a \emph{ground state polymer}
\begin{equation}
    \hat{p}_n \in \argmin_{p\in \mathcal{P}_n} E_n(p),
\end{equation}
and the minimized energy is called \emph{ground state energy}
\begin{equation}
     \hat{E}_n := \min_{p\in \mathcal{P}_n}E_n(p) .
\end{equation}

Note that the minimization of energy as defined above is equivalent to maximization of the sum of Gaussian weights (from the random environment) along a path, and the path varies over a set of possible choices (which, in statistical mechanical models, is usually combinatorial in nature). Thus, minimization of energy, a natural concept in physics, is equivalent in this setting to maximization of overall weight of an object, which is more canonical in statistics and machine learning.

To draw the parallel between NPMLE and disordered systems, we view $\mathcal{M}$, the space of all GMM densities, as the configuration space. Given a `configuration' $f\in \mathcal{M}$, the proxy for the energy is the negative log-likelihood function
\[ -L_n(f) = -\frac{1}{n} \sum_{i=1}^n \log f(X_i),\]
where $X_1,\ldots,X_n$ can be viewed as the environment of the system. The ground state energy, by definition, is the minimized energy over all configuration space
\begin{equation} \label{eq:npmle-optimization}
    -\optloss = \min_{f\in \mathcal{M}}\{-L_n(f)\} = - \max_{f\in\mathcal{M}}L_n(f)
\end{equation}
and the ground states are minimizers of \eqref{eq:npmle-optimization}, i.e., the (non-parametric) Maximum Likelihood Estimator $\npmle$.

\subsection{ Perturbation of environments and Langevin dynamics}

In the study of disordered systems, it is natural to investigate environments that undergo perturbations, and study the response of the solution of random optimization problems in response to such perturbations. The parallels with machine learning are clear; it is important to understand the response of algorithmic procedures to perturbations in their inputs.

A canonical approach to perturbations in statistical mechanics is to evolve the random environment for a small time via an invariant dynamics. This serves a two-fold purpose -- on one hand, its leaves the distribution of the random environment unchanged (so, from a statistical point of view, the model remains invariant); but on the other hand, it couples the time-evolved environment with the original one pathwise as a small perturbation. 

To give an example, we go back to the Gaussian polymer model. Here, the environment $(G_v)_{v \in \mathbb{Z}^2}$ can be made to evolve according to the Ornstein-Uhlenbeck ({\it abbrv.} OU) dynamics
\begin{equation} \label{eq:OU}
    \de G_v^{(t)} = - G_v^{(t)} \de t + \sqrt{2} \de B_v^{(t)},
\end{equation}
where $(B_v^{(t)})_{t\ge 0}$ is a standard Brownian motion. It is well-known that the standard Gaussian distribution is the equilibrium measure of the OU diffusion. Thus, for any time $t \ge 0$, $(G_v^{(t)})_{v\in \mathbb{Z}^2}$ are i.i.d. standard Gaussians. In fact, far beyond the Gaussian polymer, this is a standard invariant dynamics that may be used to perturb the environment in any stat mech model where the disorder constitutes independent Gaussians.

In the setting of NPMLE, the data $X_1,\ldots,X_n$ are i.i.d. samples from the density $f_*$. The canonical way to evolve $X_i$'s is via the so-called \emph{Langevin dynamics}
\begin{equation} \label{eq:Langevin}
    \de X^{(t)}_i = \nabla \log f_*(X^{(t)}_i) \de t + \sqrt{2} \de B^{(t)}_i,\quad 1\le i \le n,
\end{equation}
where $B^{(t)}_i$ are independent copies of the standard Brownian motion in $\R^d$ and $X^{(0)}_i=X_i \; \forall i$. The key feature of the Langevin dynamics \eqref{eq:Langevin} is that it preserves the distribution $f_*(x)\de x$. Namely, for any given time $t\ge 0$, we have $X_1^{(t)},\ldots,X_n^{(t)}$ are still i.i.d. samples from $f_*$. Thus, for small time $t$, the samples $(X_i^{(t)})_{i=1}^n$ can be regarded to be another i.i.d. sample of size $n$ from the distribution $f_*(x)\de x$, while being coupled to be pathwise close with the $(X_i)_{i=1}^n$.

\subsection{ Poincar\'e inequality and Dirichlet forms}

 A powerful tool to study Markov processes is the notion of Dirichlet forms. We include a brief description herein, mostly via examples, incorporating a more detailed discussion in Appendix \ref{appB}. Let $L$ be the generator of a Markov process with equilibrium measure $\nu$. Then the Dirichlet form of the process is then defined by
\begin{equation} \mathcal{E}(g,h) = -\int_S g(x)Lh(x) \nu(\de x). \end{equation}
The process is said to satisfy a Poincar\'e inequality (PI) with constant $C$ if 
\begin{equation} \label{eq:Poincare}
\var_{\nu}(g) \le C \mathcal{E} (g,g),
\end{equation}
for all $g\in L^2(\nu)$ such that the Dirichlet energy $\mathcal{E}(g,g)$ is well-defined. The smallest such a constant $C$ is called the optimal constant in the Poincar\'e inequality, denoted by $C_{\mathrm{PI}}$.

To give an example, in the Gaussian polymer model, the invariant dynamics on the random environment of i.i.d. Gaussians is given by the classic Ornstein-Uhlenbeck flow \eqref{eq:OU}. The generator of the process $(G_v^{(t)}: v\in V_n)_{t\ge 0}$ is 
\[L=  \Delta - x\cdot \nabla, \quad x \in \R^{|V_n|}\] 
where $\Delta$ is the Laplacian and $\nabla$ is the gradient operator in $\mathbb{R}^{|V_n|}$. The equilibrium measure of this diffusion is the standard Gaussian measure $\gamma_n$ in $\mathbb{R}^{|V_n|}$, and the Dirichlet energy is given by
\begin{equation*}
    \mathcal{E}_{\gamma_n} (g,h) =  \int_{\mathbb{R}^{|V_n|}} \nabla g (x)\cdot \nabla h(x) \gamma_n (\de x).
\end{equation*}
It is known that the OU dynamics satisfies the Poincar\'e inequality \eqref{eq:Poincare} with $C_{\mathrm{PI}}=1$.

In the NPMLE model, it is natural to let each $X_i$ evolve independently via the Langevin dynamics \eqref{eq:Langevin}. A simple computation shows that the generator of the process $(X_1^{(t)},\ldots,X_n^{(t)})_{t\ge 0}$ is 
\begin{equation} \label{eq:generator} L = \Delta + \sum_{i=1}^n \nabla_{x_i}\log f_{*}(x_i) \cdot \nabla_{x_i}, \end{equation}
where $\Delta$ is the Laplacian in $\mathbb{R}^{dn}$ and $\nabla_{x_i}$ is the gradient acting on the $x_i$-coordinate. The equilibrium measure of this diffusion is $f_*(x_1)\ldots f_*(x_n)\de x_1\ldots \de x_n$. Using integration by parts, the Dirichlet energy in this case is given by
\begin{equation} \label{eq:Dirichlet-energy}
     \mathcal{E}_{n}(g,h) = \int_{\mathbb{R}^{dn}} \nabla g(x_1,\ldots,x_n) \cdot \nabla h(x_1,\ldots,x_n) f_*(x_1)\ldots f_*(x_n)\de x_1\ldots \de x_n.
\end{equation}


\subsection{ Concentration and chaos in disordered systems}
\label{sec:conc-chaos}
A fundamental question in statistical mechanics is to study the sensitivity of a physical system (e.g. ground state energy, ground state, etc) to changes in the underlying random environments. In the seminal monograph \cite{chatterjee2014superconcentration}, Chatterjee investigated the concepts of \textit{superconcentration, chaos} and \textit{multiple valleys} describe the instability of disordered systems.

We discuss these notions below.
\begin{definition}\label{def:chaos}
    Let $X_t$ be a reversible Markov process with semigroup $P_t$, generator $L$, equilibrium measure $\nu$ and Dirichlet form $\mathcal{E}$. Let $\varepsilon, \delta >0$ and $f$ be a function on the state space of the Markov process. We say that:
    \begin{enumerate}[label=(\roman*)]
        \item (Superconcentration) $f$ is $\varepsilon$-superconcentrated if 
    \[ \var_{\nu}(f) \le \varepsilon \cdot C_{\mathrm{PI}} \mathcal{E}(f,f).\]
        \item (Chaos) $f$ is $(\varepsilon,\delta)$-chaotic if for all $t \ge \delta$
        \[ \mathcal{E}(f,P_tf) \le \varepsilon \cdot e^{-t/C_{\mathrm{PI}}} \mathcal{E}(f,f).\]
    \end{enumerate}
\end{definition}

To understand the notions of superconcentration and chaos in Definition \ref{def:chaos}, one should think of $X_t$ as the environment of a physical system at time $t$, $f$ is an observable of interest (e.g., the ground state energy), and $\varepsilon, \delta$ are small parameters that tend to $0$ as the system size increases. In that case, $f$ will be simply called superconcentrated (respectively, chaotic). 

The heuristics behind the definition of superconcentration can be understood as follows: if the semigroup $P_t$ of $X_t$ satisfies a Poincar\'e inequality with constant $C_{\mathrm{PI}}$, one should have $\var_{\nu}(f) \le C_{\mathrm{PI}} \mathcal{E}(f,f)$ for all observables $f$ of finite Dirichlet energy. Superconcentration refers to a tighter fluctuation (the existence of the small parameter $\varepsilon$) compared to the usual order obtained from the classical Poincar\'e inequality, in the specific case of the superconcentrated function f. 

The notion of chaos, as outlined in Definition \ref{def:chaos} (ii), is analytical in nature, and perhaps more convenient to handle in a technical sense. Its connections to the more intuitive notion of chaos, namely lack of stability of output in response to minor perturbations in the input, might not be superficially obvious. However, it does turn out that in many classical models of statistical mechanics,  Definition \ref{def:chaos} (ii) does imply a suitable phenomenon that is aligned to the more classical notion of chaos outlined here. For an illuminating discussion through a concrete example, we refer to Section \ref{sec:stability-GP}; for a version that is relevant to the NPMLE problem at hand, we direct the reader to Corollary \ref{cor:robust} and the related discussion in Section \ref{sec:stability-NPMLE}.  

These concepts are perhaps the most relevant in the setting where we have a sequence of models of growing size, parametrized by the quantity $n \to \infty$, and a sequence of real-valued functions $f_n$ defined on the configuration space $\Theta_n$ of the $n$-th model (for instance, consider the ground state energy as $f_n$ in the Gaussian polymer model of size $n$, which is defined on $\Theta_n$ that is the set of walks of length $n$). Then the sequence of functions $f_n$ superconcentrates if Definition \ref{def:chaos} (i) holds for $f_n$ with a corresponding sequence $\varepsilon_n \to 0$, and is chaotic  when Definition \ref{def:chaos} (ii) holds with parameters $(\tilde{\varepsilon}_n,\delta_n)$ with both $\tilde{\varepsilon}_n,\delta_n \to 0$.  

\subsection{ The multiple valleys and AEU phenomena}
\label{sec:multiple valleys}
For the notion of \emph{multiple valleys}, we will only discuss heuristics and refer the reader to \cite{chatterjee2014superconcentration} Chap. 1 Section 3 (esp. Subsections 3.1 and 3.2) for the formal definition. An optimization problem is called \textit{stable} if any near-optimum is close to the optimum in an appropriate metric. The \textit{multiple valleys} phenomenon is in some sense the opposite, namely, a random optimization problem is said to have multiple valleys if there are many significantly  dissimilar near-optimal solutions. The  monograph \cite{chatterjee2014superconcentration} demonstrates that superconcentration and chaos, as defined above are equivalent, and they imply the multiple valleys/multiple peaks phenomenon.

A sequential understanding for the multiple valleys/multiple peaks phenomenon, similar to concentration and chaos, can be easily obtained. However, as will be clear from the results in this paper, it is in fact the contrapositive of the multiple valleys/multiple peaks phenomenon that is conceptually relevant for our purposes. In the sequential setting of models of growing size, this may be understood as the so-called \emph{Asymptotic Essential Uniqueness} ({\it {abbrv.} AEU}) property. An early rigorous instance of this phenomenon was established by Aldous in the context of the so-called \emph{random assignment problem} \cite{aldous2001zeta} wherein he proved a well-known conjecture of Parisi \cite{parisi1998conjecture}; see also the discussion in Chap. 1 Section 3.1 in \cite{chatterjee2014superconcentration}. 

To understand this, we think of $f_n$ as discussed above as the optimal value of a random optimization problem; i.e.  $f_n=\min_{\theta \in \Theta_n} F_n(\theta)$ for functions $F_n$ defined on the configuration space $\Theta_n$, with the optimizing configuration being $\hat{\theta}_n$ (which is random, due to the random environment). Broadly speaking, AEU entails that for any (random) sequence of arguments $\theta_n \in \Theta_n$ such that $\mathbb{E}[s_n(\theta_n,\hat{\theta}_n)] \ll 1$ for a suitable \emph{measure of similarity} $s_n$ on the configuration space $\Theta_n$, we must have $\mathbb{E}[F_n(\theta_n)] \gg \mathbb{E}[f_n]$. Here the comparison inequalities are appropriately defined with due regard to the scaling of the quantities such as $\mathbb{E}[f_n]$ with $n$.

\subsection{ An example: Instability in the Gaussian polymer model}
\label{sec:stability-GP}
To shed light on these discussions, we once again look at the example of the $(1+1)$-dimensional Gaussian random polymer model. In this section, we will briefly address the main ideas, relegating a detailed discussion to Appendix \ref{appC}. 

To this end, let the Gaussian environment $(G_v)_{v\in \mathbb{Z}^2}$ in the $(1+1)$-dimensional Gaussian random polymer model evolve according to the Ornstein-Uhlenbeck diffusion \eqref{eq:OU}. 
We denote by $\hat{p}_n^{(t)}$ the ground state polymer of the system under the $t$-perturbed environment $(G_v^{(t)})_{v\in \mathbb{Z}^2}$.

In \cite{chatterjee2014superconcentration}, it was demonstrated that the ground state energy $\hat{E}_n$ is superconcentrated; in fact, it may be shown that $\var(\hat{E}_n) \le C n/ \log n$. To see how this bound connects to the Poincar\'e inequality, we note that in this model the optimal energy $\hat{E}_n$ satisfies $\hat{E}_n = \sum_{v\in \text{optimal path}} G_v$; therefore $\partial \hat{E}_n/\partial G_v = \mathbf{1}\{v\in \text{optimal path}\}$. This immediately leads to 
\[\|\nabla \hat{E}_n\|^2 = \sum_{v} \mathbf{1}\{v\in \text{optimal path}\} \le n+1,\]
which implies superconcentration of $\hat{E}_n$ with parameter $1/\log n$. For a detailed version of this analysis, we refer the reader to the Appendix \ref{appC}.

By the equivalence of superconcentration and chaos (established in \cite{chatterjee2014superconcentration} under rather general conditions in the setting of discrete statistical mechanical models), it may be deduced that that $\hat{E}_n$ is also chaotic (in the sense of  Definition \ref{def:chaos}). While the notion of chaos as in Definition \ref{def:chaos} is somewhat less intuitive due to the emergence of the Dirichlet form,  we remark that the Dirichlet form can be thought as a way to measure correlations in many probabilistic models, whereby we may interpret $\mathcal{E}(f,P_tf)$ as a kind of "correlation" between  $f(X)$ and $P_tf(X)$, where $X\sim \nu$. For a detailed discussion of this perspective with further examples, we refer the reader to Appendix \ref{appC}.

For the Gaussian polymer model,  it was shown in \cite{chatterjee2014superconcentration}  that chaos of the ground state energy $\hat{E}_n$ (in the sense of Definition \ref{def:chaos}) implies that there exist small times $t_n \to 0$ such that the ground state polymers $\hat{p}_n$ and $\hat{p}_n^{(t_n)}$ (at times 0 at $t_n$ resp.) are almost disjoint (relative to their lengths) even when $t_n$ is arbitrarily small (in the sense that $\mathbb{E}[\frac{1}{n}|\hat{p}_n \cap \hat{p}_n^{(t_n)}|] \to 0$ (where $|p\cap p'|$ denotes the number of vertices in the intersection of the two paths $p,p'$). This could be seen as a manifestation of the usual notion of chaos in statistical physics: the ground state polymers are highly sensitive to small changes in the environment. 

For the multiple valleys phenomenon, we recall that the ground state energy $\hat{E}_n$ is obtained by optimizing the energy functional $E_n$, which is a random function of the environment. Since the ground state energy $\hat{E}_n$ has been shown to be superconcentrated and chaotic, it follows from the theory established in \cite{chatterjee2014superconcentration}, that the multiple valleys phenomenon also occurs in this setting. This is understood most naturally via the sequential interpretation as in Section \ref{sec:multiple valleys}, with the similarity measure $s_n(p,p'):= |p\cap p'|/n$  between two paths $p$ and $p'$. It entails that, there is a large collection of {\it near-optimal} paths $p$ such that $|p\cap \hat{p}_n|/n$ is typically small, but nonetheless $E_n(p)$ is close to $\hat{E}_n$ at an appropriate scale for all such $p$ in this collection.


\subsection{ Bhattacharyya Coefficient as similarity measure and proof of Corollary \ref{cor:robust}} 
\label{sec:BC}

In this section, we discuss why the Bhattacharyya Coefficient ({\it abbrv} BC) is a natural measure of similarity of probability distributions for studying chaos phenomena in the context of statistical and machine learning problems, and complete the proof of Corollary \ref{cor:robust}.
To make this analogy precise, we examine chaos phenomena in statistical mechanics through the motivating example of the Gaussian polymer model, and identify a path $p$ with the probability
density of the normalized counting measure
\[
\mathrm{1}_p(x) := \frac{\indicator\{x \in p\}}{|p|},
\]
where $|p|$ denotes the number of vertices in the path. Then, for two
paths $p$ and $q$,
\[
\mathrm{BC}(\mathrm{1}_p,\mathrm{1}_q)
= \int \sqrt{\frac{\indicator\{x \in p\}\indicator\{x \in q\}}{|p||q|}}
\,\de x.
\]

Since all polymer paths have the same length, the above expression shows that $\mathrm{BC}(\mathrm{1}_p,\mathrm{1}_q)$ coincides exactly with the normalized intersection size $|p \cap q|$.

We complete this section with a direct proof of Corollary \ref{cor:robust}.

\begin{proof}[Proof of Corollary \ref{cor:robust}]
     Noting that
\[
\mathrm{BC}(\npmle,\npmle^{(t)}) = 1 - \frac{1}{2}\He^2(\npmle,\npmle^{(t)}),
\]
it therefore suffices to show that
$\He^2(\npmle,\npmle^{(t)}) \to 0$. 
     As $X_1, ..., X_n$ are i.i.d. samples from $f_{*}$ we have via \cite{zhang2009generalized, saha2020nonparametric} that 
     \begin{align}\mathbb{E}\left[\He^2(\npmle, f_{*})\right] \leq O\left(\frac{(\log n)^{d+1}}{n}\right).\end{align} 
     Since the perturbed samples $X_1^{(t)}, ..., X_n^{(t)}$ are also i.i.d. samples from the density $f_{*}$, the same result applies to $\npmle^{(t)}$. 
     Using sub-additivity of $\He$ gives that  
     \[\E\left[\He^2(\npmle, \npmle^{(t)})\right] \leq O \Big (\frac{(\log n)^{d+1}}{n} \Big ),\]
     as desired.
\end{proof}


\section{{ Basics of Dirichlet forms and Poincar\'e inequality}}\label{appB}

We briefly introduce the basics of the theory of Dirichlet forms here, with a view to providing a more or less self-contained account. For an in-depth account, we refer the interested reader to the excellent texts \cite{bakry2013, fukushima2010}, and the references therein. 
 
 Let $(X_t)_{t \ge 0}$ be a continuous-time Markov process taking values in some abstract state space $S$. By definition, the Markov semigroup $(P_t)_{t\ge 0}$ of the process is a family of linear operators acting on functions on $S$, given by
\[ P_tg(x) = \mathbb{E} [g(X_t) | X_0 =  x], \quad g:S\rightarrow \mathbb{R} \text{ and } x\in S.\]
As the name suggests, they satisfy the semi-group property: $P_0=\mathrm{Id}$ and $P_t\circ P_s = P_{t+s}$ for all $t,s \ge 0$.
The generator $L$ of the process is formally given by 
\[ L g = \lim_{t\rightarrow 0} \frac{P_t g - g}{t},\]
whenever the right-hand side makes sense. 

Let $\nu$ be a measure on $S$. We say that $\nu$ is \emph{invariant} under $(P_t)_{t\ge 0}$ if 
\[ \int f \de \nu = \int P_t f \de \nu , \quad \forall f \in L^1(\nu).\]
We further say that $\nu$ is \emph{reversible} if 
\[ \int (P_tf) g \de \nu = \int f P_t g \de \nu, \quad \forall f, g \in L^2(\nu).\]
If in addition $\nu$ is a probability measure, we also call $\nu$ an equilibrium measure of the Markov process.

Given an equilibrium measure $\nu$, the \emph{Dirichlet form} of the process is then defined by
\[ \mathcal{E}(f,g) = -\int_S f(x)Lg(x) \nu(\de x). \]
The process is said to satisfy the Poincar\'e inequality with constant $C$ if 
\[\var_{\nu}(g):= \int\Big (g-\int g\de \nu \Big )^2 \de \nu \le C \mathcal{E} (g,g),\]
for all $g\in L^2(\nu)$ such that the Dirichlet energy $\mathcal{E}(g,g)$ is well-defined. The smallest such a constant $C$ is called the optimal constant in the Poincar\'e inequality, denoted by $C_{\mathrm{PI}}$.

We now look at some examples.
In the Gaussian polymer model, we first notice that the system (particularly, the ground state energy $\hat{E}_n$ and the ground state polymer $\hat{p}_n$) only depends on the environment $G_v$ on the set
\[V_n:=\{(i,j)\in \mathbb{Z}_2: 0\le i \le n, -n\le j \le n\}.\]
Thus, it suffices to consider $(G_v: v\in V_n)$ as the environment of the system. For each $v\in V_n$, $G_v$ evolves independently according to the Ornstein-Uhlenbeck equation \eqref{eq:OU}. Hence, the generator of the process $(G_v^{(t)}: v\in V_n)_{t\ge 0}$ is 
\[L=  \Delta - x\cdot \nabla, \quad x \in \R^{|V_n|}\] 
where $\Delta$ is the Laplacian and $\nabla$ is the gradient operator in $\mathbb{R}^{|V_n|}$. The equilibrium measure of this diffusion is the standard Gaussian measure $\gamma_n$ in $\mathbb{R}^{|V_n|}$, and the Dirichlet energy is given by
\begin{eqnarray*}
    \mathcal{E}_{\gamma_n} (g,h) &=& -\int_{\mathbb{R}^{|V_n|}} g(x) ( \Delta h(x) - x\cdot \nabla h(x)) \gamma_n (\de x) \\
    &=& \int_{\mathbb{R}^{|V_n|}} \nabla g (x)\cdot \nabla h(x) \gamma_n (\de x),
\end{eqnarray*}
where the last equality follows by integration by parts. It is known that the OU semigroup satisfies Poincar\'e inequality with $C_{\mathrm{PI}}=1$.

In NPMLE, each $X_i$ evolves independently via the Langevin equation \eqref{eq:Langevin}. Thus, the generator of the process $(X_1^{(t)},\ldots,X_n^{(t)})_{t\ge 0}$ is 
\[ L = \Delta + \sum_{i=1}^n \nabla_{x_i}\log f_{*}(x_i) \cdot \nabla_{x_i}, \]
where $\Delta$ is the Laplacian in $\mathbb{R}^{dn}$ and $\nabla_{x_i}$ is the gradient acting on the $x_i$-coordinate. The equilibrium measure of the diffusion is $f_*(x_1)\ldots f_*(x_n)\de x_1\ldots \de x_n$. Using integration by parts, the Dirichlet energy in this case is given by
\begin{equation} \label{eq:Dirichlet-energy}
     \mathcal{E}_{n}(g,h) = \int_{\mathbb{R}^{dn}} \nabla g(x_1,\ldots,x_n) \cdot \nabla h(x_1,\ldots,x_n)\prod_{i=1}^n f_*(x_i)\de x_i.
\end{equation}

Since the equilibrium measure is of the form $\prod_{i=1}^n f_*(x_i)\de x_i$, it follows from the tensorization property of the Poincar\'e inequality that the Poincar\'e constant of $\prod_{i=1}^n f_*(x_i)\de x_i$ is the same as the one of $f_*(x)\de x$. In \cite{Bardet18}, it was shown that $f_*(x)\de x$ satisfies the Poincar\'e inequality whose the Poincar\'e constant depending only on the diameter of the support of $\mu_*$.


\section{{ A detailed analysis of the Gaussian polymer example}} \label{appC}

In this section, we discuss in detail the stability and fluctuation phenomena in the simple settling of the Gaussian random polymer model, with a view towards illustrating their key features and to serve as a motivating instance for the stability results in the NPMLE problem that are the main focus of this paper.
Let the Gaussian environment $(G_v)_{v\in \mathbb{Z}^2}$ evolve according to the Ornstein-Uhlenbeck diffusion \eqref{eq:OU}. 
We denote by $\hat{p}_n^{(t)}$ the ground state polymer of the system under the $t$-perturbed environment $(G_v^{(t)})_{v\in \mathbb{Z}^2}$.
The first result, shown by Chatterjee in \cite{chatterjee2014superconcentration}, says that the ground state energy $\hat{E}_n$ is superconcentrated.

\begin{theorem}[Theorem 1.3. in \cite{chatterjee2014superconcentration}]
  For the $(1+1)$-dimensional Gaussian random polymer of length $n$, we have
\[ \var(\hat{E}_n) \le \frac{C n}{ \log n},\]
where $C$ does not depend on $n$.
\end{theorem}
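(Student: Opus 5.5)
The proof rests on Talagrand's $L^1$--$L^2$ inequality for the Gaussian measure $\gamma_N$ (a consequence of hypercontractivity of the Ornstein--Uhlenbeck semigroup), applied not to $\hat{E}_n$ itself but to a spatially averaged version of it. For a smooth enough function $F$ of finitely many i.i.d.\ standard Gaussians $(G_v)$, the inequality reads
\[
\var(F) \le C\sum_{v} \frac{\|\partial_v F\|_2^2}{1+\log\bigl(\|\partial_v F\|_2/\|\partial_v F\|_1\bigr)}.
\]
We already know that $\partial \hat{E}_n/\partial G_v = -\mathbf{1}\{v\in \hat p_n\}$ almost surely, since the optimal path is a.s.\ unique. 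Hence $\|\partial_v \hat E_n\|_1=\|\partial_v\hat E_n\|_2^2=P(v\in\hat p_n)$, and the sum of these quantities is $n+1$. The logarithmic gain requires every influence $P(v\in\hat p_n)$ to be polynomially small. This fails near the origin, since every path passes through $(0,0)$ and vertices close to it are visited with probability of order one. Removing this obstruction is the main difficulty.

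To handle it, I would use the Benjamini--Kalai--Schramm averaging trick. Fix $m=\lfloor n^{1/4}\rfloor$. For $j=1,\dots,m$, let $\hat E_n^{(j)}$ be the ground state energy of paths of length $n$ started at $(0,j)$ instead of $(0,0)$, and set
\[
F:=\frac1m\sum_{j=1}^m \hat E_n^{(j)}.
\]
Then
\[
\var(\hat E_n)\le 2\var(F)+2\,\E\bigl[(\hat E_n-F)^2\bigr].
\]
For the error term, I would compare a path from $(0,0)$ with a path from $(0,j)$ as follows. Any optimal path from one start can be rerouted during its first $2m$ steps so as to start from the other point, and the cost of this rerouting is at most the sum of $|G_v|$ over a box of $O(m^2)$ vertices near the origin. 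Therefore $|\hat E_n-\hat E_n^{(j)}|\le \sum_{v\in \text{box}}|G_v|$ for every $j$. This gives $\E[(\hat E_n-F)^2]=O(m^4)=O(n)$, which is too weak. So I would instead take $m=\lfloor n^{1/8}\rfloor$ and use a box of $O(m^2)$ vertices, which yields $O(m^4)=O(n^{1/2})=o(n/\log n)$.

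For the main term, note that $\partial_v F=-\frac1m\sum_j \mathbf{1}\{v\in\hat p_n^{(j)}\}$ takes values in $[-1,0]$. This gives $\|\partial_vF\|_2^2\le\|\partial_vF\|_1$. By translation invariance of the environment,
\[
P\bigl(v\in \hat p^{(j)}_n\bigr)=P\bigl(v-(0,j)\in\hat p_n\bigr).
\]
For a fixed $v=(i,k)$, the points $v-(0,j)$ are distinct vertices in row $i$, and $\hat p_n$ meets row $i$ exactly once. Hence
\[
\|\partial_v F\|_1=\frac1m\sum_jP\bigl(v-(0,j)\in\hat p_n\bigr)\le\frac1m .
\]
It follows that $\|\partial_vF\|_2/\|\partial_vF\|_1\ge \|\partial_vF\|_1^{-1/2}\ge m^{1/2}$, so the logarithm in the denominator is at least $c\log n$. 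Summing over $v$ gives
\[
\sum_v\|\partial_vF\|_2^2\le\sum_v\|\partial_vF\|_1=\frac1m\sum_j\E|\hat p_n^{(j)}|=n+1.
\]
Talagrand's inequality then gives $\var(F)\le Cn/\log n$, and combining this with the error bound proves the theorem.

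Besides the rerouting estimate, the remaining technical points are routine. First, $\hat E_n$ is Lipschitz in $(G_v)$, so Talagrand's inequality applies after a standard smoothing or a.e.-differentiability argument. Second, only finitely many coordinates matter, because the relevant vertices lie in a box of size $O(n^2)$.
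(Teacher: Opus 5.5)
The paper gives no proof of this statement; it is quoted from Chatterjee's monograph, so your argument has to stand on its own. Its architecture is the right one: Talagrand's $L^1$--$L^2$ inequality applied to a Benjamini--Kalai--Schramm average. Your influence bookkeeping is also correct: translation invariance, together with the fact that $\hat p_n$ meets each row exactly once, gives $\|\partial_vF\|_1\le 1/m$ and $\sum_v\|\partial_vF\|_1=n+1$.

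The step that fails is the rerouting bound $|\hat E_n-\hat E_n^{(j)}|\le\sum_{v\in\mathrm{box}}|G_v|$, because paths are directed. With $|p(i+1)-p(i)|\le 1$, the vertex $(i,\hat p_n(i))$ is reachable from $(0,j)$ if and only if $\hat p_n(i)+i\ge j$, and $\hat p_n(i)+i$ is nondecreasing in $i$.
\begin{itemize}
\item If $\hat p_n(2m)<j-2m$ (for instance, if its first $2m$ steps are all down-steps), no path from $(0,j)$ can join it within $2m$ steps.
\item If $\hat p_n(i)=-i$ for all $i$, it can never be joined. Take the weights on $\{(i,-i)\}$ all above a large $K$ and the others bounded. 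Then $\hat E_n^{(j)}-\hat E_n$ is of order $nK$, not a box sum near the origin.
\item The reverse rerouting fails symmetrically when $\hat p_n^{(j)}$ moves up at full speed.
\end{itemize}
This is the standard obstruction to BKS in directed models: starting points on the same time level admit no triangle-inequality comparison. You also give no estimate showing these events are negligible. Proving one would require controlling the slope of the geodesic near the origin, which runs into the same comparison problem for the continuation values from time $2m$.

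One way to repair this is to use space-time shifts and only one-sided comparisons. Let $\hat E^{(z)}$ be the ground state energy of length-$n$ paths from $z$.
\begin{itemize}
\item If $z'_1-z_1\ge|z'_2-z_2|$, walk from $z$ to $z'$ and then follow $\hat p^{(z')}$ with its last $z'_1-z_1$ steps removed. This shows $\hat E^{(z)}\le\hat E^{(z')}+e_{z,z'}$ with $\E e_{z,z'}^2=O(m^2\log n)$, since the removed tail is bounded by row maxima of $|G_v|$.
\item Let $F$ be the average of $\hat E^{(z)}$ over the diamond $D_m=\{(a,b):|b|\le\min(a,2m-a)\}$. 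Then $\hat E^{(0,0)}\le F+e_1$ and $F\le\hat E^{(2m,0)}+e_2$.
\item Since $\hat E^{(0,0)}$ and $\hat E^{(2m,0)}$ have the same law, ordering quantile functions gives $Q_{F-e_2}\le Q_{\hat E_n}\le Q_{F+e_1}$. Hence $\var(\hat E_n)\le 4\var(F)+2\E(e_1^2+e_2^2)$.
\item Each time level of $D_m$ contains at most one $z$ with $v-z\in\hat p_n$, so $\|\partial_vF\|_1\le C/m$ still holds.
\end{itemize}

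Separately, your inequality $\|\partial_vF\|_2/\|\partial_vF\|_1\ge\|\partial_vF\|_1^{-1/2}$ is reversed. From $\|\partial_vF\|_2^2\le\|\partial_vF\|_1$ you get the opposite bound, because $\partial_vF$ is not an indicator; if $\partial_vF\equiv-1/m$, the ratio equals $1$. The usual fix is to split the sites:
\begin{itemize}
\item if $\|\partial_vF\|_2^2\le\|\partial_vF\|_1^{3/2}$, the term is at most $Cm^{-1/2}\|\partial_vF\|_1$;
\item otherwise the ratio exceeds $\|\partial_vF\|_1^{-1/4}\ge cm^{1/4}$.
\end{itemize}
This gives $\var(F)\le Cn(1/\log m+m^{-1/2})$, and choosing $m=n^{1/4}$ yields $\var(\hat E_n)=O(n/\log n)$.
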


To see why the above theorem implies superconcentration, we note that the ground state energy $\hat{E}_n$ is absolutely continuous with respect to the variables $(G_v:v\in V_n)$. Moreover,
\[\partial \hat{E}_n/\partial G_v = \mathbf{1}(v\in \text{optimal path}).\] Thus, 
$\|\nabla \hat{E}_n\|^2 = \sum_{v} \mathbf{1}(v\in \text{optimal path}) \le n+1$.
Since the optimal constant of the OU process is $1$, the Poincar\'e inquality immediately yields 
\[\var(\hat{E}_n) \le \mathbb{E}[\|\nabla \hat{E}_n\|^2] \le n+1.\]
Hence, we can take $\varepsilon =O (1/\log n)$ to deduce that $\hat{E}_n$ is superconcentrated. 

By the equivalence of superconcentration and chaos, we deduce that $\hat{E}_n$ is also chaotic (in the sense of \cite{chatterjee2014superconcentration}, i.e., Definition \ref{def:chaos}). However,
this particular notion of chaos is somehow less intuitive due to the emergence of the Dirichlet form. 

To get intuition, we remark that the Dirichlet form can be thought as a way to measure correlations in several probabilistic models. For example, for a discrete Gaussian free field on an undirected graph $G=(V,E)$, the Dirichlet form is defined via the Laplacian $L$ of the graph $G$, which captures the precision operator (the inverse covariance) governing conditional dependencies.

With this intuition, one might interpret $\mathcal{E}(f,P_tf)$ as a kind of "correlation" between 
$f(X)$ and $P_tf(X)$, where $X\sim \nu$. 
Indeed, for Langevin diffusions (which include the OU diffusions), the Dirichlet energy takes the form
\[\mathcal{E}(f,P_tf) = \mathbb{E}_{X\sim\nu} \Big [\nabla f(X) \cdot \nabla P_t f(X) \Big ],\] which is essentially the covariance of $\nabla f(X)$ and $\nabla P_tf(X)$. The presence of the small parameter $\varepsilon$ in Definition \ref{def:chaos} (ii) causes this correlation to decay faster than expected, indicating the chaotic nature of the system.

 However, as we remarked above, it is unclear that chaos in the sense of Definition \ref{def:chaos} is related to the standard notion of chaos in statistical mechanics, which characterized by the high sensitivity of the system to small changes in the environment.
Surprisingly, for the Gaussian polymer model,  \cite{chatterjee2014superconcentration} showed that the chaos of the ground state energy $\hat{E}_n$ in the sense of Definition \ref{def:chaos} (ii) implies the following result: 

\begin{theorem}[Chaos in Gaussian polymer] \label{t:chaotic}
   There exist $t_n \rightarrow 0$ such that
    \[ \mathbb{E} |\hat{p}_n \cap \hat{p}_n^{(t_n)}| = o(n),\]
    where $|p\cap p'|$ is the number of vertices  in the intersection of two paths.
\end{theorem}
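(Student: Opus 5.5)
We prove Theorem~\ref{t:chaotic} by the route of Chatterjee's monograph, in which the chaos of $\hat{E}_n$ is converted into decorrelation of the ground states. The bridge is the formula for the covariance of $\hat{E}_n$ at two times of the Ornstein--Uhlenbeck flow. Since $\hat{E}_n$ is a minimum of linear functions of the environment, it is Lipschitz and a.e. differentiable, with
\[
\partial \hat{E}_n/\partial G_v = -\mathbf{1}(v\in\hat{p}_n),
\]
with the sign fixed by $E_n(p)=-\sum_{v\in p}G_v$. The OU semigroup satisfies the Mehler/integration-by-parts identity
\[
\mathrm{Cov}\bigl(\hat{E}_n(G^{(0)}),\hat{E}_n(G^{(t)})\bigr)=\int_t^\infty e^{-s}\,\mathbb{E}\bigl[\nabla \hat{E}_n(G^{(0)})\cdot\nabla \hat{E}_n(G^{(s)})\bigr]\,\de s .
\]
Here $\nabla \hat{E}_n(G^{(0)})\cdot\nabla \hat{E}_n(G^{(s)})=|\hat{p}_n\cap\hat{p}_n^{(s)}|$, so the integrand is exactly the expected overlap we want to control. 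The identity itself is standard: differentiate $s\mapsto \mathbb{E}[f P_s f]$ and use $\frac{\de}{\de s}\nabla P_s f=-\nabla P_s f+P_s(\ldots)$, or equivalently $\nabla P_sf=e^{-s}P_s\nabla f$.

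Next we use superconcentration. Setting $t=0$ gives
\[
\var(\hat{E}_n)=\int_0^\infty e^{-s}\,\mathbb{E}|\hat{p}_n\cap\hat{p}_n^{(s)}|\,\de s\le Cn/\log n
\]
by the cited Theorem~1.3. Write $\phi(s):=\mathbb{E}|\hat{p}_n\cap\hat{p}_n^{(s)}|$. This function is nonnegative, and, crucially, it is nonincreasing in $s$. Monotonicity follows from the spectral representation: by reversibility and $\nabla P_s f=e^{-s}P_s\nabla f$, one has $\phi(s)=\sum_v\mathbb{E}[(P_{s/2}\partial_v \hat{E}_n)^2]$, which is completely monotone in $s$ (a sum of $e^{-\lambda s}$ terms with nonnegative weights). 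Consequently, for any $t>0$,
\[
\phi(t)(1-e^{-t})\le\int_0^t e^{-s}\phi(s)\,\de s\le \frac{Cn}{\log n}.
\]

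Finally we choose $t_n\to0$ slowly, for example $t_n=(\log n)^{-1/2}$. Then $1-e^{-t_n}\asymp t_n$, so
\[
\phi(t_n)\le \frac{Cn}{t_n\log n}=O\bigl(n/\sqrt{\log n}\bigr)=o(n),
\]
which is the claim.

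The main obstacle is the monotonicity/positivity step, which needs care. One must justify the spectral (Hermite chaos) expansion for the Lipschitz function $\hat{E}_n$, which is fine since it lies in $L^2(\gamma_n)$ with gradient in $L^2$. One must also ensure that the optimal path is a.s. unique, so that the gradient identity holds a.e.; this follows because ties among finitely many Gaussian linear forms have probability zero. Everything else is a routine calculation with the OU semigroup.
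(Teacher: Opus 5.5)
Your argument is correct and follows the route the paper indicates by citing Chatterjee. The superconcentration bound $\mathrm{Var}(\hat{E}_n)\le Cn/\log n$ is converted into an overlap bound through the identity $\mathcal{E}_{\gamma_n}(\hat{E}_n,P_t\hat{E}_n)=e^{-t}\,\mathbb{E}|\hat{p}_n\cap\hat{p}_n^{(t)}|$, which comes from $\nabla P_t=e^{-t}P_t\nabla$ and is exactly your integrand. The only difference is that you prove the ``superconcentration implies chaos'' step directly, via monotonicity of $\phi$ (which already follows from $L^2$-contractivity of $P_{s/2}$), rather than invoking the equivalence as a black box; this also yields the explicit bound $O(n/(t_n\log n))$.
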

In particular, the above theorem says that the ground state polymers $\hat{p}_n$ and $\hat{p}_n^{(t_n)}$ are almost disjoint (relative to their lengths) even when $t_n$ is arbitrarily small. This could be seens as usual notion of chaos in statistical physics: the ground state polymers are highly sensitive to small changes in the environment.

In general, it is still unclear how the formal Definition \ref{def:chaos} (ii)) relates to the usual notion of chaos in statistical physics (e.g. in the sense of Theorem \ref{t:chaotic}).
In the example of Gaussian polymer model, a key ingredient to connect these two notions of chaos is the following formula
\begin{equation} \label{eq:key-formula-polymer}
     \mathcal{E}_{\gamma_n}(\hat{E}_n,P_t \hat{E}_n) 
     = e^{-t} \cdot \mathbb{E}|\hat{p}_n \cap \hat{p}^{(t)}_n |, \quad \forall t \ge 0,
\end{equation}
    We remark that a formula like \eqref{eq:key-formula-polymer} usually depends heavily on the particular structure of the model and the diffusion. For example, to prove formula \eqref{eq:key-formula-polymer}, one has to use the following property of the OU semigroup
     \[ \nabla P_t = e^{-t} P_t \nabla,\]
     which is not available in general.

For the multiple valleys pheonomenon, we recall that the ground state energy $\hat{E}_n$ is obtained by optimizing the energy functional $E_n$, which is a random function of the environment.
Since the ground state energy $\hat{E}_n$ has been shown to be superconcentrated and chaotic, the multiple valleys phenomenon also occurs in this setting.
\begin{theorem}[Multiple valleys in Gaussian polymer]\label{thm:mvgp}
    Let $s_n(p,p'):= |p\cap p'|/n$ be the similarity measure between two paths $p$ and $p'$, where $|p\cap p'|$ denotes the number of vertices in the intersection of the two paths. Then the energy function $E_n$ in the $(1+1)$-dimensional Gaussian polymer model has multiple valleys with the similarity measure as above. In particular, when $n$ is large, then with high probability there are many
paths that all have nearly minimal energy and are all nearly disjoint from each other.
\end{theorem}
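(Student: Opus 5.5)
We would follow Chatterjee's general scheme and obtain the multiple valleys from the chaos statement (Theorem~\ref{t:chaotic}) combined with a simple comparison of energies. The near-optimal paths will be the ground states $\hat p_n^{(s)}$ of the OU-perturbed environments $(G_v^{(s)})$ at the times $s = t_n, 2t_n, \ldots, k_n t_n$. Here $k_n\to\infty$ slowly and $k_n t_n\to 0$. We throughout identify the environment with $(G_v)_{v\in V_n}$ and measure energies on the natural scale $n$.

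\textbf{Step 1 (near-optimality).} For $s\ge0$ set $\Delta^{(s)}:=\max_{p\in\mathcal P_n}|E_n^{(s)}(p)-E_n(p)|$, where $E_n^{(s)}$ is the energy in the environment $G^{(s)}$. For a fixed path, $E_n^{(s)}(p)-E_n(p)=-\sum_{v\in p}(G^{(s)}_v-G_v)$ is centred Gaussian with variance $2(n+1)(1-e^{-s})\le 2(n+1)s$. Since $|\mathcal P_n|\le 3^n$, a union bound with Gaussian tails gives $\Delta^{(s)}\le Cn\sqrt{s}$ with probability $1-e^{-cn}$. On this event,
\[
E_n(\hat p_n^{(s)})\le E_n^{(s)}(\hat p_n^{(s)})+\Delta^{(s)}=\hat E_n^{(s)}+\Delta^{(s)}\le \hat E_n+2\Delta^{(s)}\le \hat E_n+2Cn\sqrt{s}.
\]
With $s\le k_nt_n\to0$, every $\hat p_n^{(jt_n)}$, $1\le j\le k_n$, has energy within $o(n)$ of the minimum. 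A union bound over the $k_n$ times keeps the failure probability negligible provided $k_n\le e^{cn/2}$.

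\textbf{Step 2 (pairwise near-disjointness).} The OU dynamics is stationary and Markov. Hence for $i<j$ the pair $(G^{(it_n)},G^{(jt_n)})$ has the same law as $(G,G^{((j-i)t_n)})$, and so $\mathbb E|\hat p_n^{(it_n)}\cap\hat p_n^{(jt_n)}|=\mathbb E|\hat p_n\cap\hat p_n^{((j-i)t_n)}|$. By formula \eqref{eq:key-formula-polymer} this equals $e^{(j-i)t_n}\,\mathcal E_{\gamma_n}(\hat E_n,P_{(j-i)t_n}\hat E_n)$. We would then show that $u\mapsto\mathcal E_{\gamma_n}(f,P_uf)$ is nonincreasing. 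By spectral decomposition of the self-adjoint generator, $\mathcal E(f,P_uf)=\int\lambda e^{-\lambda u}\,d\sigma_f(\lambda)$ with $\lambda\ge0$, which is nonincreasing in $u$. Since $e^{(j-i)t_n}\le e^{k_nt_n}\to1$, every pairwise overlap is therefore bounded in expectation by $(1+o(1))e^{t_n}\mathbb E|\hat p_n\cap\hat p_n^{(t_n)}|=:n\eta_n$, where $\eta_n\to0$ by Theorem~\ref{t:chaotic}. Markov's inequality and a union bound over the $\binom{k_n}{2}$ pairs then show that all overlaps are at most $n\sqrt{\eta_n}\,k_n^2$ with probability at least $1-\sqrt{\eta_n}$. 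Choosing $k_n\to\infty$ with $k_n^2\sqrt{\eta_n}\to0$ and $k_nt_n\to0$ makes all $k_n$ paths pairwise nearly disjoint (overlap $o(n)$) and nearly optimal, with high probability. This is exactly the statement.

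\textbf{Main obstacle.} The delicate point is the simultaneous choice of $t_n$ and $k_n$. Theorem~\ref{t:chaotic} gives chaos only along some sequence $t_n\to0$, with no quantitative rate. We therefore need the monotonicity in Step 2 so that larger time gaps $(j-i)t_n$ inherit the decorrelation, and we need $k_nt_n\to0$ so that the energy penalty $n\sqrt{k_nt_n}$ of Step 1 remains $o(n)$. Since both constraints only require $k_n\to\infty$ sufficiently slowly relative to $t_n^{-1}$ and $\eta_n^{-1/4}$, a diagonal choice works. The one thing to check carefully is that formula \eqref{eq:key-formula-polymer} holds with $\hat E_n$ almost surely uniquely minimized, which is true since the environment is Gaussian.
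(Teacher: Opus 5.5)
Your argument is correct, and it is more explicit than the paper. The paper gives no derivation of Theorem~\ref{thm:mvgp}; it only invokes Chatterjee's general principle that superconcentration and chaos of $\hat E_n$ imply multiple valleys.

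Your proposal is a self-contained instance of that implication. It uses only the two facts the paper records, Theorem~\ref{t:chaotic} and formula~\eqref{eq:key-formula-polymer}, plus the monotonicity of $u\mapsto\mathcal{E}_{\gamma_n}(\hat E_n,P_u\hat E_n)$. That monotonicity is exactly what carries chaos at the single time $t_n$ over to every gap $(j-i)t_n\ge t_n$. The other steps also hold:
\begin{itemize}
\item The union bound over $3^n$ paths in Step~1 is sound.
\item The Markov and union bound over pairs in Step~2 is sound.
\item On the final event the $k_n$ paths are automatically distinct, since two equal paths would have overlap $n+1$.
\end{itemize}

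What your version adds is a precise meaning for ``nearly minimal'': energies within $o(n)$ of $\hat E_n$, i.e.\ small relative to $|\mathbb{E}\hat E_n|\asymp n$. This matches the normalization the paper has in mind when it says comparisons are made with regard to the scaling of $\mathbb{E}[f_n]$. Since Theorem~\ref{t:chaotic} gives no rate for $t_n$, this is also the natural scale for the argument.

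A small simplification is available. Instead of sequential times, use independent perturbations $G^{j}=e^{-s}G+\sqrt{1-e^{-2s}}\,W^{j}$, $1\le j\le k_n$, where the $W^{j}$ are i.i.d.\ standard Gaussian, independent of $G$, and $s=t_n/2$.
\begin{itemize}
\item Each $G^j$ is still an OU perturbation of $G$ at time $s$, so Step~1 is unchanged.
\item Every pair $(G^i,G^j)$ has the law of $(G,G^{(t_n)})$, so Theorem~\ref{t:chaotic} applies to each pair directly.
\end{itemize}
With this choice you need neither the monotonicity lemma nor the constraint $k_nt_n\to0$.
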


\section{{ Proofs of technical lemmas}} \label{appA}


\subsection{Proofs of technical lemmas in subsection \ref{subsec:proof-thm-stability}}

\subsubsection{Proof of Lemma \ref{lm:guntu-entropy}}\label{prf:appguntu-entropy}

\begin{proof}[Proof of Lemma \ref{lm:guntu-entropy}]
    This lemma follows automatically from Theorem 4.1 in \cite{saha2020nonparametric}.
\end{proof}

\subsubsection{Proof of Lemma \ref{lm:Hellinger-technical}}\label{prf:appHellinger-technical}

\begin{proof}[Proof of Lemma \ref{lm:Hellinger-technical}]
By definition of the covering set, there exists a (random) index $J \in [N]$ such that $\|g_J - \est\|_{\infty, \Theta_M} \le n^{-2}$.
On the event $\{\He^2(\est, f_*) \ge t\}$,  we have $J \in \mathbf{J}$. Thus
\[ \est(x) \le h_J(x) + 2n^{-2} , \quad \forall x\in \Theta_M.\]
For $x\notin \Theta_M$, we use the crude bound $f(x) \le 1$ that holds for any $x\in \R^d$ and any density $f\in \mathcal{M}$. 
Thus,  we have 
\begin{eqnarray*}
    \prod_{i=1}^n \est(X_i) 
    &\le&  \prod_{i:X_i \in \Theta_M} \Big (h_J(X_i) + 2n^{-2}\Big ) \\
    &=& \Big [ \prod_{i=1}^n \Big (h_J(X_i) + 2n^{-2}\Big ) \Big ] \cdot \Big [\prod_{i=1}^n  \Big (\frac{1}{h_J(X_i) + 2n^{-2}}\Big )^{\mathbf{1}(X_i\notin\Theta_M)} \Big] \\
    &\le& \Big [ \prod_{i=1}^n \Big (h_J(X_i) + 2n^{-2}\Big ) \Big ] \cdot \Big [\prod_{i=1}^n  \Big (\frac{1}{n^{-2}}\Big )^{\mathbf{1}(X_i\notin\Theta_M)} \Big]\\
     &\le& \Big [\max_{j\in \mathbf{J}}\prod_{i=1}^n \Big (h_j(X_i) + 2n^{-2}\Big )\Big ] 
     \cdot \Big [\prod_{i=1}^n  \Big (1+n^2{\mathbf{1}(X_i\notin\Theta_M)}\Big ) \Big].
\end{eqnarray*}
Therefore
\begin{equation} \label{eq:ineq-2}
    \prod_{i=1}^n \frac{\est(X_i)}{f_*(X_i)} \le  
    \Big [\max_{j\in \mathbf{J}}\prod_{i=1}^n \frac{h_j(X_i) + 2n^{-2}}{f_*(X_i)}\Big ] 
    \cdot \Big [\prod_{i=1}^n  \Big (1+n^2{\mathbf{1}(X_i\notin\Theta_M)}\Big ) \Big].
\end{equation}
Since $|L_n(\est) - \optloss| \le \varepsilon$, we have $L_n(\est) \ge L_n(f_*) - \varepsilon$. This implies 
\[ \prod_{i=1}^n \frac{\est(X_i)}{f_*(X_i)} \ge \exp (-n\varepsilon).\]
The lemma follows.
\end{proof}

\subsubsection{Proof of Lemma \ref{lm:1-term}}\label{prf:app1-term}
\begin{proof}[Proof of Lemma \ref{lm:1-term}]
    By Markov's inequality, we have
\begin{eqnarray*}
    \mathbb{P} \Big (\prod_{i=1}^n \frac{h(X_i) + 2n^{-2}}{f_*(X_i)}  \ge e^{-\gamma}\Big ) 
    &\le& \exp \Big (\frac{\gamma}{2} \Big ) \cdot \mathbb{E} \Big [\prod_{i=1}^n \sqrt{\frac{h(X_i) + 2n^{-2}}{f_*(X_i)} } \Big ] \\
    &=&\exp \Big (\frac{\gamma}{2}  \Big ) \cdot \mathbb{E} \Big [ \sqrt{\frac{h(X_1) + 2n^{-2}}{f_*(X_1)} } \Big ]^n, 
\end{eqnarray*}
where we used the fact that $X_1,\ldots,X_n$ are i.i.d. samples. Now, observe that
\begin{eqnarray*}
    \mathbb{E} \Big [ \sqrt{\frac{h(X_1) + 2n^{-2}}{f_*(X_1)} } \Big ] &\le& \exp \Big (\mathbb{E} \Big [ \sqrt{\frac{h(X_1) + 2n^{-2}}{f_*(X_1)} } \Big ] -1 \Big ) \\
    &=& \exp \Big (\int \sqrt{\frac{h(x)+2n^{-2}}{f_*(x)}}f_*(x)\de x  - 1 \Big )\\
    &\le& \exp \Big (\sqrt{2}n^{-1}\int \sqrt{f_*} + \int \sqrt{hf_*}   - 1 \Big ) \\
    &=& \exp \Big (\sqrt{2}n^{-1}\int \sqrt{f_*} - \frac{1}{2} \He^2(h,f_*) \Big ),
\end{eqnarray*}
where we used  the fact that $\He^2(h,f_*)=2-2\int \sqrt{hf_*}$ in the last line.
Combining all ingredients, we have 
\[ \mathbb{P} \Big (\prod_{i=1}^n \frac{h(X_i) + 2n^{-2}}{f_*(X_i)}  \ge e^{-\gamma}\Big ) 
\le \exp \Big (\frac{\gamma}{2}   - \frac{n}{2}\He^{2}(h,f_*) +\sqrt{2}\int \sqrt{f_*}\Big ),\]
as desired.
\end{proof}

\subsubsection{Proof of Lemma \ref{lm:2-term}}\label{prf:app2-term}

\begin{proof}[Proof of Lemma \ref{lm:2-term}]
We have
    \begin{eqnarray*}
       \E \Big [\prod_{i=1}^n  \Big (1+n^2{\mathbf{1}(X_i\notin\Theta_M)}\Big ) \Big ] = \Big (1+ n^2\mathbb{P}(X_1\notin\Theta_M) \Big )^n.
    \end{eqnarray*}
    where we used the fact that $X_i$'s are i.i.d. Now, using the inequality $1+x \le e^x$, we have
    \begin{eqnarray*}
        \E \Big [\prod_{i=1}^n  \Big (1+n^2{\mathbf{1}(X_i\notin\Theta_M)}\Big ) \Big ] \le \exp \Big (n^3\mathbb{P}(X_1\notin\Theta_M) \Big ).
    \end{eqnarray*}
    Recall that $X_1\sim f_*$, where $f_*$ is a Gaussian location mixture with mixing measure $\mu_*$. Thus, we can write $X_1 =Y +Z$, where $Y$ is a sample from $\mu_*$ and $Z$ is a standard Gaussians in $\R^d$ independent of $Y$. As $\supp(\mu_*) \subset \Theta$, $Y \in \Theta$ almost surely. Thus $\dist(X_1,\Theta) \le \|X_1 - Y\| = \|Z\|$, which implies that
    \[\mathbb{P}(X_1 \notin \Theta_M) \le \mathbb{P}(\|Z\|\ge M) .\]
    
    Note that $\|Z\|^2$ follows $\chi^2$-distribution with $d$ degrees of freedom, hence 
    \[\mathbb{P}(\|Z\|^2 \ge d+ 2\sqrt{dx} + 2x) \le e^{-x}, \quad \forall x\ge 0.\]
    Since $d+2\sqrt{dx}+2x \le 3(d+x)$, we have $\mathbb{P}(\|Z\|^2 \ge 3(d+x)) \le e^{-x}$ for all $x \ge 0$. Thus
     \[\mathbb{P}(X_1 \notin \Theta_M) \le \mathbb{P}(\|Z\|\ge M) \le e^{d-M^2/3} = e^{d - 3\log n} = n^{-3}e^d  .\]

     Combining all ingredients, we deduce that 
     \[ \E \Big [\prod_{i=1}^n  \Big (1+n^2{\mathbf{1}(X_i\notin\Theta_M)}\Big ) \Big ] \le \exp \Big (n^3\mathbb{P}(X_1\notin\Theta_M) \Big ) \le \exp(\exp(d)).\]
\end{proof}


\subsubsection{Proof of Lemma \ref{lm:assumption-M-Theta-tau}}

\begin{proof}[Proof of Lemma \ref{lm:assumption-M-Theta-tau}]

Let $\varepsilon>0$ and $r\ge 0$, we define
 \[ D(r):= \{x \in \mathbb{R}^d: \dist(x, \supp(\mu_*)) \le r \}.\]
    Let $f \in \mathcal{M}$ be a GMM density such that the total variation distance $\TV(f,f_*) \le \varepsilon$. Then, if $\mu$ denotes the mixing measure of $f$, we claim that:
    \[ \mu(D(2r)) \ge 2-\frac{  1 + \varepsilon }{\mathbb{P}(\|Z\|\le r) }  \]
    where $Z$ denotes a standard Gaussian vector in $\mathbb{R}^d$. Indeed, Let $X\sim f_*(x)\de x$ and $X' \sim f(x) \de x$. By definition, we can write $X= Z  + \theta$ where $\theta \sim \mu_*$ and $Z$ is a standard Gaussian in $\mathbb{R}^d$ independent of $\theta$. This implies
\[ \mathbb{P}(X \in D(r)) = \mathbb{P}(Z+\theta \in D(r)) \ge \mathbb{P}(\|Z\|\le r).\]

Similarly, we can write $X'=Z+\theta'$ where $\theta' \sim \mu$ independent of $Z$. 
Observe that on the event $\{\theta' \notin D(2r)\}$, we have $\dist(\theta',D(r)) \ge r$; thus $X' \in D(r)$ would imply $\|Z\| \ge r$. 
Therefore
\begin{eqnarray*}
    \mathbb{P}(X' \in D(r)) &=& \mathbb{P}(X' \in D(r)|\theta' \in D(2r)) \mathbb{P}(\theta' \in D(2r)) \\
    &+& \mathbb{P}(X' \in D(r)|\theta' \notin D(2r)) (1- \mathbb{P}(\theta' \in D(2r))) \\
    &\le& \mathbb{P}(\theta' \in D(2r)) + \mathbb{P}(\|Z\| \ge r) (1-\mathbb{P}(\theta' \in D(2r))\\
    &=& \mathbb{P}(\|Z\| \ge r) + \mathbb{P}(\|Z\|\le r) \mathbb{P}(\theta' \in D(2r)).
\end{eqnarray*}

Since $\TV(f,f_*) \le \varepsilon$, we have
\begin{eqnarray*}
    \mathbb{P}(X \in D(r)) - \mathbb{P}(X' \in D(r)) \le \varepsilon
\end{eqnarray*}
which implies 
\[ \mu(D(2r))=\mathbb{P}(\theta' \in D(2r)) \ge \frac{ 2\mathbb{P}(\|Z\| \le r) - 1 - \varepsilon }{\mathbb{P}(\|Z\|\le r) } = 2-\frac{  1 + \varepsilon }{\mathbb{P}(\|Z\|\le r) } \]
as desired. 

Now note that on $\mathcal{E}$, we have $\He^2(f_*,\est) \le \varepsilon_n + C_1(\log n)^{d+1}/n \le \frac{1}{2}(1-e^{-1})^2 <1/5$. This implies
    \[ \TV(f_*, \est) \le \sqrt{2} \He(f_*,\est) <\sqrt{\frac{2}{5}} <0.65.\]
    Let $r>0$ be such that $\mathbb{P}(\|Z\| \le r) \ge 0.9$.  Then $\est \in \mathcal{M}(D(2r);\tau)$ with
    \[ \tau \ge 2 - \frac{1.65}{0.9} > 0.16.\]
    
\end{proof}


\subsubsection{Proof of Lemma \ref{lm:condition-M-delta}}\label{prf:appcondition-M-delta}

\begin{proof}[Proof of Lemma \ref{lm:condition-M-delta}]
   Let $f\in \mathcal{M}(\widetilde{\Theta};\tau)$. Then
  \begin{eqnarray*}
f(x) 
\ge \frac{1}{(2\pi)^{d/2}} \int_{\widetilde{\Theta}} e^{-\|x-\theta\|^2/2} \mu_f(\de \theta) 
\ge \frac{\tau}{(2\pi)^{d/2}} \cdot e^{-\|x\|^2 - C}, 
\end{eqnarray*}
where $C:= \sup_{\theta \in \widetilde{\Theta}} \|\theta\|^2$. Thus $f(x) \gtrsim  \exp (-\|x\|^2)$ where the implicit constant depends only on $\widetilde{\Theta}, d$ and $\tau$. Thus
    \begin{eqnarray*}
        \int f_*(x) \Big (\frac{f_*(x)}{f(x)} \Big )^{1/2} \de x &\lesssim& \int f_*(x)^{3/2} \exp \Big (\frac{\|x\|^2}{2}\Big ) \de x \\
        &\lesssim& \int \Big (\int \exp \Big (-\frac{\|x-\theta\|^2}{2} + \frac{\|x\|^2}{3}
        \Big ) \mu_*(\de \theta) \Big )^{3/2} \de x \\
         &=& \int \Big (\int \exp \Big (-\frac{\|x\|^2}{6} - \langle x,\theta\rangle - \frac{\|\theta\|^2}{2}
        \Big ) \mu_*(\de \theta) \Big )^{3/2} \de x \\
        &\le&  \int \Big (\int \exp \Big (-\frac{\|x\|^2}{6} +\frac{\|x\|^2}{8}+2\|\theta\|^2 - \frac{\|\theta\|^2}{2}
        \Big ) \mu_*(\de \theta) \Big )^{3/2} \de x \\
        &=& \int \Big (\int \exp \Big (-\frac{\|x\|^2}{24} + \frac{3\|\theta\|^2}{2}
        \Big ) \mu_*(\de \theta) \Big )^{3/2} \de x \\
        &=& \Big ( \int \exp \Big (-\frac{\|x\|^2}{8} \Big ) \de x \Big ) \cdot \Big (\int \exp \Big (\frac{3\|\theta\|^2}{2}\Big ) \mu_*(\de \theta) \Big )^{3/2} < \infty.
    \end{eqnarray*}
\end{proof}

\subsection{Proofs of technical lemmas in subsection \ref{subsubsec:stability_KL_dudley}}

\subsubsection{Proof of Lemma \ref{lm:dudley-bound}}\label{prf:appdudley-bound}

\begin{proof}[Proof of Lemma \ref{lm:dudley-bound}]
    We define 
    \[G(x):= \sup_{f\in \mathcal{M}({\Theta};\tau)} |\log f(x) -\log f_*(x)|\] 
 Note that for $f\in \mathcal{M}(\Theta;\tau)$, we have
 \begin{eqnarray*}
f(x) 
\ge \frac{1}{(2\pi)^{d/2}} \int_{{\Theta}} e^{-\|x-\theta\|^2/2} \mu_f(\de \theta) 
\ge \frac{\tau}{(2\pi)^{d/2}} \cdot e^{-\|x\|^2 - \sup_{\theta \in {\Theta}} \|\theta\|^2}.
\end{eqnarray*}
This implies
     $|\log f(x)| \le \|x\|^2 + C$, for some constant $C$ depending only on ${\Theta}, d$ and $\tau$. Thus $\|G\|_{L^2(f_*)} <\infty$. 

 Let $\mathcal{G}({\Theta};\tau):= \{\log f-\log f_*: f \in \mathcal{M}({\Theta};\tau)\}$
     Since $G(x)$ is an envelope function of $\mathcal{G}({\Theta};\tau)$,
     by Dudley's entropy bound we have
    \begin{equation} \label{eq:lm-dudley-bound}
        \E \Big [\sup_{g\in \mathcal{G}({\Theta};\tau)} \Big |\frac{1}{\sqrt{n}} \sum_{i=1}^n \Big (g(X_i) - \E g(X_i) \Big ) \Big | \Big ] \lesssim \int_0^{\|G\|_{L^2(f_*)}} \sqrt{\log N_{[]}(\varepsilon, \mathcal{G}({\Theta};\tau),L^2(f_*))} \de \varepsilon.
    \end{equation}
    By Theorem \ref{thm:FiniteBracketingIntegral}, $\log N_{[]}(\varepsilon, \mathcal{G}({\Theta};\tau),L^2(f_*)) \lesssim (-\log \varepsilon)^{d+1}$, which is integrable near $0$. Thus, the integral in \eqref{eq:lm-dudley-bound} is finite. The lemma follows.
\end{proof}




\subsection{Proofs of technical lemmas in subsection \ref{subsec:proof-thm-L2mu}} \label{sbsec:prfL2mu_bound}

\subsubsection{Proof of Lemma \ref{lm:not-in-qn}}\label{prf:appnot-in-qn}

\begin{proof}[Proof of Lemma \ref{lm:not-in-qn}]

    Let $p\in \{1,2\}$. By Cauchy-Schwarz's inequality, we have
    \begin{eqnarray*}
        &&\E \Big [|\sqrt{n}(\optloss - L_n(f_*))|^p \mathbf{1}(\npmle \notin \mathcal{Q}_n) \Big ]\\ &\le& n^{p/2} \cdot\E \Big [|\optloss - L_n(f_*)|^{2p} \Big ]^{1/2} \cdot \mathbb{P}(\npmle\notin \mathcal{Q}_n)^{1/2} \\
        &\lesssim& n^{p/2} \cdot \Big (\E[|\optloss|^{2p}]^{1/2p} + \E[|L_n(f_*)|^{2p}]^{1/2p} \Big )^p \cdot e^{- (\log n)^{d+1}}.
    \end{eqnarray*}
    Now we observe that for any $f\in \mathcal{M}$
    \begin{eqnarray*}
        \E[|L_n(f)|^{2p}]^{1/2p} &=& \E \Big [ \Big |\frac{1}{n}\sum_{i=1}^n \log f(X_i) \Big|^{2p} \Big ]^{1/2p} \\
        &\le& \frac{1}{n} \sum_{i=1}^n \E[|\log f(X_i)|^{2p}]^{1/2p} = \E[|\log f(X_1)|^{2p}]^{1/2p}
    \end{eqnarray*}
    since $X_i$'s are identically distributed. 
    
    Let us tak a quick detour to derive an upper bound on the logarithm of the density. Writing $f(x) = (2\pi)^{-d/2} \int \exp(-\|x-\theta\|^2/2)\mu(\de \theta)$ for some $\mu \in \mathcal{P}(\R^d)$, we have
    \begin{eqnarray*}
        f(x) \ge \frac{1}{(2\pi)^{d/2}} \int e^{-\|x\|^2 - \|\theta\|^2} \mu(\de \theta) \ge \frac{1}{(2\pi)^{d/2}}e^{-\|x\|^2 - \sup_{\theta \in \supp(\mu)}\|\theta\|^2}.
    \end{eqnarray*}
    Thus 
    \begin{equation}\label{eq:logf_upbd}
      |\log f(x)| \le \|x\|^2 + \sup_{\theta\in \supp(\mu)} \|\theta\|^2 +C,  
    \end{equation}
    for some constant $C$ independent of $f$. While inequality~\eqref{eq:logf_upbd} is uncontrolled in general, we will still be able to use it for the specific cases of $f = f_*$ and $f = \npmle$ by the following arguments.

    In particular for $f=f_*$, the mixing measure $\mu_*$ is compactly supported, and thus we have by applying \eqref{eq:logf_upbd} that,
    \[ \E[|L_n(f_*)|^{2p}]^{1/2p} \lesssim \E[\|X_1\|^{2p}]^{1/2p} + C = O(1).\]
    
    On the other hand note that for $f=\npmle$ the mixing measure $\hat{\mu}_n$ is supported inside the convex hull of $X_i$'s. Thus by \eqref{eq:logf_upbd} we have
    \begin{eqnarray*}
    \E[|\optloss|^{2p}]^{1/2p} &\lesssim& \E[\|X_1\|^{2p}]^{1/2p} + \E \Big [ \max_{1\le i \le n} \|X_i\|^{2p} \Big ]^{1/2p} +C \\
    &\le&  \E[\|X_1\|^{2p}]^{1/2p} + \E \Big [ \Big (\sum_{i=1}^n \|X_i\|\Big )^{2p} \Big ]^{1/2p} +C \\
    &\le&  \E[\|X_1\|^{2p}]^{1/2p} +  \sum_{i=1}^n \E  [ \|X_i\|^{2p} ]^{1/2p} +C = O(n).
    \end{eqnarray*}
Thus,
\[n^{p/2} \cdot \Big (\E[|\optloss|^{2p}]^{1/2p} + \E[|L_n(f_*)|^{2p}]^{1/2p} \Big )^p \cdot e^{- (\log n)^{d+1}} = O \Big (n^{3p/2} \cdot e^{- (\log n)^{d+1}} \Big ) = o(1),\]
as desired.
\end{proof}

\subsubsection{Proof of Lemma \ref{lem:KLdiv-Qn}}\label{prf:appKLdiv-Qn}

\begin{proof}[Proof of Lemma \ref{lem:KLdiv-Qn}]
    By Lemma \ref{lm:assumption-M-Theta-tau} and Lemma \ref{prop:shenwong}, we have for $n$ large enough:
    \[\KL (f_*\|f) \lesssim  \frac{(\log n)^{d+1}}{n} \log \Big ( \frac{n}{(\log n)^{d+1}} \Big ) \lesssim  \frac{(\log n)^{d+2}}{n}\] 
    as desired.
\end{proof}

\subsubsection{Proof of Lemma \ref{lem:envelopeFunction}}\label{prf:appenvelopeFunction}

\begin{proof}[Proof of Lemma \ref{lem:envelopeFunction}]
By Lemma \ref{lm:assumption-M-Theta-tau},
for $n$ sufficiently large, $\mathcal{Q}_n \subset \mathcal{M}(\widetilde\Theta;\tau)$ for some compact set $\widetilde\Theta$ defined by the support of $\mu_*$ (cf. Lemma \ref{lm:assumption-M-Theta-tau}).
In particular, we would have 
 $|\log f(x)| \le \|x\|^2 + C$ for some constant $C$ depending only on $\mu_*, d$ for all $f\in \mathcal{Q}_n$ for $n$ large.
Thus, for $n$ large, we have
\[ |g_f(x)| = |\log f(x) - \log f_*(x)| \le 2 \|x\|^2 + 2C, \quad \forall f \in \mathcal{Q}_n.\]
This particularly implies $|G_n(x)| \le 2 \|x\|^2 + 2C$. Thus
\[ \|G_n\|^2_{L^2(f_*)} = \E[|G_n(X_1)|^2] \lesssim \E[\|X_1\|^4] + 1 < \infty.\]
Hence $G_n \in L^2(f_*)$.

Since $\mathcal{Q}_n$ is nested, i.e. $\mathcal{Q}_{n+1} \subset \mathcal{Q}_n$ for every $n$, it is clear that $G_n$ is decreasing in $n$. Assuming that there exists $x_0 \in \R^d$ such that $\lim_{n\rightarrow \infty} G_n(x_0) = \delta > 0$, by definition, there exists a sequence of functions $f_n$ such that
\[f_n \in \mathcal{Q}_n \quad\text{ and } \quad |\log f_n(x_0) - \log f_*(x_0) | \ge \delta/2 > 0, \quad\forall n. \]
In particular, there exists $\tilde{\delta}>0$ such that 
\begin{equation} \label{eq:sqrt-fn-x0}
|\sqrt{f_n(x_0)} -\sqrt{f_*(x_0)}| \ge \tilde{\delta} >0, \quad \forall n.
\end{equation}

We claim that for any GMM $f$ of the form
\[f(x) = \frac{1}{(2\pi)^{d/2}}\int_{\R^d} e^{-\|x-\theta\|^2/2}\mu(d \theta), \quad \mu\in \mathcal{P}(\R^d), \]
then $f$ is Lipschitz on $\R^d$ with uniformly bounded Lipschitz constant $\mathrm{Lip}(f)\le (2\pi)^{-d/2}e^{-1/2}$. Indeed, let $\varphi(z):= (2\pi)^{-d/2}e^{-\|z\|^2/2}$, we have
\[\|\nabla \varphi(z)\|=\frac{1}{(2\pi)^{d/2}}\|z\|e^{-\|z\|^2/2} \le(2\pi)^{-d/2}e^{-1/2}, \quad \forall z\in \R^d. \]
Now for every $x,y \in\R^d$, we have
\begin{eqnarray*}
    |f(x)-f(y)|&\le& \int_{\R^d}|\varphi(x-\theta)-\varphi(y-\theta)|\mu(d \theta) \le \frac{\|x-y\|}{(2\pi)^{d/2}e^{1/2}} \cdot
\end{eqnarray*}
The claim follows. Consequently, the square roots of such GMM densities are uniformly $1/2$-H\"older continuous.

By \eqref{eq:sqrt-fn-x0} and the uniform $1/2$-H\"older continuity of square roots of GMM densities, there exists a small neighborhood $B$ of $x_0$ (independent of $n$) such that $|\sqrt{f_n(x)}-\sqrt{f_*(x)}| \ge \tilde{\delta}/2$ uniformly on $B$. Thus
\[ \He^2(f_n,f_*) \ge \int_{B}|\sqrt{f_n(x)}-\sqrt{f_*(x)}|^2 \de x \ge \mathrm{Vol}(B) (\tilde{\delta}/2)^2>0. \]
On the other hand, if $f_n \in \mathcal{Q}_n$ for every $n$, then $\lim_{n\rightarrow \infty}\He^2(f_n, f_*) = 0$. This yields a contradiction, which implies that $G_n$ must decrease to $0$ everywhere. The lemma follows.

\end{proof}

\subsubsection{Proof of Lemma \ref{lm:bracketing-entropy}}
\label{prf:appbracketing-entropy}
\begin{proof}[Proof of \ref{lm:bracketing-entropy}]

Let $\tau\in (0,1)$. We assume the contrapositive statement, i.e., assume that there exists a sequence $n_k \rightarrow \infty$ such that for every $k$, $f_{n_k} \in \mathcal{Q}_{n_k}$ but $f_{n_k} \notin \mathcal{M}(\Theta;\tau)$. By definition of $\mathcal{Q}_{n_k}$, $f_{n_k}$ converges to $f_*$ in the Hellinger distance. Thus
\[ \|f_{n_k}-f_*\|_{L^1(\R^d)} = \int_{\R^d}(\sqrt{f_{n_k}}-\sqrt{f_*})(\sqrt{f_{n_k}}+\sqrt{f_*}) \le 2\|\sqrt{f_{n_k}}-\sqrt{f_*}\|_{L^2(\R^d)} \to 0.\]
In particular, the probability measures $\nu_{n_k}:=f_{n_k}(x) \de x$ converge to $\nu_*:=f_*(x)\de x$ in the total variation distance. Now observe that 
\[ \nu_{n_k}= \gamma\ast \mu_{n_k} \quad,\quad \nu_* =\gamma \ast \mu_*\]
where $\gamma$ denotes the standard Gaussian measure $(2\pi)^{-d/2}\exp(-\|x\|^2/2)\de x$ in $\R^d$. We deduce (since the Gaussian characteristic function is nowhere zero) that the mixing measures $\mu_{{n_k}}$ of $f_{n_k}$ also weakly converges to the the mixing measure $\mu_*$ of $f_{*}$. In particular, $\mu_{n_k}(\Theta)$ converges to $1$, which contradicts to $f_{n_k} \notin \mathcal{M}(\Theta;\tau)$. This contradiction completes the proof.

\end{proof}

\subsection{Proofs of technical lemmas in subsection \ref{subsec:proof-thm-anticon}}

\subsubsection{Proof of Lemma \ref{lm:convex}}\label{prf:appconvex}

\begin{proof}[Proof of Lemma \ref{lm:convex}]
    For each $f\in \mathcal{M}$, let $G_{f}(x) := \log f(x) + \|x\|^2/2$. We claim that $G_{f}$ is convex on $\mathbb{R}^d$ for all $f\in \mathcal{M}$. 
    Indeed,  for $f\in \mathcal{M}$, we can write 
    \[f(x) = (2\pi)^{-d/2} \int \exp(-\|x-\theta\|^2/2)\mu_f( \de\theta)\] 
    where $\mu_f\in \mathcal{P}(\R^d)$ is the mixing measure of $f$.
    It is then easy to see that
 \[ G_{f}(x) = \log \Big ( \int \exp \Big (\langle x,\theta \rangle -\frac{1}{2}\|\theta\|^2 \Big ) \mu_f(\de\theta) \Big ) + \text{constant}. \]
   Since we are interested in the convexity of $G_{f}$, we can ignore the constant term. 
   
    Let $t\in [0,1]$ and $x,y \in \mathbb{R}^d$ be arbitrary, we have
    \begin{eqnarray*}
        &&G_{f}(tx + (1-t)y) \\
        &=& \log \Big \{ \int \exp\Big ( t\langle x, \theta\rangle +(1 -t)\langle y,\theta\rangle -\frac{1}{2}\|\theta\|^2 \Big )\mu_f(\de\theta) \Big \} \\
        &=& \log \Big \{ \int \exp\Big [ t \Big (\langle x, \theta\rangle -\frac{1}{2}\|\theta\|^2\Big )+(1 -t) \Big (\langle y,\theta\rangle -\frac{1}{2}\|\theta\|^2 \Big ) \Big ]\mu_f(\de\theta) \Big \} \\
        &\le& \log \Big \{ \Big [ \int \exp \Big (\langle x, \theta\rangle -\frac{1}{2}\|\theta\|^2\Big )\mu_f (\de\theta)\Big]^t \cdot 
        \Big[ \int \exp \Big (\langle y, \theta\rangle -\frac{1}{2}\|\theta\|^2\Big )\mu_f(\de\theta)\Big]^{1-t} \Big \}\\
        &=& t G_{f}(x) + (1-t)G_{f}(y),
    \end{eqnarray*}
    where we used H\"older inequality and the monotonicity of $\log$. Thus, $G_{f}$ is convex.

    Given the convexity of $G_{f}$, we observe that for each $f\in \mathcal{M}$
    \[L_n(f)(x_1,\ldots,x_n) + \frac{1}{2n} \sum_{i=1}^n \|x_i\|^2= \frac{1}{n} \sum_{i=1}^n  \Big (\log f(x_i) + \frac{\|x_i\|^2}{2} \Big ) = \frac{1}{n} \sum_{i=1}^n G_{f}(x_i),\]
    which is convex in $(x_1,\ldots,x_n)$ due to the observation above.
     Since convexity is closed under taking supremum and convex functions are differentiable almost everywhere, the lemma follows.
\end{proof}

\subsubsection{Proof of Lemma \ref{lm:Dirichlet-control-1}}\label{prf:appDirichlet-control-1}

\begin{proof}[Proof of Lemma \ref{lm:Dirichlet-control-1}]

  We have $n \|\nabla \optloss\|^2 = n^{-1} \sum_{i=1}^n \|\nabla \log \npmle (X_i)\|^2$. Note that for every $f\in \mathcal{M}$, we have 
  \[ \|\nabla f(x)\| \le \frac{1}{(2\pi)^{d/2}} \int \|x-\theta\|e^{-\|x-\theta\|^2/2}\mu_f(\de\theta) \le \Big (\|x\| + \sup_{\theta \in \supp(\mu_f)}\|\theta\| \Big ) f(x).\]
  Since $\npmle$ is supported inside the convex hull of $\{X_i:1\le i \le n\}$, we have 
  almost surely that
  \[ \|\nabla \log \npmle (X_i)\| \le \|X_i\| + \max_{1\le j\le n} \|X_j\| \le \|X_i\|+\sum_{j=1}^n\|X_j\|.\]
  In particular, $\E[\|\nabla \log \npmle (X_1)\|^k] = O(n^k)$ for every $k$.
  Thus
  \begin{eqnarray*}
      \E[n^2\|\nabla \optloss\|^4] &=& \E \Big [\frac{1}{n^2} \Big |\sum_{i=1}^n \|\nabla \log \npmle(X_i)\|^2 \Big |^2 \Big ] \\
      &\lesssim& \E [\|\nabla \log \npmle(X_1)\|^4] \le O(n^4).
  \end{eqnarray*}
  Since $\mathbb{P}(\npmle \notin \mathcal{Q}_n) \lesssim \exp(-(\log n)^{d+1})$, we have $\E[n^2\|\nabla \optloss\|^4] \cdot\mathbb{P}(\npmle \notin \mathcal{Q}_n) = o(1)$ as desired.
    
\end{proof}

\subsubsection{Proof of Lemma \ref{lem:derivsup}}\label{prf:appderivsup}

\begin{proof}[Proof of Lemma \ref{lem:derivsup}]
    It suffices to prove for $m=1$. Note that for any $x,y \in \mathbb{R}$, we have
    \[\sup_{g \in \mathcal{G}} g(y) - \sup_{g \in \mathcal{G}} g(x) \leq \sup_{g \in \mathcal{G}} (g(y) - g(x)).\]
    Exchanging the position of $x$ and $y$, noting that $\hat{g}(x)=\sup_{g\in \mathcal{G}} g(x)$, we deduce that
    \[|\hat{g}(x)-\hat{g}(y)| \le \sup_{g\in\mathcal{G}}|g(x)-g(y)|.\]
    Dividing by $\abs{y-x}$ and sending $y \to x$ completes the proof.
\end{proof}

\subsubsection{Proof of Lemma \ref{lm:Dirichlet-control-2}}\label{prf:appDirichlet-control-2}

\begin{proof}[Proof of Lemma \ref{lm:Dirichlet-control-2}]
    Let $\tau\in (0,1)$ be fixed and $\Theta$ be a compact set whose interior containing the support of $\mu_*$. For large enough $n$, we have $\mathcal{Q}_n \subset \mathcal{M}(\Theta;\tau)$ by Lemma \ref{lm:bracketing-entropy}. Thus, it suffices to show the desired inequality for $f\in \mathcal{M}(\Theta;\tau)$.

    For $f\in \mathcal{M}(\Theta;\tau)$, one has
    \begin{eqnarray*}
        \|\nabla f(x)\| &\le & \frac{1}{(2\pi)^{d/2}}\int\|x-\theta\| e^{-\|x-\theta\|^2/2} \mu_f(\de \theta).
    \end{eqnarray*}

    First, let us consider the case when $\sqrt{2}\|x\|\ge 1$.
    We claim that for every $\theta \in \R^d$
    \[ \|x-\theta\| e^{-\|x-\theta\|^2/2} \le \sqrt{2}\|x\| e^{-\|x\|^2} + \sqrt{2}\|x\|e^{-\|x-\theta\|^2/2}.\]
    Indeed, if $\|x-\theta\| \le \sqrt{2}\|x\|$, the inequality trivially holds. If $\|x-\theta\| \ge \sqrt{2} \|x\|$, we note that the function $t\mapsto te^{-t^2/2}$ is decreasing on $[1,+\infty)$, which implies $\|x-\theta\|e^{-\|x-\theta\|^2/2} \le \sqrt{2} \|x\| e^{-\|x\|^2}$ as desired. Thus, for $\sqrt{2}\|x\| \ge 1$, one has
    \begin{eqnarray*}
        \|\nabla \log f(x)\| &=& \Big \| \frac{\nabla f(x)}{f(x)} \Big \| \\
        &\le& \frac{\sqrt{2} \|x\|e^{-\|x\|^2} + \sqrt{2}\|x\| \int e^{-\|x-\theta\|^2/2} \mu_f(\de \theta)}{ \int e^{-\|x-\theta\|^2/2} \mu_f(\de \theta)} \\
        &=& \frac{\sqrt{2} \|x\|}{ e^{\|x\|^2}\int e^{-\|x-\theta\|^2/2} \mu_f(\de \theta)} + \sqrt{2}\|x\| .
    \end{eqnarray*}
For $f\in \mathcal{M}(\Theta;\tau)$, we have $f(x) \ge C \cdot e^{-\|x\|^2}$ for some constant $C>0$ depending only on $\Theta,d$ and $\tau$. Thus, for $\sqrt{2}\|x\|\ge 1$, we have 
$\|\nabla \log f(x)\| \le C_1\|x\|$ for some $C_1$ depending only on $\Theta,d$ and $\tau$.

Now let us consider the case when $\sqrt{2}\|x\|\le 1$. Then $f(x) \ge C \cdot e^{-1/2}$. We note that the function $t\mapsto t e^{-t^2/2}$ on $[0,+\infty)$ has a global maximizer at $t=1$, which implies
\[ \|\nabla  f(x) \| \le \frac{1}{(2\pi)^{d/2}} \int \|x-\theta\| e^{-\|x-\theta\|^2/2} \mu_f(\de \theta) \le(2\pi)^{-d/2} e^{-1/2}.\]
Thus, for $\sqrt{2}\|x\|\le 1$, we have
\[ \|\nabla \log f(x)\| \le C_2\]
for some $C_2>0$ depending only on $\Theta, d$ and $\tau$. 
Combining all ingredients, we deduce that $\|\nabla \log f(x)\| \le C_1\|x\|+C_2$ for any $f\in \mathcal{M}(\Theta;\tau)$ as desired.
\end{proof}


\section{{ Proof of Theorem~\ref{thm:FiniteBracketingIntegral}}} \label{app:prfFiniteBracketingIntegral}
\subsection{Proof of Theorem~\ref{thm:FiniteBracketingIntegral}}

We first recall the class $\mathcal{M}(\Theta;\tau)$ of GMM densities 
\[  \mathcal{M}(\Theta;\tau):= \Big \{ f\in \mathcal{M}: \mu_f(\Theta) \ge \tau \Big \}, \quad \tau \in (0,1] \]
where $\mu_f$ denotes the mixing measure of the GMM density $f$, and 
\[\log\mathcal{M}(\Theta;\tau):=\Big \{ \log f:f\in \mathcal{M}(\Theta;\tau) \Big \}.\] 

To construct brackets for $\log \mathcal{M}(\Theta;\tau)$, we will need to control the logarithmic densities $\log f(x)$ pointwisely, where $f \in \mathcal{M}(\Theta;\tau)$. To this end, let us first state a simple property of GMM densities in this class, proved later in Appendix~\ref{prf:appbound-density-Mc}.
\begin{lemma} \label{lm:bound-density-Mc}
    There exists a constant $C_{\ref{lm:bound-density-Mc}}$ depending only on $\Theta, d$ and $\tau$ such that
    \[ 0 \le - \log f(x) \le \|x\|^2 +C_{\ref{lm:bound-density-Mc}}, \quad \forall x\in \R^d, \forall f \in \mathcal{M}(\Theta;\tau).\]
\end{lemma}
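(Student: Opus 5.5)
The argument is a direct pointwise estimate on the Gaussian kernel, split into the upper and lower bounds on \(f(x)\).

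For the upper bound \(-\log f(x)\ge 0\), I use that every \(f\in\mathcal{M}\) satisfies
\[
f(x)=(2\pi)^{-d/2}\int e^{-\|x-\theta\|^2/2}\,\mu_f(\de\theta)\le (2\pi)^{-d/2}.
\]
This gives \(\log f(x)\le -\tfrac d2\log(2\pi)\le 0\), since \(2\pi>1\). So the left inequality holds for all \(f\in\mathcal{M}\), not only those in \(\mathcal{M}(\Theta;\tau)\).

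For the lower bound on \(f\) I follow the computation already used in the proofs of Lemma~\ref{lm:condition-M-delta} and Lemma~\ref{lm:dudley-bound}.

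1. Discard the part of the mixing measure outside \(\Theta\). Since the integrand is nonnegative,
\[
f(x)\ge (2\pi)^{-d/2}\int_\Theta e^{-\|x-\theta\|^2/2}\,\mu_f(\de\theta).
\]
2. Apply the elementary inequality \(\|x-\theta\|^2\le 2\|x\|^2+2\|\theta\|^2\). This gives \(e^{-\|x-\theta\|^2/2}\ge e^{-\|x\|^2-\|\theta\|^2}\).
3. Set \(\|\Theta\|_\infty^2:=\sup_{\theta\in\Theta}\|\theta\|^2\), which is finite by compactness. Then \(e^{-\|\theta\|^2}\ge e^{-\|\Theta\|_\infty^2}\) on \(\Theta\).
4. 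Use the defining property \(\mu_f(\Theta)\ge\tau\) of the class. Combining steps 1–3,
\[
f(x)\ge \tau(2\pi)^{-d/2}e^{-\|x\|^2-\|\Theta\|_\infty^2}.
\]

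Taking logarithms gives
\[
-\log f(x)\le \|x\|^2+\|\Theta\|_\infty^2+\tfrac d2\log(2\pi)+\log(1/\tau).
\]
So I set
\[
C_{\ref{lm:bound-density-Mc}}:=\|\Theta\|_\infty^2+\tfrac d2\log(2\pi)+\log(1/\tau).
\]
This constant depends only on \(\Theta\), \(d\) and \(\tau\), and is independent of \(x\) and of \(f\), as the statement requires.

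There is no genuine obstacle here. The only point requiring care is that the lower bound must be uniform over the class. This works because we only need a fixed positive mass \(\tau\) on a fixed compact set \(\Theta\); the uncontrolled mass of \(\mu_f\) outside \(\Theta\) is simply dropped, using positivity of the integrand.
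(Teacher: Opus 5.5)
Your proof is correct and matches the paper's argument: drop the mass of $\mu_f$ outside $\Theta$, use $\|x-\theta\|^2\le 2\|x\|^2+2\|\theta\|^2$ and $\mu_f(\Theta)\ge\tau$ to get $f(x)\ge \tau(2\pi)^{-d/2}e^{-\|x\|^2-\|\Theta\|_\infty^2}$, then take logarithms. Your explicit check of the left inequality via $f\le(2\pi)^{-d/2}<1$ supplies a step the paper leaves implicit.
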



As remarked earlier, controlling a logarithmic density pointwise is challenging, due to the possibility that it may diverge to infinity when the underlying density approaches zero. To address this issue, we first perform a splitting argument as follows.

\subsubsection{A splitting argument}

Let $R>0$ be a radius to be specified later. We split $\R^d$ into the disjoint union of the ball $B_R:= B(0,R)$ and its complement $B_R^\complement$.
The key observation is that: to construct a bracket $[l,u]$ for $\log f\in \log \mathcal{M}(\Theta;c)$, it suffices to construct brackets inside $B_R$ and outside $B_R$ separately. Namely, we only need to construct functions $l_{\In}(x), u_{\In}(x)$ supported in $B_R$, and $l_{\Out}(x), u_{\Out}(x)$ supported in $B_R^{\complement}$, such that
\begin{eqnarray*}
   l_{\In}(x) &\le& \log f(x) \le u_{\In}(x), \quad \forall x\in B_R;\\
  l_{\Out}(x) &\le& \log f(x) \le u_{\Out}(x), \quad \forall x\in B_R^{\complement}.
\end{eqnarray*}
By gluing 
\begin{equation} \label{eq:gluing}
    l(x):= \begin{cases}l_{\In}(x) \\ l_{\Out}(x) \end{cases}\quad,\quad
u(x):= \begin{cases} u_{\In}(x) &, \quad x\in B_R \\ u_{\Out}(x) &, \quad x\in B_R^{\complement} \end{cases}
\end{equation}
we obtain a bracket $[l,u]$ for $\log f$. Moreover, we have
\begin{eqnarray} \label{eq:braket-norm}
     \|l-u\|^2_{L^2(f_*)} &=& \int_{\R^d} |l(x)-u(x)|^2 f_*(x)\de x \nonumber \\
     &=&\int_{B_R}|l_{\In}(x)-u_{\In}(x)|^2 f_*(x)\de x + \int_{B_R^\complement} |l_{\Out}(x) - u_{\Out}(x)|^2 f_*(x) \de x.
\end{eqnarray}

On the complement $B_R^{\complement}$, by Lemma \ref{lm:bound-density-Mc}, we can choose 
\begin{equation} \label{eq:out-bra}
    \begin{cases}l_{\Out}(x) &= -\|x\|^2 - C_{\ref{lm:bound-density-Mc}} , \quad x\in B_R^{\complement} \\
    u_{\Out}(x) &= 0 \end{cases}
\end{equation}
to form an outside bracket uniformly for any $\log f \in \log \mathcal{M}(\Theta;c)$. To control the $L^2(f_*)$-norm of the outside bracket $[l_{\Out}, u_{\Out}]$, using the fact that $f_*$ has compactly supported mixing measure $\mu_*$, we show in Appendix~\ref{prf:appenvelope-noncompact} that
\begin{lemma} \label{lm:envelope-noncompact}
    Let $\|\Theta\|_{\infty}:=\sup_{\theta\in \Theta}\|\theta\|$ and 
     \[R \ge \max \Big ( \sqrt{48|\log \varepsilon|}, 2\|\Theta\|_{\infty} \Big ).\]
     There exists a constant $C_{\ref{lm:envelope-noncompact}}$ depending only on $\Theta, d$ and $c$ such that
    \[ \int_{B_R^{\complement}} (\|x\|^2+C_{\ref{lm:bound-density-Mc}})^2  f_*(x)\de x \le C_{\ref{lm:envelope-noncompact}} \varepsilon^2,\]
    where $C_{\ref{lm:bound-density-Mc}}$ is the constant in Lemma \ref{lm:bound-density-Mc}.
\end{lemma}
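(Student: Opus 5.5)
The plan is to reduce the integral to a Gaussian tail moment using the structure $X=Y+Z$ for $X\sim f_*$. Here $Y\sim\mu_*$ is supported in $\Theta$ and $Z\sim N(0,I_d)$ is independent of $Y$. Then
\[
\int_{B_R^{\complement}} (\|x\|^2+C_{\ref{lm:bound-density-Mc}})^2 f_*(x)\,\de x=\E\bigl[(\|X\|^2+C_{\ref{lm:bound-density-Mc}})^2\mathbf{1}(\|X\|\ge R)\bigr].
\]
Since $\|X\|\le \|\Theta\|_\infty+\|Z\|$ and $R\ge 2\|\Theta\|_\infty$, the event $\{\|X\|\ge R\}$ is contained in $\{\|Z\|\ge R/2\}$. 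On that event $\|X\|\le \|\Theta\|_\infty+\|Z\|\le R/2+\|Z\|\le 2\|Z\|$. Hence the integrand is bounded by $(4\|Z\|^2+C_{\ref{lm:bound-density-Mc}})^2\mathbf{1}(\|Z\|\ge R/2)$, and the problem becomes a bound on a standard Gaussian quantity.

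Next I apply Cauchy--Schwarz:
\[
\E\bigl[(4\|Z\|^2+C)^2\mathbf{1}(\|Z\|\ge R/2)\bigr]\le \E\bigl[(4\|Z\|^2+C)^4\bigr]^{1/2}\,\mathbb{P}(\|Z\|\ge R/2)^{1/2}.
\]
The first factor is a constant depending only on $d$ and $C_{\ref{lm:bound-density-Mc}}$, hence only on $\Theta,d,\tau$. For the second factor I use the $\chi^2$ tail bound already used in the proof of Lemma~\ref{lm:2-term}, namely $\mathbb{P}(\|Z\|^2\ge 3(d+x))\le e^{-x}$. Taking $x=R^2/12-d$ gives $\mathbb{P}(\|Z\|\ge R/2)\le e^{d}e^{-R^2/12}$. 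With $R^2\ge 48|\log\varepsilon|$ this is at most $e^d\varepsilon^{4}$. Its square root is then $e^{d/2}\varepsilon^2$, which gives the bound $C_{\ref{lm:envelope-noncompact}}\varepsilon^2$.

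The only step needing care is the bookkeeping of exponents: the Cauchy--Schwarz square root halves the Gaussian exponent. This is exactly why the constant $48$ is needed, since $48/12=4$ yields $\varepsilon^4$ before the square root. If the case $R^2/12<d$ arises, the tail bound is trivially $\le 1\le e^de^{-R^2/12}$, so no separate case is needed. An alternative that avoids Cauchy--Schwarz is to integrate the polynomial times the Gaussian density directly in polar coordinates, which gives $\mathrm{poly}(R)e^{-R^2/8}$. With this route one must absorb the polynomial factor into the exponent, which is where Cauchy--Schwarz is the cleaner choice.
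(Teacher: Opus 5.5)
Your proof is correct and takes essentially the same route as the paper: write $X=Y+Z$, apply Cauchy--Schwarz to separate a bounded fourth-moment factor, and use $R\ge 2\|\Theta\|_\infty$ and $R^2\ge 48|\log\varepsilon|$ to get $\mathbb{P}(\|Z\|\ge R/2)\lesssim e^{-R^2/12}\le\varepsilon^4$, whose square root gives $\varepsilon^2$. The differences are cosmetic: you bound the integrand by a function of $Z$ before applying Cauchy--Schwarz and make the tail constant explicit with the $\chi^2$ bound, whereas the paper applies Cauchy--Schwarz to $X$ directly.
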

To conclude, for any bracket $[l_{\In}, u_{\In}]$ inside $B_R$, we can always extend to a bracket $[l,u]$ on $\R^d$ as in \eqref{eq:gluing}, where $[l_{\Out},u_{\Out}]$ is given by \eqref{eq:out-bra}. By Equation \eqref{eq:braket-norm} and Lemma \ref{lm:envelope-noncompact}, we have
\[ \|l-u\|_{L^2(f_*)}^2 \le \int_{B_R}|l_{\In}(x)-u_{\In}(x)|^2f_*(x)\de x + C_{\ref{lm:envelope-noncompact}} \varepsilon^2.\]
Therefore, we only need to control the bracketing entropy of $\log \mathcal{M}(\Theta;c)$ inside the ball $B_R$.

To this end we will use Lemma~\ref{lm:approx} which contains the central difficulty of our analysis. The main challenge arises from the fact that the logarithm function is not uniformly Lipschitz. In particular Lemma~\ref{lm:approx} states that if $\varepsilon > 0$ is small enough such that 
    \[R:= \sqrt{80|\log \varepsilon|} \ge 2  \sup_{\theta\in \Theta} \|\theta\|,\] 
    then there exist constants $C_{\ref{lm:approx}}, D_{\ref{lm:approx}}$ depending only on $\Theta, d$ and $c$, and a class of functions $\mathcal{H}$ with 
    \[\log |\mathcal{H}| \le D_{\ref{lm:approx}} |\log \varepsilon|^{d+1}\] 
    satisfying the following property: for every density $f \in \mathcal{M}(\Theta;c)$, there exists $h\in \mathcal{H}$ such that
    \[ \sup_{x\in B_R} |\log f(x) - h(x)| \le C_{\ref{lm:approx}} \varepsilon.\]

We can now proceed to complete the proof of Theorem \ref{thm:FiniteBracketingIntegral}. Let $\varepsilon>0$ be small enough such that Lemma \ref{lm:approx} holds
and $\mathcal{H}$ be the function class in Lemma \ref{lm:approx}. For each $h\in \mathcal{H}$, we define
\begin{equation}
    l_h(x) = \begin{cases}
        h(x)- C_{\ref{lm:approx}}\varepsilon \\
        -\|x\|^2 - C_{\ref{lm:bound-density-Mc}}
    \end{cases} \quad, \quad
    u_h(x) = \begin{cases}
        h(x) + C_{\ref{lm:approx}}\varepsilon &, \quad x \in B_R\\
        0 &, \quad x \in B_R^{\complement} 
    \end{cases} 
\end{equation}
where $C_{\ref{lm:bound-density-Mc}}, C_{\ref{lm:approx}}$ are constants in Lemma \ref{lm:bound-density-Mc} and Lemma \ref{lm:approx}, respectively. 

For each $f\in \mathcal{M}(\Theta;c)$, by Lemma \ref{lm:approx}, there exists $h\in \mathcal{H}$ such that $l_h \le \log f \le u_h$ on $B_R$. Combining with Lemma \ref{lm:bound-density-Mc}, we deduce that $l_h\le \log f \le u_h$ on $\R^d$. Thus, the set of brackets $\{ [l_h,u_h]: h \in \mathcal{H} \}$ covers the class $\log \mathcal{M}(\Theta;c)$. 

Moreover, by Lemma \ref{lm:envelope-noncompact}, we have for each $h\in \mathcal{H}$
\begin{eqnarray*}
    \|l_h-u_h\|_{L^2(f_*)}^2 &=& \int_{B_R} |l_h -u_h|^2f_* + \int_{B_R^{\complement}} |l_h-u_h|^2f_* \\
    &\le& 4C_{\ref{lm:approx}}^2 \varepsilon^2 + C_{\ref{lm:envelope-noncompact}} \varepsilon^2 =: \tilde\varepsilon^2
\end{eqnarray*}
where $\tilde\varepsilon := (4C_{\ref{lm:approx}}^2 + C_{\ref{lm:envelope-noncompact}})^{1/2} \varepsilon$.
Hence, each $[l_h,u_h]$ is an $\tilde\varepsilon$-bracket in $L^2(f_*)$.

Since $\log |\mathcal{H}| \le  D_{\ref{lm:approx}} |\log \varepsilon|^{d+1}$, there are at most $\exp (D_{\ref{lm:approx}} |\log \varepsilon|^{d+1})$ many such brackets. Therefore
\[ \log N_{[]} \Big (\tilde \varepsilon, \log \mathcal{M}(\Theta;c) ,L^2(f_*) \Big ) \lesssim |\log \varepsilon|^{d+1} \lesssim |\log \tilde\varepsilon|^{d+1}, \]
where the implicit constants depend only on $\Theta,d $ and $c$. 
The proof is thus complete. $\square$
 \label{app:logbrack}

\subsection{Proofs of technical lemmas in subsection \ref{app:logbrack}}
\subsubsection{Proof of Lemma \ref{lm:bound-density-Mc}} \label{prf:appbound-density-Mc}

\begin{proof}[Proof of Lemma \ref{lm:bound-density-Mc}]
    For each $f\in \mathcal{M}(\Theta;c)$, we have
\begin{eqnarray*}
f(x) &=& \frac{1}{(2\pi)^{d/2}} \int e^{-\|x-\theta\|^2/2} \mu_f(\de \theta)\\
&\ge& \frac{1}{(2\pi)^{d/2}} \int_{\Theta} e^{-\|x-\theta\|^2/2} \mu_f(\de \theta) \\
&\ge& \frac{c}{(2\pi)^{d/2}} \cdot e^{-\|x\|^2 - \|\Theta\|^2_{\infty}}, 
\end{eqnarray*}
where $\|\Theta\|_{\infty}:= \sup_{\theta \in \Theta} \|\theta\|$. Thus $-\log f(x) \le \|x\|^2 + C_{\ref{lm:bound-density-Mc}}$ for some constant $C_{\ref{lm:bound-density-Mc}}$ depending only on $\Theta, d$ and $c$, as desired.
\end{proof}

\subsubsection{Proof of Lemma \ref{lm:envelope-noncompact}}\label{prf:appenvelope-noncompact}

\begin{proof}[Proof of Lemma \ref{lm:envelope-noncompact}]
    Let $X\sim f_*$. We have
    \begin{eqnarray*}
        \int_{B_R^\complement} (\|x\|^2 + C_{\ref{lm:bound-density-Mc}})^2 f_*(x)\de x &=& \mathbb{E} [(\|X\|^2 + C_{\ref{lm:bound-density-Mc}})^2 \mathbf{1}(\|X\| >R)] \\
        &\le& \mathbb{E}[(\|X\|^2 + C_{\ref{lm:bound-density-Mc}})^4]^{1/2} \cdot \mathbb{P}(\|X\|>R)^{1/2}.
    \end{eqnarray*}
   Note that we can write $X=Y+Z$, where $Y\sim \mu_*$ and $Z$ is a standard Gaussian in $\R^d$. Since $\mu_*$ is supported in the compact set $\Theta$, we have
    \[ \|X\| \le \|Y\|+\|Z\| \le \|\Theta\|_{\infty} + \|Z\| \quad \text{a.s.}\]
    Thus, $ \mathbb{E}[(\|X\|^2 + C_{\ref{lm:bound-density-Mc}})^4] = O(1)$. Moreover, using $R\ge 2\|\Theta\|_{\infty}$, we have
    \[\mathbb{P}(\|X\|>R) \le \mathbb{P}(\|Z\| >R/2) \lesssim e^{-R^2/12} \le e^{-4\log \varepsilon} = \varepsilon^4,\]
    where we used the fact that $R\ge \sqrt{48|\log \varepsilon|}$ and the inequality
    \[ \mathbb{P}(\|Z\|\ge x) \lesssim e^{-x^2/3}, \quad \forall x\ge 0,\]
    for a $d$-dimensional standard Gaussian vector $Z$.
    Thus,
    \[  \int_{B_R^\complement} (\|x\|^2 + C_{\ref{lm:bound-density-Mc}})^2 f_*(x)\de x \lesssim \varepsilon^2\]
    as desired.
\end{proof}

\subsubsection{Proof of Lemma \ref{lm:approx}}\label{prf:applmapprox}

To prove Lemma~\ref{lm:approx}, we can begin by trying to follow in the footsteps of \cite{ghosal2007posterior},\cite{zhang2009generalized} and \cite{saha2020nonparametric} which construct brackets through successively finer approximations. 

For the rest of this proof, for any measure $\mu$ we will use $f_{\mu}$ to denote its density.
Consider some $f_{\psi} \in \pspace{c}{\Theta}$ and let $\psi = \pi*\phi$ be the underlying measure whose density is $f_{\psi}$. 
We will approximate $\psi$ by a measure $\tilde{\psi} = \tilde{\pi}*\phi$ whose mixing measure $\tilde{\pi}$ has a finite number of atoms. This is where we encounter another difficulty. Existing results (e.g., those in \cite{saha2020nonparametric}) discretize $\pi$ to a measure $\tilde{\pi}$ supported on a set roughly of radius $a$ and such that the discrepancy of the density $|f_{\psi} - f_{\tilde{\psi}}|$ is roughly of order $e^{-a^2/2}$. Consequently the value of $a$ is then chosen appropriately to make the discrepancy $|f_{\psi} - f_{\tilde{\psi}}|$ as small as required. This however is problematic for us. This is because to control the logarithms of the densities, we need the density discrepancies not only to be small, but significantly smaller than the densities themselves. However since a density with gaussian decay can become as small as $e^{-a^2/2}$ when restricted to the ball of radius $a$, we cannot naively use the discretization results in \cite{saha2020nonparametric}. 

Therefore we need a finer approach. To do this we will split the measure $\pi$ into two parts: the measure derived by restricting $\pi$ to the compact set $\Theta$ denote by $\pi^{\Theta}$ and the part outside (denoted by $\pi^{\comp{\Theta}}$). We will discretize (i.e., create an approximate measure which is discrete) each measure $\pi^{\Theta}$ and $\pi^{\comp{\Theta}}$ separately and then put the discretized measures back together to construct the final discretization. 

To discretize the measure $\pi^{\Theta}$ we will consider the following Proposition.

\begin{proposition}\label{prop:discretizationcompact}
 Let $\rho$ be some probability measure supported on a compact set $\Theta$.  
 Let $\varepsilon \in (0,1)$ be small enough such that $R := \sqrt{80\log (1/\varepsilon)} \geq 2\norm{\Theta}_{\infty}$. Let $B_R$ be the ball of radius $R$ centered at $0$. Then there is a constant $D_{\ref{prop:discretizationcompact}}$ dependent only on $\Theta$ and $d$ such that there exists a discrete probability measure, $\tilde{\rho}$, with at most $D_{\ref{prop:discretizationcompact}}(\log \frac{1}{\varepsilon})^d$ atoms and supported on $\Theta$ satisfying 
 \begin{align}\label{eq:ideas-compactfdiff} \sup_{x \in B_R} \abs{f_{\rho * \phi} (x) - f_{\tilde{\rho}*\phi}(x)} \leq C_{\ref{prop:discretizationnoncompact}}\varepsilon^{81} 
 \end{align}
\end{proposition}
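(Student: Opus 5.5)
We will use the standard moment-matching discretization of Gaussian mixtures (as in Ghosal--van der Vaart, Zhang, Saha--Guntuboyina), with one change: we keep every atom inside $\Theta$ by applying Carathéodory's theorem cell by cell.

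First, partition $\Theta$ into cells $Q_1,\dots,Q_m$, each of diameter at most a fixed constant $\delta$ (say $\delta=1$). This gives $m\le C(\Theta,d)$ cells; we can take $Q_j = \Theta\cap(\text{cube})$. If the atom count proves too large, we instead use cells of side $\asymp 1/R$, giving $m\lesssim R^d\asymp(\log\frac1\varepsilon)^{d/2}$.

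Fix $x\in B_R$. On each cell $Q_j$ with centre $z_j$, write
\[
e^{-\|x-\theta\|^2/2}=e^{-\|x-z_j\|^2/2}\,e^{-\langle x-z_j,\,z_j-\theta\rangle}\,e^{-\|\theta-z_j\|^2/2}.
\]
Taylor expand the $\theta$-dependence as a polynomial in $\theta-z_j$ of total degree $k$. In the regime of interest $\|x-z_j\|$ can be as large as $R+\|\Theta\|_\infty\lesssim R$ while $\|\theta-z_j\|\le\delta$, so the exponent is $O(R\delta)$. A degree-$k$ truncation then has remainder bounded by $(CR\delta)^{k}/k!$ times $e^{O(R\delta)}$.

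With $\delta$ constant, making this at most $\varepsilon^{81}$ needs $k\asymp R^2\asymp\log\frac1\varepsilon$ (Stirling: $(CR)^k/k!\le (CeR/k)^k$, and $k= C'R^2$ gives $\le e^{-k}$, small enough for large $C'$). The cleaner alternative is to match moments of $\theta$ globally: $e^{-\|x-\theta\|^2/2}=e^{-\|x\|^2/2}e^{\langle x,\theta\rangle}e^{-\|\theta\|^2/2}$, and expanding $e^{\langle x,\theta\rangle-\|\theta\|^2/2}$ in power series in $\theta$ requires matching mixed moments $\int\theta^\alpha\rho(d\theta)$ for $|\alpha|\le k$ with $k\asymp R^2$. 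That costs $\binom{k+d}{d}\asymp k^d\asymp(\log\frac1\varepsilon)^d$ constraints, exactly the claimed atom count. We therefore adopt this global version.

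The construction: let $k=\lceil C'\log\frac1\varepsilon\rceil$ and consider the vector of moments $(\int\theta^\alpha\,d\rho)_{|\alpha|\le k}$. It lies in the convex hull of the compact curve $\{(\theta^\alpha)_{|\alpha|\le k}:\theta\in\Theta\}$. By Carathéodory's theorem there is a probability measure $\tilde\rho$ supported on $\Theta$ with at most $\binom{k+d}{d}+1\le D(\log\frac1\varepsilon)^d$ atoms and identical moments up to order $k$. Support in $\Theta$ is automatic, which is exactly what the later lower-bound argument needs.

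The error estimate: for $x\in B_R$,
\[
|f_{\rho*\phi}(x)-f_{\tilde\rho*\phi}(x)|\le(2\pi)^{-d/2}\Big|\int g_x\,d(\rho-\tilde\rho)\Big|,
\qquad g_x(\theta)=e^{-\|x-\theta\|^2/2}.
\]
Write $g_x(\theta)=e^{-\|x\|^2/2}\sum_{m\ge0}\frac{(\langle x,\theta\rangle-\|\theta\|^2/2)^m}{m!}$. Each term with $m\le k/2$ is a polynomial of degree $\le k$ in $\theta$ and cancels. The tail is bounded by
\[
2e^{-\|x\|^2/2}\sum_{m>k/2}\frac{(R\|\Theta\|_\infty+\|\Theta\|_\infty^2)^m}{m!}\le 2\sum_{m>k/2}\Big(\frac{eAR}{m}\Big)^m,\qquad A=A(\Theta),
\]
after dropping $e^{-\|x\|^2/2}\le1$. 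Choosing $k\ge C''R^2\asymp\log\frac1\varepsilon$ makes $eAR/m\le e^{-1}/R$, so the sum is at most $e^{-k/2}\le\varepsilon^{81}$ once $C'$ is large.

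The main obstacle is this tail bound: $R\asymp\sqrt{\log(1/\varepsilon)}$ enters the exponent linearly through $\langle x,\theta\rangle$, so the polynomial degree must scale like $R^2$ rather than $R$. We must check that the resulting atom count is still $O((\log\frac1\varepsilon)^d)$ and not worse. It is, since $k^d$ with $k\asymp\log\frac1\varepsilon$ gives exactly $(\log\frac1\varepsilon)^d$, and all constants depend only on $\Theta,d$.
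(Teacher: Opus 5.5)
Your proposal is correct and follows essentially the paper's route: Carath\'eodory moment matching applied on $\Theta$ itself (so all atoms stay in $\Theta$), a Taylor truncation of the Gaussian kernel at degree $\asymp\log(1/\varepsilon)$, and an atom count of order $(\log\frac{1}{\varepsilon})^d$ coming from the number of matched moments. The only difference is cosmetic: the paper expands $e^{-y}$ directly in $y=\|x-\theta\|^2/2\le 2R^2$ and matches coordinatewise monomials of degree up to $2t-1$, whereas you factor out $e^{-\|x\|^2/2}$, expand $e^{\langle x,\theta\rangle-\|\theta\|^2/2}$, and match moments of total degree at most $k$ (your initial cell-by-cell idea is not needed).
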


The detailed proof of Proposition~\ref{prop:discretizationcompact} is in Appendix~\ref{prf:appdiscretizationcompact}. Using Proposition~\ref{prop:discretizationcompact} with $\rho = \pi^{\Theta}$, we will construct the approximate discrete distribution $\tilde{\pi}_1 := \tilde{\rho}$. The crucial point here is that $\tilde{\pi}_1$ is also supported on the compact set $\Theta$ and not on a fattening of $\Theta$. This is in contrast to the Proposition that we will use for the measure $\pi^{\comp{\Theta}}$ stated below. 

\begin{proposition} \label{prop:discretizationnoncompact} 
Let $B_R$ be the ball of radius $R:= \sqrt{80\log (1/\varepsilon)}$ centered at $0$. Let $a = \sqrt{\frac{81}{40}}R$.
There exists a constant $D_{\ref{prop:discretizationnoncompact}}\in \R_{>0}$ (depending on $d$ and $\Theta$) such that 
for any measure $\eta$ there exists a discrete probability distribution $\tilde{\eta}$ with at most $D_{\ref{prop:discretizationnoncompact}} (\log \frac{1}{\varepsilon})^d$ 
atoms and supported on $B_R^a$ such that for some constant $C_{\ref{prop:discretizationnoncompact}}$ depending on $\Theta$ and $d$ we have  
\begin{align}\label{eq:ideas-noncompactfdiff}
\sup_{x\in B_R} \abs{f_{\eta*\phi}(x) - f_{\tilde{\eta}*\phi}(x)} \leq C_{\ref{prop:discretizationnoncompact}}\varepsilon^{81}.\end{align}
\end{proposition}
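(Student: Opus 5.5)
The plan is a truncation step followed by a moment-matching step. \emph{Truncation.} Interpret $B_R^a$ as the $a$-fattening $\{\theta:\dist(\theta,B_R)\le a\}$. The choice $a=\sqrt{81/40}\,R$ gives $a^2/2=\tfrac{81}{80}R^2=81\log(1/\varepsilon)$, so for $x\in B_R$ and $\theta\notin B_R^a$,
\[
(2\pi)^{-d/2}e^{-\|x-\theta\|^2/2}\le (2\pi)^{-d/2}e^{-a^2/2}=(2\pi)^{-d/2}\varepsilon^{81}.
\]
Fix any point $\theta_0\in B_R^a$ and define
\[
\eta':=\eta|_{B_R^a}+\eta\big((B_R^a)^{\complement}\big)\,\delta_{\theta_0}.
\]
Then $\eta'$ is a probability measure supported on $B_R^a$. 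For $x\in B_R$, the only change is that the outside mass has been moved to $\theta_0$, so $\sup_{x\in B_R}|f_{\eta*\phi}(x)-f_{\eta'*\phi}(x)|$ is at most the sum of two terms each bounded by $(2\pi)^{-d/2}\varepsilon^{81}$. It therefore suffices to discretize a probability measure supported on the compact set $B_R^a$, whose radius is $R+a\asymp\sqrt{\log(1/\varepsilon)}$.

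\emph{Local moment matching.} Partition $B_R^a$ into cubes $Q_1,\dots,Q_m$ of side $\asymp 1$ with centres $c_j$. For $x\in B_R$ and $\theta\in Q_j$, write
\[
e^{-\|x-\theta\|^2/2}=e^{-\|x-c_j\|^2/2}\,e^{\langle x-c_j,\theta-c_j\rangle-\|\theta-c_j\|^2/2}.
\]
There are two regimes for the pair $(x,Q_j)$.
\begin{enumerate}[label=(\alph*)]
\item If $\|x-c_j\|\ge C\sqrt{\log(1/\varepsilon)}$, the kernel itself is below $\varepsilon^{81}$ on $Q_j$. Any replacement measure with the same mass on $Q_j$ then changes the contribution of $Q_j$ by at most $\varepsilon^{81}\eta'(Q_j)$. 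Summing over $j$ gives at most $\varepsilon^{81}$, since the total mass is $1$.
\item If $\|x-c_j\|\lesssim\sqrt{\log(1/\varepsilon)}$, Taylor-expand the exponential in the polynomial variable $\theta-c_j$ to total degree $k\asymp\log(1/\varepsilon)$. The remainder is bounded by $(C\sqrt{\log(1/\varepsilon)})^{k}/k!\le\varepsilon^{81}$ once the constant in $k$ is large enough, by Stirling.
\end{enumerate}
By Carath\'eodory's theorem, on each $Q_j$ we replace $\eta'|_{Q_j}$ by a discrete measure supported in $Q_j$, with the same mass and the same mixed moments up to degree $k$. This needs $\binom{k+d}{d}$ atoms. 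The resulting $\tilde\eta$ stays supported on $B_R^a$, and it satisfies \eqref{eq:ideas-noncompactfdiff} after summing the per-cube errors, again weighted by the masses $\eta'(Q_j)$.

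\emph{Main obstacle: the atom count.} The naive count is
\[
m\cdot k^d\asymp(\log(1/\varepsilon))^{d/2}\cdot(\log(1/\varepsilon))^{d}=(\log(1/\varepsilon))^{3d/2},
\]
whereas the statement asks for $D_{\ref{prop:discretizationnoncompact}}(\log(1/\varepsilon))^d$. To close this gap, I would first follow the refined discretization behind Lemma \ref{lm:guntu-entropy} (Theorem 4.1 / the moment-matching lemma in \cite{saha2020nonparametric}), which produces exactly $O(a^d)$-type atom counts at Gaussian-tail accuracy $e^{-a^2/2}$. I would apply that result with $a^2/2=81\log(1/\varepsilon)$ on $B_R^a$.

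Failing a direct citation, I would try letting the matching degree depend on the cube. For a cube whose distance to $B_R$ is $s$, the relevant $x$ satisfy $\|x-c_j\|\ge s$, so the prefactor $e^{-\|x-c_j\|^2/2}\le e^{-s^2/2}$ is already small. Hence only degree $k_j\asymp\log(1/\varepsilon)-s^2/2$ is needed for that cube. I would then re-sum $\sum_j k_j^d$, using larger cubes where the required degree is low. The lost polylogarithmic factors would only change constants in Lemma \ref{lm:approx} if they can be absorbed, so this counting is the step I would check most carefully.
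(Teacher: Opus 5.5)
\textbf{Main gap: the atom count.} Your self-contained construction does not reach $(\log(1/\varepsilon))^d$ atoms, and the variable-degree patch cannot fix it. The bottleneck is the near cubes, not the far ones. By your own degree rule, the $\asymp(\log(1/\varepsilon))^{d/2}$ unit cubes inside $B_R$ have $s=0$ and need $k_j\asymp\log(1/\varepsilon)$. So $\sum_j k_j^d\gtrsim(\log(1/\varepsilon))^{3d/2}$ however you treat the far cubes.

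The missing idea is that small cells are the wrong choice here. Every relevant distance already satisfies $\|x-\theta\|^2\lesssim\log(1/\varepsilon)$. Hence a cell of diameter $\asymp\sqrt{\log(1/\varepsilon)}$ needs no higher matching degree than a unit cell, and it cuts the number of cells from $(\log(1/\varepsilon))^{d/2}$ to $O(1)$. In fact a single cell suffices:
\begin{enumerate}[label=(\roman*)]
\item For $x\in B_R$ and $\theta\in B_R^a$ one has $y:=\|x-\theta\|^2/2\le(2R+a)^2/2\le C\log(1/\varepsilon)$, with $C$ absolute.
\item The degree-$(t-1)$ Taylor polynomial $P_{t-1}$ of $e^{-y}$ has error at most $(ey/t)^t\le e^{-t}\le\varepsilon^{81}$ once $t=\lceil\max(81,e^2C)\log(1/\varepsilon)\rceil$.
\item Carath\'eodory matching on $B_R^a$ of all monomials with exponents $<2t$ then uses $(2t)^d+1\asymp(\log(1/\varepsilon))^d$ atoms.
\end{enumerate}
This is just the argument of Proposition~\ref{prop:discretizationcompact} with $\Theta$ replaced by $B_R^a$.

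The paper takes your first fallback: it cites Lemma D.3 of \cite{saha2020nonparametric} with $S=B_R$. Note, though, that the count there is $(2\lfloor 13.5a^2\rfloor+2)^d\,N(a,B_R^a)+1$. That is of order $a^{2d}$, not ``$O(a^d)$'' as you wrote. It is $O((\log(1/\varepsilon))^d)$ only because $N(a,B_R^a)=O(1)$, since $B_R^a$ is a ball of radius $R+a\asymp a$. This is the same large-cell mechanism, and it is the justification your citation needs.

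\textbf{Secondary error: the truncation step.} As written, it is wrong. With $\theta_0$ an arbitrary point of $B_R^a$, say $\theta_0=0$, the relocated mass changes the density at $x=0$ by up to $(2\pi)^{-d/2}\,\eta\bigl((B_R^a)^{\complement}\bigr)$, which is not small. In the application $\eta=\pi^{\Theta^{\complement}}$ may sit entirely far away, so this is not a corner case. You must place $\theta_0$ on the outer sphere $\|\theta_0\|=R+a$. Then $\dist(\theta_0,B_R)=a$ and $(2\pi)^{-d/2}e^{-\|x-\theta_0\|^2/2}\le(2\pi)^{-d/2}\varepsilon^{81}$ for all $x\in B_R$. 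With that choice, truncation followed by single-cell moment matching proves the proposition with error at most $4(2\pi)^{-d/2}\varepsilon^{81}$.
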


{Proposition~\ref{prop:discretizationnoncompact} is a consequence of existing results.} We defer the full proof of Proposition~\ref{prop:discretizationnoncompact} to the Appendix~\ref{prf:appdiscretization}. We will use Proposition~\ref{prop:discretizationnoncompact} with $\eta = \pi^{\comp{\Theta}}$ to construct the discrete measure $\tilde{\pi}_2 := \tilde{\eta}$, a discrete approximation of $\pi^{\comp{\Theta}}$. The point here is that we could not have used Proposition~\ref{prop:discretizationnoncompact} to discretize $\pi^{\Theta}$, as that would have pushed the support into the fattened set $B_{R}^a$. This would for instance mean that the density at $0$ could be as small as $e^{-a^2/2} \approx \varepsilon^{81}$ which would cancel the right hand side of equation~\eqref{eq:ideas-noncompactfdiff} when applied to the inequality in Proposition~\eqref{prop:loglips}. (In fact in reality if we were to only use Proposition~\ref{prop:discretizationnoncompact}, the density could be much smaller than in this simple example). 

We will now combine the discrete measures $\tilde{\pi}_1$ and $\tilde{\pi}_2$ to get $\tilde{\pi}$ using the following construction:
\begin{align}\label{eq:ideas-pitilconstruct}
 \tilde{\pi} = \pi(\Theta)\tilde{\pi}_1 +\pi(\comp{\Theta})\tilde{\pi}_2 
\end{align}

Now as $f_{\psi} \in \pspace{c}{\Theta}$, we have $\pi(\Theta) > c$. Thus by the construction in \eqref{eq:ideas-pitilconstruct}, $\tilde{\pi}$ also satisfies $\tilde{\pi}(\Theta) > c$. This fact will now allow us to appropriately lower bound the density of $\tilde{\psi} := \tilde{\pi}*\phi$. 

Finally we will move from the densities to the logarithm of the densities using the following simple inequality:
\begin{proposition}\label{prop:loglips}
 Let $\log x$ denote the natural logarithm of $x$. Then for any positive reals $u,v \in \mathbb{R}_{>0}$ we have
 \begin{align*} 
 \abs{\log u - \log v} \leq \frac{\abs{u-v}}{\min(u,v)}. 
 \end{align*} 
\end{proposition}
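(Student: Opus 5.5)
The claim is the elementary inequality $|\log u-\log v|\le |u-v|/\min(u,v)$. I will prove it with the mean value theorem for $\log$ on the interval between $u$ and $v$. By symmetry of both sides in $u$ and $v$, I may assume $0<v\le u$; if $u=v$ both sides vanish.

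For $v<u$, the logarithm is differentiable on $(v,u)$ with derivative $1/t$. The mean value theorem then gives some $\xi\in(v,u)$ with
\[
\log u-\log v=\frac{u-v}{\xi}.
\]
Since $\xi>v=\min(u,v)>0$, we have $1/\xi<1/v$. Hence $|\log u-\log v|\le (u-v)/v=|u-v|/\min(u,v)$.

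An alternative that avoids the mean value theorem is the inequality $\log y\le y-1$ with $y=u/v\ge 1$. This gives $\log u-\log v\le (u-v)/v$, which is the same bound.

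There is no real obstacle here; the only care needed is the reduction to the ordered case, and the symmetry of both sides justifies it.

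In the subsequent argument, the inequality will be applied with $u=f_\pi(x)$ and $v=f_{\widetilde\pi}(x)$ on $B_R$. The lower bound $\min(u,v)\gtrsim \varepsilon^{80}$ comes from the mass condition $\widetilde\pi(\Theta)\ge\tau$ together with $R\ge 2\|\Theta\|_\infty$. The numerator is bounded by $C\varepsilon^{81}$ via Propositions~\ref{prop:discretizationcompact} and~\ref{prop:discretizationnoncompact}. Together these yield the $C\varepsilon$ log-approximation used in Lemma~\ref{lm:approx}.
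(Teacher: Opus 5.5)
Your proof is correct and is essentially the paper's argument: the paper assumes $u\le v$ and bounds $|\log u-\log v|=\int_u^v x^{-1}\,dx\le (v-u)/\min(u,v)$, which is the integral form of your mean-value bound $1/\xi\le 1/\min(u,v)$. One small correction to your closing aside about how the proposition is applied: there the $\varepsilon^{80}$ lower bound comes from Lemma~\ref{lm:bound-density-Mc}, i.e.\ the mass condition, combined with $\|x\|\le R=\sqrt{80\log(1/\varepsilon)}$; the condition $R\ge 2\|\Theta\|_\infty$ plays no role in it.
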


A proof of this has been included in Appendix~\ref{prf:apploglips} for posterity. Combining Propositions~\ref{prop:discretizationcompact}, \ref{prop:discretizationnoncompact} and \ref{prop:loglips} we will prove the following Proposition whose proof is deferred to Appendix~\ref{prf:applogdiscretization}.

\begin{proposition}\label{prop:logdiscretization}
 Let $\varepsilon \in (0,1)$ be small enough such that $R := \sqrt{80}\sqrt{\log (1/\varepsilon)} \geq 2\norm{\Theta}_{\infty}$. Let $B_R$ be the ball of radius $R$. Let $\psi =\pi*\phi$ be a measure such that its density, $f_{\psi} \in \pspace{c}{\Theta}$. Then there exists a measure $\tilde{\psi} = \tilde{\pi}*\phi$ with its density, $f_{\tilde{\psi}} \in \pspace{c}{\Theta}$, such that for some constants $C_{\ref{prop:logdiscretization}}, D_{\ref{prop:logdiscretization}}$ depending only on $\Theta$, $c$ and  $d$ such that the following are true 
 \begin{enumerate} 
 \item \label{eq:logdiscnoofatoms} $\tilde{\pi}$ is a discrete measure supported on $B_R^a$ with at most $D_{\ref{prop:logdiscretization}}\left(\log \frac{1}{\varepsilon}\right)^d$ atoms. 
 \item \label{eq:logdiscdiffbound}  
 $\sup_{x \in B_R} \abs{\log f_{\psi}(x) - \log f_{\tilde{\psi}}(x)} \leq C_{\ref{prop:logdiscretization}}\varepsilon$ 
 \end{enumerate}
\end{proposition}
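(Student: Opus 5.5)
We will follow the decomposition sketched before Proposition~\ref{prop:discretizationcompact}. Write $\pi = \pi(\Theta)\rho_1 + \pi(\comp{\Theta})\rho_2$, where $\rho_1 := \pi|_{\Theta}/\pi(\Theta)$ and $\rho_2 := \pi|_{\comp{\Theta}}/\pi(\comp{\Theta})$. If $\pi(\comp{\Theta})=0$, simply drop the second term. Since $f_\psi\in\pspace{c}{\Theta}$, we have $\pi(\Theta)\ge c>0$, so $\rho_1$ is well defined.

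Apply Proposition~\ref{prop:discretizationcompact} to $\rho=\rho_1$. This gives a discrete $\tilde\pi_1$ supported in $\Theta$ with at most $D_{\ref{prop:discretizationcompact}}(\log\frac1\varepsilon)^d$ atoms and $\sup_{B_R}|f_{\rho_1*\phi}-f_{\tilde\pi_1*\phi}|\le C\varepsilon^{81}$. Apply Proposition~\ref{prop:discretizationnoncompact} to $\eta=\rho_2$. This gives $\tilde\pi_2$ supported on $B_R^a$ with the same atom count and the same $\varepsilon^{81}$ sup bound on $B_R$. Set $\tilde\pi := \pi(\Theta)\tilde\pi_1+\pi(\comp{\Theta})\tilde\pi_2$, as in \eqref{eq:ideas-pitilconstruct}. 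Then $\tilde\pi$ is a discrete probability measure with at most $(D_{\ref{prop:discretizationcompact}}+D_{\ref{prop:discretizationnoncompact}})(\log\frac1\varepsilon)^d$ atoms. It is supported in $\Theta\cup B_R^a\subset B_R^a$, because $\Theta\subset B_{R/2}$ by the hypothesis $R\ge 2\|\Theta\|_\infty$. Since $\tilde\pi(\Theta)\ge\pi(\Theta)\ge c$, we get $f_{\tilde\psi}\in\pspace{c}{\Theta}$. This proves item~\ref{eq:logdiscnoofatoms}. By linearity of $\mu\mapsto f_{\mu*\phi}$ and the triangle inequality, for all $x\in B_R$,
\[
|f_\psi(x)-f_{\tilde\psi}(x)|\le \pi(\Theta)C\varepsilon^{81}+\pi(\comp{\Theta})C\varepsilon^{81}\le C\varepsilon^{81}.
\]

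For item~\ref{eq:logdiscdiffbound}, we need a lower bound on both densities on $B_R$. For any $\mu$ with $\mu(\Theta)\ge c$ and any $x\in B_R$, every $\theta\in\Theta$ satisfies $\|x-\theta\|\le R+\|\Theta\|_\infty\le \tfrac32 R$. Hence
\[
f_{\mu*\phi}(x)\ge c(2\pi)^{-d/2}e^{-9R^2/8}.
\]
This is too weak, since $9R^2/8=90\log\frac1\varepsilon$. So we use the sharper Lemma~\ref{lm:bound-density-Mc} instead, which gives $f(x)\ge c'e^{-\|x\|^2}\ge c' e^{-R^2}=c'\varepsilon^{80}$ on $B_R$. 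Here $c'$ depends only on $\Theta,d,c$. This bound applies both to $f_\psi$ and to $f_{\tilde\psi}$, since both lie in $\pspace{c}{\Theta}$. Proposition~\ref{prop:loglips} then gives
\[
\sup_{B_R}|\log f_\psi-\log f_{\tilde\psi}|\le \frac{C\varepsilon^{81}}{c'\varepsilon^{80}} = C_{\ref{prop:logdiscretization}}\varepsilon.
\]

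The main obstacle is exactly this matching of exponents. The absolute error $\varepsilon^{81}$ must beat the worst-case size $\varepsilon^{80}$ of the density on $B_R$. That requires the lower bound to come from the $\Theta$-part with exponent $\|x\|^2$, not $\|x-\theta\|^2$ with $\theta$ far away. It is therefore essential that $\tilde\pi_1$ stays inside $\Theta$ rather than on a fattening: the $\varepsilon^{81}$ accuracy of the two discretization propositions is what makes the ratio $O(\varepsilon)$.
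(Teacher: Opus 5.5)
Your proposal is correct and follows the paper's proof essentially step for step. You split $\pi$ into its $\Theta$ and $\comp{\Theta}$ parts, discretize them with Propositions~\ref{prop:discretizationcompact} and~\ref{prop:discretizationnoncompact}, recombine with the original weights so that $\tilde\pi(\Theta)\ge c$, and then divide the $\varepsilon^{81}$ density error by the $\varepsilon^{80}$ lower bound from Lemma~\ref{lm:bound-density-Mc} via Proposition~\ref{prop:loglips}. Your explicit treatment of the case $\pi(\comp{\Theta})=0$ and of the inclusion $\Theta\subset B_R^a$ are small details the paper leaves implicit.
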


To complete the proof we will then exploit the fact that the discrete measure $\Tilde{\pi}$ has a limited number of atoms and thus is parameterizable. Then for some suitably chosen $\delta$, we will carefully construct a $\delta$-lattice over this space of parameters, and use this lattice along with the envelope function $H$ to finally construct the requisite brackets. In particular we use the following Proposition whose proof is deferred to Appendix~\ref{prf:applattice_cons}.:

\begin{proposition}\label{prop:lattice_cons}
 Let $B_R$ and $B_R^a$ be as defined in Proposition~\ref{prop:discretizationnoncompact}. Let $\varepsilon$ be small enough such that $R \geq 2\norm{\Theta}_{\infty}$.  Then there exists constants $D_{\ref{prop:lattice_cons}}$, $C_{\ref{prop:lattice_cons}}$ and a set of functions $\mc{H}$ whose cardinality is at most $\exp \left(D_{\ref{prop:lattice_cons}}\left(\log \frac{1}{\varepsilon}\right)^{d+1}\right)$ and for which the following is true: 
 Let $\tilde{\psi} = \tilde{\pi}*\phi$ with density $f_{\tilde{\psi}} \in \pspace{c}{\Theta}$ be such that $\tilde{\pi}$ has atmost $D_{\ref{prop:logdiscretization}}\left(\log \frac{1}{\varepsilon}\right)^d$ atoms and $\text{Supp}(\tilde{\pi}) \subseteq B_R^a$. Then $\exists h \in \mc{H}$ such that 
 \begin{align*} 
 \sup_{x \in B_R} \abs{h(x) - \log f_{\tilde{\psi}}(x)} \leq C_{\ref{prop:lattice_cons}} \varepsilon
 \end{align*} 
\end{proposition}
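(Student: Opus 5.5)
We will parametrize the discrete mixing measures and discretize the parameters on a lattice, then transfer parameter closeness to uniform log-density closeness on $B_R$ using the lower bound that membership in $\pspace{c}{\Theta}$ provides. Write $\tilde{\pi} = \sum_{k=1}^m w_k \delta_{\theta_k}$ with $m \le m_\varepsilon := D_{\ref{prop:logdiscretization}}(\log \frac{1}{\varepsilon})^d$, $\theta_k \in B_R^a$ (a ball of radius comparable to $R \asymp \sqrt{\log(1/\varepsilon)}$), and weights in the simplex. Split the atoms into those lying in $\Theta$ and those outside, since $\tilde\pi(\Theta)\ge c$. Fix a mesh $\delta = \varepsilon^{K}$ for a large constant $K$ to be chosen. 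Let $\Lambda_\Theta$ be a $\delta$-net of $\Theta$ contained in $\Theta$, and $\Lambda$ a $\delta$-net of $B_R^a$. Let $W$ be a $\delta$-grid of the simplex in $\ell^1$. Define $\mc{H}$ as the set of functions $\log f_{\pi'*\phi}$, where $\pi' = \sum_k w'_k\delta_{\theta'_k}$ with $m \le m_\varepsilon$ atoms, atoms from $\Lambda_\Theta\cup\Lambda$, and weights from $W$, restricted to those $\pi'$ with $\pi'(\Theta)\ge c/2$.

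The counting is routine. $|\Lambda| \lesssim (R/\delta)^d$ and $|W| \lesssim \delta^{-m}$, so $\log|\mc{H}| \lesssim m_\varepsilon (d\log(R/\delta) + \log(1/\delta)) \lesssim (\log\frac1\varepsilon)^{d}\cdot K\log\frac1\varepsilon$. This gives the bound $\exp(D_{\ref{prop:lattice_cons}}(\log\frac1\varepsilon)^{d+1})$.

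For approximation, map each atom $\theta_k\in\Theta$ to its nearest point of $\Lambda_\Theta$ (so it stays in $\Theta$) and every other atom to its nearest point of $\Lambda$. Round the weights so that $\sum|w_k-w'_k|\le\delta$ and the total mass on $\Theta$-atoms changes by at most $\delta$, hence remains $\ge c-\delta\ge c/2$. Since $\phi$ is $(2\pi)^{-d/2}e^{-1/2}$-Lipschitz and bounded,
\[
\sup_x |f_{\tilde\pi*\phi}(x)-f_{\pi'*\phi}(x)| \lesssim \sum_k w_k\|\theta_k-\theta'_k\| + \sum_k|w_k-w'_k| \lesssim \delta .
\]
Both measures put mass at least $c/2$ on $\Theta$, so the argument of Lemma~\ref{lm:bound-density-Mc} gives, on $B_R$, $f \ge \frac{c}{2}(2\pi)^{-d/2}e^{-R^2-\|\Theta\|_\infty^2} \gtrsim \varepsilon^{80}$ for both densities. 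Proposition~\ref{prop:loglips} then yields $\sup_{B_R}|\log f_{\tilde\psi}-h|\lesssim \delta\varepsilon^{-80} = \varepsilon^{K-80}\le\varepsilon$ once $K\ge 81$.

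The main obstacle is keeping the lower bound on the approximant. Naive rounding of atoms could push $\Theta$-atoms outside $\Theta$, or rounding of weights could lose mass on $\Theta$, either of which would destroy the $\varepsilon^{80}$ floor. This is handled by using a net inside $\Theta$ itself and controlling the $\ell^1$ weight error, which is the same device used in the splitting construction of Proposition~\ref{prop:logdiscretization}. A secondary point is that $\mc{H}$ must not depend on $\tilde\psi$; indexing it by all lattice configurations handles this, since the number of atoms is allowed to vary up to $m_\varepsilon$ and zero weights are permitted.
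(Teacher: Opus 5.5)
Your proposal is correct and follows the same route as the paper: parametrize $\tilde\pi$ by its atoms and weights, round both to lattices, convert this into a uniform density error via the Lipschitz bound on the Gaussian, and pass to logarithms with Proposition~\ref{prop:loglips}, using an $\varepsilon^{80}$ lower bound that comes from retained mass on a compact set. The differences are only in implementation. The paper uses a single lattice on $B_R^a$, rounds weights down, places the leftover mass at the origin, and applies Lemma~\ref{lm:bound-density-Mc} to the enlarged compact set $\Theta^{\sqrt d}\cup\{0\}$; your internal net of $\Theta$ together with an $\ell^1$ simplex grid and slack $c/2$ achieves the same thing. The paper's version also shows that the ``main obstacle'' you identify is not really one, since moving $\Theta$-atoms by $\delta\le 1$ only changes the constant in the lower bound.
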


Finally we can combine all the above propositions by a simple triangle inequality to prove Lemma~\ref{lm:approx}.
\begin{proof}[Proof of Lemma~\ref{lm:approx}]
 By Proposition~\ref{prop:logdiscretization} for any density $f \in \pspace{c}{\Theta}$ there exists a density $f_{\tilde{\psi}} \in \pspace{c}{\Theta}$ such that 
 \begin{align} \label{eq:logdensdiffdisc} 
 \sup_{x \in B_R} \abs{\log f_{\psi}(x) - \log f_{\tilde{\psi}}(x)} \leq C_{\ref{prop:logdiscretization}}\varepsilon. 
 \end{align} 
 Additionally by Proposition~\ref{prop:logdiscretization} we can write $\tilde{\psi} = \tilde{\pi}*\phi$ such that $\tilde{\pi}$ is a discrete measure supported on $B_R^a$ with at most $D_{\ref{prop:logdiscretization}}\left(\log \frac{1}{\varepsilon}\right)^d$ atoms. Thus we can use Proposition~\ref{prop:lattice_cons} to conclude that there is some function $h$ in the set of functions $\mc{H}$ (as defined in Proposition~\ref{prop:lattice_cons}) such that 
 \begin{align} \label{eq:logdensdifflattice} 
 \sup_{x \in B_R} \abs{h(x) - \log f_{\tilde{\psi}}(x)} \leq C_{\ref{prop:lattice_cons}} \varepsilon 
 \end{align} 

 Thus by triangle inequality and inequalities~\eqref{eq:logdensdiffdisc} and \eqref{eq:logdensdifflattice} for the constant $C_{\ref{lm:approx}} = C_{\ref{prop:logdiscretization}} + C_{\ref{prop:lattice_cons}}$, we have
 \begin{align*}
 \sup_{x\in B_R} |\log f(x) - h(x)| \le C_{\ref{lm:approx}} \varepsilon. 
 \end{align*} 
 Finally we note that by Proposition~\ref{prop:lattice_cons} the set of functions $\mc{H}$ satisfies 
 \begin{align*} 
 \log \abs{\mc{H}} \leq D_{\ref{prop:lattice_cons}}\left(\log \frac{1}{\varepsilon}\right)^{d+1}. 
 \end{align*} 
 This concludes the proof of Lemma~\ref{lm:approx}.
\end{proof}

\subsubsection{Proof of Proposition~\ref{prop:discretizationcompact}}\label{prf:appdiscretizationcompact}

To prove Proposition~\ref{prop:discretizationcompact} we will first need the following simple proposition.
\begin{proposition}\label{prop:momentmatchTheta} Let $h_1,\cdots,h_m : K \to \R$ be continuous functions 
on a compact metric space $K$. For any probability measure $\pi$ supported on $K$
, there exists a discrete probability measure $\tilde{\pi}$ 
on $K$ with at most $m+1$ atoms such that $\E_{X\sim\pi}[h_j(X)] = \E_{\Tilde{X}\sim\tilde{\pi}}[h_j(\Tilde{X})]$ for all $1\leq j \leq m$.
\end{proposition}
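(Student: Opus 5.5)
The plan is a convexity (Carathéodory) argument applied to the moment map. Define $H:K\to\R^m$ by $H(\theta):=(h_1(\theta),\ldots,h_m(\theta))$. The target vector is $v:=\E_{X\sim\pi}[H(X)]=\int_K H\,\de\pi \in \R^m$. It suffices to show that $v$ lies in the convex hull $\mathrm{conv}(H(K))$. Once that is known, Carathéodory's theorem in $\R^m$ writes $v$ as a convex combination $v=\sum_{k=1}^{m+1}w_k H(\theta_k)$ with $\theta_k\in K$, $w_k\ge 0$ and $\sum_k w_k=1$. Setting $\tilde\pi:=\sum_k w_k\delta_{\theta_k}$ then gives a probability measure on $K$ with at most $m+1$ atoms and $\E_{\tilde\pi}[h_j]=\sum_k w_k h_j(\theta_k)=v_j$ for every $j$, as required.

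The key step is showing $v\in\mathrm{conv}(H(K))$.

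First, since $K$ is compact and each $h_j$ is continuous, $H(K)$ is a compact subset of $\R^m$. In finite dimensions the convex hull of a compact set is compact; this itself follows from Carathéodory, since $\mathrm{conv}(H(K))$ is the image of the compact set $K^{m+1}\times\Delta_{m}$ under a continuous map. So $\mathrm{conv}(H(K))$ is closed and convex.

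Next, I argue by contradiction using separation. Suppose $v\notin\mathrm{conv}(H(K))$. The separating hyperplane theorem (a point versus a closed convex set) gives $a\in\R^m$ and $b\in\R$ with $\langle a,H(\theta)\rangle\le b$ for all $\theta\in K$ and $\langle a,v\rangle> b$. Integrating the first inequality against the probability measure $\pi$, which is supported on $K$, gives $\langle a,v\rangle=\int_K\langle a,H\rangle\,\de\pi\le b$, a contradiction. Therefore $v\in\mathrm{conv}(H(K))$.

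The only point needing care is the closedness of the convex hull. Without it, separation would only place $v$ in the closure of the hull, and that is exactly why compactness of $K$ and continuity of the $h_j$ are assumed. I expect no real obstacle. Measurability of $H$ is automatic from continuity, and integrability holds because each $h_j$ is bounded on the compact set $K$.

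An alternative route approximates $\pi$ weakly by finitely supported measures on $K$ and passes to the limit using compactness of the $(m+1)$-atom configurations. I would use the separation argument above, since it is shorter.
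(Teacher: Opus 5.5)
Your proposal is correct and follows the paper's route: map $K$ into $\R^m$ via $(h_1,\dots,h_m)$, place the moment vector in the convex hull of the compact image, and apply Carath\'eodory's theorem to obtain at most $m+1$ atoms. The paper simply asserts that the moment vector lies in that convex hull. Your argument (the hull of a compact set in $\R^m$ is closed, plus separation of a point from a closed convex set) supplies the justification the paper omits.
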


\begin{proof} 
Begin by noting that the vector \[v := (\E_{X\sim\pi}[h_1(X)],\cdots,\E_{X\sim\pi}[h_m(X)]) \in \R^m,\] 
is contained in the convex hull of the compact set
\begin{align}\nonumber C:= \big\{ (h_1(x),\cdots,h_m(x)) \ : \ x\in K\big\}.\end{align}
It then follows from Carath\'{e}odory's theorem that $v$ 
is a convex combination of (at most) $m+1$ elements of $C$. 
Thus there there exists $x_1,\cdots,x_{m+1}\in K$ and $\lambda_1,\cdots,\lambda_{m+1}\in \R_{\geq 0}$ 
 such that \begin{align*}
 \sum_{i=1}^{m+1} \lambda_i = 1 
 \end{align*} and
\begin{align}\label{eq:hcaratheodory}(\E_\pi[h_1(X)],\cdots,\E_\pi[h_m(X)]) 
&= \sum_{i=1}^{m+1} \lambda_i\cdot (h_1(x_i),\cdots,h_m(x_i)). \end{align}
Then $\displaystyle\tilde{\pi} := \sum_{i=1}^{m+1}\lambda_i \cdot \delta_{x_i}$ is a discrete probability measure on $K$ with at most $m+1$ atoms such that by equation~\eqref{eq:hcaratheodory} for any $j \leq m$,
\begin{align*}
 \E_{X\sim\pi}[h_j(X)] = \sum_{i=1}^{m+1} \lambda_i\cdot h_j(x_i) = \E_{\Tilde{X}\sim\tilde{\pi}}[h_j(\Tilde{X})]
\end{align*}
\end{proof}

We can now continue on to the proof of Proposition~\ref{prop:discretizationcompact}.
\begin{proof}[Proof of Proposition~\ref{prop:discretizationcompact}] Let $t\in \Z_{\geq 1}$ be an integer whose exact value will be determined later. 

Consider the exponential function $g(y) := e^{-y}$. As the (mean-value) remainder in the Taylor expansion of order $t-1$ of $g$ at the origin is bounded above by $\frac{|y|^t}{t!}\leq \frac{(e|y|)^t}{t^t}$, 
there exists a polynomial function $P_{t-1} : \R \to \R$ of degree $t-1$ such that for all $y\in \R$,
\begin{align*}|g(y) - P_{t-1}(y)| \leq \frac{(e|y|)^t}{t^t}\end{align*}

Then for any $x\in \R^d$ by setting $y = \norm{x}^2/2$ in the equation above it follows that,
\begin{align} \label{eqn:Nresidue}
\bigg|e^{-\normnorm{x}^2/2} - P_{t-1}\bigg(\frac{\normnorm{x}^2}{2}\bigg)\bigg| \leq \bigg(\frac{e\normnorm{x}^2}{2t}\bigg)^t. 
\end{align}

Now define the indexing set $I$ as
\begin{align}
 \nonumber I = \{k = (k_1, ..., k_d) \in \mathbb{Z}^d : \forall 1\leq j\leq d \text{ we have } 0 \leq k_j \leq 2t-1 \}
\end{align}

Then for any $k \in I$ define the functions \begin{align}\nonumber h_{k}(x_1, ..., x_d) := x_1^{k_1} \cdot x_2^{k_2} \cdots x_d^{k_d}.\end{align} Note that as there are a total of $(2t)^d$ elements in $I$ we only have $(2t)^d$ many functions $h_k$. 

Thus by proposition \ref{prop:momentmatchTheta} with $K = \Theta$ and the functions $h$ as above, there exists a discrete probability measure $\tilde{\rho}$ 
on $\Theta$ with at most $(2t)^d+1$ atoms such that if $X = (X_1,\cdots,X_d) \sim \rho$ 
and $\Tilde{X} = (\Tilde{X}_1,\cdots,\Tilde{X}_d) \sim \tilde{\rho}$, then for every $k \in I$,
\begin{align}\label{eq:hkexpidentity}\E_{X}[h_k(X_1, ..., X_d)] = \E_{\Tilde{X}}[h_k(\Tilde{X}_1,\cdots,\Tilde{X}_d)].\end{align}

On the other hand as $P_{t-1}$ is a polynomial with degree $t$ and $\norm{x}^2 = \sum_{i=1}^d x_i^2$, we must have that $P_{t-1}(\norm{x}^2/2)$ viewed as a function of $x_1, .., x_d$ is a linear combination of the set of functions $\{h_k\}_{k\in I}$.

The above observation combined with equation~\eqref{eq:hkexpidentity} implies that:
\begin{align}\int_{\Theta} P_{t-1}\bigg(\frac{\normnorm{x-\theta}^2}{2}\bigg) \ d\rho(\theta) = 
\int_{\Theta} P_{t-1}\bigg(\frac{\normnorm{x-\theta}^2}{2}\bigg) \ d\tilde{\rho}(\theta). \end{align} 
It then follows that for all $x\in B_R$,
\begin{align}\abs{f_{\rho*\phi} (x) - f_{\tilde{\rho}*\phi}(x)} &\leq \frac{1}{(2\pi)^{d/2}}\bigg|\int_{\Theta} e^{-\normnorm{x-\theta}^2/2} \ d(\rho-\tilde{\rho})(\theta) \bigg| \\
&=\frac{1}{(2\pi)^{d/2}} \bigg|\int_{\Theta} e^{-\normnorm{x-\theta}^2/2} - P_{t-1}\bigg(\frac{\normnorm{x-\theta}^2}{2}\bigg) \ d(\rho - \tilde{\rho})(\theta)\bigg| \\
&\leq \frac{1}{(2\pi)^{d/2}}\cdot 2\sup_{\substack{x\in B_R\\ u\in \Theta}} 
\bigg|e^{-\normnorm{x-\theta}^2/2} - P_{t-1}\bigg(\frac{\normnorm{x-\theta}^2}{2}\bigg)\bigg|.\end{align}

Then using equation~\eqref{eqn:Nresidue} into the inequality above we get that for the constant $C_{\ref{prop:discretizationnoncompact}} = 2\cdot \frac{1}{(2\pi)^{d/2}}$
\begin{align} \label{eq:tmp1}
 \abs{f_{\rho*\phi} (x) - f_{\tilde{\rho}*\phi}(x)} \leq C_{\ref{prop:discretizationnoncompact}} \left(\frac{e\norm{x-\theta}^2}{2t}\right)^t
\end{align}

As $R > \norm{\Theta}_{\infty}$ we have $\norm{x-\theta} \leq 2R = 2\sqrt{80}\sqrt{\log(1/\varepsilon)}.$ 

Using the above observation along with equation~\eqref{eq:tmp1} we get that 
\begin{align}\nonumber\abs{f_{\rho*\phi} (x) - f_{\tilde{\rho}*\phi}(x)} 
&\leq C_{\ref{prop:discretizationnoncompact}}\frac{(160e)^t (\log\frac{1}{\varepsilon})^t}{t^t} 
\\ \label{eq:discretizationcmpcttmp1} &= C_{\ref{prop:discretizationnoncompact}}\exp\big[- t(\log t - \log 160e + \log \varepsilon)\big].\end{align}
Let $C_1 = \max(81, 160e^2)$, and choose $t = \left\lceil C_1\log(1/\varepsilon)\right\rceil$. 
Plugging these values into inequality~\eqref{eq:discretizationcmpcttmp1}
\begin{align}\nonumber \abs{f_{\rho*\phi} (x) - f_{\tilde{\rho}*\phi}(x)} &\leq C_{\ref{prop:discretizationnoncompact}}\exp(-t(\log C_1 - \log 160e))
\\ \nonumber &\leq C_{\ref{prop:discretizationnoncompact}}\exp(-t(\log 160e^2 - \log 160e))
\\ \nonumber &\leq C_{\ref{prop:discretizationnoncompact}}\exp(-t) 
\\ \nonumber &= C_{\ref{prop:discretizationnoncompact}}\exp(- C_1\log(1/\varepsilon)) 
\\ &\leq C_{\ref{prop:discretizationnoncompact}}\varepsilon^{81}. \label{eq:densitydiffdiscretecompact} \end{align}

Inequality~\eqref{eq:densitydiffdiscretecompact} then ensures that $\tilde{\rho}$ satisfies the condition~\eqref{eq:ideas-compactfdiff}.

To complete the argument then we need to show that for some constant $D_{\ref{prop:discretizationcompact}}$, $\tilde{\rho}$ has at most $D_{\ref{prop:discretizationcompact}}(\log \frac{1}{\varepsilon})^d$ atoms. To this end let us remember that $\tilde{\rho}$ was constructed above equation~\eqref{eq:hkexpidentity} using proposition\ref{prop:momentmatchTheta} and hence had at most $(2t)^d+1$ atoms. Since we have set $t = \left\lceil C_1\log(1/\varepsilon)\right\rceil$, $\tilde{\rho}$ has at most $(2C_1)^d(\log \frac{1}{\varepsilon})^d$ atoms. The proof is then complete for $D_{\ref{prop:discretizationcompact}} = (2C_1)^d+1$.
\end{proof}

\subsubsection{Proof of Proposition~\ref{prop:discretizationnoncompact}}\label{prf:appdiscretization}
To prove Proposition~\ref{prop:discretizationnoncompact} we will need the following proposition appearing in the appendix as Lemma D.3 from \cite{saha2020nonparametric}. In the following Lemma, $N(r,S)$ denotes the minimum number of $r$ Euclidean balls needed to cover $S$.
\begin{proposition}[Lemma D.3 in \cite{saha2020nonparametric}]\label{prop:discretizationsaha}
 Let $G$ be an arbitrary probability measure on $\mathbb{R}^d$ and let $S$ denote an arbitrary compact subset of $\mathbb{R}^d$. Also let $a \geq 1.$
 Then there exists a discrete probability measure $G'$ that is supported on 
 $S^a$ and having at most
 \begin{align*}
 l := (2\lfloor(13.5)a^2\rfloor + 2)^d N(a, S^a) + 1
 \end{align*}
 atoms such that
 \begin{align*}
 \sup_{x\in S} \abs{f_{G*\phi}(x) - f_{G'*\phi}(x)} \leq \left(1 + \frac{1}{\sqrt{2\pi}}\right)(2\pi)^{-d/2}e^{-a^2/2}.
 \end{align*} 
\end{proposition}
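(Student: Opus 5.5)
The plan is to follow the standard moment-matching scheme (as in Ghosal--van der Vaart and Zhang, and already used in our Proposition~\ref{prop:discretizationcompact}), but localized to cells of size $a$ and combined with a separate treatment of the mass of $G$ far from $S$. Throughout, $x$ ranges over $S$ and $\phi(z)=(2\pi)^{-d/2}e^{-\|z\|^2/2}$.

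\textbf{Step 1: the far mass.} Write $G=G|_{S^a}+G|_{\comp{(S^a)}}$. Let $p$ be a nearest-point map from $\comp{(S^a)}$ onto the boundary $\partial S^a$; such points lie at distance exactly $a$ from $S$. Push $G|_{\comp{(S^a)}}$ forward under $p$. For $x\in S$ and any $\theta$ with $\dist(\theta,S)\ge a$, we have $0\le\phi(x-\theta)\le(2\pi)^{-d/2}e^{-a^2/2}$. Hence both the original far contribution and the pushed contribution to the density at $x$ lie in $[0,(2\pi)^{-d/2}e^{-a^2/2}\,G(\comp{(S^a)})]$. Their difference is therefore at most $(2\pi)^{-d/2}e^{-a^2/2}$, which accounts for the ``$1$'' in the constant $1+1/\sqrt{2\pi}$. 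We now have a probability measure $G_1$ supported on the compact set $S^a$, and $\sup_{x\in S}|f_{G*\phi}(x)-f_{G_1*\phi}(x)|\le(2\pi)^{-d/2}e^{-a^2/2}$.

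\textbf{Step 2: localize and moment-match.} Cover $S^a$ by $N(a,S^a)$ balls of radius $a$ and refine the cover into a measurable partition $E_1,\dots,E_N$ of $S^a$, each cell having diameter at most $2a$. Write $G_1=\sum_j G_1(E_j)\,G_{1,j}$, where $G_{1,j}$ is the normalized restriction. Set $m:=\lfloor 13.5a^2\rfloor+1$. On each $\overline{E_j}$, apply Proposition~\ref{prop:momentmatchTheta} to the monomials $(\theta-c_j)^k$ with $0\le k_i\le 2m-1$, where $c_j$ is the cell's centre. This gives $\tilde G_j$ supported in $\overline{E_j}\subset S^a$ with at most $(2m)^d+1$ atoms, matching all these moments. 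Put $G'=\sum_j G_1(E_j)\tilde G_j$. Its atom count is at most $N(a,S^a)\,((2\lfloor13.5a^2\rfloor+2)^d+1)$, which has the stated order after bookkeeping with the ``$+1$'' terms.

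\textbf{Step 3: per-cell error, uniformly in $x\in S$.} Since the errors are weighted by $G_1(E_j)$, which sum to $1$, it suffices to show that for each $j$ and each $x\in S$,
\[
\Bigl|\int\phi(x-\theta)\,d(G_{1,j}-\tilde G_j)(\theta)\Bigr|\le \tfrac{1}{\sqrt{2\pi}}(2\pi)^{-d/2}e^{-a^2/2}.
\]
We split into two cases according to the distance from $x$ to $E_j$.
\begin{itemize}
\item \emph{Far cells.} If $\dist(x,E_j)\ge \kappa a$ for a suitable constant $\kappa$ (e.g.\ $\kappa=\sqrt2$ or larger), both integrals are bounded by $(2\pi)^{-d/2}e^{-\kappa^2a^2/2}$, which is small enough.
\item \emph{Near cells.} If $\dist(x,E_j)<\kappa a$, then $\|x-\theta\|\le(\kappa+2)a$ on $E_j$. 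Expand $e^{-\|x-\theta\|^2/2}$ as the degree-$(m-1)$ Taylor polynomial $P_{m-1}(\|x-\theta\|^2/2)$ plus a remainder bounded by $(e\|x-\theta\|^2/(2m))^m$. Writing $x-\theta=(x-c_j)-(\theta-c_j)$, the polynomial $P_{m-1}(\|x-\theta\|^2/2)$ is, for fixed $x$, a combination of the monomials $(\theta-c_j)^k$ with $k_i\le 2m-2$. These integrate identically against $G_{1,j}$ and $\tilde G_j$, so only the remainder survives, giving a bound of $2(2\pi)^{-d/2}(e(\kappa+2)^2a^2/(2m))^m$. With $m\ge13.5a^2$ the base is $\le e(\kappa+2)^2/27<1$, and for $\kappa$ chosen appropriately this is below $e^{-m}\le e^{-13.5a^2}$, far smaller than required.
\end{itemize}
Adding Steps 1 and 3 gives the stated bound.

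The main obstacle is Step 3's constant bookkeeping: choosing $\kappa$ so that the far-cell bound and the Taylor base $e(\kappa+2)^2/27$ simultaneously work with $m\approx13.5a^2$. If the clean choice fails, I would instead center the Taylor expansion at $x-c_j$, which gives a tighter distance $\|x-\theta\|\le\dist(x,E_j)+2a$, or slightly enlarge $m$; the latter only affects constants in the atom count.
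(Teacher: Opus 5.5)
The paper gives no proof of this statement; it imports it verbatim as Lemma D.3 of Saha--Guntuboyina. Your architecture is the right one: collapse the mass outside $S^a$ to distance exactly $a$, partition $S^a$ into $N(a,S^a)$ cells inside $a$-balls, match coordinate moments up to degree $2m-1$ with $m=\lfloor 13.5a^2\rfloor+1$ on each cell via Carath\'eodory, then Taylor-expand. But Step 3 does not close as written, and this is more than constant bookkeeping.

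Step 1 already spends the whole ``$1$'', so you need every cell, far ones included, to contribute at most $\frac{1}{\sqrt{2\pi}}(2\pi)^{-d/2}e^{-a^2/2}$. Your far-cell estimate $(2\pi)^{-d/2}e^{-\kappa^2a^2/2}$ meets this for all $a\ge 1$ only if $\kappa^2\ge 1+\log(2\pi)$, i.e. $\kappa\gtrsim 1.68$. The near-cell Taylor base $e(\kappa+2)^2/27$, however, is below $1$ only for $\kappa<1.15$; your $\kappa=\sqrt2$ gives about $1.17$. The claim that the remainder is below $e^{-m}$ fails for every $\kappa\ge0$, since already $4e/27>e^{-1}$. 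So at $a=1$ (where $m=14$) no choice of $\kappa$ satisfies both requirements.

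Your fallbacks do not rescue this. Enlarging $m$ changes the explicit atom count $l$ in the statement. The ``centering'' you describe yields the same bound $\|x-\theta\|\le \mathrm{dist}(x,E_j)+2a$ you already used.

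There are two ways to repair it.

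(a) Take $\kappa=1$, and do not separate Step 1 from the far cells by the triangle inequality. For fixed $x\in S$, put on one side the $G$-mass outside $S^a$ together with the $G$-mass in far cells. Put on the other side the $G'$-mass in far cells together with the pushed mass that landed in near cells. All of this mass sits at distance $\ge a$ from $x$, and each side has total mass $\le 1$. So the two $\phi$-integrals are numbers in $[0,(2\pi)^{-d/2}e^{-a^2/2}]$ and differ by at most that; this is the ``$1$''. Near cells then have $\|x-\theta\|<3a$, hence $t<4.5a^2$. The Taylor base is $e\cdot 4.5/13.5=e/3$, and $(e/3)^m\le e^{-1.33a^2}\le e^{-a^2/2}$. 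Stirling's bound $m!\ge\sqrt{2\pi m}\,(m/e)^m$ with $m\ge 14$ makes the factor $2/\sqrt{2\pi m}$ at most $1/\sqrt{2\pi}$. This is exactly where $13.5=3\cdot 4.5$ and the constant $1+1/\sqrt{2\pi}$ come from.

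(b) Alternatively, center genuinely. Write $e^{-\|x-\theta\|^2/2}=e^{-\|x-c_j\|^2/2}e^{s}$ with $s=\langle x-c_j,\theta-c_j\rangle-\|\theta-c_j\|^2/2$, and expand $e^{s}$ to order $m-1$. This is still a polynomial of coordinate degree $\le 2m-2$ in $\theta-c_j$, so your moment matching covers it. Since $e^{-\|x-c_j\|^2/2+\max(s,0)}\le 1$ and $|s|\le(\rho+\tfrac12)a^2$ with $\rho=\|x-c_j\|/a$, the base becomes $e(\rho+\tfrac12)/13.5<0.65$ for $\rho\le 2.7$. Your per-cell target then holds with $\kappa=1.7$.

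Finally, per-cell Carath\'eodory with all $(2m)^d$ monomials gives up to $N((2m)^d+1)$ atoms, which exceeds $l$ once $N\ge 2$. Drop the constant monomial, which is matched automatically, to get at most $(2m)^d$ atoms per cell.
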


\begin{proof}[Proof of Proposition~\ref{prop:discretizationnoncompact}]
Using Proposition~\ref{prop:discretizationsaha} for $a = \sqrt{\frac{81}{40}}R$, $S = B_R$ and $G =\eta$, we have a discrete measure supported on $B_R^a$, denoted as $\Tilde{\eta}$ (or $G'$ in Proposition~\ref{prop:discretizationsaha}), having at most 
\begin{align}\label{eq:atomsupper}
 l := (2\lfloor(13.5)a^2\rfloor + 2)^d N(a, B_R^a) + 1
 \end{align}
 atoms such that
 \begin{align}\label{eq:discretdensdiff}
 \sup_{x\in B_R} \abs{f_{\eta*\phi}(x) - f_{\tilde{\eta}*\phi}(x)} \leq \left(1 + \frac{1}{\sqrt{2\pi}}\right)(2\pi)^{-d/2}e^{-a^2/2}.
 \end{align}

Plugging the value of $a =\sqrt{\frac{81}{40}}R = \sqrt{162\log \frac{1}{\varepsilon}}$ into equation~\eqref{eq:discretdensdiff} we get that for the constant $C_{\ref{prop:discretizationnoncompact}} = \left(1 + \frac{1}{\sqrt{2\pi}}\right)(2\pi)^{-d/2}$, we have for any $x \in B_R$,
\begin{align}\label{eq:discretizationtmp1}
 \abs{f_{\eta*\phi}(x) - f_{\tilde{\eta}*\phi}(x)} \leq C_{\ref{prop:discretizationnoncompact}} e^{-\frac{a^2}{2}} \leq C_{\ref{prop:discretizationnoncompact}} \varepsilon^{81}.
\end{align}
Let us now see why the number of atoms is of the order of $(-\log \varepsilon)^d$. To this end note 
the minimum number of $r$-euclidean balls needed to cover $S$ is of the order $\left(\frac{\norm{S}_{\infty}}{r}\right)^d$. In particular there is some constant $C_1$ such that
\begin{align}\nonumber
 N(a,B_R^a) \leq C_1\left(\frac{\norm{B_R^a}_{\infty}}{a}\right)^d.
\end{align}
But $\norm{B_R^a}_{\infty} \leq R + a.$ Thus we have
\begin{align}\label{eq:Nbound1}
 N(a,B_R^a) \leq C_1\left(\frac{R + a}{a}\right)^d \leq C_1 \left(\frac{\left(1+\sqrt{\frac{81}{40}}\right)R}{\sqrt{\frac{81}{40}} R}\right)^d = C_1 \left(\frac{\left(1+\sqrt{\frac{81}{40}}\right)}{\sqrt{\frac{81}{40}}}\right)^d.
\end{align}

Note that $a = \sqrt{\frac{81}{40}}R = \sqrt{162 \log \frac{1}{\varepsilon}}$. Thus plugging the above equation~\eqref{eq:Nbound1} into \eqref{eq:atomsupper} we get that for some constant $D_{\ref{prop:discretizationnoncompact}}$, we have that the number of atoms of $\Tilde{\eta}$ is at most $D_{\ref{prop:discretizationnoncompact}}^d(-\log \varepsilon)^d$ as required.

\end{proof}

\subsubsection{Proof of Proposition~\ref{prop:loglips}}\label{prf:apploglips}
\begin{proof}[Proof of Proposition~\ref{prop:loglips}]
 WLOG assume $u \leq v$. Then simply note that 
 \begin{align}\nonumber
 \frac{d \log x}{dx} = \frac{1}{x}.
 \end{align}
 Therefore we have
 \begin{align*}
 \abs{\log u - \log v} \leq \abs{\int_{u}^v \frac{d \log x}{dx}~dx} \leq \int_{u}^v \abs{\frac{1}{x}}~dx = \int_{u}^v\frac{1}{x}~dx \leq \frac{\abs{v-u}}{\min(u,v)}.
 \end{align*}
\end{proof}

\subsubsection{Proof of Proposition~\ref{prop:logdiscretization}}\label{prf:applogdiscretization}

\begin{proof}[Proof of Proposition~\ref{prop:logdiscretization}]
 We will first split the measure $\pi$ into two parts, the part supported on $\Theta$ and the part outside. To do this formally define the measure $\pi^{\Theta}$ as
 \begin{align}\label{eq:pithetadefn}
 \pi^{\Theta}(A) = \frac{\pi(A\cap \Theta)}{\pi(\Theta)}. 
 \end{align}
 Similarly if $\comp{\Theta}$ denotes the set $\mathbb{R}^d\exclude \Theta$, define the measure $\pi^{\comp{\Theta}}$ as
 \begin{align}\label{eq:pinotthetadefn}
 \pi^{\comp{\Theta}}(A) = \frac{\pi(A\cap \comp{\Theta})}{\pi(\comp{\Theta})}. 
 \end{align}
 Note that combining equations~\eqref{eq:pithetadefn} and \eqref{eq:pinotthetadefn} we get
 \begin{align}
 \pi = \pi(\Theta) \pi^{\Theta} + \pi(\comp{\Theta}) \pi^{\comp{\Theta}}.
 \end{align}
 Thus if $\varphi$ denotes the density of the standard normal distribution we have,
 \begin{align}
 \nonumber f_{\psi}(x) &= \mathbb{E}_{\theta\sim\pi}\left[\varphi(x - \theta) \right]
 \\\nonumber &= \pi(\Theta)\mathbb{E}_{\theta_1\sim\pi^{\Theta}}\left[\varphi(x - \theta_1) \right] + \pi(\comp{\Theta})\mathbb{E}_{\theta_2\sim\pi^{\comp{\Theta}}}\left[\varphi(x - \theta_2) \right]
 \\&= \pi(\Theta)f_{\pi^{\Theta}*\phi}(x) + \pi(\comp{\Theta})f_{\pi^{\comp{\Theta}}*\phi}(x).\label{eq:fpsiconvexdecomp}
 \end{align}
 
 Now we note that the measure $\pi^{\Theta}$ is supported in $\Theta$. As such we can use Proposition~\ref{prop:discretizationcompact} (with $\rho = \pi^{\Theta}$) to conclude that there exists a discrete measure $\tilde{\pi}_1$ supported on $\Theta$ such that $\tilde{\pi}_1$ has at most $D_{\ref{prop:discretizationcompact}}(\log \frac{1}{\varepsilon})^d$ atoms and satisfying 
 \begin{align}\label{eq:logdisccompactfdiff} \sup_{x \in B_R} \abs{f_{\pi^{\Theta} * \phi} (x) - f_{\tilde{\pi}_1*\phi}(x)} \leq C_{\ref{prop:discretizationnoncompact}}\varepsilon^{81}.
 \end{align}

 Similarly then we can use Proposition~\ref{prop:discretizationnoncompact} (with $\eta = \pi^{\comp{\Theta}}$) to conclude that there exists a discrete measure $\tilde{\pi}_1$ supported on $B_R^a$ such that $\tilde{\pi}_2$ has at most $D_{\ref{prop:discretizationnoncompact}}(\log \frac{1}{\varepsilon})^d$ atoms and satisfying 
 \begin{align}\label{eq:logdiscnoncompactfdiff} \sup_{x \in B_R} \abs{f_{\pi^{\comp{\Theta}}*\phi}(x) - f_{\tilde{\pi}_2*\phi}(x)} \leq C_{\ref{prop:discretizationnoncompact}}\varepsilon^{81}.
 \end{align}
 Then define the measure $\tilde{\pi}$ as
 \begin{align}\label{eq:logdisc-pitildedefn}
 \tilde{\pi} = \pi(\Theta)\tilde{\pi}_1 + \pi(\comp{\Theta})\tilde{\pi}_2.
 \end{align}
 
 We note that $\tilde{\pi}$ has at most as many atoms as the sum of the number of atoms in $\tilde{\pi}_1$ and $\tilde{\pi}_2$. As the $\tilde{\pi}_1$ and $\tilde{\pi}_2$ have at most $D_{\ref{prop:discretizationcompact}}(\log \frac{1}{\varepsilon})^d$ and $D_{\ref{prop:discretizationnoncompact}}(\log \frac{1}{\varepsilon})^d$ atoms respectively. We can conclude that for the constant $D_{\ref{prop:logdiscretization}} = D_{\ref{prop:discretizationcompact}} + D_{\ref{prop:discretizationnoncompact}}$, the measure $\tilde{\pi}$ has atmost $D_{\ref{prop:logdiscretization}}\left(\log \frac{1}{\varepsilon}\right)^d$ atoms. Thus $\tilde{\pi}$ satisfies item~\ref{eq:logdiscnoofatoms} of the Proposition~\ref{prop:discretizationnoncompact}.

 To prove item~\ref{eq:logdiscdiffbound}, define $\tilde{\psi} = \tilde{\pi}*\phi$. Then we have,
 \begin{align}
 \nonumber f_{\tilde{\psi}}(x) &= \mathbb{E}_{\theta\sim\tilde{\pi}}\left[\varphi(x - \theta) \right]
 \\\nonumber &= \pi(\Theta)\mathbb{E}_{\theta_1\sim\tilde{\pi}_1}\left[\varphi(x - \theta_1) \right] + \pi(\comp{\Theta})\mathbb{E}_{\theta_2\sim\tilde{\pi}_2}\left[\varphi(x - \theta_2) \right]
 \\&= \pi(\Theta)f_{\tilde{\pi}_1*\phi}(x) + \pi(\comp{\Theta})f_{\tilde{\pi}_2*\phi}(x). \label{eq:ftildepsiconvexdecomp}
 \end{align}

 Thus using equations~\eqref{eq:fpsiconvexdecomp} and \eqref{eq:ftildepsiconvexdecomp} we get
 \begin{align}\label{eq:fdiffdecomp}
 \abs{f_{\psi}(x) - f_{\tilde{\psi}}(x)} \leq \pi(\Theta)\abs{f_{\pi^{\Theta}*\phi}(x)-f_{\tilde{\pi}_1*\phi}(x)} + \pi(\comp{\Theta})\abs{f_{\pi^{\comp{\Theta}}*\phi}(x)-f_{\tilde{\pi}_2*\phi}(x)}. 
 \end{align}
 Then plugging in inequalities~\eqref{eq:logdisccompactfdiff} and \eqref{eq:logdiscnoncompactfdiff} into the above inequality \eqref{eq:fdiffdecomp} we get for any $x \in B_R$
 \begin{align}\label{eq:fdiffineq1}
 \abs{f_{\psi}(x) - f_{\tilde{\psi}}(x)} \leq (\pi(\Theta)C_{\ref{prop:discretizationnoncompact}} + \pi(\comp{\Theta})C_{\ref{prop:discretizationnoncompact}})\varepsilon^{81}.
 \end{align}

 Now as $\psi = \pi *\phi \in \pspace{c}{\Theta}$, we have that 
 \begin{align}\label{eq:logdisc-pithetac}
 \pi(\Theta) \geq c.
 \end{align}

 Remember that $\tilde{\pi}_1$ was constructed via Proposition~\ref{prop:discretizationcompact} and is thus supported on $\Theta$. Thus by the construction of the measure $\tilde{\pi}$ in equation~\eqref{eq:logdisc-pitildedefn} and \eqref{eq:logdisc-pithetac} we have
 \begin{align*}
 \tilde{\pi}(\Theta) \geq \pi(\Theta)\tilde{\pi}_1(\Theta) = \pi(\Theta) \geq c.
 \end{align*} 
 Thus we have that $\tilde{\psi} = \tilde{\pi} * \phi$ is such that $f_{\tilde{\psi}}$ is in the set $\pspace{c}{\Theta}$.

 Then as $f_{\psi}, f_{\tilde{\psi}} \in \pspace{c}{\Theta}$, we can use \ref{lm:bound-density-Mc} to conclude that $\forall x \in B_R$,
 \begin{align}\label{eq:logdiscmaxlogupbd}
 \max(- \log f_{\psi}(x), - \log f_{\tilde{\psi}}(x)) \le \|x\|^2 + C_{\ref{lm:bound-density-Mc}}.
 \end{align}
 Exponentiating the negative of both sides of equation~\eqref{eq:logdiscmaxlogupbd} we conclude that $\forall x \in B_R$,
 \begin{align}\label{eq:logdisc-mindenslwbd1}
 \min(f_{\psi}(x),f_{\tilde{\psi}}(x)) \ge e^{-C_{\ref{lm:bound-density-Mc}}}e^{-\norm{x}^2}.
 \end{align}
 We note that as $x \in B_R$, we have that $\norm{x} \leq R = \sqrt{80\log \frac{1}{\varepsilon}}$. Plugging this observation into \eqref{eq:logdisc-mindenslwbd1} we get
 \begin{align}\label{eq:logdisc-mindenslwbd2}
 \min(f_{\psi}(x),f_{\tilde{\psi}}(x)) \ge e^{-C_{\ref{lm:bound-density-Mc}}}\varepsilon^{80}.
 \end{align}

 We can now use Proposition~\ref{prop:loglips} with $u= f_{\psi}(x)$, $v=f_{\tilde{\psi}}(x)$ along with inequalities \eqref{eq:fdiffineq1} and \eqref{eq:logdisc-mindenslwbd2} to conclude
 \begin{align}\label{eq:logdisctmpfinal}
 \abs{\log f_{\psi}(x) - \log f_{\tilde{\psi}}(x)} \leq \frac{\abs{f_{\psi}(x) - f_{\tilde{\psi}}(x)}}{\min(f_{\psi}(x),f_{\tilde{\psi}}(x))} \leq \frac{(\pi(\Theta)C_{\ref{prop:discretizationnoncompact}} + \pi(\comp{\Theta})C_{\ref{prop:discretizationnoncompact}})\varepsilon^{81}}{e^{-C_{\ref{lm:bound-density-Mc}}}\varepsilon^{80}}
 \end{align}
 Rewriting the above equation with $C_{\ref{prop:logdiscretization}} = \frac{\pi(\Theta)C_{\ref{prop:discretizationnoncompact}} + \pi(\comp{\Theta})C_{\ref{prop:discretizationnoncompact}}}{e^{-C_{\ref{lm:bound-density-Mc}}}}$,we conclude
 \begin{align}
 \abs{\log f_{\psi}(x) - \log f_{\tilde{\psi}}(x)} \leq C_{\ref{prop:logdiscretization}}\varepsilon.
 \end{align}
\end{proof}

\subsubsection{Proof of Proposition~\ref{prop:lattice_cons}}\label{prf:applattice_cons}
Before we proceed onto the Proof of Proposition~\ref{prop:lattice_cons}, we will need the following technical proposition which is well known in the literature but whose proof we have included for completeness:
\begin{proposition}\label{prop:gausslips}
 Let $\varphi = \frac{1}{(2\pi)^{d/2}}e^{-\norm{x}^2/2} $ denote the density of the standard $d$-dimensional gaussian. Then for any $u,v \in \mathbb{R}^d$ we have for some constant $C_{\ref{prop:gausslips}}$
 \begin{align*}
 \abs{\varphi(u) - \varphi(v)} \leq C_{\ref{prop:gausslips}}\norm{u-v}_1
 \end{align*}
\end{proposition}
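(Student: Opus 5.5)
The plan is a mean-value argument combined with a uniform bound on the gradient of $\varphi$. The function $\varphi$ is smooth on $\mathbb{R}^d$ with gradient
\[
\nabla \varphi(z) = -\frac{1}{(2\pi)^{d/2}}\, z\, e^{-\norm{z}^2/2}.
\]

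\textbf{Step 1 (bound the gradient).} Its Euclidean norm is $(2\pi)^{-d/2}\norm{z}e^{-\norm{z}^2/2}$. The scalar function $t\mapsto te^{-t^2/2}$ on $[0,\infty)$ attains its maximum at $t=1$. Hence
\[
\sup_{z}\norm{\nabla\varphi(z)}_2 \le (2\pi)^{-d/2}e^{-1/2}.
\]
This is the same computation already used in the proof of Lemma~\ref{lem:envelopeFunction}.

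\textbf{Step 2 (integrate along the segment).} Set $\gamma(s)=v+s(u-v)$ for $s\in[0,1]$. By the fundamental theorem of calculus,
\[
\varphi(u)-\varphi(v)=\int_0^1 \nabla\varphi(\gamma(s))\cdot(u-v)\,ds .
\]
Cauchy--Schwarz then gives
\[
\abs{\varphi(u)-\varphi(v)}\le \sup_z\norm{\nabla\varphi(z)}_2\,\norm{u-v}_2 .
\]

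\textbf{Step 3 (pass to the $\ell^1$ norm).} Since $\norm{u-v}_2\le\norm{u-v}_1$, the statement follows with
\[
C_{\ref{prop:gausslips}}=(2\pi)^{-d/2}e^{-1/2}.
\]

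There is no real obstacle here; the only point needing care is the one-variable maximization in Step 1. An alternative that avoids gradients is to move one coordinate at a time, bounding each partial derivative by $(2\pi)^{-d/2}e^{-1/2}$, and sum the coordinate increments to obtain the $\ell^1$ norm directly. I would use the gradient route as the main argument.
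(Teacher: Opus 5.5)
Your proof is correct: the bound $\sup_z\|\nabla\varphi(z)\|_2=(2\pi)^{-d/2}e^{-1/2}$, the fundamental theorem of calculus along the segment from $v$ to $u$, Cauchy--Schwarz, and $\|u-v\|_2\le\|u-v\|_1$ all go through.

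The paper takes the alternative you mention at the end. It bounds each partial derivative $|\partial_i\varphi|$ by a uniform constant. It then moves from one point to the other by changing one coordinate at a time, applies the one-variable mean value theorem to each step, and telescopes. This gives the $\ell^1$ norm directly, with no Cauchy--Schwarz. Your segment route yields the slightly stronger $\ell^2$-Lipschitz bound with an explicit constant. The paper's coordinatewise route produces the $\ell^1$ form natively, which is the form in which the bound is later used for the lattice-rounded atoms in Proposition~\ref{prop:lattice_cons}. For the paper's purposes the two arguments are interchangeable.
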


\begin{proof}
 Let us denote $x = (x_1, ..., x_d)$ and $y = (y_1, ..., y_d)$.
 
 Since $\varphi(x) = \frac{1}{(2\pi)^{d/2}}e^{-\norm{x}^2/2}$, we note that 
 \begin{align}\label{eq:gausslips1}
 \frac{\partial \varphi(x)}{\partial x_i} = -\frac{x_i}{(2\pi)^{d/2}}e^{-\norm{x}^2/2}
 \end{align}

 Now note that there is some constant $C_{\ref{prop:gausslips}}$ such that $\forall x \in \mathbb{R}^d$, we have:
 \begin{align}\label{eq:gausslips2}
 \frac{\abs{x_i}}{(2\pi)^{d/2}}e^{-\norm{x}^2/2} \leq C_{\ref{prop:gausslips}}
 \end{align}

 Combining equations~\eqref{eq:gausslips1} and \eqref{eq:gausslips2} we then have $\forall x \in \mathbb{R}^d$,
 \begin{align} \label{eq:gausslips3}
 \abs{\frac{\partial \varphi(x)}{\partial x_i}} \leq C_{\ref{prop:gausslips}}.
 \end{align}

 For any $1\leq i \leq d$, define the vector $x^{(i)} = (y_1, y_2, ..., y_{i}, x_{i+1}, ..., x_d)$. Let $x^{(0)} = x$. Also note that $x^{(d)} = y$. Then using equation~\eqref{eq:gausslips3}, we can conclude that 
 \begin{align}\nonumber
 \abs{\varphi(x^{(i)}) - \varphi(x^{(i-1)})} \leq C_{\ref{prop:gausslips}}\abs{x_i -y_i}.
 \end{align}

 The proof is complete by using triangle inequality along with the following telescoping sum
 \begin{align*}
 y - x = x^{(d)} - x^{(0)} = \sum_{i=1}^d x^{(i)} -x^{(i-1)}.
 \end{align*}
\end{proof}

Armed with the Proposition above we can continue on to prove Proposition~\ref{prop:lattice_cons}.

\begin{proof}[Proof of Proposition~\ref{prop:lattice_cons}]
For conciseness of notation let us write \begin{align}\label{eq:mdefn}
m = D_{\ref{prop:logdiscretization}}\log \left(\frac{1}{\varepsilon}\right)^d.\end{align}
Then $\exists z_1, ..., z_m \in B_R^a$ and $p_1, ...., p_m \in [0,1]$ such that
\begin{align}\label{eq:finitebrack-tildepisumdefn}
 \tilde{\pi} = \sum_{j=1}^m p_j\delta_{z_j}
\end{align}

Now for some fixed $\delta>0$ (to be chosen appropriately later), define the set
\begin{align}\Delta := \big\{ (k_1\cdot\delta, k_2\cdot\delta,\cdots, k_d \cdot\delta) \ : \ k_1,\cdots,k_d\in \Z\} \cap B_R^a.\end{align}
And for some fixed $\gamma>0$ such that $1/\gamma \in \Z$ (to be chosen appropriately later), define the set
\begin{align}\label{eq:gammaprobmass}
 \Gamma = \{\gamma \cdot k\ : \ k\in \Z_{\geq 0}\} \cap [0,1]. 
\end{align}

Now let $\mc{P}_{\delta,\gamma,m}$ to be the collection of all discrete distributions with at most $m+1$ atoms, with support in $\Delta$ and with the probability mass of each atom in the set $\Gamma$.

To approximate $\tilde{\pi}$ by an element in $\mc{P}_{\delta,\gamma,m}$ let us remind ourselves that by equation~\eqref{eq:finitebrack-tildepisumdefn} we can write $\tilde{\pi} = \sum_{j=1}^m p_j\delta_{z_j}$. Now for each $1 \leq j \leq m$, define $y_j$ to be the closest point in $\Delta$ to $z_j$. Similarly define $q_j$ to be the closest point in $\Gamma$ to $p_j$ that is smaller than $p_j$. Then define $\tilde{\tilde{\pi}}$, the "projection" of $\tilde{\pi}$ onto $\mc{P}_{\delta,\gamma,m}$ as
\begin{align*}
 \tilde{\tilde{\pi}} = \sum_{j=1}^m q_j \delta_{y_j} + \left(1 - \sum_{j=1}^m q_j\right) \delta_0.
\end{align*}

Note that as $\Delta$ is a $\delta$-lattice of $B_R^a$ and $z_j \in B_R^a$, we must have $\norm{y_j-z_j}_1 \leq d\delta$. This observation combined with Propositiion~\ref{prop:gausslips} (which amounts to proving that the gaussian density is Lipschictz), implies that if $\varphi(x) = \frac{1}{(2\pi)^{d/2}}e^{-\norm{x}^2/2}$ denotes the density of the standard gaussian we have:
\begin{align}\label{eq:phil1deldiff}
 \abs{\varphi(x-z_j) - \varphi(x-y_j)} \leq C_{\ref{prop:gausslips}}\norm{(x-z_j) -(x-y_j)}_1 \leq C_{\ref{prop:gausslips}}d\delta.
\end{align}
On the other hand as $\Gamma$ forms a $\gamma$-lattice and $q_j$ is chosen to be smaller than $p_j$ we conclude,
\begin{align}\label{eq:gammadiff}
 0 \leq p_j -q_j \leq \gamma.
\end{align}

Finally note that $\forall x, ~\varphi(x) \leq 1$. Combining this observation with equations~\eqref{eq:phil1deldiff} and \eqref{eq:gammadiff} we conclude that for $\tilde{\tilde{\psi}} = \tilde{\tilde{\pi}}*\phi$ and $\forall x\in B_R$ we have,
\begin{align}\nonumber
|f_{\tilde{\psi}} (x) - f_{\tilde{\tilde{\psi}}}(x)| 
&= \abs{\sum_{j=1}^m p_j \varphi(x-z_j) - \left(\sum_{j=1}^m q_j \varphi(x-y_j) + \sum_{j=1}^m (p_j-q_j)\varphi(x) \right)}
\\\nonumber&\leq \sum_{j=1}^m p_j|\varphi(x-z_j) - \varphi(x-y_j)| 
+ \sum_{j=1}^m |p_j - q_j| \varphi(x - y_j) + \sum_{j=1}^m |p_j - q_j| \varphi(x)
\\ \label{eq:ftildegriddiff} &\leq m(C_{\ref{prop:gausslips}}d\delta + 2\gamma).\end{align}

Let us now set \begin{align}\label{eq:finitebrac- deltavalue}\delta = \min\left(\frac{\varepsilon^{81}}{2C_{\ref{prop:gausslips}}dm},1\right),\end{align} 
and choose $\gamma$ such that \begin{align}\label{eq:finitebrac- gammavalue}\frac{1}{\gamma} = \left\lceil \frac{1}{\min\left(\frac{\varepsilon^{81}}{4m},1\right)}\right\rceil.\end{align} Plugging the values in equations~\eqref{eq:finitebrac- deltavalue} and \eqref{eq:finitebrac- gammavalue} in the inequality~\eqref{eq:ftildegriddiff} we conclude $\forall x\in B_R$ we have,
\begin{align}\label{eq:ftilftiltil-diff}
 \abs{f_{\tilde{\psi}} (x) - f_{\tilde{\tilde{\psi}}}(x)} \leq \varepsilon^{81}.
\end{align}
On the other hand by Lemma~\ref{lm:bound-density-Mc} as $\tilde{\psi} \in \pspace{c}{\Theta}$, we conclude $\forall x\in B_R$ we have, 
\begin{align}\label{eq:finitebrack-ftillwbd}
 f_{\tilde{\psi}}(x) \geq e^{-C_{\ref{lm:bound-density-Mc}}}e^{-\norm{x}_2^2} \geq e^{-C_{\ref{lm:bound-density-Mc}}}e^{-R^2} \geq e^{-C_{\ref{lm:bound-density-Mc}}}\varepsilon^{80}
\end{align}

Note that as $R \geq \norm{\Theta}_{\infty}$, we have $\Theta \subseteq B_R \subset B_R^a$. Thus since $\Delta$ is a $\delta$-lattice for $B_R^a$ and $\delta \leq 1$, for any $j$ such that $z_j \in \Theta$ we must have $y_j \in \Theta^{\sqrt{d}}$. Thus \begin{align}\nonumber\tilde{\tilde{\pi}}(\Theta^{\sqrt{d}}\cup\{0\}) \geq \sum_{j:z_j\in\Theta} q_j + 1 - \sum_{j=1}^m q_j = \sum_{j:z_j\in\Theta} q_j + \sum_{j=1}^m p_j - q_j\end{align}
By construction $p_j \geq q_j$, thus we conclude from the above inequality,
\begin{align}
\nonumber \tilde{\tilde{\pi}}(\Theta^{\sqrt{d}}\cup\{0\}) \geq \sum_{j:z_j\in\Theta} q_j + \sum_{j:z_j\in\Theta} p_j - q_j \geq \sum_{j:z_j\in\Theta} p_j = \tilde{\pi}(\Theta) \geq c.
\end{align}

Then as $\Theta^{\sqrt{d}}\cup\{0\}$ is also a compact set we can use Lemma~\ref{lm:bound-density-Mc} with $\Theta$ replaced by $\Theta^{\sqrt{d}}\cup\{0\}$ to conclude that for some constant $C'$ and $\forall x \in B_R$, we have
\begin{align}\label{eq:finitebrack-ftiltillwbd}
 f_{\tilde{\tilde{\psi}}}(x) \geq e^{-C'}e^{-\norm{x}_2^2} \geq e^{-C'}e^{-R^2} \geq e^{-C'}\varepsilon^{80}.
\end{align}

Thus using Proposition~\ref{prop:loglips} with $u=f_{\tilde{\psi}}$ and $v=f_{\tilde{\tilde{\psi}}}$ and using inequalities~\eqref{eq:ftilftiltil-diff}, \eqref{eq:finitebrack-ftillwbd} and \eqref{eq:finitebrack-ftiltillwbd} we get $\forall x \in B_R$ and for some constant $C_{\ref{prop:lattice_cons}}$,
\begin{align}\label{eq:logftilftiltildiff}
 \abs{\log f_{\tilde{\psi}}(x) - \log f_{\tilde{\tilde{\psi}}}(x)} \leq \frac{\abs{ f_{\tilde{\psi}}(x) - f_{\tilde{\tilde{\psi}}}(x)}}{\min(f_{\tilde{\psi}}(x), f_{\tilde{\tilde{\psi}}}(x))} \leq \frac{\varepsilon^{81}}{\min( e^{-C_{\ref{lm:bound-density-Mc}}},e^{-C'})\varepsilon^{80}} \leq C_{\ref{prop:lattice_cons}}\varepsilon
\end{align}

Then given $m, \delta, \gamma$ as defined in equations \eqref{eq:mdefn}, \eqref{eq:finitebrac- deltavalue}, \eqref{eq:finitebrac- gammavalue} we will now define the class of functions $\mc{H}$ as,
\begin{align} \label{eq:Fclassdefn}
 \mc{H} := \{\log f_{\tilde{\tilde{\psi}}} : \tilde{\tilde{\psi}} \in \mc{P}_{\delta,\gamma,m} \}
\end{align}

Now equation~\eqref{eq:logftilftiltildiff} already implies the first condition that $\mc{H}$ needs to satisfy. It remains to bound the cardinality of $\mc{H}$. To that end note that $\abs{\mc{H}} = \abs{\mc{P}_{\delta,\gamma,m}}$. The rest is a simple counting argument as follows.

We begin by noting that by plugging in the values of $m$ and $\delta$ from equations~\eqref{eq:mdefn} and \eqref{eq:finitebrac- deltavalue} we get
 \begin{align}\label{eq:deltalogbd}
 \log\frac{1}{\delta} = \log \frac{2C_{\ref{prop:gausslips}}dm}{\varepsilon^{81}} \lesssim d\log\left(\log\frac{1}{\varepsilon}\right) + \log\frac{1}{\varepsilon}\lesssim \log\frac{1}{\varepsilon}.\end{align}

Also note that \begin{align}\label{eq:Bepsainfnorm}\norm{B_R^a}_{\infty} \leq R+a \lesssim \sqrt{\log (1/\varepsilon)}\end{align} 

Next, observe that $|\Delta| \lesssim \frac{\norm{B_R^a}_{\infty}^d}{\delta^d}$. Combining this observation with equations~\eqref{eq:deltalogbd} and \eqref{eq:Bepsainfnorm} we conclude
\begin{align}\label{eq:deltalatticeupbd}
 \log \abs{\Delta} \lesssim d\left(\log \norm{B_R^a}_{\infty} + \log \frac{1}{\delta} \right) \lesssim \log \frac{1}{\varepsilon}.
\end{align}
 
Similarly plugging in the value of $\gamma$ from equation~\eqref{eq:finitebrac- gammavalue} we also have $\log \frac{1}{\gamma} \lesssim \log\frac{1}{\varepsilon}.$ Thus we have that 
\begin{align}\label{eq:gammalatticeupbd}
 \log \abs{\Gamma} \lesssim \log \frac{1}{\varepsilon}.
\end{align}

 Thus as $|\mc{P}_{\delta,\gamma,m}| \leq |\Delta|^m\cdot|\Gamma|^m$, we have by using equations~\eqref{eq:deltalatticeupbd}, \eqref{eq:gammalatticeupbd} and ~\eqref{eq:mdefn},
\begin{align}\label{eq:logpdelgam}
\log |\mc{P}_{\delta,\gamma,m}| \lesssim m \left(\log \abs{\Delta} + \log \abs{\Gamma} \right) \lesssim m \log \frac{1}{\varepsilon} \lesssim
D_{\ref{prop:logdiscretization}} \left(\log\frac{1}{\varepsilon}\right)^{d+1}\leq D_{\ref{prop:lattice_cons}}\left(\log\frac{1}{\varepsilon}\right)^{d+1}. \end{align}
Rewriting the above we thus have that 
\begin{align*}
 \abs{\mc{H}} = \abs{\mc{P}_{\delta,\gamma,m}} \leq \exp \left(D_{\ref{prop:lattice_cons}}\left(\log\frac{1}{\varepsilon}\right)^{d+1} \right).
\end{align*}
\end{proof}

\section{Proof of statistical results in \ref{subsec:stat_impl_L2mu}}
\subsection{Proof of Corollary~\ref{cor:clt_Lhat}}\label{app:prf_clt_Lhat}
\begin{proof}
Before we begin let us quickly comment on why \(\sigma_*^2>0\) holds.  Indeed, if \(\sigma_*^2=0\), then
\(\log f_*(X)\) is constant \(f_*\)-almost surely.  Since
\(f_*(x)>0\) for every \(x\in\mathbb R^d\), this implies that
\(\log f_*\) is constant Lebesgue-a.e.  By continuity of \(f_*\),
\(f_*\) would be constant on \(\mathbb R^d\), which is impossible for
a probability density on \(\mathbb R^d\).  

Since $\Theta \supseteq \operatorname{supp}(\mu_*)$, we have $\mu_* \in \mathcal{M}(\Theta,1)$. As such by Lemma~\ref{lm:bound-density-Mc} we have that
\[
    |\log f_*(x)| \le C_{\ref{lm:bound-density-Mc}}(1+\|x\|^2)
\]
for a constant \(C_{\ref{lm:bound-density-Mc}}<\infty\) depending only on \(d\) and \(R\).

If \(X\sim f_*\), then \(X\) admits the representation
\[
    X=\Theta+Z,
\]
where \(\Theta\sim\mu_*\), \(Z\sim N(0,I_d)\), and \(\Theta\) and
\(Z\) are independent.  Since \(\Theta\) is in a compact set almost surely, it follows
that \(\mathbb E\|X\|^4<\infty\).  Consequently,
\[
    \mathbb E_{f_*}\big[|\log f_*(X)|^2\big]<\infty.
\]
Thus the ordinary central limit theorem applies and gives
\[
    \sqrt n\big(L_n(f_*)-\mathbb E_{\mu_*}\big[\log f_*(X)\big]\big)
    =
    \frac{1}{\sqrt n}\sum_{i=1}^n
    \left(\log f_*(X_i)-\mathbb E_{\mu_*}\big[\log f_*(X)\big]\right)
    \rightsquigarrow
    N(0,\sigma_*^2).
\]

It remains only to compare \(L_n(f_*)\) and \(\hat L_n\).  By
Theorem~\ref{thm:L2mu_bound}, applied with \(p=2\),
\[
    \mathbb E\left[
        \left|\hat L_n-L_n(f_*)\right|^2
    \right]
    =
    o(n^{-1}).
\]
Equivalently,
\[
    \mathbb E\left[
        \left|\sqrt n\,(\hat L_n-L_n(f_*))\right|^2
    \right]
    \to 0.
\]
Hence
\[
    \sqrt n\,(\hat L_n-L_n(f_*))\to 0
    \quad\text{in }L^2,
\]
and therefore also in probability.  Since
\[
    \sqrt n(\hat L_n-\mathbb E_{\mu_*}\big[\log f_*(X)\big])
    =
    \sqrt n\big(L_n(f_*)-\mathbb E_{\mu_*}\big[\log f_*(X)\big]\big)
    +
    \sqrt n\big(\hat L_n-L_n(f_*)\big),
\]
Slutsky's theorem yields
\[
    \sqrt n(\hat L_n-\mathbb E_{f_*}\big[\log f_*(X)\big])
    \rightsquigarrow
    N(0,\sigma_*^2).
\]

\end{proof}

\subsection{Proof of Lemma \ref{sigma_est}}\label{prf:appsigma_est}

\begin{proof}[Proof of Lemma \ref{sigma_est}]
Write $\hat\ell_i := \log \hat f_n(X_i)$, $\ell_i^* := \log f_*(X_i)$, and
\[
\hat L_n = \frac{1}{n}\sum_{i=1}^n \hat\ell_i, \qquad 
L_n^* := L_n(f_*) = \frac{1}{n}\sum_{i=1}^n \ell_i^*, \qquad 
S_n^2 := \frac{1}{n}\sum_{i=1}^n (\ell_i^* - L_n^*)^2.
\]
We decompose
\[
\hat\sigma_n^2 - \sigma_*^2 \;=\; (\hat\sigma_n^2 - S_n^2) + (S_n^2 - \sigma_*^2)
\]
and show both pieces are $o_p(1)$.

We first note that by the weak law of large numbers applied to the i.i.d.\ variables
$\ell_i^*$ and $(\ell_i^*)^2$ (note that Corollary \ref{cor:clt_Lhat}
asserts that $\sigma_*^2 < \infty$.) we have
\[
L_n^* \xrightarrow{P} \mathbb{E}[\log f_*(X)], 
\qquad 
\frac{1}{n}\sum_{i=1}^n (\ell_i^*)^2 \xrightarrow{P} \mathbb{E}[(\log f_*(X))^2].
\]
By the continuous mapping theorem,
\begin{equation} \label{eq:stp1_F2}
S_n^2 \;=\; \frac{1}{n}\sum_{i=1}^n (\ell_i^*)^2 - (L_n^*)^2 
\;\xrightarrow{P}\; \operatorname{Var}_{f_*}\log f_*(X) \;=\; \sigma_*^2.    
\end{equation}

We will now show that 
\begin{equation}
\hat\sigma_n^2 - S_n^2 \xrightarrow{P} 0. \label{eq:stp2_F2}\end{equation}
Now expanding each as a second moment minus a squared mean,
\[
\hat\sigma_n^2 - S_n^2 \;=\; 
\underbrace{\frac{1}{n}\sum_{i=1}^n \bigl(\hat\ell_i^{\,2} - (\ell_i^*)^2\bigr)}_{=: A_n} 
\;-\; 
\underbrace{\bigl(\hat L_n^{\,2} - (L_n^*)^2\bigr)}_{=: B_n}.
\]

\emph{(2a) $B_n \to 0$ in probability.} Factor $B_n = (\hat L_n - L_n^*)(\hat L_n + L_n^*)$. By Theorem~\ref{thm:L2mu_bound} with $p=2$,
\[
\mathbb{E}\bigl[(\hat L_n - L_n^*)^2\bigr] = o(n^{-1}) 
\;\Longrightarrow\; \hat L_n - L_n^* \xrightarrow{L^2} 0 
\;\Longrightarrow\; \hat L_n - L_n^* \xrightarrow{P} 0.
\]
Since $L_n^* \xrightarrow{P} \mathbb{E}[\log f_*(X)]$ by Step~1, the sum $\hat L_n + L_n^* = O_p(1)$, so $B_n = o_p(1)\cdot O_p(1) = o_p(1)$.

\emph{(2b) $A_n \to 0$ in probability.} Using $a^2 - b^2 = (a-b)(a+b)$
and the Cauchy--Schwarz inequality, 
\[
|A_n| \;\le\; \sqrt{\frac{1}{n}\sum_{i=1}^n (\hat\ell_i - \ell_i^*)^2} 
\;\cdot\; \sqrt{\frac{1}{n}\sum_{i=1}^n (\hat\ell_i + \ell_i^*)^2}.
\]
The first factor is $o_p(1)$ by Corollary~\ref{cor:L2prob}.

For the second factor, $(a+b)^2 \le 2a^2 + 2b^2$ gives
\[
\frac{1}{n}\sum_{i=1}^n (\hat\ell_i + \ell_i^*)^2 
\;\le\; 2\cdot\frac{1}{n}\sum_{i=1}^n \hat\ell_i^{\,2} 
\;+\; 2\cdot\frac{1}{n}\sum_{i=1}^n (\ell_i^*)^2.
\]
The term $\frac{1}{n}\sum_i (\ell_i^*)^2$ is $O_p(1)$ by Step~1. For the $\hat\ell_i$ term, expand $\hat\ell_i = \ell_i^* + (\hat\ell_i - \ell_i^*)$ and apply $(a+b)^2 \le 2a^2 + 2b^2$ again:
\[
\frac{1}{n}\sum_{i=1}^n \hat\ell_i^{\,2} 
\;\le\; 2\cdot\frac{1}{n}\sum_{i=1}^n (\ell_i^*)^2 
\;+\; 2\cdot\frac{1}{n}\sum_{i=1}^n (\hat\ell_i - \ell_i^*)^2 
\;=\; O_p(1) + o_p(1) \;=\; O_p(1),
\]
where we used Corollary~\ref{cor:L2prob} once more. Therefore $\frac{1}{n}\sum_i (\hat\ell_i + \ell_i^*)^2 = O_p(1)$, and
\[
|A_n| \;\le\; \sqrt{o_p(1)} \cdot \sqrt{O_p(1)} \;=\; o_p(1).
\]

Combining (2a) and (2b), $\hat\sigma_n^2 - S_n^2 = A_n - B_n = o_p(1)$.

\noindent\textbf{Conclusion.} Adding equations~\eqref{eq:stp1_F2} and \eqref{eq:stp2_F2},
\[
\hat\sigma_n^2 - \sigma_*^2 \;=\; (\hat\sigma_n^2 - S_n^2) + (S_n^2 - \sigma_*^2) \;=\; o_p(1),
\]
i.e.\ $\hat\sigma_n^2 \xrightarrow{P} \sigma_*^2$.
\end{proof}

\subsection{Proof of Corollary \ref{cor:L2prob}}\label{prf:appL2prob}

\begin{proof}[Proof of Corollary \ref{cor:L2prob}]
  We shall use the following notation: 
  \begin{equation}\label{not_cor}
    \begin{split}
    & r(x) := \log \hat{f}_n(x) - \log f_*(x) \\ &\Delta_n :=  \left|\sup_{f \in
    \mathcal{M}} \frac{1}{n} \sum_{i=1}^n \log f(X_i) - \frac{1}{n}
    \sum_{i=1}^n \log f_*(X_i) \right| = \left| \hat{L}_n - L_n(f_*)
                                                   \right|.
     \end{split}                                                   
  \end{equation}
 Also $P_n g = \frac{1}{n} \sum_{i=1}^n g(X_i)$ denote the expectation
 of $g$ with respect to the empirical probability measure $P_n$ of
 $X_1, \dots, X_n$.

 Our main claim is that
 \begin{align}\label{mainclaim}
   P_n r^2 \leq e^2 \left(\Delta_n + n \Delta_n^2 \right). 
 \end{align}
 This proves \eqref{cor:L2prob.eq} because, according to Theorem
 \ref{thm:L2mu_bound}, both $\Delta_n$ and $n
 \Delta_n^2$ converge to zero in probability as $n \rightarrow
 \infty$.

 We prove \eqref{mainclaim} below. Because $\hat{f}_n$ maximizes
 likelihood over $\mathcal{M}$ and because $(1 - t) \hat{f}_n + t f_*
 \in \mathcal{M}$ for every $t \in [0, 1]$, we have
 \begin{align*}
   0 \geq \lim_{t \downarrow 0} \frac{L_n((1 - t) \hat{f}_n + t f_*) -
   L_n(\hat{f}_n)}{t} 
 \end{align*}
 Calculating the limit explicitly, we obtain
 \begin{align*}
   \frac{1}{n} \sum_{i=1}^n \frac{f_*(X_i)}{\hat{f}_n(X_i)} \leq 1. 
 \end{align*}
 Using the notation from \eqref{not_cor}, the above is same as
 \begin{align*}
   P_n e^{-r} \leq 1. 
 \end{align*}
 Letting
 \begin{align*}
   h(u) := e^{-u} - 1 + u \qt{for $u \in \R$}, 
 \end{align*}
 we then get
 \begin{align}\label{h_del}
   P_n h(r) = P_n e^{-r} - 1 + P_n r \leq P_n r = \frac{1}{n}
   \sum_{i=1}^n \log \hat{f}_n(X_i) - \frac{1}{n} \sum_{i=1}^n \log
   f_*(X_i) \leq \Delta_n. 
 \end{align}
We are now ready to prove \eqref{mainclaim}. We shall use the
inequalities for $h(u)$ given in Lemma  \ref{h_ineq}.  Write
\begin{align}\label{mainclaim_start}
  P_n r^2 = P_n r^2 I\{r \le 1\} + P_n r^2 I\{r > 1\}. 
\end{align}
For the first term above, we use the first part of Lemma \ref{h_ineq}
(and \eqref{h_del}) which gives
\begin{align}\label{mainclaim_1}
  P_n r^2 I\{r \leq 1\} \leq 2 e P_n h(r) \leq 2 e \Delta_n. 
\end{align}
For the second term, write
\begin{align*}
  P_n r^2 I\{r > 1\} &= \frac{1}{n} \sum_{i=1}^n r^2(X_i) I\{r(X_i) >
                       1\} \\
                     &\leq \left[ \max_{i : r(X_i) > 1} r(X_i) \right]
                     \left[ \frac{1}{n} \sum_{i=1}^n r(X_i) I\{r(X_i) >
                       1\}\right] \\
  &\leq \left[\sum_{i=1}^n r(X_i) I\{r(X_i) > 1\} \right] \left[
         \frac{1}{n} \sum_{i=1}^n r(X_i) I\{r(X_i) >
    1\}    \right] \\
                     &= n \left(\frac{1}{n} \sum_{i=1}^n r(X_i)
                       I\{r(X_i) > 1\} \right)^2 = n \left(P_n r I\{r
                       > 1\}  \right)^2. 
\end{align*}
Now using the second part of Lemma \ref{h_ineq} and \eqref{h_del}, we deduce
\begin{align}\label{mainclaim_2}
  P_n r^2 I\{r > 1\} \leq n e^2 \left(P_n h(r) \right)^2 \leq e^2 n \Delta_n^2
\end{align}
Combining \eqref{mainclaim_1} and \eqref{mainclaim_2} with
\eqref{mainclaim_start}, we deduce \eqref{mainclaim}. This completes
the proof of Corollary \ref{cor:L2prob}. 
\end{proof}

\begin{lemma}\label{h_ineq}
  Let $h(u) := e^{-u} - 1 + u$ for $u \in \R$. Then
  \begin{enumerate}
  \item $u^2 \leq 2 e h(u)$ for $u \leq 1$,
  \item $u \leq e h(u)$   for $u > 1$. 
  \end{enumerate}
\end{lemma}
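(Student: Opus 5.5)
We argue directly from the definition $h(u)=e^{-u}-1+u$, using a second-order Taylor expansion for the first inequality and a one-variable monotonicity argument for the second. Neither step is a real obstacle; the only point needing care is where the intermediate point of the Taylor remainder lies.

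\emph{Part 1 ($u\le 1$).} Expand $g(u)=e^{-u}$ around $0$ with the Lagrange form of the remainder. Since $g(0)=1$ and $g'(0)=-1$, we get
\[
h(u)=\frac{u^2}{2}e^{-\xi}
\]
for some $\xi$ between $0$ and $u$.
\begin{itemize}
\item If $u\le 0$, then $\xi\le 0$, so $e^{-\xi}\ge 1$.
\item If $0<u\le 1$, then $\xi\le u\le 1$, so $e^{-\xi}\ge e^{-1}$.
\end{itemize}
In both cases $e^{-\xi}\ge e^{-1}$. Hence $h(u)\ge u^2/(2e)$, which is $u^2\le 2e\,h(u)$.

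\emph{Part 2 ($u>1$).} The claim $u\le e\,h(u)$ is equivalent to $\psi(u)\ge 0$, where
\[
\psi(u):=e\,h(u)-u=e^{1-u}-e+(e-1)u .
\]
\begin{itemize}
\item At $u=1$: $\psi(1)=1-e+e-1=0$.
\item For $u\ge 1$: $\psi'(u)=-e^{1-u}+e-1\ge -1+e-1=e-2>0$, because $e^{1-u}\le 1$ there.
\end{itemize}
So $\psi$ is increasing on $[1,\infty)$ and $\psi(u)>\psi(1)=0$ for every $u>1$. This gives $u\le e\,h(u)$ and completes the proof.
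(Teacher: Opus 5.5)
Your proof is correct and follows essentially the paper's route. In part 1 you use the Lagrange form of the Taylor remainder, while the paper uses the integral form for $0<u\le 1$ and handles $u\le 0$ separately via $e^{a}-1-a\ge a^2/2$. In part 2 your $\psi$ is just $e$ times the paper's $g(u)=h(u)-u/e$, with the same monotonicity argument.
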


\begin{proof}[Proof of Lemma \eqref{h_ineq}]
  Here is the proof of $u^2 \leq 2 e h(u)$ for $u \leq 1$. First
  assume that $u \leq 0$. Then writing $a = -u \geq 0$, we see that
  \begin{align*}
    h(u) = e^{-u} - 1 + u = e^{a} - 1 - a \geq \frac{a^2}{2} = \frac{u^2}{2}
  \end{align*}
  which shows $u^2 \leq 2 h(u)$ for $u \leq 0$.

  Now let $0 < u \leq 1$. Then
  \begin{align*}
    h(u) &= e^{-u} - 1 + u = \int_0^u (u - t) e^{-t} dt \geq
                                e^{-1} \int_0^u (u - t) dt =
                                \frac{u^2}{2e},            
  \end{align*}
  where, in the penultimate inequality, we used $e^{-t} \geq e^{-1}$
  because $0 \leq t \leq u \leq 1$. We have thus proved $u^2 \leq 2 e
  h(u)$ for all $u \leq 1$.

  Now let $u > 1$. Let
  \begin{align*}
    g(u) := h(u) - \frac{u}{e}. 
  \end{align*}
  It is easy to check that $g(1) = 0$. Also
  \begin{align*}
    g'(u) = -e^{-u} + 1 - \frac{1}{e} > -\frac{1}{e} + 1 - \frac{1}{e}
    = 1 - \frac{2}{e} > 0. 
  \end{align*}
  This proves that $g$ is increasing on $[1, \infty)$ which proves
  $g(u) > g(1) = 0$ for $u > 1$. But $g(u) > 0$ is equivalent to $u \leq e
  h(u)$. This completes the proof of Lemma \ref{h_ineq}. 
\end{proof}

\end{appendix}

\end{document}